\newcounter{pos} % needed for tikz
\tikzset{									% for pictures
	initcounter/.code={\setcounter{pos}{0}},
	style between/.style n args={3}{
		postaction={
			initcounter,
			decorate,
			decoration={
				show path construction,
				curveto code={
					\addtocounter{pos}{1}
					\pgfmathtruncatemacro{\min}{#1 - 1}
					\ifthenelse{\thepos < #2 \AND \thepos > \min}{
						\draw[#3]
						(\tikzinputsegmentfirst)
						..
						controls (\tikzinputsegmentsupporta) and (\tikzinputsegmentsupportb)
						..
						(\tikzinputsegmentlast);
					}{}
				}
			}
		},
	},
}
\theoremstyle{plain} % style plain
\newtheorem{theorem}{Theorem}[section]
\newtheorem{lemma}[theorem]{Lemma}
\newtheorem{proposition}[theorem]{Proposition}
\newtheorem{corollary}[theorem]{Corollary}
\theoremstyle{definition} % style definition
\newtheorem{example}[theorem]{Example}
\newtheorem{definition}[theorem]{Definition}
\newtheorem{remark}[theorem]{Remark}
\newtheorem{notation}[theorem]{Notation}
\newcommand{\End}{\operatorname{End}}
\newcommand{\Hom}{\operatorname{Hom}}
\renewcommand{\H}{\operatorname{H}}
\newcommand{\per}{\operatorname{per}}
\newcommand{\add}{\operatorname{add}}
\newcommand{\Obj}{\operatorname{Obj}}
\newcommand{\Tw}{\operatorname{Tw}}
\newcommand{\cA}{\mathcal{A}}
\newcommand{\cB}{\mathcal{B}}
\newcommand{\cC}{\mathcal{C}}
\newcommand{\cD}{\mathcal{D}}
\newcommand{\cF}{\mathcal{F}}
\newcommand{\bD}{\mathbb{D}}
\newcommand{\bR}{\mathbb{R}}
\newcommand{\bM}{\mathbb{M}}
\newcommand{\bP}{\mathbb{P}}
\newcommand{\bS}{\mathbb{S}}
\newcommand{\bX}{\mathbb{X}}
\newcommand{\bZ}{\mathbb{Z}}
\newcommand{\surf}{\mathcal{S}}
\newcommand{\grad}{\mathfrak{f}}
\definecolor{dark-green}{RGB}{14,150,2}
\begin{document}
\title{Skew-group~$A_{\infty}$-categories as Fukaya categories of orbifolds}

\author{Claire Amiot}
\address[Claire Amiot]{Institut Fourier, 100 rue des maths, 38402 Saint Martin d'H\`eres, Université Grenoble Alpes, Institut Universitaire de France (IUF)}
\email{\href{mailto:claire.amiot@univ-grenoble-alpes.fr}{claire.amiot@univ-grenoble-alpes.fr}}

\author{Pierre-Guy Plamondon}
\address[Pierre-Guy Plamondon]
{Laboratoire de Mathématiques de Versailles, UVSQ, CNRS, Université Paris-Saclay, Institut universitaire de France (IUF)}
\email{\href{mailto:pierre-guy.plamondon@uvsq.fr}{pierre-guy.plamondon@uvsq.fr}}

\keywords{}
\thanks{}
%%\dedicatory{}

\date{\today}

%\classification{18E30}

\begin{abstract}
We study the partially wrapped Fukaya category of a surface with boundary with an action of a group of order two.  We let the notion of a skew-group~$A_\infty$-category play the role of the partially wrapped Fukaya category of an orbifold surface.  We classify indecomposable objects in terms of graded curves with signs, or taggings, at orbifold points.  We compute morphisms between a class of objects, and we use this to describe tilting objects and find algebras derived-equivalent to skew-gentle algebras.
\end{abstract}

\maketitle

\tableofcontents

\section{Introduction}

Our main interest in this paper is the partially wrapped (or topological) Fukaya category of a surface with boundary \cite{HaidenKatzarkovKontsevich} with an action of a group of order two.   We use the notion of a skew-group~$A_\infty$-category \cite{OpperZvonareva}, which is an enhancement of the notion of a skew-group algebra as studied in \cite{ReitenRiedtmann}, as a way to construct the partially wrapped Fukaya category of an orbifold surface.  We use this to describe algebras derived-equivalent to skew-gentle algebras.

The partially wrapped Fukaya category of a graded surface with marked boundary is an~$A_\infty$-category defined in \cite{HaidenKatzarkovKontsevich}, where it was seen that dissections of the surface gave rise to formal generators.  The endomorphism algebras of these generators, in cohomology, are isomorphic to graded gentle algebras, and any such algebra arises from a dissection \cite{OpperPlamondonSchroll,BaurCoelho,PaluPilaudPlamondon2}.  This point of view has been fruitful in providing new results on derived equivalences of gentle algebras \cite{LekiliPolishchuk,AmiotPlamondonSchroll,Opper,ChangJinSchroll,JinSchrollWang}.  Related geometric models have also been used to study gentle algebras \cite{DavidRoeslerSchiffler,CanakciPauksztelloSchroll,LiuZhang,BaurSchroll,BaurLaking,ChangSchroll}, Brauer graph algebras \cite{OpperZvonareva} and skew-gentle algebras \cite{AmiotBrustle,LabardiniSchrollValdivieso,HeZhouZhu,Amiot23}.

Skew-gentle algebras were introduced by Geiss and de la Peña in \cite{GeissdelaPena} and are a special case of clannish algebras \cite{CrawleyBoevey89}; however, when working over a field of characteristic different from~$2$, skew-gentle algebras are conveniently described (up to Morita-equivalence) as skew-group algebras of gentle algebras with an action of a group of order two on their Gabriel quiver.  The group action lifts to an action on the graded surface with dissection giving rise to the gentle algebra, and thus to the partially wrapped Fukaya category.

For an~$A_\infty$-category~$\cA$ with a strict action of a finite group~$G$, we define the skew-group~$A_\infty$-category~$\cA*G$ as in \cite{OpperZvonareva}, see also \cite{ReitenRiedtmann} for usual categories (Definition~\ref{defi::skew-group-category}).

The philosophy put forward in this paper is that, if~$\cA$ comes from a~$G$-invariant arc system on the graded surface~$\surf$, then the skew-group~$A_\infty$-category~$\cA*G$, or rather the split-closure of the category~$\Tw(\cA*G)$ of twisted complexes, plays the role of the partially wrapped Fukaya category of the graded orbifold surface~$\surf/G$. 

This necessitates an understanding of the split-closure~$\Tw(\cA*G)_+$ (Section~\ref{sect::split-closure-skew-group}).  In the case where~$\cA$ is a finite-dimensional algebra~$\Lambda$, we obtain in this way an equivalence of triangulated categories between~$\big( (\per\Lambda)*G \big)_+$ and~$\per(\Lambda * G)$ (Corollary~\ref{coro::split-closure-triangulated}).  

We apply this to the partially wrapped Fukaya category of a graded surface~$\surf$ with marked boundary endowed with an action of a group~$G$ of order two.  In this case,~$\Tw(\cA*G)_+$ is equivalent (in cohomology) to the perfect derived category of a graded skew-gentle algebra.  The natural functor
\[
 \Tw\cA \to \Tw(\cA*G)_+
\]
sends indecomposable objects either to indecomposable objects or to direct sums of two distinct indecomposable objects; this allows us to give a description of all indecomposable objects in this category in terms of curves on~$\surf$ together with \emph{taggings} (Section~\ref{Section4}).   In Section~\ref{Section5}, we turn this into a description in terms of \emph{tagged graded arcs} on the orbifold surface~$\surf/G$.  By the above functor, objects corresponding to arcs are sent to objects corresponding to tagged arcs, but interestingly, objects corresponding to a closed curve with paramater are sent either to closed curves or to tagged arcs joining two orbifold points, depending on the parameter. This was already noted in \cite{Amiot23}, and is similar to a phenomenon observed in~\cite{AmiotPlamondon} in the context of cluster categories from triangulated surfaces.

One interesting and useful facet of the construction of the Fukaya category developped in \cite{HaidenKatzarkovKontsevich} is the fact that given two full systems of graded arcs on the surface $\surf$, there is a canonical Morita equivalence linking the corresponding Fukaya categories. This permits in particular when studying an object to choose a suitable full system of arcs in which the description of this object as twisted complexes becomes simple. One central point studied in Section \ref{Section4} is to understand how the tagging behaves when changing the system. This careful checking is of importance when computing the morphisms spaces between tagged arcs interescting in an orbifold point.

This allows us to describe tilting objects in the derived category of a graded skew-gentle algebra in terms of dissections of the orbifold surface~$\surf/G$ (Thoerem~\ref{theo::local-conditions}).  This gives a class of algebras (Definition~\ref{defi::algebra-of-dissection}) which are derived-equivalent to skew-gentle algebras.  In the case of a disk with one orbifold point, this yields in Section~\ref{Section6} a complete list of the algebras piecewise hereditary of Dynkin type~$\mathbf{D}_n$ (see also~\cite{Keller91}).  

As stated above, the philosophy of this paper is that the skew-group~$A_\infty$-category~$\cA*G$, or rather the category~$\Tw(\cA*G)_+$, plays the role of the partially wrapped Fukaya category of the graded orbifold surface~$\surf/G$.  Recently, another approach has appeared in~\cite{ChoKim}, in which the authors define an~$A_\infty$-structure directly on a tagged arc system on~$\surf/G$, and relate it to the~$G$-invariant portion of~$\Tw\cA$.  Our approach differs from theirs in that we define the skew-group~$A_\infty$-algebra~$\cA*G$, which should be seen as an~$A_\infty$-enhancement of the algebra~$\Lambda*G$, with~$\Lambda$ a graded gentle algebra.  It should be noted that~$\Lambda*G$ is not \emph{basic}, and so is not given by a quiver with relations; however, it has the advantage of being canonical -- no choice of idempotent is involved in its description.  It would be of great interest to compare more carefully the two points of view.

Related work of Barmeier, Schroll and Wang on Fukaya categories of orbifold surfaces can be found in~\cite{BWZ}.

The paper is organized as follows.  In Section~\ref{Section2}, we recall the main defnition of~$A_\infty$ structures and their split closures, and introduce skew-group~$A_\infty$-categories.  In Section~\ref{Section3}, we recall the main results on topological Fukaya categories of graded surfaces, and in particular the description of their indecomposable objects in terms of graded curves.  In Section~\ref{Section4}, we describe some indecomposable objects in~$\Tw(\cA*G)_+$ and the morphisms between them.  This allows us to obtain tilting objects in Section~\ref{Section5} in terms of dissection of the orbifold~$\surf/G$.  Finally, in Section~\ref{Section6}, we apply this to provide a complete list of piecewise hereditary algebras of Dynkin type~$\mathbf{D}$.

\section{Skew-group $A_{\infty}$-categories}\label{Section2}

%-------------
\subsection{$A_{\infty}$-categories}

The aim of this subsection is mainly to fix the notations we will be using throughout the paper.  Our main references are \cite[Chapter 1]{Seidel} and \cite[Section 3]{HaidenKatzarkovKontsevich}.

We fix an algebraically closed field~$K$.

\begin{definition}
 An \emph{$A_{\infty}$-category}~$\cA$ is given by the following data.
 \begin{itemize}
  \item A set of objects~$\Obj(\cA)$.
  
  \item For each pair of objects~$X$ and~$Y$ of~$\cA$, a~$\bZ$-graded vector space of morphisms $\Hom_{\cA}(X,Y) = \bigoplus_{m\in \bZ} \Hom_{\cA}^m(X,Y)$.  The degree of a homogeneous morphism~$f$ is denoted by~$|f|$ and its \emph{reduced degree} is~$\| f \| := |f| -1$.
  
  \item For each integer~$n\geq 1$ and each~$(n+1)$-tuple of objects~$X_0,X_1,\ldots, X_n$, a morphism of degree~$2-n$
  \[
   \mu_{\cA}^n : \Hom_{\cA}(X_{n-1},X_n) \otimes \cdots \otimes \Hom_{\cA}(X_{0},X_1) \longrightarrow \Hom_{\cA}(X_{0},X_n)
  \]
  such that the \emph{Stasheff relations} are satisfied for all~$n$-tuples of homogeneous morphisms~$(a_n, \ldots, a_1)$:
  \[
    \sum_{\stackrel{k\geq 0,j\geq 1}{k+j\leq n}} (-1)^{\sum_{i=1}^k \| a_i \|}  \mu_{\cA}^{n-j+1}\big(a_n, \ldots, a_{k+j+1},\mu_{\cA}^{j}(a_{k+j},\ldots,a_{k+1}),a_k,\ldots,a_1\big).
  \]
  The~$\mu_{\cA}^n$ are called the \emph{higher multiplications} of~$\cA$.
 \end{itemize}
 
 An $A_{\infty}$-category~$\cA$ is \emph{strictly unital} if for every object~$X$ of~$\cA$, there is a morphism~$1_X \in \Hom_{\cA}^0(X,X)$ such that
 \begin{itemize}
  \item $\mu_{\cA}^1(1_X) = 0$;
  \item $\mu_{\cA}^2(a,1_X) = f$ and~$\mu_{\cA}^2(1_Y,a) = (-1)^{|a|}f$ for all homogeneous morphism~$a$ in~$\Hom_{\cA}(X,Y)$;
  \item $\mu_{\cA}^n (\ldots, 1_X, \ldots) = 0$ if~$n\geq 3$.
 \end{itemize}
\end{definition}

We will often write~$\mu^n$ instead of~$\mu_{\cA}^n$.  We will also sometimes denote~$\mu^n$ simply by~$\mu$, since the~$n$ in~$\mu^n$ is implied when writing~$\mu(a_n,\ldots,a_1)$.

%-------------
\subsection{Twisted complexes}

Our main interest in~$A_{\infty}$-categories lies in their use as enhancement of~$K$-linear or triangulated categories.  

\begin{definition}
 Let~$A$ be a strictly unital~$A_{\infty}$-category.  Its \emph{cohomological category}~$H^*\cA$ is the (graded) category whose objects are those of~$\cA$ and whose morphism graded spaces are defined as
 \[
  \Hom_{H^*\cA}(X,Y) = H^*\Hom_{\cA}(X,Y),
 \]
 where~$\Hom_{\cA}(X,Y)$ is viewed as a complex with differential~$\mu_{\cA}^1$, and where composition of homogeneous morphisms is defined by
 \[
  [a_2][a_1] := (-1)^{|a_1|}\mu_{\cA}^2(a_2,a_1).
 \]
 We denote by~$H^0\cA$ the subcategory obtained by keeping only morphisms of degree~$0$.
\end{definition}

The main way triangulated categories will arise in this paper is by taking twisted complexes over an~$A_{\infty}$ category.  We define them in several steps.

\begin{definition}
 Let~$\cA$ be an~$A_{\infty}$-category.  The~$A_{\infty}$-category~$\add\bZ\cA$ is defined as follows.
 \begin{itemize}
  \item Its objects are formal direct sums of the form~$\bigoplus_{X\in\Obj(\cA)} V_X\otimes X$, where~$V_X$ is a finite-dimensional graded vector space which is zero for all but finitely many~$X$.
  
  \item Morphism spaces are defined by
    \[
     \Hom_{\add\bZ\cA}(V_X\otimes X, V_Y\otimes Y) := \Hom(V_X,V_Y)\otimes_K \Hom_{\cA}(X,Y),
    \]
    extended to direct sums by additivity.
    
  \item The higher multiplications are defined on homogeneous morphisms by
  \[
   \mu_{\add\bZ\cA}^n(\phi_n\otimes a_n, \ldots, \phi_1\otimes a_1) = (-1)^{\sum_{i<j} |\phi_i| \| a_j \| }  \phi_n\cdots\phi_1 \otimes \mu_{\cA}^n(a_n, \ldots, a_1).
  \]
 \end{itemize}
\end{definition}

\begin{notation}
\begin{enumerate}
\item If~$\bigoplus_{X\in\Obj(\cA)} V_X\otimes X$ and~$\bigoplus_{X\in\Obj(\cA)} W_X\otimes X$ are two objects of~$\add\bZ\cA$, we will denote their \emph{direct sum} by
\[
\left( \bigoplus_{X\in\Obj(\cA)} V_X\otimes X \right) \oplus \left( \bigoplus_{X\in\Obj(\cA)} W_X\otimes X \right) := \bigoplus_{X\in\Obj(\cA)} (V_X\oplus W_X)\otimes X.
\]

\item 
When writing an object of~$\add\bZ\cA$, we usually omit the summands for which~$V_X$ is the zero space.

\item
 If we view the field~$K$ as a graded vector space over itself, then for any integer~$m$, the identity map from~$K$ to itself induces a morphism of degree~$-m$ from~$K$ to~$K[m]$.  We will denote this morphism by~$s^m\in \Hom^{-m}(K,K[m])$.  By a slight abuse of notation, we will also denote by~$s^m$ the induced morphism in~$\Hom^{-m}(K[d], K[d+m])$ for any integer~$d$.
 
\item
 If~$X$ is an object of~$\cA$, then~$X[m]$ stands for the object~$K[m]\otimes X$ of~$\add\bZ\cA$.  Thus, a morphism from~$X[m]$ to~$Y[n]$ in~$\add\bZ\cA$ is a scalar multiple of a morphism of the form~$s^{n-m}\otimes a$, with~$a\in\Hom_{\cA}(X,Y)$.
\end{enumerate}
\end{notation}

\begin{definition}
 Let~$\cA$ be an~$A_{\infty}$ category.  A \emph{twisted complex} over~$\cA$ is a pair~$(W,\delta)$, where~$W$ is an object of~$\add\bZ\cA$ and~$\delta\in\Hom_{\add\bZ\cA}^1(W,W)$ such that
 \begin{itemize}
  \item there is a direct sum decomposition~$W = \bigoplus_{i=1}^m W_i$ such that~$\delta$ is strictly upper triangular;
  \item $\sum_{k\geq 1} \mu_{\add\bZ\cA}^k (\delta, \delta, \ldots, \delta) = 0$.
 \end{itemize}

 The~$A_{\infty}$-category~$\Tw\cA$ has for objects all twisted complexes over~$\cA$ with morphism spaces defined by
 \[
  \Hom_{\Tw\cA} \big( (W_1,\delta_1), (W_2,\delta_2) \big) := \Hom_{\add\bZ\cA}(W_1,W_2)
 \]
 and higher multiplications defined on~\[a_n \otimes \cdots \otimes a_1\in\Hom_{\Tw\cA}\big( (W_{n-1},\delta_{n-1}),(W_n,\delta_n) \big) \otimes \cdots \otimes \Hom_{\Tw\cA}\big( (W_{0},\delta_{0}),(W_1,\delta_1) \big)\] by
 \begin{eqnarray*}
  \mu_{\Tw\cA}^n(a_n,\ldots,a_1) &:=& \\
   && \hspace{-2.5cm}\sum \mu_{\add\bZ\cA}(\delta_n,\ldots,\delta_n, a_n, \delta_{n-1},\ldots,\delta_{n-1}, a_{n-1}, \delta_{n-2},\ldots,\delta_{n-2},a_{n-2} \ldots, \delta_{1},\ldots,\delta_{1}, a_1, \delta_{0},\ldots,\delta_{0}),
 \end{eqnarray*}
 where the sum is taken over all possible ways of inserting a finite number of copies of the~$\delta_i$ between the~$a_j$. 
\end{definition}

\begin{notation}
 \begin{enumerate}
  \item We denote the \emph{direct sum} of two twisted complexes as
  \[
   (W_1,\delta_1)\oplus (W_2,\delta_2) = (W_1\oplus W_2, \delta_1\oplus \delta_2).
  \]
  
  \item We represent a twisted complex~$(W,\delta)$ as follows.  If~$W = \bigoplus_{i=1}^n X_i[d_i]$ is a decomposition of~$W$ for which~$\delta$ is strictly upper triangular, then we depict~$(W,\delta)$ as
% https://q.uiver.app/?q=WzAsNSxbMCwwLCJYX25bZF9uXSJdLFsyLDAsIlhfM1tkXzNdIl0sWzEsMCwiXFxjZG90cyJdLFszLDAsIlhfMltkXzJdIl0sWzQsMCwiWF8xW2RfMV0iXSxbMCwyXSxbMiwxLCJcXGRlbHRhX3szNH0iXSxbMSwzLCJcXGRlbHRhX3syM30iXSxbMyw0LCJcXGRlbHRhX3sxMn0iXSxbMCwxLCJcXGRlbHRhX3szbn0iLDIseyJjdXJ2ZSI6MX1dLFswLDMsIlxcZGVsdGFfezJufSIsMix7ImN1cnZlIjozfV0sWzAsNCwiXFxkZWx0YV97MW59IiwyLHsiY3VydmUiOjV9XSxbMSw0LCJcXGRlbHRhX3sxM30iLDIseyJjdXJ2ZSI6MX1dXQ==
\[\begin{tikzcd}
	{X_n[d_n]} & \cdots & {X_3[d_3]} & {X_2[d_2]} & {X_1[d_1]}
	\arrow[from=1-1, to=1-2]
	\arrow["{\delta_{34}}", from=1-2, to=1-3]
	\arrow["{\delta_{23}}", from=1-3, to=1-4]
	\arrow["{\delta_{12}}", from=1-4, to=1-5]
	\arrow["{\delta_{3n}}"', curve={height=6pt}, from=1-1, to=1-3]
	\arrow["{\delta_{2n}}"', curve={height=18pt}, from=1-1, to=1-4]
	\arrow["{\delta_{1n}}"', curve={height=30pt}, from=1-1, to=1-5]
	\arrow["{\delta_{13}}"', curve={height=6pt}, from=1-3, to=1-5]
\end{tikzcd}\]
where we will omit all arrows for which the corresponding~$\delta_{ij}$ is zero.

 \end{enumerate}

\end{notation}

The notion of mapping cones of morphisms of twisted complexes allows for a definition of a triangulated structure on~$H^0\Tw\cA$.  We say that a strictly unital~$A_{\infty}$-category~$A$ is a \emph{triangulated~$A_{\infty}$-category} when these mapping cones confer to~$H^0\cA$ a structure of a triangulated category.  We will not need the technicalities, so we will simply recall the main results and refer the reader to~\cite[Sections 3h to 3p]{Seidel}.

\begin{theorem}[e.g. Lemma 3.28 of \cite{Seidel}]
 If~$\cA$ is a strictly unital~$A_{\infty}$-category, then~$\Tw\cA$ is triangulated.
\end{theorem}

%-------------
\subsection{$A_{\infty}$-functors}
The only~$A_{\infty}$-functors we will need in this paper are \emph{strict~$A_{\infty}$-functors}.
\begin{definition}
 Let~$\cA$ and~$\cB$ be two~$A_{\infty}$-categories.  A \emph{strict~$A_{\infty}$-functor}~$F:\cA\to \cB$ is given by
 \begin{itemize}
  \item a map~$F:\Obj\cA \to \Obj\cB$;
  \item for each pair of objects~$X,Y$ of~$\cA$, a morphism of~$K$-vector spaces
  \[
   F^1: \Hom_{\cA}(X,Y) \to \Hom_{\cB}(FX,FY)
  \]
  such that, for all integers~$n\geq 1$ and all morphisms~$a_1,\ldots,a_n$ in~$\cA$,
  \[
   \mu^n_{\cB}(Fa_n,\ldots,Fa_1) = F\mu^n_{\cA}(a_n,\ldots,a_1).
  \]
 \end{itemize}
 If~$\cA$ and~$\cB$ are strictly unital, then~$F$ is \emph{strictly unital} if for each object~$X$ of~$\cA$, we have that~$F(1_X) = 1_{FX}$.
\end{definition}
For a strict~$A_{\infty}$-functor~$F$, we will write~$F$ instead of~$F^1$, as this will not lead to any confusion.

A strict~$A_{\infty}$-functor~$F:\cA\to \cB$ induces a strict~$A_{\infty}$-functor~$\Tw\cA \to \Tw\cB$, which we will denote by~$\Tw F$.  It also induces functors in cohomology~$H^0F:H^0\cA\to H^0\cB$ and~$H^0\Tw F:H^0\Tw\cA\to H^0\Tw\cB$, the former being~$K$-linear and the latter being triangulated.

\begin{definition}
 
 A strictly unital strict~$A_{\infty}$-functor~$F:\cA\to \cB$ is 
 \begin{itemize}
  \item a \emph{quasi-equivalence} if the induced functor~$H^0$ is an equivalence;
  \item a \emph{Morita equivalence} if the induced~$A_{\infty}$-functor~$\Tw F:\Tw\cA\to \Tw\cB$ is a quasi-equivalence.
 \end{itemize} 
\end{definition}

A useful fact is that a strictly unital~$A_{\infty}$-category~$\cA$ is triangulated if and only if the embedding~$I_{\cA}:\cA\to \Tw\cA$ is a quasi-equivalence~\cite[Lemma 3.33]{Seidel}.

%-------------
\subsection{Skew-group $A_{\infty}$-categories}

 Let~$G$ be a finite group whose order is not divisible by the characteristic of the field~$K$.  Let~$\cA$ be a strictly unital~$A_{\infty}$-category.  We are interested in actions of~$G$ on~$\cA$ by strictly unital strict~$A_{\infty}$-functors.

\begin{definition}
 A \emph{strict~$G$-action on~$\cA$} is a group morphism~$\rho$ from~$G$ to the group of strictly unital strict invertible~$A_{\infty}$-functors.
\end{definition}
By a slight abuse of notation, we will usually write~$g$ instead of~$\rho(g)$ when dealing with group actions.  For instance, if~$X$ is an object of~$\cA$, then we write~$gX$ instead of~$\rho(g)(X)$. 

The main object of this section is that of a \emph{skew-group~$A_{\infty}$-category}.  It appears in \cite{OpperZvonareva}, and is reminiscent of the definition of a skew-group category from~\cite{ReitenRiedtmann}.

\begin{definition}\label{defi::skew-group-category}
 Given a strict~$G$-action on~$\cA$, its \emph{skew-group~$A_{\infty}$-category}~$\cA * G$ is defined as follows.
 \begin{itemize}
  \item Its objects are symbols of the form~$\{X\}$ for~$X\in \Obj\cA$.
  \item Morphism spaces have the form~$$\Hom_{\cA*G}(\{X\},\{Y\}) = \bigoplus_{g\in G}\Hom_{\cA}(gX,Y).$$
  If~$a\in \Hom_{\cA}(gX,Y)$, then we denote the corresponding morphism from~$\{X\}$ to~$\{Y\}$ by~$a\otimes g$.
  
  \item For any integer~$n$, the higher multiplication~$\mu^n_{\cA*G}$ is defined by
  \[
   \mu^n_{\cA*G}(a_n\otimes g_n, \ldots, a_1\otimes g_1) = \mu^n_{\cA}(a_n, g_n\cdot a_{n-1}, g_ng_{n-1}\cdot a_{n-2}, \ldots, g_n\cdots g_3g_2\cdot a_1)\otimes g_n\cdots g_1
  \]
  and extended to all morphisms by multilinearity.  
 \end{itemize}
\end{definition}

\begin{proposition}
 If~$\cA$ is a strictly unital with a strict~$G$-action, then the skew-group category~$\cA*G$ is a strictly unital~$A_{\infty}$-category.
\end{proposition}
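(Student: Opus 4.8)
The plan is to verify the two defining properties of a strictly unital $A_\infty$-category for $\cA*G$: the Stasheff relations for the $\mu^n_{\cA*G}$, and strict unitality. The starting observation is that the group element part of the formula for $\mu^n_{\cA*G}$ is purely bookkeeping — the output of $\mu^n_{\cA*G}(a_n\otimes g_n,\ldots,a_1\otimes g_1)$ always carries the group element $g_n\cdots g_1$, which depends only on the $g_i$ and not on how we compose. So the first step is to record that if we set $b_k := (g_n g_{n-1}\cdots g_{k+1})\cdot a_k$ for $k=1,\ldots,n$ (with $b_n = a_n$), then $\mu^n_{\cA*G}(a_n\otimes g_n,\ldots,a_1\otimes g_1) = \mu^n_{\cA}(b_n,\ldots,b_1)\otimes (g_n\cdots g_1)$. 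The key compatibility to check is that this ``twisting'' of the arguments is consistent under composition: applying $\mu^j_{\cA*G}$ to a consecutive block $a_{k+j}\otimes g_{k+j},\ldots,a_{k+1}\otimes g_{k+1}$ and then feeding the result into $\mu^{n-j+1}_{\cA*G}$ produces exactly the inner term $\mu^j_{\cA}(b_{k+j},\ldots,b_{k+1})$ sitting inside $\mu^{n-j+1}_{\cA}(b_n,\ldots,b_{k+j+1},-,b_k,\ldots,b_1)$, up to applying a fixed group element $g_n\cdots g_{k+j+1}$ to the whole inner expression — and since the $g\in G$ act by \emph{strict} $A_\infty$-functors, $g\cdot\mu^j_{\cA}(\cdots) = \mu^j_{\cA}(g\cdot(\cdots))$, so this group element can be pushed inside and absorbed into the definition of the $b_i$. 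This is the step I expect to be the main obstacle: one must track the group elements through the nested composition carefully and confirm the $b_i$ defined for the outer product agree with those defined for the inner product after this twist.

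Once that is established, the Stasheff relation for $\cA*G$ on the tuple $(a_n\otimes g_n,\ldots,a_1\otimes g_1)$ becomes, after collecting the group element $g_n\cdots g_1$ as a common factor, precisely the Stasheff relation for $\cA$ on the tuple $(b_n,\ldots,b_1)$ — provided the signs match. So the second step is to check the signs: the sign in the Stasheff relation is $(-1)^{\sum_{i=1}^k \|a_i\|}$, and since the $A_\infty$-functors $\rho(g)$ preserve degrees, $\|b_i\| = \|a_i\|$, so the exponent is unchanged. Thus the sign attached to the $(k,j)$ term of the $\cA*G$-relation equals the sign attached to the corresponding $(k,j)$ term of the $\cA$-relation on $(b_n,\ldots,b_1)$, and the relation for $\cA*G$ follows from that for $\cA$ term by term.

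The third step is strict unitality. For an object $\{X\}$ of $\cA*G$, take $1_{\{X\}} := 1_X\otimes e$, where $e\in G$ is the identity; note $1_X\in\Hom^0_{\cA}(eX,X) = \Hom^0_{\cA}(X,X)$ makes sense since $\rho(e) = \mathrm{id}$. Then $\mu^1_{\cA*G}(1_X\otimes e) = \mu^1_{\cA}(1_X)\otimes e = 0$ by strict unitality of $\cA$. For $\mu^2$: given $a\otimes g\in\Hom_{\cA*G}(\{X\},\{Y\})$ (so $a\in\Hom_{\cA}(gX,Y)$), we compute $\mu^2_{\cA*G}(a\otimes g, 1_X\otimes e) = \mu^2_{\cA}(a, g\cdot 1_X)\otimes g = \mu^2_{\cA}(a,1_{gX})\otimes g = a\otimes g$, using that strict $A_\infty$-functors are strictly unital so $g\cdot 1_X = 1_{gX}$; and $\mu^2_{\cA*G}(1_Y\otimes e, a\otimes g) = \mu^2_{\cA}(1_Y, e\cdot a)\otimes g = \mu^2_{\cA}(1_Y,a)\otimes g = (-1)^{|a|}a\otimes g$, where $|a\otimes g| = |a|$ by definition of the grading. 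Finally, for $n\geq 3$, any occurrence of $1_X\otimes e$ among the arguments yields, after the twist, an occurrence of $g\cdot 1_X = 1_{gX}$ as an argument of $\mu^n_{\cA}$, which vanishes by strict unitality of $\cA$. This completes the verification; note that the hypothesis on $\mathrm{char}(K)$ and $|G|$ is not actually used for this proposition — it will enter later when analyzing $\Tw(\cA*G)_+$.
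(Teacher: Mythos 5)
Your proposal is correct. The paper actually states this proposition without proof, treating it as a routine verification, and your argument supplies exactly the expected check: rewriting $\mu^n_{\cA*G}(a_n\otimes g_n,\ldots,a_1\otimes g_1)=\mu^n_{\cA}(b_n,\ldots,b_1)\otimes(g_n\cdots g_1)$ with $b_k=(g_n\cdots g_{k+1})\cdot a_k$, using strictness of the action to push the group twist inside the inner $\mu^j_{\cA}$, noting the signs are unchanged because the functors $\rho(g)$ preserve degrees, and taking $1_X\otimes 1_G$ as the strict unit (with $g\cdot 1_X=1_{gX}$ from strict unitality of the functors). Your closing remark that the hypothesis on $\operatorname{char}(K)$ and $|G|$ is not needed here is also accurate.
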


The construction of a skew-group~$A_{\infty}$-category extends to~$A_{\infty}$-functors.

\begin{proposition}
 Let~$\cA$ and~$\cB$ be strictly unital~$A_\infty$-categories with a strict~$G$-action, and let~$\Phi:\cA\to \cB$ be a strict~$A_{\infty}$-functor which commutes with the action of~$G$. Then the following data defines a strict~$A_{\infty}$-functor~$\Phi*G:\cA*G\to \cB*G$:
 \begin{itemize}
  \item For any object~$\{X\}$ of~$\cA*G$, let~$(\Phi*G)(\{X\}):= \{\Phi X\}$.
  \item For any morphism~$a\otimes g$ of~$\cA*G$, let~$(\Phi*G)(a\otimes g):= \Phi a\otimes g$.
 \end{itemize}
\end{proposition}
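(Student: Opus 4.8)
The plan is to verify directly the defining identity of a strict~$A_\infty$-functor for~$\Phi*G$, namely that~$\mu^n_{\cB*G}$ applied to images of morphisms equals the image under~$\Phi*G$ of~$\mu^n_{\cA*G}$ applied to those morphisms. First, though, I would make explicit the hypothesis that is implicit in the statement: both~$\cA$ and~$\cB$ are equipped with strict~$G$-actions, and~$\Phi$ is strictly~$G$-equivariant, i.e.~$\Phi\circ\rho_{\cA}(g)=\rho_{\cB}(g)\circ\Phi$ for all~$g\in G$, so that~$\Phi(gX)=g(\Phi X)$ on objects and~$\Phi(g\cdot a)=g\cdot(\Phi a)$ on morphisms. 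This is needed already for well-definedness: a morphism~$\{X\}\to\{Y\}$ of~$\cB*G$ with label~$g$ must lie in~$\Hom_{\cB}(g(\Phi X),\Phi Y)$, whereas~$\Phi a$ a priori lies in~$\Hom_{\cB}(\Phi(gX),\Phi Y)$, and equivariance identifies the two.

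With this in hand, the verification is a short computation. Fix composable homogeneous morphisms~$a_1\otimes g_1,\ldots,a_n\otimes g_n$ of~$\cA*G$. Using Definition~\ref{defi::skew-group-category} for~$\cB*G$,
\[
 \mu^n_{\cB*G}\big(\Phi a_n\otimes g_n,\ldots,\Phi a_1\otimes g_1\big)=\mu^n_{\cB}\big(\Phi a_n,\, g_n\cdot\Phi a_{n-1},\,\ldots,\, g_n\cdots g_2\cdot\Phi a_1\big)\otimes g_n\cdots g_1.
\]
Now I would push each~$\Phi$ past the group action by equivariance, rewriting~$g_n\cdots g_{k+1}\cdot\Phi a_k=\Phi(g_n\cdots g_{k+1}\cdot a_k)$, and then pull~$\Phi$ out of~$\mu^n_{\cB}$ using that~$\Phi$ is a strict~$A_\infty$-functor; the right-hand side becomes~$\Phi\big(\mu^n_{\cA}(a_n,g_n\cdot a_{n-1},\ldots,g_n\cdots g_2\cdot a_1)\big)\otimes g_n\cdots g_1$. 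On the other side, by Definition~\ref{defi::skew-group-category} for~$\cA*G$ and the definition of~$\Phi*G$ on morphisms,~$(\Phi*G)\big(\mu^n_{\cA*G}(a_n\otimes g_n,\ldots,a_1\otimes g_1)\big)$ is precisely this same expression, so the two agree. Extending by multilinearity from homogeneous morphisms completes the check. Finally, if~$\Phi$ is strictly unital I would note that~$\Phi*G$ is too, since the unit of~$\{X\}$ is~$1_X\otimes e$ (with~$e$ the identity of~$G$) and~$(\Phi*G)(1_X\otimes e)=\Phi(1_X)\otimes e=1_{\Phi X}\otimes e$.

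I do not expect any genuine obstacle: the statement is a bookkeeping exercise, with no sign subtleties, since~$\Phi*G$ introduces no new signs and the Koszul signs hidden inside~$\mu^n_{\cA*G}$ and~$\mu^n_{\cB*G}$ are literally the same expression once~$\Phi$ has been pulled outside. The one point that deserves care, and the only place the hypotheses are used beyond ``$\Phi$ is a strict~$A_\infty$-functor'', is the strict~$G$-equivariance of~$\Phi$, which is what makes the group-element labels and the commutation of~$\Phi$ with the higher multiplications line up term by term; if~$\Phi$ were only equivariant up to coherent homotopy the construction would have to be corrected and the proof would become substantially harder.
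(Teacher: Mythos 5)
Your proof is correct, and since the paper states this proposition without proof, your direct verification is exactly the routine check the authors leave implicit: expand $\mu^n_{\cB*G}$ on the images, use equivariance to move $\Phi$ past the group action, and use strictness of $\Phi$ to pull it out of $\mu^n_{\cB}$. You are also right to flag the implicit hypothesis that $\cB$ carries a strict $G$-action and that $\Phi$ is strictly $G$-equivariant (needed already so that $\Phi a\otimes g$ lands in the correct summand $\Hom_{\cB}(g\Phi X,\Phi Y)$); the paper only makes this explicit later, in its statement on $G$-equivariant functors in Section~5, and in the application to $I_{\cA}:\cA\to\Tw\cA$ the equivariance indeed holds.
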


\begin{proposition}\label{prop::comm-square}
 The following is a commutative diagram of strict~$A_{\infty}$-functors which are all Morita equivalences.
 % https://q.uiver.app/#q=WzAsNCxbMCwwLCJcXGJ1bGxldCJdLFsxLDAsIlxcYnVsbGV0Il0sWzAsMSwiXFxidWxsZXQiXSxbMSwxLCJcXGJ1bGxldCJdLFswLDEsImEiLDAseyJzdHlsZSI6eyJ0YWlsIjp7Im5hbWUiOiJob29rIiwic2lkZSI6InRvcCJ9fX1dLFswLDIsImIiLDIseyJzdHlsZSI6eyJ0YWlsIjp7Im5hbWUiOiJob29rIiwic2lkZSI6InRvcCJ9fX1dLFsyLDMsImMiLDIseyJzdHlsZSI6eyJ0YWlsIjp7Im5hbWUiOiJob29rIiwic2lkZSI6InRvcCJ9fX1dLFsxLDMsImQiLDAseyJzdHlsZSI6eyJ0YWlsIjp7Im5hbWUiOiJob29rIiwic2lkZSI6InRvcCJ9fX1dXQ==
\[\begin{tikzcd}
	\cA*G & (\Tw\cA) * G \\
	\Tw(\cA*G) & \Tw\big((\Tw\cA)*G\big)
	\arrow["I_{\cA}*G", hook, from=1-1, to=1-2]
	\arrow["I_{\cA*G}"', hook, from=1-1, to=2-1]
	\arrow["\Tw\big(I_{\cA}*G\big)"', hook, from=2-1, to=2-2]
	\arrow["I_{(\Tw\cA)*G}", hook, from=1-2, to=2-2]
\end{tikzcd}\]
In particular, the bottom arrow is a quasi-equivalence.
\end{proposition}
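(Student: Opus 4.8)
The plan is to verify commutativity of the square directly on objects and morphisms, and then to deduce that all four arrows are Morita equivalences by combining a few known facts. First I would check commutativity: starting from an object $\{X\}$ of $\cA*G$, both composites send it to $\{X\}$ viewed as an object of $\Tw\big((\Tw\cA)*G\big)$, since $I_\cA$ and $I_{\cA*G}$ are the embeddings of a category into its twisted complexes (so they are the identity on objects, up to the canonical identification), and $(-)*G$ and $\Tw(-)$ are the identity on the underlying set of objects in the relevant sense. On a morphism $a\otimes g$ with $a\in\Hom_\cA(gX,Y)$, going right then down gives $(I_\cA a)\otimes g$ regarded as a morphism of twisted complexes, while going down then right gives the same element $a\otimes g$ regarded first inside $\Tw(\cA*G)$ and then mapped by $\Tw(I_\cA*G)$; since $\Tw(I_\cA*G)$ acts on a morphism supported on a single summand exactly as $I_\cA*G$ does, namely by $a\otimes g\mapsto (I_\cA a)\otimes g$, the two agree. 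All four functors are strict, so there is no higher-order data to match, and commutativity is immediate once the identifications are spelled out.

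Next I would establish that each arrow is a Morita equivalence. The two horizontal-vs-vertical embeddings $I_{\cA*G}$ and $I_{(\Tw\cA)*G}$ are Morita equivalences by the general fact, recalled in the excerpt after the definition of Morita equivalence, that for any strictly unital $A_\infty$-category $\cC$ the embedding $I_\cC:\cC\to\Tw\cC$ becomes a quasi-equivalence after applying $\Tw$ — equivalently, $\Tw\Tw\cC\to\Tw\cC$ is a quasi-equivalence, which is the statement that $\Tw\cC$ is triangulated (Theorem, e.g. Lemma 3.28 of \cite{Seidel}); applying this with $\cC=\cA*G$ and $\cC=(\Tw\cA)*G$ handles the two vertical arrows. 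For the top horizontal arrow $I_\cA*G$, I would argue that $(-)*G$ sends quasi-equivalences to quasi-equivalences, hence Morita equivalences to Morita equivalences: a strict $A_\infty$-functor $\Phi$ that is a quasi-equivalence has $H^0\Phi$ fully faithful and essentially surjective, and one checks from the explicit formula $\Hom_{\cA*G}(\{X\},\{Y\})=\bigoplus_{g\in G}\Hom_\cA(gX,Y)$ that $H^0(\Phi*G)$ is then fully faithful (the $\mu^1$ on $\cA*G$ is the direct sum over $g$ of the $\mu^1$ on $\cA$ twisted by the $G$-action, and $\Phi$ commutes with these) and essentially surjective on objects; thus $I_\cA:\cA\to\Tw\cA$ being a Morita equivalence — i.e.\ $\Tw I_\cA$ a quasi-equivalence — is transported by $(-)*G$, and then $\Tw(I_\cA*G)=\Tw\big((-)*G\big)(\Tw I_\cA)$, up to the commutativity just proved, is a quasi-equivalence, making $I_\cA*G$ a Morita equivalence and $\Tw(I_\cA*G)$ a quasi-equivalence. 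Finally, the fourth arrow is forced: in the commuting square three arrows are Morita equivalences (equivalently, quasi-equivalences after $\Tw$), so by two-out-of-three for quasi-equivalences applied to the commuting square of $\Tw$-images, $I_{(\Tw\cA)*G}$ — or whichever arrow one did not treat directly — is a Morita equivalence as well; the "in particular" about the bottom arrow being a quasi-equivalence is then the statement that $\Tw(I_\cA*G)$, a Morita equivalence between $\Tw$-closed categories, is already a quasi-equivalence.

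The main obstacle I anticipate is not any single estimate but the bookkeeping of canonical identifications: one must be careful that $(\Tw\cA)*G$, $\Tw(\cA*G)$, and their twisted-complex categories are compared along genuinely compatible identifications of objects and morphism spaces, and that the sign conventions in the definitions of $\mu_{\add\bZ\cA}$, $\mu_{\Tw\cA}$, and $\mu_{\cA*G}$ do not obstruct the equality $\Tw(I_\cA*G)\circ I_{\cA*G}=I_{(\Tw\cA)*G}\circ(I_\cA*G)$; since $I_\cA$ is an embedding onto twisted complexes concentrated in a single summand with zero differential, these signs all trivialize, but writing this cleanly is where the care is needed. A secondary point requiring a short lemma is that $(-)*G$ preserves quasi-equivalences; this uses in an essential way that $|G|$ is invertible in $K$ only indirectly (it is needed for $\cA*G$ to behave well, but the preservation of quasi-equivalences follows already from the explicit description of $H^0(\cA*G)$ in terms of $H^0\cA$ and the $G$-action).
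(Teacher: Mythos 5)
Your commutativity check and the treatment of the two vertical arrows are fine and agree with the paper (commutativity is naturality of the embedding $I_{(-)}$ with respect to the strict functor $I_{\cA}*G$, and any $I_{\cB}$ is a Morita equivalence). The problem is the step that is supposed to handle the top arrow. You argue that $(-)*G$ preserves quasi-equivalences and conclude ``hence Morita equivalences to Morita equivalences,'' writing $\Tw(I_{\cA}*G)=\Tw\big((-)*G\big)(\Tw I_{\cA})$. This identification does not exist: applying $*G$ to the quasi-equivalence $\Tw I_{\cA}:\Tw\cA\to\Tw\Tw\cA$ produces a functor $(\Tw\cA)*G\to(\Tw\Tw\cA)*G$, whereas $\Tw(I_{\cA}*G)$ is a functor $\Tw(\cA*G)\to\Tw\big((\Tw\cA)*G\big)$ --- different source, different target. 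The whole content of the proposition is precisely that $\Tw$ and $*G$ do not commute on the nose and must be compared; assuming such a comparison to prove the proposition is circular. Note also that $I_{\cA}$ itself is only a Morita equivalence, not a quasi-equivalence, so ``$*G$ preserves quasi-equivalences'' (which is true and easy) cannot be applied to it directly. Consequently your two-out-of-three finish has nothing to bootstrap from beyond the two vertical arrows, and neither the top nor the bottom arrow is actually established.

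What is missing is the paper's genuine argument for the bottom arrow: since both bottom categories are triangulated, it suffices to show that $H^0\Tw(I_{\cA}*G)$ is an equivalence, and for that it is enough that its image contain a generator of $H^0\Tw\big((\Tw\cA)*G\big)$, namely the image of $I_{(\Tw\cA)*G}$. Concretely, every object of $(\Tw\cA)*G$ is a sum of objects $\{(X,\delta)\}$ with $(X,\delta)\in\Tw\cA$, whose image under $I_{(\Tw\cA)*G}$ is $\big(\{(X,\delta)\},0\big)$; on the other hand the bottom functor sends $(\{X\},\delta\otimes 1)\in\Tw(\cA*G)$ to $\big(\{(X,0)\},\delta\otimes 1\big)$, and one checks that $1_X\otimes 1_G$ gives an isomorphism in cohomology between $\big(\{(X,0)\},\delta\otimes 1\big)$ and $\big(\{(X,\delta)\},0\big)$. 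This explicit quasi-isomorphism is the substantive step your proposal lacks; once it is in place, the top arrow follows by two-out-of-three exactly as you intend, and the ``in particular'' statement is then immediate because a Morita equivalence between triangulated $A_{\infty}$-categories is a quasi-equivalence.
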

\begin{proof}
 Commutativity of the diagram follows from the fact that for any strict~$A_\infty$-functor~$\Phi:\cB\to \cC$, the diagram
 \[\begin{tikzcd}
	\cB & \cC \\
	\Tw\cB & \Tw\cC
	\arrow["\Phi",  from=1-1, to=1-2]
	\arrow["I_{\cB}"', hook, from=1-1, to=2-1]
	\arrow["\Tw(\Phi)"',  from=2-1, to=2-2]
	\arrow["I_{\cC}", hook, from=1-2, to=2-2]
\end{tikzcd}\]
is commutative. 
 Moreover,~$I_{\cB}$ is a Morita equivalence for any~$\cB$, so the two vertical arrows are Morita equivalences.
 
 Let us prove that the bottom arrow is a Morita equivalence.  Since the two bottom~$A_{\infty}$-categories are triangulated, the bottom functor is a Morita equivalence if and only if it is a quasi-equivalence.  Moreover,~
 \[
  H^0\Tw\big(I_{\cA}*G\big):H^0\Tw(\cA*G)\to H^0\Tw\big((\Tw\cA)*G\big)
 \]
 is a triangulated functor; we need to prove that it is an equivalence.  It is sufficient to prove that its image contains a generator of the target category.  Such a generator is given by (the~$H^0$ of) the image of~$I_{(\Tw\cA)*G}$.  
 
 An object of~$(\Tw\cA)*G$ is a direct sum of objects of the form~$\{(X,\delta)\}$, with~$(X,\delta)$ in~$\Tw\cA$.  The image of this object by~$I_{(\Tw\cA)*G}$ is~$\big(\{(X,\delta)\},0\big)$.  
 
 Now, the image of the object~$(\{X\},\delta\otimes 1)$ by the bottom functor is the object~$\big(\{(X,0)\}, \delta\otimes 1\big)$, where~$\delta$ in the right hand side is viewed as an endomorphism of~$(X,0)$.  If we prove that there is an isomorphism in cohomology between~$(\{(X,0)\},\delta\otimes 1)$ and~$\big(\{(X,\delta)\},0\big)$, then we are done proving that the bottom arrow is a quasi-equivalence.  The morphism~$1_X\otimes 1_G$ provides this isomorphism.
 
 Lastly, the top arrow is a Morita equivalence because the three other arrows are.
\end{proof}

\begin{remark}
 In~\cref{prop::comm-square}, it is unclear whether the right arrow should be a quasi-equivalence.  In the next section, we will see that it will become one if we pass to split-closures.
\end{remark}

%-------------
\subsection{Split-closure of~$A_{\infty}$-categories}
In this section, we rely heavily on \cite[(4c)]{Seidel}.  Recall that a~$K$-linear category is \emph{split-closed} if all its idempotents split.  Any~$K$-linear category~$\cC$ admits a \emph{split-closure}, that is, a~$K$-linear fully faithful functor~$F:\cC\to \cD$ with~$\cD$ split-closed and any object of~$\cD$ is a direct summand of an object in the image of~$F$.  

We will need an~$A_{\infty}$ version of split-closure.

\begin{definition}[(4c) of \cite{Seidel}]
 A strictly unital~$A_{\infty}$-category~$\cA$ is \emph{split-closed} if the~$K$-linear category~$H^0\cA$ is split-closed.  A~\emph{split-closure} of~$\cA$ is an~$A_{\infty}$-functor~$F:\cA\to \cB$ such that~$H^0F$ is a split-closure of~$H^0\cA$.
\end{definition}

We will be using a strong result of existence and uniqueness of split-closures.

\begin{lemma}\label{lemm::existence-split-closure}
 Let~$\cA$ be a strictly unital~$A_{\infty}$-category.
 \begin{enumerate}
  \item \cite[Lemma 4.7]{Seidel} There exists a split closure~$F:\cA\to \cB$ of~$\cA$.  Moreover, if~$F':\cA\to \cB'$ is another split closure, then there exists a quasi-equivalence~$H:\cB\to \cB'$ such that~$F\circ H$ is isomorphic to~$F'$ in cohomology.
  \item \cite[Remark 4.10]{Seidel} There exists a split closure~$F:\cA\to \cA_+$ such that~$F$ is a strict~$A_{\infty}$-functor and~$F^1:\Hom_{\cA}(X,Y)\to \Hom_{\cA_+}(FX,FY)$ is an isomorphism for all~$X,Y$.  In other words,~$\cA$ is identified with a full subcategory of~$\cA_+$.
 \end{enumerate}
\end{lemma}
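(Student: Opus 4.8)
Both assertions are proved in the cited references; we indicate the shape of the argument. For part~(1), a split closure can be produced by passing to modules: let $\operatorname{Mod}_\infty(\cA)$ be the $A_\infty$-category of right $A_\infty$-modules over $\cA$. It is strictly unital and triangulated, and its cohomology category $H^0\operatorname{Mod}_\infty(\cA)$ admits arbitrary direct sums, hence is idempotent-complete. The Yoneda embedding realizes $\cA$ as a full subcategory of $\operatorname{Mod}_\infty(\cA)$, and one lets $\cB$ be the full $A_\infty$-subcategory on those modules $M$ for which $H^0(M)$ is, in $H^0\operatorname{Mod}_\infty(\cA)$, a direct summand of a representable module; then $\cA\to\cB$ is a split closure. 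For the uniqueness clause, if $F':\cA\to\cB'$ is another split closure, then $H^0F$ and $H^0F'$ both exhibit $H^0\cB$, respectively $H^0\cB'$, as the idempotent completion of $H^0\cA$; by uniqueness of idempotent completions up to $K$-linear equivalence, together with Seidel's comparison results for split closures, one obtains a quasi-equivalence $H:\cB\to\cB'$ with $F\circ H$ isomorphic to $F'$ in cohomology. This is the content of \cite[Lemma 4.7]{Seidel}.

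For part~(2), one wants a strict model in which $\cA$ literally sits as a full subcategory. Following \cite[Remark 4.10]{Seidel}, one enlarges $\cA$ by formally adjoining the images of homotopy idempotents: objects of $\cA_+$ are pairs $(X,\pi)$ where $X\in\Obj\cA$ and $\pi$ is an idempotent up to coherent homotopy on $X$ (so $(X,1_X)$ is allowed, recovering the original objects), with morphism spaces modelled on $\Hom_{\cA}(X,Y)$ and higher multiplications defined by a bar-type formula that inserts copies of the relevant idempotents between consecutive arguments, exactly in the spirit of the definition of $\Tw\cA$. One checks the Stasheff relations and strict unitality for $\cA_+$ just as for twisted complexes. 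The inclusion $F:\cA\to\cA_+$, $X\mapsto(X,1_X)$, is then a strict $A_\infty$-functor, and since the morphism spaces between objects of the form $(X,1_X)$ are literally $\Hom_{\cA}(X,Y)$ with the original operations, $F^1$ is an isomorphism on every $\Hom$-space. It remains to see that $H^0F$ is a split closure of $H^0\cA$: on one hand every cohomological idempotent $e\in H^0\End_{\cA}(X)$ lifts to a homotopy idempotent, so $(X,e)$ is an object of $\cA_+$ splitting $e$; on the other hand, an idempotent on an object $(X,\pi)$ of $\cA_+$ can again be lifted to a homotopy idempotent and absorbed into the second entry, showing that $H^0\cA_+$ is itself split-closed.

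The main obstacle is precisely this last step: producing, from a cohomology-level idempotent, a coherent homotopy idempotent (the infinite tower of higher homotopies), and verifying that the enlarged class of pairs $(X,\pi)$ is closed under splitting idempotents and genuinely computes the idempotent completion in cohomology. This is handled by the homological perturbation lemma, which manufactures the higher homotopies, together with an obstruction argument over the field $K$ ensuring that the successive obstruction classes can be killed. These verifications are carried out in detail in \cite[Lemma 4.7 and Remark 4.10]{Seidel}, to which we refer the reader for the complete proof.
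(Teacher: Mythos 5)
The paper does not actually prove this lemma: both parts are quoted verbatim from Seidel, so your sketch is being compared against a citation rather than an argument, and as a summary-plus-deferral it is broadly acceptable. Your outline of part (1) matches Seidel's route (Yoneda embedding into $A_\infty$-modules, then the full subcategory of summands of representables), up to a slip of phrasing: the condition should be that the module $M$ itself is a direct summand of a Yoneda module in $H^0$ of the module category, not that ``$H^0(M)$ is a direct summand of a representable''.

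Where you genuinely deviate is part (2). Seidel's Remark 4.10 --- and this is the model the paper relies on later, e.g.\ in the proof of the lemma producing $\Phi_+:\cA_+\to\cB_+$ --- realizes $\cA_+$ as a full subcategory of $\Tw^-\cA$, the category of twisted complexes unbounded on the left: the object splitting a homotopy idempotent is an \emph{infinite} twisted complex, not a formal pair $(X,\pi)$. Your proposed construction, with objects $(X,\pi)$ and higher multiplications given by a bar-type formula inserting copies of $\pi$ ``in the spirit of $\Tw\cA$'', runs into a concrete problem: unlike the differential $\delta$ of a twisted complex, $\pi$ is not nilpotent in any triangular sense, so the sum over all finite insertions of $\pi$ is an infinite sum with no reason to converge, and the Stasheff relations cannot be checked ``just as for twisted complexes''. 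This is precisely the difficulty that the one-sidedly infinite twisted complex model is designed to resolve, and it is also why the strictness of $F$ and the identification of $\cA$ with a full subcategory come for free there. Similarly, your appeal to the homological perturbation lemma for lifting cohomological idempotents to homotopy idempotents is a plausible but different mechanism from Seidel's obstruction-theoretic argument. Since the paper's subsequent constructions use the $\Tw^-\cA$ model explicitly, if you want your sketch to serve as a proof of part (2) you should either switch to that model or supply the missing convergence/relations verification for your formal-pairs category.
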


\begin{definition}
 Let~$\cA$ be a strictly unital~$A_{\infty}$-category.  We will call the split-closure~$\cA\to \cA_+$ of \cref{lemm::existence-split-closure}(2) the \emph{faithful split closure of~$\cA$}.
\end{definition}
In this paper, whenever we will need a split-closure of~$\cA$, we will choose the faithful split-closure~$\cA_+$.

\begin{notation}
 For an object~$X$ of~$\cA$, any idempotent~$e$ of~$X$ (in cohomology) defines a direct factor of~$X$.  We will denote by~$(X,e)$ an object of~$\cA_+$ corresponding to this direct factor.
\end{notation}

\begin{lemma}\label{lemm::+induit}
 If~$\Phi:\cA\to \cB$ is a strict~$A_{\infty}$-functor between two strictly unital categories, then there is a strict~$A_{\infty}$-functor~$\Phi_+:\cA_+\to \cB_+$ such that the following diagram commutes.
 \[
  \begin{tikzcd}
   \cA \ar{r}{\Phi}\ar{d} & \cB\ar{d} \\
   \cA_+ \ar{r}{\Phi_+} & \cB_+ \\
  \end{tikzcd}
 \]

\end{lemma}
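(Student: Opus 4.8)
The plan is to build $\Phi_+$ by transporting idempotents through $\Phi$ and invoking the universal property of the faithful split closure. First I would recall the setup from \cref{lemm::existence-split-closure}(2): we may assume $\cA$ (resp.\ $\cB$) is a full subcategory of $\cA_+$ (resp.\ $\cB_+$) via the strict inclusion functors $F_\cA, F_\cB$, and that every object of $\cA_+$ is of the form $(X,e)$ for $X\in\Obj\cA$ and $e$ an idempotent of $X$ in $H^0\cA$; similarly for $\cB_+$. On objects, set $\Phi_+(X,e) := (\Phi X, [\Phi](e))$, where $[\Phi]$ denotes the induced functor $H^0\cA\to H^0\cB$; since $\Phi$ is a strict $A_\infty$-functor, $H^0\Phi$ is a genuine $K$-linear functor, so it sends the idempotent $e\in\End_{H^0\cA}(X)$ to an idempotent $[\Phi](e)\in\End_{H^0\cB}(\Phi X)$, and $\Phi_+(X,e)$ is a well-defined object of $\cB_+$ (here $(X,1_X)$ maps to $(\Phi X, 1_{\Phi X})$, so the square commutes on objects).

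Next I would define $\Phi_+$ on morphisms. A morphism $(X,e)\to (X',e')$ in $\cA_+$ can be written, in cohomology, as $e' a e$ with $a\in\Hom_{\cA}(X,X')$; more precisely, following Seidel's explicit model of $\cA_+$ (the category of one-sided twisted complexes / formal images of idempotents inside $\Tw\cA$), the morphism complex $\Hom_{\cA_+}((X,e),(X',e'))$ is a subquotient — or, in the model used, a direct summand cut out by $e,e'$ — of $\Hom_{\cA}(X,X')$, and the $A_\infty$-operations are inherited. I would then set $\Phi_+^1$ to be the restriction of $\Phi^1$ to these summands: since $\Phi^1$ intertwines the $\mu^n$ strictly, it carries the subspace cut out by $e,e'$ into the subspace cut out by $[\Phi](e),[\Phi](e')$, and the identity $\mu^n_{\cB_+}(\Phi_+a_n,\ldots,\Phi_+a_1) = \Phi_+\mu^n_{\cA_+}(a_n,\ldots,a_1)$ follows immediately from the corresponding identity for $\Phi$ together with the fact that the higher multiplications in $\cA_+$ and $\cB_+$ are the restrictions of those in (the relevant twisted-complex completions of) $\cA$ and $\cB$. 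Strict unitality of $\Phi_+$ is inherited from that of $\Phi$. Commutativity of the square then holds on the nose, because on the full subcategory $\cA\hookrightarrow\cA_+$ the functor $\Phi_+$ restricts to $\Phi$ by construction.

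Alternatively — and this is the cleaner route I would actually write up — I would not build $\Phi_+$ by hand at all, but argue as follows. The composite $\cA \xrightarrow{\Phi} \cB \xrightarrow{F_\cB} \cB_+$ lands in a split-closed $A_\infty$-category. By \cref{lemm::existence-split-closure}(1) the faithful split closure $F_\cA:\cA\to\cA_+$ is initial (up to quasi-equivalence and cohomological isomorphism) among $A_\infty$-functors from $\cA$ to split-closed categories; concretely, because $H^0\cA_+$ is the idempotent completion of $H^0\cA$ and $H^0(F_\cB\circ\Phi)$ sends idempotents of $H^0\cA$ to idempotents of the split-closed category $H^0\cB_+$, the functor $H^0(F_\cB\circ\Phi)$ extends uniquely (up to isomorphism) along $H^0 F_\cA$ to a functor $H^0\cA_+\to H^0\cB_+$. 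Lifting this to a strict $A_\infty$-functor $\Phi_+$ is then done using the explicit model: $\cA_+$ is the full subcategory of $\Tw\cA$ on formal images of idempotents, $\Tw\Phi:\Tw\cA\to\Tw\cB$ is already defined (Section on $A_\infty$-functors), and it restricts to a strict $A_\infty$-functor between these full subcategories, giving $\Phi_+$ with $\Phi_+\circ F_\cA = F_\cB\circ\Phi$ strictly.

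The main obstacle is bookkeeping rather than mathematics: one must pin down a single explicit model of the faithful split closure (Seidel's image-of-idempotents model inside $\Tw$) and check that $\Tw\Phi$ preserves the subcategory of objects that are summands of objects from $\cA$ — i.e.\ that $\Phi$ sends idempotents to idempotents compatibly with the $A_\infty$-structure, which is automatic for a \emph{strict} functor since $H^0\Phi$ is an honest $K$-linear functor. Once the model is fixed, commutativity of the square and strict unitality are immediate from the construction, so no genuine difficulty arises; the only care needed is to state which split-closure model is being used so that "$\Phi_+$" is well-defined and the diagram commutes strictly rather than merely up to cohomological isomorphism.
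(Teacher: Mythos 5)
Your second (preferred) route is essentially the paper's proof: Seidel's explicit model realizes~$\cA_+$ inside a category of one-sided twisted complexes, the strict functor~$\Phi$ induces a strict functor on that category, and~$\Phi_+$ is its restriction to~$\cA_+$ and~$\cB_+$, which makes the square commute strictly. The one correction is that the ambient category must be~$\Tw^-\cA$, the twisted complexes unbounded on the left as in Seidel's Remark~4.10, rather than~$\Tw\cA$ (where idempotents need not split, so your chain-level description of~$\Hom_{\cA_+}$ as summands cut out by cohomological idempotents is also not accurate); with that substitution your argument coincides with the paper's.
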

\begin{proof}
 In \cite[Remark 4.10]{Seidel},~$\cA_+$ is constructed as a full subcategory of~$\Tw^-\cA$, the~$A_{\infty}$-category of twisted complexes ``unbounded on the left''.  It suffices to check that~$\Phi$ induces a strict~$A_{\infty}$-functor~$\Tw^-\Phi:\Tw^-\cA\to \Tw^-\cB$ which sends objects of~$\cA_+$ into~$\cB_+$.  We then take~$\Phi_+$ to be the restriction of this functor to domain~$\cA_+$ and codomain~$\cB_+$.
\end{proof}

\begin{lemma}\label{lemm::split-twist-is-twist-split}
 We have that~$\Tw(\cA)_+$ and~$\Tw(\cA_+)_+$ are quasi-equivalent.
\end{lemma}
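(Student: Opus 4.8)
The plan is to exhibit a chain of quasi-equivalences connecting $\Tw(\cA)_+$ and $\Tw(\cA_+)$, using the universal property of the split-closure together with the fact that both categories are triangulated. First I would observe that, since $\cA \hookrightarrow \cA_+$ is a quasi-equivalence after passing to twisted complexes — indeed $H^0\Tw(\cA) \to H^0\Tw(\cA_+)$ is a triangulated functor whose image contains $H^0\cA_+$, which is a generating subcategory of $H^0\Tw(\cA_+)$ since every object of $\cA_+$ is a summand of an object of $\cA$ — the functor $\Tw(\cA)\to \Tw(\cA_+)$ is a Morita equivalence; equivalently, by the characterization recalled after \cref{lemm::existence-split-closure}, $\Tw(\cA_+)$ is triangulated and the composite $\cA_+ \to \Tw(\cA_+)$ together with $\cA\to\cA_+$ shows that $\Tw(\cA_+)$ contains $\cA_+$ as a full subcategory generating it.

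Next I would argue that $\Tw(\cA_+)$ is in fact split-closed. Since $\Tw\cB$ is triangulated for any strictly unital $\cB$ by the cited theorem, and a triangulated category with split idempotents at the level of a generating subcategory has split idempotents everywhere — more precisely, $H^0\Tw(\cA_+)$ is generated by $H^0\cA_+$, which is split-closed, and idempotents split in any triangulated category that is generated by a split-closed subcategory (this is where one invokes that in a triangulated category every idempotent splits as soon as it splits on a set of generators, or simply that $\Tw$ of a split-closed category is split-closed, which follows from \cite[Section 4c]{Seidel}). Thus $\Tw(\cA_+)$ is a split-closed $A_\infty$-category receiving a functor from $\Tw(\cA)$.

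Then the key step: the functor $\Tw(\cA) \to \Tw(\cA_+)$ is a split-closure of $\Tw(\cA)$. We have just seen the target is split-closed; it remains to see that $H^0\Tw(\cA)\to H^0\Tw(\cA_+)$ is fully faithful (immediate, as $\cA\to\cA_+$ induces isomorphisms on all morphism spaces by \cref{lemm::existence-split-closure}(2), hence so does the induced functor on $\add\bZ$ and on $\Tw$) and that every object of $H^0\Tw(\cA_+)$ is a summand of one in the image. For the latter: an object of $\Tw(\cA_+)$ is built from objects $(X,e)$ of $\cA_+$, each of which is a summand of $X\in\cA$; replacing each $(X,e)$ by $X$ and adjusting the differential shows the given twisted complex is a summand of a twisted complex over $\cA$, i.e. of an object in the image. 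Hence $\Tw(\cA)\to\Tw(\cA_+)$ is a split-closure, and so by the uniqueness statement \cref{lemm::existence-split-closure}(1) it is quasi-equivalent to the faithful split-closure $\Tw(\cA)\to\Tw(\cA)_+$, giving a quasi-equivalence $\Tw(\cA)_+ \simeq \Tw(\cA_+)$.

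The main obstacle I anticipate is the verification that $\Tw(\cA_+)$ is split-closed and, relatedly, that every object of $\Tw(\cA_+)$ is a summand of an object coming from $\Tw(\cA)$ — this requires being careful about the ``unbounded on the left'' construction of $\cA_+$ inside $\Tw^-\cA$ from \cite[Remark 4.10]{Seidel} and checking that the relevant twisted complexes remain genuinely bounded (so that they lie in $\Tw$, not just $\Tw^-$) once one unfolds the idempotent data; one should either argue directly that a summand of a bounded twisted complex over $\cA_+$ can be realized as a bounded twisted complex over $\cA$, or invoke the general principle that $\Tw$ commutes with split-closure up to quasi-equivalence, for which \cite[Section 4c]{Seidel} is the reference. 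Everything else is formal manipulation with the universal properties already recorded in \cref{lemm::existence-split-closure} and the triangulatedness of $\Tw$.
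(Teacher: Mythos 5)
The paper states this lemma without proof, so there is no official argument to measure you against; judged on its own terms, your proof has a genuine gap at its crux. Your strategy (show that $\Tw\cA\to\Tw(\cA_+)$ is itself a split-closure and invoke the uniqueness part of \cref{lemm::existence-split-closure}) is the natural one, and the full-faithfulness step is fine. But the two substantive inputs are not established. First, the split-closedness of $\Tw(\cA_+)$: the principle you invoke, ``idempotents split in any triangulated category generated by a split-closed subcategory'', is false, and so is the fallback ``$\Tw$ of a split-closed category is split-closed'' (neither appears in Seidel (4c)). Concretely, take $\cA$ with one object $X$ and $\End(X)=B$ concentrated in degree $0$, where $B$ is the coordinate ring of a smooth affine curve with non-trivial Picard group (e.g.\ an affine elliptic curve). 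Since $B$ has no idempotents except $0,1$, the split-closure $\cA_+$ only adds a zero object, so $H^0\Tw(\cA_+)$ is the homotopy category of bounded complexes of finitely generated \emph{free} $B$-modules, generated by the split-closed $H^0\cA_+$; yet the idempotent of $B^{2}$ cutting out a non-free line bundle $L$ does not split there, because $[L]\notin\mathbb{Z}[B]$ in $K_0$. This also shows that your first paragraph is wrong: the image of $H^0\Tw\cA\to H^0\Tw(\cA_+)$ contains $\cA$, not $H^0\cA_+$ (objects $(X,e)$ are in general only summands of image objects, not isomorphic to twisted complexes over $\cA$), so that functor is not a quasi-equivalence in general. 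Establishing split-closedness of $H^0\Tw(\cA_+)$ is precisely the content of the lemma (it is the $A_\infty$-analogue of the Balmer--Schlichting statement that the bounded homotopy category of an idempotent-complete additive category is idempotent complete), and it needs either an actual argument or additional hypotheses on $\cA$ of the kind satisfied in this paper's applications (where the relevant $\cA_+$ supplies all indecomposable summands of a finite-dimensional algebra, so that $H^0\Tw(\cA_+)$ is a perfect derived category); you cannot simply cite it.

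Second, your cofinality step is also not right as stated: ``replacing each $(X,e)$ by $X$ and adjusting the differential'' does not produce a twisted complex over $\cA$ having the given one as a summand (the naive replacement runs into the usual staircase problem, and a summand of a twisted complex over $\cA_+$ is in general \emph{not} realizable as a twisted complex over $\cA$, again by $K_0$ obstructions). The correct argument is a thickness one: the full subcategory of $H^0\Tw(\cA_+)$ consisting of $H^0$-summands of objects coming from $\Tw\cA$ is closed under shifts, cones (using that the functor is full, so a cone of a map between image objects is an image object, and $\operatorname{cone}(f)$ is a summand of $\operatorname{cone}(f\oplus 0)$) and summands, and it contains every $(X,e)$; since every twisted complex is an iterated cone on its shifted entries, it is everything. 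With that repair, what your argument actually proves is the weaker (and unconditionally true) statement that $\Tw(\cA)_+$ and $\Tw(\cA_+)_+$ are quasi-equivalent; upgrading this to the lemma as stated is exactly the missing split-closedness of $\Tw(\cA_+)$ discussed above.
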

\begin{proof}
 The inclusion functor~$\cA\to \cA_+$ induces a functor~$\Tw(\cA) \to \Tw(\cA_+)$, and thus by Lemma~\ref{lemm::+induit} we have a functor~$\Tw(\cA)_+ \to \Tw(\cA_+)_+$, which is fully faithful in cohomology.  To prove that it is an equivalence, it suffices to prove that all objects on the right-hand side are direct factors of objects in the image of the functor.  In fact it suffices to prove that this is true for all objects of~$\Tw(\cA_+)$.  Let~$( W, \delta)\in\Tw(\cA_+)$, with~$W\in \add \bZ\cA_+$ and~$\delta\in \Hom_{\cA_+}^1(W,W)$.  There exists a~$W'\in\add\bZ\cA_+$ such that~$W\oplus W' \in \add\bZ\cA$.  Then~$(W,\delta)$ is a direct factor of the twisted complex~$(W\oplus W', \delta\oplus 0)$, which is in~$\Tw(\cA)$.  This finishes the proof.

\end{proof}

We note that~$\Tw(\cA_+)$ might not be split-closed; situations where it known to be split-closed include the case where~$\cA$ is a non-positively graded dg algebra with vanishing differential, see~\cite[Lemm. 4.19 and Thm. 4.28]{GHW}.

The following two results are an adaptation of \cite[Lemma 2.3.1]{LeMeur} to~$A_\infty$-categories.  Recall that we assumed that the order of~$G$ is not divisible by the characteristic of~$K$.

\begin{lemma}\label{lemm::split-natural-transformations}
 Let~$\cA$ be a strictly unital~$A_{\infty}$-category with a strict~$G$-action.  Let~$F:\cA\to \cA*G$ be the strict~$A_{\infty}$-functor defined on objects by~$FX = \{X\}$ and on morphisms by~$F(f) = f\otimes 1$.  Extend it naturally to a functor~$F:\add \cA \to \add (\cA*G)$, where~$\add(-)$ denotes the~$A_{\infty}$-category obtained by taking formal direct sums.
 
 There is a strict~$A_{\infty}$-functor~$H:\add(\cA*G) \to \add\cA$ defined on objects by~$H(\{X\}) = \bigoplus_{g\in G} gX$ and on morphisms by sending~$\sum_{g\in G}a_g\otimes g : \{X\} \to \{Y\}$ to the morphism given in matrix form by $\big( ha_{h^{-1}g} \big)_{h,g\in G}$.
 
 Moreover, there are split natural transformations
 \begin{itemize}
  \item $\varepsilon: HF \to 1_{\add\cA}$ defined by projecting~$HF(X) = \bigoplus_{g\in G} gX$ onto the copy of~$X$ corresponding to~$g=1$;
  
  \item $\eta: 1_{\add(\cA*G)}\to FH$ defined by sending~$\{X\}\in\cA*G$ into~$FH(\{X\}) = \bigoplus_{g\in G} gX$ by the column vector of morphisms~$1_{gX}\otimes g$.
 \end{itemize}
\end{lemma}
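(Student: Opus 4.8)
The plan is to verify the three claims — that $H$ is a strict $A_\infty$-functor, and that $\varepsilon$ and $\eta$ are natural transformations that are split — mostly by direct computation, using the explicit matrix descriptions. First I would unpack the notation: an object $\{X\}$ of $\cA*G$ has $H(\{X\})=\bigoplus_{g\in G}gX$, and a morphism $\sum_g a_g\otimes g\colon\{X\}\to\{Y\}$ (with $a_g\in\Hom_\cA(gX,Y)$) is sent to the $G\times G$ matrix whose $(h,g)$-entry is $h\cdot a_{h^{-1}g}\in\Hom_\cA(gX,hY)$ (applying the functor $\rho(h)$ to $a_{h^{-1}g}\in\Hom_\cA(h^{-1}gX,Y)$, landing in $\Hom_\cA(gX,hY)$, which is the correct slot). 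To check $H$ is a strict $A_\infty$-functor I must verify $\mu^n_{\add\cA}(Hb_n,\dots,Hb_1)=H\mu^n_{\cA*G}(b_n,\dots,b_1)$ for homogeneous $b_i=\sum_{g}a^{(i)}_g\otimes g$. Expanding both sides, the left side composes matrices entrywise via $\mu^n_\cA$, while the right side applies $H$ to $\mu^n_{\cA*G}(b_n,\dots,b_1)=\sum \mu^n_\cA\big(a^{(n)}_{g_n},\,g_n\cdot a^{(n-1)}_{g_{n-1}},\,\dots,\,g_n\cdots g_2\cdot a^{(1)}_{g_1}\big)\otimes g_n\cdots g_1$. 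The key identity that makes these agree is that each $\rho(h)$ is a \emph{strict} $A_\infty$-functor, so it commutes with $\mu^n_\cA$; after relabelling the group elements along a composable chain of matrix entries $g_0 \xrightarrow{h_1} g_1 \xrightarrow{h_2}\cdots$, the twists $g_n\cdots g_{i+1}\cdot(-)$ appearing in $\mu_{\cA*G}$ match exactly the accumulated $\rho$-applications in the matrix entries of $H$. I expect this matching of indices and signs to be the main obstacle — the bookkeeping is delicate but mechanical, since the sign conventions in $\mu_{\cA*G}$ were chosen precisely so that $H$ is strict. I would carry this out for a general composable matrix chain, fixing the source/target group elements $g_0,g_n$ and summing over the intermediate ones.

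Next I would check naturality of $\varepsilon\colon HF\Rightarrow 1_{\add\cA}$ and $\eta\colon 1_{\add(\cA*G)}\Rightarrow FH$. Since we are in the $A_\infty$ setting, "natural transformation" means a pre-natural transformation that is closed under the differential $\mu^1$ on the relevant functor category, or — in the strictly unital degree-$0$ case that suffices here — a closed element inducing an honest natural transformation of the cohomological functors $H^0(-)$. Because $\varepsilon$ and $\eta$ are built from identity morphisms, $\mu^1$ vanishes on them, so only the $\mu^2$-compatibility (ordinary naturality squares) and vanishing of higher components need checking. For $\varepsilon$: given $f\colon X\to Y$ in $\cA$, compute $HF(f)$ — this is $H(f\otimes 1)$, the matrix with $(h,g)$-entry $h\cdot f$ if $g=h$ and $0$ otherwise, i.e.\ the block-diagonal matrix $\mathrm{diag}(g\cdot f)_{g\in G}$; composing with the projection onto the $g=1$ summand gives $f$ on the nose, which equals $f$ composed with $\varepsilon_X$, establishing strict (hence $A_\infty$-) naturality. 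For $\eta$: given $\sum_g a_g\otimes g\colon\{X\}\to\{Y\}$, I compare $FH(\sum a_g\otimes g)\circ\eta_{\{X\}}$ with $\eta_{\{Y\}}\circ(\sum a_g\otimes g)$; both should equal the column vector with $h$-th entry obtained from $a_g$ by the appropriate twist-and-tensor, again matching because the matrix entries of $H$ were defined by conjugating with the $\rho(h)$'s.

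Finally, "split" means $\varepsilon$ and $\eta$ admit one-sided inverses (in cohomology) exhibiting the relevant objects as direct summands — equivalently, each $\varepsilon_X\colon HF(X)=\bigoplus_{g}gX\to X$ is a split epimorphism and each $\eta_{\{X\}}\colon\{X\}\to FH(\{X\})=\bigoplus_g gX$ is a split monomorphism. For $\varepsilon$, the obvious splitting is the inclusion of the $g=1$ summand; for $\eta$, the splitting is the row vector $\tfrac{1}{|G|}\big(g^{-1}\cdot(-)\big)_{g\in G}$ in matrix form, i.e.\ averaging over the group — this is exactly where the hypothesis that $|G|$ is invertible in $K$ is used, and it is the analogue of the splitting in \cite[Lemma 2.3.1]{LeMeur}. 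One checks this composite is $1_{\{X\}}$ by summing $\tfrac1{|G|}\sum_{g}(g^{-1}\cdot 1_{gX})\otimes(\text{appropriate }g)$ and using that $\rho(g^{-1})(1_{gX})=1_X$ together with strict unitality. I expect this last step to be short once the conventions are pinned down; the bulk of the work, and the only real subtlety, is the index-and-sign matching in showing $H$ is a strict $A_\infty$-functor.
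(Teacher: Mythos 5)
Your proposal is essentially sound, but note that the paper itself gives no proof of this lemma: it is stated as an adaptation of \cite[Lemma 2.3.1]{LeMeur} to the $A_\infty$-setting, with the verification left to the reader. So there is no "paper proof" to match; your direct verification is exactly the kind of argument the authors are implicitly invoking. Your index bookkeeping for $H$ is correct (the $(h,g)$-entry $h\cdot a_{h^{-1}g}$ lands in $\Hom_\cA(gX,hY)$, and strictness of each $\rho(h)$ makes the matrix composition match $\mu^n_{\cA*G}$); moreover, since $\add\cA$ involves no shifts, the $\phi$-components are all of degree $0$ and the sign $(-1)^{\sum_{i<j}|\phi_i|\|a_j\|}$ from the $\add$-construction is trivial, so the "delicate sign matching" you anticipate is actually vacuous for $H$. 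The one place where signs genuinely appear is in the naturality checks: with the paper's strict unitality convention $\mu^2(1_Y,a)=(-1)^{|a|}a$ versus $\mu^2(a,1_X)=a$, the two sides of the square for $\eta$ differ by $(-1)^{|b|}$ at the chain level and agree only after using the cohomological composition $[a_2][a_1]=(-1)^{|a_1|}\mu^2(a_2,a_1)$ (equivalently, viewing $\varepsilon,\eta$ as closed prenatural transformations), which is the interpretation you indicate. Your treatment of the splittings is also the right one: the inclusion of the $g=1$ summand splits $\varepsilon$ and is natural because $HF(f)$ is diagonal, whereas for $\eta$ the naive projection onto the $g=1$ component is \emph{not} natural, and the averaged retraction $\tfrac1{|G|}\sum_g 1_X\otimes g^{-1}$ is needed -- this is precisely where the hypothesis that $|G|$ is invertible in $K$ enters, as you say.
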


\begin{lemma}\label{lemm:split2}
 The functors and natural transformations of Lemma~\ref{lemm::split-natural-transformations} induce functors~$F:\Tw\cA \to \Tw(\cA*G)$ and~$H:\Tw(\cA*G)\to \Tw\cA$ and split natural transormations~$\varepsilon: HF \to 1_{\add\cA}$ and~$\eta: 1_{\add(\cA*G)}\to FH$. 
\end{lemma}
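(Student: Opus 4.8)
The plan is to lift the functors $F$ and $H$ of Lemma~\ref{lemm::split-natural-transformations} from $\add\cA$ (resp. $\add(\cA*G)$) to twisted complexes, and then to check that the natural transformations $\varepsilon$ and $\eta$ survive the passage to $\Tw$. First I would recall that a twisted complex over $\cA$ is a pair $(W,\delta)$ with $W\in\add\bZ\cA$ and $\delta\in\Hom^1_{\add\bZ\cA}(W,W)$ strictly upper triangular satisfying the Maurer--Cartan equation $\sum_{k\geq1}\mu^k(\delta,\ldots,\delta)=0$; the only subtlety relative to Lemma~\ref{lemm::split-natural-transformations} is that here one also allows grading shifts, but since $G$ acts by strict (hence degree-preserving) $A_\infty$-functors, both $F$ and $H$ extend to $\add\bZ$ exactly as written, commuting with the shift functors $[m]$. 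On objects, set $F(W,\delta):=(FW, F^1\delta)$ and $H(W,\delta):=(HW,H^1\delta)$; on morphisms both functors act as in Lemma~\ref{lemm::split-natural-transformations}, which makes sense because $\Hom_{\Tw\cA}((W_1,\delta_1),(W_2,\delta_2))=\Hom_{\add\bZ\cA}(W_1,W_2)$ by definition.

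The main point to verify is that these assignments land in $\Tw$, i.e. that $F^1\delta$ and $H^1\delta$ are again strictly upper triangular and satisfy Maurer--Cartan. Strict upper triangularity is immediate for $F$ (it is $\add$-linear and preserves the direct-sum decomposition), and for $H$ one uses that $H(\{X\})=\bigoplus_{g\in G}gX$ so a decomposition $W=\bigoplus_i W_i$ with $\delta$ strictly upper triangular yields the refined decomposition $HW=\bigoplus_i\bigoplus_{g\in G}gW_i$ (ordered lexicographically) for which $H^1\delta$ is again strictly upper triangular. For Maurer--Cartan, the key is that $F$ and $H$ are \emph{strict} $A_\infty$-functors, so $\mu^k_{\cB}(F^1\delta,\ldots,F^1\delta)=F^1\mu^k_{\cA}(\delta,\ldots,\delta)$ and likewise for $H$; summing over $k$ and using that $F^1,H^1$ are linear gives $\sum_k\mu^k(F^1\delta,\ldots)=F^1(\sum_k\mu^k(\delta,\ldots))=0$, and similarly for $H$. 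The same strictness, combined with the formula for $\mu_{\Tw}$ as a sum over insertions of the $\delta_i$, shows that $F$ and $H$ are compatible with the higher multiplications of $\Tw\cA$ and $\Tw(\cA*G)$, hence are strict $A_\infty$-functors between the twisted-complex categories. (One should also note $H$ respects units: $H^1(1_{\{X\}}\otimes 1)$ is the identity matrix on $\bigoplus_g gX$.)

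It remains to promote $\varepsilon$ and $\eta$. A natural transformation between strict $A_\infty$-functors amounts here to a closed degree-$0$ morphism (a cocycle for $\mu^1$) that is natural in the appropriate sense; since $\varepsilon_X:\bigoplus_{g}gX\to X$ and $\eta_{\{X\}}:\{X\}\to\bigoplus_g gX$ are built from identity morphisms of $\cA$ (resp. $\cA*G$), they are already morphisms in $\add\bZ\cA$ (resp. $\add\bZ(\cA*G)$), and the same matrices define morphisms $HF(W,\delta)\to(W,\delta)$ and $(W,\delta)\to FH(W,\delta)$ in $\Tw$. That they are closed and natural in $\Tw$ follows from the corresponding statement in $\add\cA$ together with strictness of the functors and the compatibility of $\delta$ with these identity-built morphisms; that they remain \emph{split} (i.e. $\varepsilon$ admits a section and $\eta$ a retraction, coming from the idempotent $\frac1{|G|}\sum_g g$, which is where the hypothesis on $\mathrm{char}\,K$ enters) is inherited verbatim from Lemma~\ref{lemm::split-natural-transformations} since the splitting morphisms are likewise built from scalar multiples of identities. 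I expect the only genuinely fiddly step to be bookkeeping the signs and the ordering of summands when checking that $H^1\delta$ is strictly upper triangular and that the matrix $\big(ha_{h^{-1}g}\big)_{h,g}$ interacts correctly with $\mu_{\Tw}$; everything else is a formal consequence of strictness of $F$ and $H$, so I would state those reductions cleanly and leave the matrix computations to the reader.
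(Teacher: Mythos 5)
Your proposal is correct and is essentially the argument the paper has in mind: the lemma is stated without proof (as a routine companion to Lemma~\ref{lemm::split-natural-transformations}, itself an adaptation of LeMeur), and the verification is exactly what you outline — strictness of $F$ and $H$ lets them descend to twisted complexes preserving strict upper triangularity and the Maurer--Cartan equation, while closedness, naturality and splitness of $\varepsilon$ and $\eta$ on $\Tw$ reduce to the $\add$-level statements because their components are units (or group-translates $1_{gX}\otimes g$ of units, so all higher products involving them vanish by strict unitality), with $\operatorname{char} K \nmid |G|$ entering only for the retraction of $\eta$. The sign bookkeeping you defer is genuinely only bookkeeping and does not affect the argument.
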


%-------------
\subsection{Split-closure of skew-group~$A_{\infty}$-categories}\label{sect::split-closure-skew-group}
We now come to the first result of this paper.

\begin{theorem}\label{theo::comm-square}
 Let~$\cA$ be a strictly unital~$A_{\infty}$-category with a strict~$G$-action.  Then the following is a commutative diagram.
 % https://q.uiver.app/#q=WzAsNCxbMCwwLCJcXGJ1bGxldCJdLFsxLDAsIlxcYnVsbGV0Il0sWzAsMSwiXFxidWxsZXQiXSxbMSwxLCJcXGJ1bGxldCJdLFswLDEsImEiLDAseyJzdHlsZSI6eyJ0YWlsIjp7Im5hbWUiOiJob29rIiwic2lkZSI6InRvcCJ9fX1dLFswLDIsImIiLDIseyJzdHlsZSI6eyJ0YWlsIjp7Im5hbWUiOiJob29rIiwic2lkZSI6InRvcCJ9fX1dLFsyLDMsImMiLDIseyJzdHlsZSI6eyJ0YWlsIjp7Im5hbWUiOiJob29rIiwic2lkZSI6InRvcCJ9fX1dLFsxLDMsImQiLDAseyJzdHlsZSI6eyJ0YWlsIjp7Im5hbWUiOiJob29rIiwic2lkZSI6InRvcCJ9fX1dXQ==
\[\begin{tikzcd}[column sep = huge]
	(\cA*G)_+ & \big((\Tw\cA)*G\big)_+ \\
	\big(\Tw(\cA*G)\big)_+ & \Big(\Tw\big((\Tw\cA)*G\big)\Big)_+ \\
	\big(\Tw(\cA*G)_+\big)_+ & \Big(\Tw\big((\Tw\cA)*G\big)_+\Big)_+
	\arrow["(I_{\cA}*G)_+", hook, from=1-1, to=1-2]
	\arrow["(I_{\cA*G})_+"', hook, from=1-1, to=2-1]
	\arrow["(\Tw (I_{\cA}*G))_+"', hook, from=2-1, to=2-2]
	\arrow["(I_{(\Tw\cA)*G})_+", hook, from=1-2, to=2-2]
	\arrow[hook, from=2-1, to=3-1]
	\arrow[hook, from=2-2, to=3-2]
	\arrow[hook, from=3-1, to=3-2]
\end{tikzcd}\]
Moreover, the top-most and top-left arrows are Morita-equivalences and all other arrows are quasi-equivalences.
\end{theorem}
\begin{proof}
 By Proposition~\ref{prop::comm-square}, the upper square commutes, all its arrows are Morita-equivalences and the middle one is a quasi-equivalence.   By Lemma~\ref{lemm::+induit}, the lower square commutes and all its arrows are quasi-equivalences.

 All that remains is to prove that the top-right arrow is a quasi-equivalence, that is, an equivalence once we apply~$H^0$.  Consider the following.

 \[
  \begin{tikzcd}
   H^0((\Tw \cA)*G) \ar{dr}{L} & \\
   H^0(\Tw \cA) \ar{u}{M}\ar{r}{F} & H^0(\Tw(\cA*G))\ar{l}{H}.
  \end{tikzcd}
 \]
where $L$ is the composition of $H^0(I_{(\Tw\cA)*G})$ with the inverse of $H^0(\Tw (I_\cA*G))$, and $F$ an $H$ are as in Lemma~\ref{lemm::split-natural-transformations}. We would like to prove that $L_+$ is an equivalence, and since it is fully faithful it is enough to prove that it is dense. For that, it is enough to prove that any object of $H^0(\Tw(\cA*G))$ is isomorphic to a direct summand of an object in the image of $L$.

We have ~$F=LM$, so~$FH=LMH$, and by Lemma~\ref{lemm:split2}, for any object~$X$ of~$\Tw(\cA*G)$,~$H^0X$ is a direct factor of~$LMH(H^0X)$.  In particular, it is a direct factor of an object in the image of~$L$.  But then the same is true if we take split-closures; this shows that any object of~$\Big(\Tw\big((\Tw\cA)*G\big)\Big)_+$ is a direct factor (in cohomology) of an object in the image of the top-right arrow in the above square.  Since all categories in the upper square are split-closed, this shows that the right-most map is a quasi-equivalence.
\end{proof}

As a consequence of this result, we reobtain the following which can be seen as a corollary of \cite[Thm 1.5]{BalmerSchlichting}.

\begin{corollary}\label{coro::split-closure-triangulated}
 Let~$\Lambda$ be a finite-dimensional~$K$-algebra with an action of~$G$ by automorphisms, and let~$\Lambda*G$ be the skew-group algebra (see \cite{ReitenRiedtmann}).  Then the category~$\big((\per\Lambda)*G\big)_+$ is triangulated and is equivalent to~$\per(\Lambda*G)$.
\end{corollary}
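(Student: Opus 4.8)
The plan is to deduce the corollary from \cref{theo::comm-square} applied to the case $\cA = \per\Lambda$, regarded as a strictly unital (indeed triangulated, hence quasi-equivalent to $\Tw\cA$) $A_\infty$-category carrying the strict $G$-action induced by the algebra automorphisms. First I would observe that, since $\per\Lambda$ is already triangulated, the functor $I_{\per\Lambda}\colon \per\Lambda \to \Tw(\per\Lambda)$ is a quasi-equivalence, and therefore so is $I_{\per\Lambda}*G$, and hence $(I_{\per\Lambda}*G)_+$. Combined with the fact that in \cref{theo::comm-square} the bottom and right-most arrows are quasi-equivalences, a diagram chase shows that the top arrow $(I_{\per\Lambda}*G)_+\colon (\per\Lambda *G)_+ \to \big((\Tw(\per\Lambda))*G\big)_+$ is also a quasi-equivalence. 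Since the target of this last arrow is, by construction, the split-closure of $\Tw$ of something, it is triangulated (being split-closed and equal up to quasi-equivalence to $\Tw((\cdot))_+ \simeq \Tw((\cdot)_+)$ via \cref{lemm::split-twist-is-twist-split}, and $\Tw$ of anything is triangulated by Lemma 2.7); hence $(\per\Lambda *G)_+$ is triangulated as well, being quasi-equivalent to it.

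Next I would identify $\big((\Tw(\per\Lambda))*G\big)_+$, or equivalently $(\per\Lambda*G)_+$, with $\per(\Lambda*G)$. The key point is that $\per\Lambda \simeq \per(\operatorname{proj}\Lambda)$ in the $A_\infty$/dg sense, so $\per\Lambda$ is quasi-equivalent to $\Tw(\add\Lambda)$, where $\add\Lambda$ is the (formal, $A_\infty$-trivial) one-object category with endomorphism algebra $\Lambda$ concentrated in degree $0$. Under this identification, the $G$-action on $\per\Lambda$ is the one induced from the $G$-action on the category $\add\Lambda$, and one checks that $(\add\Lambda)*G$ is precisely the category $\add(\Lambda*G)$ — that is, the one-object-up-to-direct-sums $A_\infty$-category with endomorphism algebra the skew-group algebra $\Lambda*G$ (here the hypothesis that $\operatorname{char}K \nmid |G|$ is used only to know $\Lambda*G$ is well-behaved, but the identification itself is just the definition of the skew-group algebra matched against \cref{defi::skew-group-category} with all higher $\mu^n$, $n\neq 2$, vanishing). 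Then $\Tw\big((\add\Lambda)*G\big) = \Tw(\add(\Lambda*G)) \simeq \per(\Lambda*G)$, and passing to split-closures, $\big(\Tw((\add\Lambda)*G)\big)_+ \simeq \per(\Lambda*G)$ since $\per(\Lambda*G)$ is already split-closed (idempotents split in the perfect derived category of any algebra). Chaining this with \cref{lemm::split-twist-is-twist-split} and the quasi-equivalence $\cA*G \to \Tw(\cA)*G$ at the level of split closures established above gives $(\per\Lambda*G)_+ \simeq \per(\Lambda*G)$ as triangulated categories.

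I expect the main obstacle to be the bookkeeping in the second paragraph: carefully matching the $A_\infty$-category $(\add\Lambda)*G$ with $\add(\Lambda*G)$ and then tracking how split-closures and $\Tw$ interact, so that no spurious objects are introduced or lost. In particular one must be slightly careful that "$\per\Lambda$" as used in the statement means the perfect derived category viewed with its natural dg/$A_\infty$-enhancement, and that the $G$-action is strict on that enhancement; this is routine but needs to be stated. A secondary subtlety is verifying that the composite quasi-equivalences are compatible with the triangulated structures — this is automatic from the fact that quasi-equivalences of $A_\infty$-categories induce triangulated equivalences on $H^0\Tw$, together with the fact (\cite[Lemma 3.33]{Seidel}) that a triangulated $A_\infty$-category is quasi-equivalent to its category of twisted complexes, so that $H^0$ of the split-closed triangulated $A_\infty$-categories above really do carry the expected triangulated structure. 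Once these identifications are in place, the corollary follows formally.
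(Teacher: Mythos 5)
Your argument is correct and rests on the same key result as the paper, namely \cref{theo::comm-square}; the only real difference is where you feed that theorem. The paper applies it directly to $\cA=\Lambda$ viewed as a one-object $A_\infty$-category: the upper-right corner of the square then has $H^0$ equal to $\big((\per\Lambda)*G\big)_+$ and the lower-left corner has $H^0$ equal to $\per(\Lambda*G)$, so the corollary is read off at once from the bottom and right-most arrows being quasi-equivalences. You instead apply the theorem to $\cA=\per\Lambda$, which obliges you to prove in addition that the top arrow is a quasi-equivalence (using that $\per\Lambda$ is triangulated and that $*G$ and $(-)_+$ preserve quasi-equivalences of $G$-equivariant functors), and then to identify $(\per\Lambda*G)_+$ with $\per(\Lambda*G)$ via $(\add\Lambda)*G\cong\add(\Lambda*G)$ -- which is precisely the corner identification the paper gets for free by its choice of $\cA$; so your route is a slightly longer variant of the same proof rather than a different one. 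The one imprecision to fix is that $\per\Lambda$ is not quasi-equivalent to $\Tw(\add\Lambda)$ itself but to its split closure (twisted complexes only produce complexes of free modules), and likewise $H^0\Tw\big(\add(\Lambda*G)\big)$ is only $\per(\Lambda*G)$ after idempotent completion; since the statement carries an outer $(-)_+$ and you pass to split closures at the end anyway, this does not affect the conclusion.
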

\begin{proof}
 Apply Theorem~\ref{theo::comm-square} to~$\Lambda$ viewed as an~$A_\infty$-category~$\cA$.  Then~$H^0$ of the upper-right corner is~$\big((\per\Lambda)*G\big)_+$ and~$H^0$ of the lower-left corner is~$\per(\Lambda*G)$.
\end{proof}

\section{Topological Fukaya category of a surface}\label{Section3}

In this section we recall the definition of the toplogical Fukaya category of a graded surface given in \cite[section 3]{HaidenKatzarkovKontsevich} and we recall the main results concerning these categories. We end the section by giving some explicit computations of change of basis that will be used in the section \ref{Section4}.

\subsection{Graded surfaces and graded arcs}

 We define the $A_\infty$-category associated with a graded marked surface, and recall the main properties needed for our setup.

\begin{definition}
A \emph{graded marked surface} is a triple $(\surf,\bM,\eta)$ where
\begin{itemize}
\item the \emph{surface} $\surf$ is a smooth compact connected oriented surface with non empty boundary;
\item the set of \emph{marked segments} $\bM$ is a non empty finite subset of pairwise disjoint closed segments of the boundary of $\surf$;
\item the \emph{graduation} $\eta:\surf\to \bP (T\surf)$ is a section of the projectivized tangent bundle.
\end{itemize}
Note that we consider $\surf$ to be a smooth closed connected oriented surface from which we remove open discs, and as such, the tangent bundle is also defined on the boundary of $\surf$. Note also that we allow boundary component without marked segments and boundary components which are completely marked.
\end{definition}

\begin{definition}
An \emph{arc} on $(\surf,\bM,\eta)$ is a non contractible smooth immersion $\gamma:[0,1]\to \surf$ with endpoints in $\bM$, intersecting transversally the boundary, and such that $\gamma(0,1)\subset \surf\setminus\partial\surf$. Isotopies of arcs are regular isotopies with endpoints in $\bM$. An arc~$\gamma$ is called \emph{simple} if the map~$\gamma$ is injective. A \emph{boundary arc} is a simple arc which is isotopic to a boundary segment, that is, to a segment $s:[0,1]\to \partial \surf$ with endpoints in $\bM$, and such that $s(0,1)\cap \bM=\emptyset$.

Arcs are considered up to orientation.

\end{definition}

\begin{definition}\label{def-graded-arc}
 
A \emph{graded arc} $(\gamma,\grad)$ is the equivalence class of an arc $\gamma$ together with a homotopy $\grad$ of from $\gamma^*(\eta)$ to $\dot{\gamma}$. That is $\grad$ is a continuous map $\grad:[0,1]^2\to \bP(T\surf)$ such that for all $s,t\in [0,1]$, $\grad(0,t)=\eta\circ\gamma (t)$, $\grad(1,t)=\dot{\gamma}(t)$ and $\pi(\grad(s,t))=\gamma(t)$ where $\pi:\bP(T\surf)\to \surf$ is the canonical projection. Two graded curves $(\gamma_1,\grad_1)$ and $(\gamma_2,\grad_2)$ are equivalent if $\gamma_1=\gamma_2$, and if $\grad_1$ and $\grad_2$ are homotopic (among homotopies from $\gamma^*(\eta)$ to $\dot{\gamma}$). 

\end{definition}

Note that a grading $\eta$ on $\surf$ allows to construct an orientation preserving trivialisation $\Psi_{\eta}$ of the circle bundle $\bP(T\surf)$, that is a homeomorphism of bundles $\Psi_{\eta}:\bP(T\surf)\to \surf\times \bR/\bZ$ such that $\Psi_{\eta}(x)=(x,0)$ for all $x\in \surf$. This trivialisation is unique up to isotopy.  Now given a graded arc $(\gamma,\grad)$, the homotopy $\Psi_{\eta}\circ \grad$ can be uniquely lifted  to a continuous map $\tilde{\grad}:[0,1]^2\to \surf\times \bR$

\[\xymatrix{ & &\surf\times \bR\ar[d]\\ [0,1]^2\ar[r]_{\grad}\ar[urr]^{\tilde{\grad}} & \bP (T\surf) \ar[r]^{\sim}_{\Psi_{\eta}} & \surf\times \bR\times \bZ}\]
 of the form $\tilde{\grad}(t,s)=(\gamma(t),\theta_s(t))$ satisfying $\theta_0(t)=0$.

Then one easily checks that two gradings $\grad$ and $\grad'$ are homotopic if and only if $\theta_1(t)=\theta'_1(t)$ for all $t\in [0,1]$, if and only if $\theta_1(0)=\theta'_1(0)$. 
Therefore the equivalence class of a grading on $\gamma$ is determined by the integral part of the real number $\theta_1(0)$.

\begin{definition}
Let $(\gamma,\grad)$ and $(\gamma',\grad')$ be two graded curves intersecting transversally in a point $p=\gamma(t_0)=\gamma'(t_0')$. Denote by $\kappa:[0,1]\to \bP(T_p\surf)$ the smallest  path from $\dot{\gamma}(t_0)$ to $\dot{\gamma'}(t_0')$ which is positive with respect to the orientation of $\surf$ (that induces an orientation of $\bP(T_p\surf)$). The map $\grad(-,t_0):[0,1]\to \bP(T_p(\surf))$ is a path from $\eta(p)$ to $\dot{\gamma}(t_0)$ while the map $\grad'(-,t_0')^{-1}:[0,1]\to \bP(T_p(\surf))$  is a path from $\dot{\gamma'}(t_0')$ to $\eta(p)$. The \emph{index} of the intersection of the graded curves at $p$ is defined as 
$${\rm ind}_p((\gamma,\grad),(\gamma',\grad')):=\grad(-,t_0).\kappa.\grad'(-,t_0')^{-1}\in \pi_1(\bP(T_p\surf),\eta(p))$$ 
viewed as an integer with the natural isomorphism  $\pi_1(\bP(T_p\surf),\eta(p))$ where the canonical generator is given by the orientation of $\surf$.
\end{definition}
 \begin{figure}[!h]
 \[
  \scalebox{1}{
  \begin{tikzpicture}[scale=2,>=stealth]
\draw[blue] (-1,1)--(1,-1);
\draw[cyan] (-1,-1)--(1,1);
\draw[red] (0,1)--(0,-1);

\draw[purple, ->] (0.25,-0.25) arc (-45:45:0.35);

\node[purple, inner sep=0pt, fill=white] at (0.35,0) {$\kappa$};

\draw[cyan,->] (0,-0.6) arc (-90:45:0.6);
\node[cyan, inner sep=0pt, fill=white] at (0.6,0) {$\grad'_{t_0'}$};
\node[cyan, inner sep=0pt, fill=white] at (-0.8,-0.8) {$\gamma'$};

\draw[blue,->] (0,0.5) arc (90:315:0.5);
\node[blue, inner sep=0pt, fill=white] at (-0.5,0) {$\grad_{t_0}$};
 
 \node[blue, inner sep=0pt, fill=white] at (-0.8,0.8) {$\gamma$}; 
 
 \node[red, inner sep=0pt, fill=white] at (0,0.8) {$\eta(p)$};
  
  \end{tikzpicture}}\] 
 \caption{}
 \end{figure}
 
\begin{remark}
Using the discussion above, the index can be alternatively defined as follows : 
$${\rm ind}_p((\gamma,\grad),(\gamma',\grad')):= \lceil \theta_1(t_0)-\theta'_1(t'_0)\rceil.$$
Indeed, under the identification $\bP(T_p(\surf))\simeq \bR/\bZ$ given by the grading $\eta$, the path $\kappa$ can be uniquely lifted to a path $\widetilde{\kappa}$ in $\bR$ with $\widetilde{\kappa}(0)=\theta_1(t_0)$ and $\widetilde{\kappa}(1)$ being the unique real number such that $\widetilde{\kappa}(1)-\theta_1(t_0)\in [0,1)$, and $\widetilde{\kappa}(1)-\theta'_1(t'_0)\in \bZ$. Lifting the path $\grad'(-,t_0')^{-1}.\kappa.\grad(-,t_0)$ in $\bR$ one gets a path from $0$ to $\widetilde{\kappa}(1)-\theta'_1(t'_0)$. Therefore the index is the integer $\widetilde{\kappa}(1)-\theta'_1(t'_0)$. 

\end{remark}

From the remark above, one immediately obtains the following

\begin{proposition}\cite[(2.5)]{HaidenKatzarkovKontsevich}\label{prop::IndexFormula}
Let $(\gamma_1,\grad_1)$ and $(\gamma_2,\grad_2)$ be two graded curves intersecting transversally in a point $p=\gamma_1(t_1)=\gamma_2(t_2)$ in the interior of $\surf$. Then one has 
$${\rm ind}_p((\gamma_1,\grad_1),(\gamma_2,\grad_2))+{\rm ind}_p((\gamma_2,\grad_2),(\gamma_1,\grad_1))=1.$$
\end{proposition}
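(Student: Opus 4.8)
The plan is to derive the identity directly from the alternative description of the index given in the Remark immediately preceding the statement. Fix the trivialization $\Psi_\eta$ as in the text, and let $\theta_i$ denote the lifted angle function associated with the graded curve $(\gamma_i,\grad_i)$. Set
\[
 x := \theta_1(t_1) - \theta_2(t_2) \in \bR .
\]
By the Remark, one has $\mathrm{ind}_p((\gamma_1,\grad_1),(\gamma_2,\grad_2)) = \lceil x \rceil$ and, swapping the roles of the two curves, $\mathrm{ind}_p((\gamma_2,\grad_2),(\gamma_1,\grad_1)) = \lceil -x \rceil$. Hence the claimed formula reduces to the elementary fact that $\lceil x \rceil + \lceil -x \rceil = 1$ for every real number $x \notin \bZ$ (the sum being $0$ when $x \in \bZ$).

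It therefore remains to verify that $x \notin \bZ$, and this is precisely the point at which the transversality hypothesis is used. Under the identification $\bP(T_p\surf) \cong \bR/\bZ$ induced by $\Psi_\eta$, the class of $\theta_i(t_i)$ modulo $\bZ$ is exactly the tangent direction $\dot{\gamma_i}(t_i) \in \bP(T_p\surf)$. Since $\gamma_1$ and $\gamma_2$ meet transversally at the interior point $p$, these two tangent directions are distinct in $\bP(T_p\surf)$, so $\theta_1(t_1) \not\equiv \theta_2(t_2) \pmod{\bZ}$, i.e. $x \notin \bZ$. Combining this with the previous paragraph gives the result.

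Alternatively, one can argue straight from the original definition in terms of loops in $\pi_1(\bP(T_p\surf),\eta(p))$: writing $\kappa_{12}$ (resp. $\kappa_{21}$) for the shortest positively-oriented path from $\dot{\gamma_1}(t_1)$ to $\dot{\gamma_2}(t_2)$ (resp. from $\dot{\gamma_2}(t_2)$ to $\dot{\gamma_1}(t_1)$), the concatenation of the two loops representing the two indices simplifies, after cancelling the matching $\grad_i$-segments, to a loop freely homotopic to $\kappa_{12}\cdot\kappa_{21}$. Transversality forces $\dot{\gamma_1}(t_1)\neq \dot{\gamma_2}(t_2)$, so $\kappa_{12}\cdot\kappa_{21}$ winds exactly once positively around the circle $\bP(T_p\surf)$ and thus represents the canonical generator $1$; this recovers the formula.

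The only real obstacle is the justification that transversality forces the angle difference $x$ to be non-integral — once the trivialization $\Psi_\eta$ and the lift $\tilde{\grad}$ set up earlier in the section are in place, this is immediate, and the remainder is routine bookkeeping with the identity $\lceil x\rceil + \lceil -x\rceil = 1$.
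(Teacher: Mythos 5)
Your proposal is correct and follows exactly the route the paper intends: the statement is deduced ``immediately'' from the preceding remark, i.e.\ from the ceiling formula $\mathrm{ind}_p=\lceil \theta_1(t_1)-\theta_2(t_2)\rceil$ together with the identity $\lceil x\rceil+\lceil -x\rceil=1$ for $x\notin\bZ$, with transversality guaranteeing $\theta_1(t_1)\not\equiv\theta_2(t_2)\pmod{\bZ}$. Your supplementary argument via loops in $\pi_1(\bP(T_p\surf),\eta(p))$ is a fine consistency check but adds nothing needed beyond the paper's (implicit) proof.
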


\begin{definition}
Let $(\gamma,\grad)$ be a graded arc (resp. graded loop). We define the graded arc (resp. graded loop) $(\gamma,\grad)[1]:=(\gamma,\grad[1])$ as follows $$\grad[1](s,t):=(\gamma(t),s(\theta_1(t)-1)),$$ where the map $\theta_1:[0,1]\to \bR$ is defined as in the previous remark.

One easily checks that ${\rm ind}_p((\gamma_1,\grad_1),(\gamma_2,\grad_2)[1])={\rm ind}_p((\gamma_1,\grad_1),(\gamma_2,\grad_2))+1.$

\end{definition}

\subsection{$A_\infty$-categories associated with a full system of graded arcs}

\begin{definition}
A \emph{system of graded arcs} $\bS=((\gamma_1,\grad_1),\ldots, (\gamma_s,\grad_s))$ is a collection of graded simple arcs such that the arcs $
\gamma_i$ are pairwise disjoint and non isotopic. 

A system of arcs  $\bS$ is called \emph{full} if it contains all boundary arcs and cuts out the surface into discs or annuli with one non marked boundary component.
\end{definition}

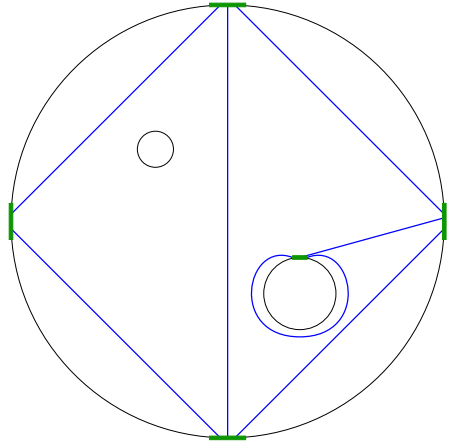
\begin{figure}[!h]
 
  \scalebox{0.6}{
  \begin{tikzpicture}[scale=0.8,>=stealth]
  
  \draw (0,0) circle (6);

  \draw (2,-2) circle (1);
  
  \draw (-2,2) circle (0.5);
  
  \draw[blue, thick] (0,6)--(0,-6);
  \draw[blue, thick] (-0.2,6)--(-6,0.2);
  \draw[blue, thick] (-6,-0.2)--(-0.2,-6);
  \draw[blue, thick] (0.2,-6)--(6,-0.2);
  \draw[blue, thick] (6,0.2)--(0.2,6);
  \draw[blue, thick] (6,0.1)--(2,-1);
  
  \draw[blue, thick] (2.2,-1).. controls (3.5,-0.5) and (4,-3.2)..(2,-3.2);
    \draw[blue, thick] (1.8,-1).. controls (0.5,-0.5) and (0,-3.2)..(2,-3.2);
  
    \draw[dark-green, fill=dark-green] (5.95,-0.5) rectangle(6.05,0.5);
  \draw[dark-green, fill=dark-green] (-0.5,5.95) rectangle(0.5,6.05);
    \draw[dark-green, fill=dark-green] (-5.95,-0.5) rectangle(-6.05,0.5);
  \draw[dark-green, fill=dark-green] (-0.5,-5.95) rectangle(0.5,-6.05);
  \draw[dark-green, fill=dark-green] (2.2,-1.05) rectangle (1.8, -0.95);

  \end{tikzpicture}}
  \caption{A full system of arcs on a sphere with three boundary components}
  \end{figure}

\begin{definition}
An \emph{$\bM$-segment} $a$ on $(\surf,\bM,\eta)$ is an embedding  of $[0,1]\to \bM$ oriented with the reverse orientation of the boundary of $\surf$. Given two $\bM$-segments $a$ and $b$ such that $a(1)=b(0)$, we define $ba=a.b$ to be the unique $\bM$-segment such that $a.b(0)=a(0)$ and $a.b(1)=b(1)$.
\end{definition}

Let $(\gamma_i,\grad_i)$ and $(\gamma_j,\grad_j)$ be two graded arcs ending in a same marked segment of $\bM$. Denote by $m_i$ and $m_j$ these endpoints.  Let $a$ be the $\bM$-segment with endpoints $m_i$ and $m_j$. Without loss of generality, we assume that $a(0)=m_i$ and $a(1)=m_j$. We say that $a$ is an $\bM$-segment from $(\gamma_i,\grad_i)$ to $(\gamma_j,\grad_j)$. The degree of $a$ is defined to be :
$$|a|={\rm ind}_{m_i}((\gamma_i,\grad_i),(a,\grad))-{\rm ind}_{m_j}((\gamma_j,\grad_j),(a,\grad)),$$ where $\grad$ is a grading on $a$. By Proposition \ref{prop::IndexFormula}, this integer does not depend on the choice of $\grad$.

Note that if $(\gamma_i,\grad_i)$ have endpoints lying in the same boundary segment, then there exists an $\bM$-segment from $(\gamma_i,\grad_i)$ to itself. There also might exist two different $\bM$-segments from $(\gamma_i,\grad_i)$ to $(\gamma_j,\grad_j)$, or one from $(\gamma_i,\grad_i)$ to $(\gamma_j,\grad_j)$, and one from $(\gamma_j,\grad_j)$ to $(\gamma_i,\grad_i)$ depending on the relative position of $\gamma_i$ and $\gamma_j$.

Note moreover that if $a$ is a $\bM$-segment from $(\gamma_i,\grad_i)$ to $(\gamma_j,\grad_j)$, it can also be viewed as an $\bM$-segment from $(\gamma_i,\grad_i)$ to $(\gamma_j,\grad_j)[1]$. Then its degree is $|a|-1$.

We are now ready to define the \emph{Fukaya category} $\cA:=\cA_{\bS}$ associated to a full system of arcs $\bS=(X_1=(\gamma_1,\grad_1),\ldots, X_s=(\gamma_s,\grad_s))$:

\begin{itemize}
\item the objects of $\cA$ are the graded curves $((\gamma_1,\grad_1),\ldots, (\gamma_s,\grad_s))$;

\item given two objects $X_i$ and $X_j$ of $\bS$, a basis of $\Hom_{\cA}(X_i,X_j)$ is given by the $\bM$-segments from $X_i$ to $X_j$ (to which we add the identity if $i=j$). If $a$ is an $\bM$-segment, the degree of the morphism $a$ is defined to be $|a|$.

\item $\mu^1_{\cA}$ is defined to be zero;

\item if $a\in \Hom (X_i,X_j)$ and $b\in \Hom_{\cA}(X_j,X_\ell)$ are $\bM$-segments, then the composition is defined by 
$$\mu^2_{\cA}(b,a):=\left\{\begin{array}{ll} (-1)^{|a|}ba  & \textrm{if $a.b$ exists,}\\ 0 & \textrm{else.}\end{array}\right.  $$
Moreover we set $\mu^2_{\cA}(1_{X_j},a)=(-1)^{|a|}a$, and $\mu^2_{\cA}(b, 1_{X_j})=b$ for any homogenous $a\in \Hom_{\cA}(X_i,X_j)$ and $b\in\Hom_{\cA}(X_j,X_{\ell})$.

\item for $n\geq 3$, the higher multiplication $\mu^n_{\cA}$ is defined as follows: for $j=1,\ldots, n-1$, assume that $a_{j}\in \Hom(X_{i_j},X_{i_{j+1}})$  and $a_n\in \Hom(X_{i_n},X_{i_1})$ are pairwise distincts $\bM$-segments such that $(\gamma_{i_1},a_1,\gamma_{i_2},a_2,\ldots,\gamma_{i_n},a_n)$ enclose a disc, then define    
$$\mu^n_{\cA}(a_n,\ldots,a_1b)=(-1)^{|b|}b$$
if $b$ is an $\bM$-segment such that $a_1b$ is defined, and 
$$\mu^n_{\cA}(ba_n,\ldots,a_1)= b$$
if $b$ is an $\bM$-segment such that $ba_n$ is defined.

Moreover, for any $X_i\in \cA$, we set $\mu^n_{\cA}(\ldots,1_{X_i},\cdots)=0$, for $n\geq 3$.

Other higher multiplications between $\bM$-segments vanish, and these operations are extended by linearity.

\end{itemize}

\begin{theorem}\cite[Prop 3.1]{HaidenKatzarkovKontsevich}
The category $\cA_{\bS}$ defined above is a strictly unital $A_\infty$-category. 
\end{theorem}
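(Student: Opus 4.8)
The statement asserts three things about the data defining~$\cA_{\bS}$: that each~$\mu^n_{\cA}$ is homogeneous of degree~$2-n$ (and unambiguously defined, the~$\bM$-segment~$b$ and the enclosing disc being determined by the inputs), that the~$\mu^n_{\cA}$ satisfy the Stasheff relations, and that the morphisms~$1_{X_i}$ are strict units. This is essentially~\cite[Prop.~3.1]{HaidenKatzarkovKontsevich}, and the plan below is to reprove it along the lines of that reference. The first reduction is the degree count. The basic fact is that the degree of~$\bM$-segments is additive under smooth concatenation: if~$a.b$ is defined then~$|ab|=|a|+|b|$, which one reads off the definition of~$|a|$ together with Proposition~\ref{prop::IndexFormula} at the shared marked point. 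This makes~$\mu^2_{\cA}$ a degree-$0$ operation. For~$n\ge 3$, unravelling~$\mu^n_{\cA}(a_n,\ldots,a_2,a_1b)=(-1)^{|b|}b$ shows that homogeneity of degree~$2-n$ is equivalent to the identity~$\sum_{j=1}^n|a_j|=n-2$ whenever~$(\gamma_{i_1},a_1,\ldots,\gamma_{i_n},a_n)$ bounds an immersed disc; I would prove this as a winding-number computation in the trivialization~$\Psi_{\eta}$, lifting the boundary loop of the disc to~$\surf\times\bR$ and distributing its total rotation over the corners of the polygon. Strict unitality is then immediate: an identity is not an~$\bM$-segment, so it is never one of the distinguished sides of an enclosed disc and cannot arise from concatenating~$\bM$-segments, whence~$\mu^n(\ldots,1_{X_i},\ldots)=0$ for~$n\ge 3$; and~$\mu^1=0$, while the rules~$1.b=b$ and~$a.1=a$ give~$\mu^2(b,1_{X_i})=b$ and~$\mu^2(1_{X_j},a)=(-1)^{|a|}a$.

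For the Stasheff relations, fix homogeneous inputs~$(a_n,\ldots,a_1)$. If one input is an identity, the relation collapses, using the unit axioms just checked, to the~$n=2$ case, i.e. to the associativity of concatenation of~$\bM$-segments; this is an elementary verification in which the signs~$(-1)^{|a|}$ built into~$\mu^2$ and the signs in the Stasheff formula match. So one may assume every~$a_i$ is an~$\bM$-segment, and since~$\mu^1=0$ only the terms with~$2\le j\le n-1$ survive. The mechanism of cancellation rests on the observation that each application of a~$\mu^{\ge 2}$ is of one of two types: a concatenation of two consecutive~$\bM$-segments, or the ``absorption'' of an immersed polygon returning the leftover~$\bM$-segment~$b$. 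Hence a nonzero composite~$\mu^{n-j+1}(a_n,\ldots,\mu^j(a_{k+j},\ldots,a_{k+1}),\ldots,a_1)$ falls into a short list of geometric configurations: two nested concatenations of three consecutive segments; an immersed polygon one of whose~$\bM$-sides has been pre- or post-concatenated with a neighbouring input; or two immersed polygons meeting along a chain of inputs. The plan is to define a fixed-point-free involution~$\sigma$ on the set of nonzero terms that reads each configuration in the unique ``other'' admissible way -- exchanging the order in which a leftover segment is split off, or trading a polygon with a subdivided side for the composite that reassembles the same disc -- and to check by a direct computation that~$\sigma$ reverses the Koszul sign~$(-1)^{\sum_{i\le k}\|a_i\|}$ appearing in the relation (once the~$(-1)^{|a|}$'s hidden in the~$\mu^2$'s are taken into account). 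The terms then cancel in pairs.

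The main obstacle is precisely this last step: the combinatorial classification of the nonzero composites and the verification that~$\sigma$ is a well-defined sign-reversing involution with no fixed points. The delicate points are the low-arity and degenerate-disc boundary cases, and the geometric lemma that two immersed discs glued along a common chain of boundary edges reassemble into exactly the disc appearing in~$\sigma$ of the corresponding term; the Gauss--Bonnet identity needed for the degree count is a secondary, more routine, difficulty. Since none of the later results of this paper depends on re-deriving this statement, one may alternatively simply cite~\cite[Prop.~3.1]{HaidenKatzarkovKontsevich}.
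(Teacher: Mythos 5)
The paper does not prove this statement at all: it is imported verbatim as \cite[Prop.\ 3.1]{HaidenKatzarkovKontsevich}, so your closing remark that one may simply cite that reference is exactly the route the paper takes, and to that extent your proposal matches the paper. Your sketched direct verification is also a faithful outline of the argument in the cited source: the degree identity $\sum_j|a_j|=n-2$ for a disc sequence (consistent, e.g., with $|a_0|+|a_1|+|a_2|=1$ used later in Lemma~\ref{CalculString1}), additivity of degree under concatenation, the unitality checks, and the pairwise cancellation of Stasheff terms. Be aware, however, that the last item --- the classification of the nonzero composites and the sign-reversing, fixed-point-free involution pairing them --- is the entire mathematical content of the cited proposition, and you explicitly defer it; so as a standalone proof your text has a gap precisely there, while as a justification of the statement it coincides with what the paper itself does, namely an appeal to \cite{HaidenKatzarkovKontsevich}.
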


Note that in \cite{HaidenKatzarkovKontsevich}, the authors assume that there are no unmarked boundary component. The case of unmarked boundary components is treated in \cite[Definition~3.24]{OpperZvonareva}.

One of the fundamental result in \cite[Section 3]{HaidenKatzarkovKontsevich} is the following :

\begin{theorem}\cite[Lemma 3.2]{HaidenKatzarkovKontsevich}\label{thm HKK Morita equivalence}
If $\bS$ and $\bS'$ are full arc systems such that $\bS\subset \bS'$, then the inclusion $\bS\subset \bS'$ induces a Morita equivalence of $A_\infty$-categories $$\cA_{\bS}\longrightarrow \cA_{\bS'}.$$

\end{theorem}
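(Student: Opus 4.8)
The plan is to reduce to the case where $\bS'$ is obtained from $\bS$ by adjoining a single graded arc, to observe that the inclusion is then already a strict full embedding of $A_\infty$-categories, and finally to resolve the new arc as a twisted complex over the old ones.

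Since any inclusion $\bS\subset\bS'$ of full arc systems can be refined into a chain adding one arc at a time, and since a composite of Morita equivalences is a Morita equivalence, we may assume $\bS'=\bS\cup\{X\}$ with $X=(\gamma,\grad)$ a graded simple arc not isotopic to any arc of $\bS$. The induced strict $A_\infty$-functor $F\colon\cA_{\bS}\to\cA_{\bS'}$ is then \emph{fully faithful on the nose}: for $X_i,X_j$ in $\bS$ a basis of $\Hom_{\cA_{\bS}}(X_i,X_j)$ consists of the $\bM$-segments between $\gamma_i$ and $\gamma_j$ (together with the unit when $i=j$), and this set depends only on $\gamma_i$, $\gamma_j$ and the marked segments of $\surf$, not on the ambient arc system; likewise, the condition entering the definition of $\mu^n_{\cA}$ — that a cyclic word $(\gamma_{i_1},a_1,\ldots,\gamma_{i_n},a_n)$ bound an embedded disc in $\surf$ — is intrinsic to $\surf$ and is unaffected by the presence of $\gamma$. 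Hence $\Hom_{\cA_{\bS}}(X_i,X_j)=\Hom_{\cA_{\bS'}}(X_i,X_j)$ with the same higher multiplications, so $\Tw F\colon\Tw\cA_{\bS}\to\Tw\cA_{\bS'}$ is a fully faithful $A_\infty$-functor between triangulated $A_\infty$-categories. By definition $F$ is a Morita equivalence exactly when $H^0\Tw F$ is an equivalence; being fully faithful, it is an equivalence as soon as it is essentially surjective, and since $\Obj\cA_{\bS'}$ generates $H^0\Tw\cA_{\bS'}$ as a triangulated category, it suffices to prove that $X$ lies in the triangulated subcategory $\cT\subset H^0\Tw\cA_{\bS'}$ generated by the arcs of $\bS$, i.e.\ that $X$ is isomorphic in cohomology to a twisted complex built out of $\bS$.

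For this one exploits fullness of $\bS$: the complement $\surf\setminus\bigcup_{\gamma_i\in\bS}\gamma_i$ is a disjoint union of discs and of annuli with exactly one unmarked boundary component, each of whose marked boundary alternates between arcs of $\bS$ and segments of $\partial\surf$. Isotope $\gamma$ so as to meet $\bigcup\bS$ transversally and minimally, at points $p_1,\ldots,p_k$ listed in order along $\gamma$, crossing the arcs $\gamma_{i_1},\ldots,\gamma_{i_k}$; the segment of $\gamma$ between two consecutive crossings lies in a single complementary region, where (using simplicity of $\gamma$ and minimality) it determines a composite $b_j$ of consecutive sides, hence an $\bM$-segment. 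Form the candidate twisted complex $T=\bigoplus_{j=1}^{k}\gamma_{i_j}[d_j]$, the shifts $d_j$ being forced by the grading $\grad$ and the indices $\mathrm{ind}_{p_j}$ of the successive crossings, with $\delta$ having the $b_j\otimes 1$ as its only nonzero entries. One checks region by region that $\delta$ is strictly upper triangular for this ordering and satisfies the Maurer--Cartan equation $\sum_{m\geq1}\mu^m(\delta,\ldots,\delta)=0$ — the only higher products contributing are those attached to the discs bounded by the traversed sides, and they cancel against the quadratic term or vanish for degree reasons — and that the two $\bM$-segments at the ends of $\gamma$ provide a closed degree-$0$ morphism $T\to X$ in $\Tw\cA_{\bS'}$ which is a quasi-isomorphism, by a local cohomology computation in each region (a disc region being elementary; the annular regions requiring the extra bookkeeping already present in \cite{HaidenKatzarkovKontsevich}). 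This exhibits $X\cong T$ with $T$ over $\bS$, completing the argument.

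The genuinely hard part is this last step: verifying that the prescribed $\delta$ really solves the twisted-complex equation and that the comparison morphism $T\to X$ is a cohomology isomorphism. This is where the combinatorics of the disc-enclosing operations $\mu^n$, the tracking of shifts coming from the grading, and the signs in the Stasheff relations all have to be reconciled simultaneously; organizing the verification region by region — or, alternatively, running an induction on $k$ by smoothing the first crossing of $\gamma$ and invoking the resulting distinguished triangle, the base case of an arc disjoint from $\bS$ being disposed of directly since such an arc is isotopic to a side of its complementary region and hence to an arc of $\bS$ or a boundary arc (apart from the annular case treated in \cite{HaidenKatzarkovKontsevich}) — is what makes it tractable.
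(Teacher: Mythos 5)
First, note that the paper does not prove this statement at all: it is quoted from \cite{HaidenKatzarkovKontsevich} (Lemma 3.2), so the only comparison available is with the argument there. Your reduction to adding a single arc and your observation that the inclusion is a strict full embedding (the $\bM$-segment bases and the disc-enclosing condition defining the $\mu^n$ are intrinsic to $(\surf,\bM,\eta)$, not to the ambient system) are both correct, and so is the reduction of Morita equivalence to showing that the new arc $X$ lies in the triangulated hull of $\bS$ inside $H^0\Tw\cA_{\bS'}$.

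The gap is in how you carry out that last step. By definition a system of arcs consists of \emph{pairwise disjoint} simple arcs, so the new arc $X\in\bS'$ is automatically disjoint from every arc of $\bS$; your isotopy to minimal position gives $k=0$ crossings, the twisted complex $T=\bigoplus_j\gamma_{i_j}[d_j]$ you build is empty, and the induction ``smoothing the first crossing'' never starts. The whole content of the lemma is therefore exactly your ``base case'', and the claim you use to dispose of it --- that an arc disjoint from $\bS$ is isotopic to a side of its complementary region, hence to an arc of $\bS$ or a boundary arc --- is false: a complementary polygon can have many sides, and a diagonal joining two non-adjacent marked segments is disjoint from $\bS$ but isotopic to nothing in it (the triangle of Lemma~\ref{CalculString1}, where $X_0$ is disjoint from $X_1,X_2$ yet only quasi-isomorphic to the cone $X_1[|a_0|]\to X_2[-|a_2|]$, is already a counterexample). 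The correct argument, which is the one in \cite{HaidenKatzarkovKontsevich}, resolves $X$ not by arcs it crosses but by the sides of the sub-polygon it cuts off inside its complementary region: $X$ subdivides that disc (or annulus with one unmarked boundary component) into two pieces, and one exhibits $X$ as quasi-isomorphic to a one-sided twisted complex on the boundary arcs of one piece, with differential given by the connecting $\bM$-segments and shifts dictated by the grading --- precisely the type of computation the present paper performs in Lemma~\ref{CalculString1} and, for the annular case, in Lemma~\ref{calculBand1}. Your crossing-based resolution is really an ingredient for the classification of \emph{arbitrary} objects (Theorem~\ref{thm HKK indec obj}), not for this lemma, and as written it does not prove the statement.
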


As a consequence, given two full arc systems $\bS$ and $\bS'$ on $\surf$, since one can pass from $\bS$ to $\bS'$ by adding and deleting simple graded arcs, there is a canonical equivalence $\H^0(\Tw \cA_{\bS})\simeq \H^0(\Tw \cA_{\bS'})$. We denote by $\Psi_{\bS\to \bS'}$ this equivalence. Then if $\bS$, $\bS'$ and $\bS''$ are full systems of arcs we have $\Psi_{\bS'\to \bS''}\circ\Psi_{\bS\to \bS'}=\Psi_{\bS\to \bS''}$.  
In fact, for any full arc systems~$\bS$ and~$\bS'$ not necessarily related by inclusion, it is proved in~\cite{HaidenKatzarkovKontsevich} that $\H^0(\Tw \cA_{\bS})$ and~$\H^0(\Tw \cA_{\bS'})$ are \emph{canonically} equivalent.  Thus, by an abuse of notation, we will denote by~$\cF(\surf)$ the category~$\H^0(\Tw(\cA_\bS))$ for any full arc system~$\bS$.

This results allows to associate a well-defined object $X$ in $\cF(\surf)$ to any graded simple arc $(\gamma,\grad)$.

\begin{remark}

Assume that $\bS$ is a full system of arcs, and $X:=(\gamma,\grad)$ be a graded arc such that $\bS\cup X$ is a full system of arcs. Denote by $X'$ the graded arc $(\gamma,\grad [1])$. Then the inclusions 
$$\xymatrix{\cA_{\bS\cup X} & \cA_{\bS}\ar@{^(->}[r]\ar@{_(->}[l] & \cA_{\bS\cup X'}}$$
induce inclusions
$$\xymatrix{\bZ\cA_{\bS\cup X} & \bZ\cA_{\bS}\ar@{^(->}[r]\ar@{_(->}[l] & \bZ\cA_{\bS\cup X'}}.$$
If $f$ is an $\bM$-segment in $\bS$, then it is also an $\bM$-segment in $\bS\cup X$ and in $\bS\cup X'$. The morphism $s^\ell\otimes f$ is sent to $s^\ell\otimes f$ on both inclusions.

On the other hand, one can consider  the full $A_\infty$-subcategory $\cA'$ of $\bZ\cA_{\bS\cup X}$ containing the objects of $\bS$ and the object $X[1]$. Then one can construct a strict isomorphism between $\cA'$ and $\cA_{\bS\cup X'}$. This isomorphism induces a strict isomorphism
$$\bZ \cA_{\bS\cup X'}\longrightarrow \bZ\cA'\simeq \bZ \cA.$$
However one can check that this isomorphism sends $s^\ell\otimes ab$ to $(-1)^\ell s^\ell\otimes ab$ if $ab$ is a $\bM$-segment passing through $X$, thus this isomorphism is not compatible with the inclusions above. In general the equivalence between $H^0(\Tw \cA_{\bS\cup X})$ and $H^0(\Tw\cA_{\bS\cup X'})$ of Theorem \ref{thm HKK Morita equivalence} is not induced from an $A_\infty$-isomorphism between $\bZ\cA_{\bS\cup X}$ and $\bZ\cA_{\bS\cup X'}$.

\end{remark}

\subsection{Indecomposable objects in $\cF(\surf)$} 

All the indecomposable objects in $\cF(\surf)$ can be described in term of graded curves on $(\surf,\bM,\eta)$. 

We first need to extend the definition of graded curves to the case of loops.

\begin{definition}
Let $(\surf,\bM,\eta)$ be a graded marked surface. A \emph{graded loop} on $\surf$ is the equivalence class of $(\gamma,\grad)$ where
\begin{itemize}
\item  $\gamma:\bS^1=\bR/\bZ\to \surf\setminus \partial \surf$ is an immersion, which does not bound an immersed teardrop, and which is primitive as an element in $\pi_1(\surf)$.
\item a grading $\grad:[0,1]\times \bR/\bZ\to \bP(T\surf)$ as in Definition \ref{def-graded-arc}.
\end{itemize}
Two graded loops $(\gamma,\grad)$ and $(\gamma',\grad')$ are equivalent if $\gamma =\gamma'$ and $\grad$ and $\grad'$ are homotopic.
\end{definition}

\begin{remark}

If $\gamma:\bR/\bZ\to \surf$ is a closed loop, then using the trivialisation $\Psi_{\eta}$ of the circle bundle $\bP(T\surf)$, one obtains a map $\Psi_\eta\circ\dot{\gamma}: [0,1]\to \surf\times \bR/\bZ$ given by  $t\mapsto (\gamma(t),\theta (t))$ where $\theta(0)=\theta(1)$. By lifting $\theta$ to a continuous map $\tilde{\theta}:[0,1]\to\bR$, we obtain an integer $\tilde{\theta}(1)-\tilde{\theta}(0)$ which is the winding number $w_\eta(\gamma)$ of the curve $\gamma$ with respect to the line field $\eta$.

Now let $(\gamma,\grad)$ be a graded loop, by setting $\Psi_{\eta}(\grad(s,t))=(\gamma(t),\theta_s(t))\in \surf\times \bR/\bZ$, we have $\theta_s(0)=\theta_s(1)$ for any $s\in [0,1]$. By lifting $\grad$ to $\tilde{\grad}:[0,1]\times \bR\to \surf\times \bR$, 

\[\xymatrix{[0,1]\times \bR \ar[d]\ar[rr]^{\tilde{\grad}} & & \surf\times \bR\ar[d]\\ [0,1]\times \bR/\bZ\ar[r]_{\grad} & \bP(T\surf)\ar[r]^\sim_{\Psi_{\eta}} & \surf\times \bR/\bZ}\]
one obtains a continous map $\tilde{\grad}(s,t)=(\gamma(t),\tilde{\theta}_s(t))$.  Therefore the map $s\mapsto \tilde{\theta}_s(1)-\tilde{\theta}_s(0)$ is continuous with codomain $\bZ$, hence it is constant. Since $\tilde{\theta}_0(1)=\tilde{\theta}_0(0)=0$, we obtain $w_\eta(\gamma)=0$. Therefore a graded loop has always winding number $0$. Conversely, if $\gamma$ is a closed loop with winding number $0$, then one can construct a grading on $\gamma$.    

\end{remark}

To $(\gamma,\grad)$ a graded loop, and $V$ a finite dimensional $k[x]$-module, one can associate an object in $\Tw(\cA_{\bS})$ for a well-chosen full system of arcs, a precise construction will be given in the next subsection.

\begin{theorem}(\cite[Thm 4.3]{HaidenKatzarkovKontsevich} (see also \cite[Thm 2.12]{OpperPlamondonSchroll}, and \cite[Definition 3.24]{OpperZvonareva} for surfaces with unmarked boundary components)\label{thm HKK indec obj})
Let $(\surf,\bM,\eta)$ be a graded marked surface and let $\bS$ a full system of graded arcs. 
The indecomposable objects of the category $\H^0\Tw(\cA_{\bS})$ are in bijection with the following sets:

\begin{itemize}
\item the set of graded arcs, up to orientation and regular homotopy;
\item the set of primitive graded loops with indecomposable $k[x]$-module, up to orientation, regular free homotopy and isomorphism on $k[x]$-module.
\end{itemize}
This bijection is independent of the choice of $\bS$ via the canonical Morita equivalences and commute with $[1]$. In other words, if $(\gamma,\grad)$ is a graded arc (resp. $(\gamma,\grad,V)$ a graded loop with $k[x]$-module) and if $X$ is the corresponding object in $\cF(\surf)$, then the object corresponding to $(\gamma,\grad[1])$ (resp. $(\gamma,\grad[1],V)$) is isomorphic to $X[1]$.
\end{theorem}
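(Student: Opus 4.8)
The statement is \cite[Theorem~4.3]{HaidenKatzarkovKontsevich} (see also \cite[Theorem~2.12]{OpperPlamondonSchroll}, and \cite[Definition~3.24]{OpperZvonareva} for unmarked boundary components), so the plan is to recall the structure of its proof. The overall strategy is to replace $\cF(\surf)$ by the perfect derived category of an explicit graded gentle algebra, to classify its indecomposable objects by combinatorial means, and then to translate that combinatorics back into graded curves on $\surf$.

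First, using Theorem~\ref{thm HKK Morita equivalence} one fixes a convenient full arc system $\bS$, which reduces the problem to describing the indecomposable objects of $\H^0\Tw(\cA_{\bS})$. One then observes that the cohomology algebra $\Lambda_{\bS}:=\H^*\cA_{\bS}$, with the composition induced by $\mu^2$, is a graded gentle algebra whose Gabriel quiver has the arcs of $\bS$ as vertices and the $\bM$-segments between consecutive arcs as arrows, and that $\H^0\Tw(\cA_{\bS})$ is triangle-equivalent to $\per\Lambda_{\bS}$ --- the higher products $\mu^n$, which only contribute morphisms factoring through a disc cut out by $\bS$, being harmless for this.

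Next one invokes the classification of indecomposable objects of $\per\Lambda$ for a graded gentle algebra $\Lambda$: they are the \emph{string complexes}, indexed by reduced homotopy strings, and the \emph{band complexes}, indexed by a homotopy band together with an indecomposable $k[x]$-module. This is the graded refinement of the Bekkert--Merklen description, obtained for instance by the functorial filtration method in the homotopy category of projective complexes, or by directly computing the relevant $\Hom$ and $\End$ spaces. It then remains to interpret this data geometrically. A reduced homotopy string is a walk in the Gabriel quiver of $\Lambda_{\bS}$ alternating arcs of $\bS$ and $\bM$-segments; such a walk is realized by an immersed arc $\gamma$ on $\surf$, transverse to $\bS$, whose homotopy class is determined by the walk, and the integers recording the cohomological position of each arc in the string complex are exactly the values of $\theta_1(0)$ specifying a grading $\grad$ on $\gamma$ --- the $\mu^1$-closedness of the complex being precisely the condition that $\grad$ is well defined. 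A homotopy band gives instead a closed walk, hence a primitive immersed loop $\gamma:\bS^1\to\surf$, and the existence of a compatible grading forces $w_\eta(\gamma)=0$, as in the Remark preceding the statement; the $k[x]$-module attached to the loop is the one labeling the band complex. Matching the equivalence relations on the two sides --- reduced strings up to reversal against arcs up to orientation and regular homotopy, primitive bands up to rotation and reversal against loops up to orientation and free regular homotopy --- yields the stated bijection. Independence of $\bS$ holds because the canonical Morita equivalences $\Psi_{\bS\to\bS'}$ act on strings by the evident relabeling when an arc is added or deleted, and compatibility with $[1]$ is immediate, since replacing $\grad$ by $\grad[1]$ lowers every value of $\theta_1(0)$ by one, which is exactly the shift of the associated string or band complex.

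The hard part is the geometric translation just described, together with its graded bookkeeping. One must verify that passing from a homotopy string (resp.\ band) to a graded immersed curve descends to a genuine \emph{bijection} on equivalence classes: self-intersections of $\gamma$ must correspond to the appropriate repetitions in the walk, the base-point and orientation ambiguities of a band must match the symmetries of the $k[x]$-module data, the vanishing of the winding number must be exactly the obstruction to gradability, and all of this must remain compatible with the degree shifts produced by $\grad\mapsto\grad[1]$. Checking these compatibilities carefully --- so that no spurious curves appear and none are missing --- is where the real work lies.
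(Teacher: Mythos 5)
This statement is not proved in the paper at all: it is quoted verbatim from the literature (\cite[Thm 4.3]{HaidenKatzarkovKontsevich}, with \cite[Thm 2.12]{OpperPlamondonSchroll} and \cite[Definition 3.24]{OpperZvonareva} for the variants used here), so there is no internal argument to compare your proposal against. What you wrote is a reasonable reconstruction of the strategy of the cited references, and it is closest in spirit to the Opper--Plamondon--Schroll route: pass to the perfect derived category of a graded gentle algebra via a formal generator, classify indecomposables there by (graded) homotopy strings and bands in the style of Bekkert--Merklen, and then build the dictionary with graded curves, checking gradability (winding number zero for loops), the matching of equivalence relations, independence of the arc system, and compatibility with the shift. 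Haiden--Katzarkov--Kontsevich's own argument is organized somewhat differently (a direct classification of twisted complexes over the arc-system category), but the output is the same, so as a summary of ``how this is proved in the literature'' your outline is acceptable; of course it is only a sketch, since the two genuinely hard ingredients you defer --- the string/band classification for graded gentle algebras and the careful curve dictionary --- are precisely the content of the cited theorems.

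One inaccuracy worth correcting: it is not true that for an arbitrary full arc system $\bS$ the higher products are ``harmless'' and $\H^0\Tw(\cA_{\bS})\simeq \per(\H^*\cA_{\bS})$ with the full endomorphism algebra of $\bS$. A full system contains boundary arcs and cuts the surface into fully marked discs, so $\cA_{\bS}$ has nonvanishing $\mu^{\geq 3}$ (the disc sequences), and $\cA_{\bS}$ is not formal. The correct reduction, and the one used in this paper in Section~\ref{Section5}, is to choose a formal generator inside $\Tw\cA_{\bS}$ coming from a dissection (a subsystem cutting the surface into pieces with exactly one unmarked boundary segment), on which all higher products do vanish; its graded endomorphism algebra is the graded gentle algebra, and Morita invariance then gives $\H^0\Tw(\cA_{\bS})\simeq\per$ of that algebra. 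With that adjustment your outline matches the standard argument.
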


\subsection{Explicit construction of the bijection for certain curves}

In this subsection, we give some concrete computation of certain objects through the map $\Psi_{\bS\to \bS'}$ where $\bS$ and $\bS'$ are full systems of graded arcs. These concrete computations will be useful in Section \ref{Section4}. 

\subsubsection{Case of strings}

\begin{lemma}\label{CalculString1}
Let $\bS$ be a full system of arcs containing a triangle $X_0$, $X_1$ and $X_2$ and denote by $a_0$, $a_1$ and $a_2$ the corresponding $\bM$-segments.
\begin{figure}[!h]
 \[
  \scalebox{0.8}{
  \begin{tikzpicture}[>=stealth]
  \draw[dark-green,thick,->]  (1,0) arc (0:90:1);
  \draw[dark-green,thick,->] (0,3) arc (-90:0:1);
  \draw[dark-green,thick,->] (4,4) arc (-180:-90:1);
  \node at (1,3) {$a_1$};
  \node at (4.5,3.5) {$a_2$};
  \node at (0.5,0.5) {$a_0$};
  \draw[blue] (0.5,0.85)--node[midway,left]{$X_1$}(0.5,3.15);
  \draw[blue] (0.85,3.5)--node[midway,above]{$X_2$}(4.15,3.5);
  \draw[blue] (0.7,0.7)--node[midway,right,below]{$X_0$}(4.3,3.3);
  \end{tikzpicture}}\]
\caption{}
\end{figure}

 Then the object $X_0[0]$ is isomorphic in $H^0(\Tw\cA_{\bS})$ to the twisted complex
$$X:\xymatrix@C=2cm{X_1[|a_0|]\ar[r]^{s^{\|a_1\|}\otimes a_1} & X_2[-|a_2|]}.$$
Inverse isomorphisms are given by $s^{|a_0|}\otimes a_0$ and $(-1)^{\|a_1\|\|a_2\|}s^{|a_2|}\otimes a_2$.
\end{lemma}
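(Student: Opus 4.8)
The statement claims that the object $X_0[0]$ is isomorphic in $H^0(\Tw\cA_{\bS})$ to the two-term twisted complex $X$, with explicit inverse isomorphisms. The natural strategy is: produce the candidate morphisms $f\colon X_0[0]\to X$ and $g\colon X\to X_0[0]$ in $\Tw\cA_{\bS}$, check they are cocycles (i.e.\ closed under the twisted differential $\mu^1_{\Tw}$), and then verify that $g\circ f$ and $f\circ g$ are the respective identities up to coboundary. Since $X$ is a twisted complex with only two summands and differential $\delta = s^{\|a_1\|}\otimes a_1$ placed in a single slot, all the relevant $\mu^n_{\Tw}$ reduce to finite sums involving at most one copy of $\delta$, so the computation is entirely finite.

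First I would write $f = s^{|a_0|}\otimes a_0$ as a morphism from $X_0$ to the first summand $X_1[|a_0|]$ of $X$ (and zero into the second summand), and $g$ as the pair whose component to $X_0$ from $X_2[-|a_2|]$ is $(-1)^{\|a_1\|\|a_2\|}s^{|a_2|}\otimes a_2$ (and zero from $X_1[|a_0|]$). The key geometric input is that $X_0,X_1,X_2$ bound a triangle, so that the triple $(\gamma_{X_1},a_1,\gamma_{X_2},a_2,\gamma_{X_0},a_0)$ (in some cyclic order) encloses a disc; this is exactly the configuration feeding the definition of $\mu^3_{\cA}$, giving relations such as $\mu^3_{\cA}(a_0, a_2, a_1) = \pm 1_{X_?}$ and the analogous identities with an extra $\bM$-segment appended. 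I would enumerate which of $a_0a_1$, $a_1a_2$, $a_2a_0$ are composable $\bM$-segments (from the picture, adjacent ones compose, opposite orientations force $\mu^2$ to vanish on the ``wrong'' pairs), then compute $\mu^1_{\Tw}(f)$ and $\mu^1_{\Tw}(g)$: each is a sum over insertions of $\delta$, e.g.\ $\mu^1_{\Tw}(f) = \mu^1_{\add\bZ\cA}(f) + \mu^2_{\add\bZ\cA}(\delta, f)$, and the $\mu^1$ term vanishes since $\mu^1_{\cA}=0$, while the $\mu^2$ term vanishes because $a_0a_1$ is not a composable $\bM$-segment into the relevant target. So $f,g$ are cocycles.

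Next I would compute the composite $g\circ f$, which by the $\Tw$-composition formula is $\mu^2_{\Tw}(g,f)$ plus insertions of $\delta$; the surviving term is $\mu^3_{\add\bZ\cA}(g,\delta,f)$, and using the sign conventions for $\add\bZ\cA$ (the Koszul sign $(-1)^{\sum |\phi_i|\|a_j\|}$) together with the composition sign $(-1)^{|f|}$ in $H^*\cA$ and the triangle relation for $\mu^3_{\cA}$, this collapses to $1_{X_0}$ — this is where the precise sign $(-1)^{\|a_1\|\|a_2\|}$ in $g$ is forced. Dually, $f\circ g = \mu^2_{\Tw}(f,g) + \ldots$; here one gets a diagonal endomorphism of $X = X_1[|a_0|]\oplus X_2[-|a_2|]$ whose $(1,1)$ and $(2,2)$ entries come out to $1_{X_1}$ and $1_{X_2}$ from the two relevant $\mu^3_{\cA}$ triangle relations, while the off-diagonal $(2,1)$ entry (a degree-$0$ map $X_1[|a_0|]\to X_2[-|a_2|]$, i.e.\ a multiple of $a_1$) need not vanish on the nose but is a coboundary: it equals $\mu^1_{\Tw}$ applied to a homotopy built from $1_{X_2}\otimes 1$ or $1_{X_1}\otimes 1$ composed appropriately. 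I would exhibit that homotopy explicitly.

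\textbf{Main obstacle.} The genuine difficulty is bookkeeping of signs — threading the Stasheff/Koszul signs of $\mu^n_{\add\bZ\cA}$, the $(-1)^{|a|}$ in the definition of $\mu^2_{\cA}$ on $\bM$-segments, the $(-1)^{|a_1|}$ in the cohomological composition, and the shift conventions $s^m$ — so that $g\circ f$ lands on $1_{X_0}$ exactly and $f\circ g$ is $1_X$ up to a coboundary, rather than off by a sign. The geometry (triangle $\Rightarrow$ disc $\Rightarrow$ $\mu^3$ relations) is immediate from Theorem~\cite[Prop 3.1]{HaidenKatzarkovKontsevich} as recalled above; the real work is checking that the stated signs, and only those, make the two composites correct, which is why the lemma records the inverse isomorphisms with their signs explicitly.
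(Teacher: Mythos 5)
Your proposal is correct and takes essentially the same route as the paper: write down $s^{|a_0|}\otimes a_0$ and $(-1)^{\|a_1\|\|a_2\|}s^{|a_2|}\otimes a_2$, check they are closed, and evaluate both composites through the triangle relations $\mu^3_{\cA}(a_2,a_1,a_0)=1_{X_0}$, $\mu^3_{\cA}(a_1,a_0,a_2)=1_{X_2}$, $\mu^3_{\cA}(a_0,a_2,a_1)=1_{X_1}$, using $|a_0|+|a_1|+|a_2|=1$ to collapse the Koszul signs. The only difference is your anticipated off-diagonal homotopy term in the endomorphism of $X$: the sole insertion of $\delta$ that could produce such a component is $\mu^4_{\bZ\cA}(\delta,f,g,\delta)$, whose underlying $\mu^4_{\cA}(a_1,a_0,a_2,a_1)$ vanishes (the arguments are not pairwise distinct segments bounding a disc), so as in the paper both composites are exactly $(-1)^{\|a_1\|\|a_2\|}$ times identities and no explicit homotopy is needed.
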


\begin{proof}
The space $\Hom_{\Tw\cA}(X_0[0],X)$ is generated by 
$s^{|a_0|}\otimes a_0$ which has degree zero. So we have 
$$H^0(\Hom_{\Tw\cA}(X_0,X))={\rm vect} (s^{|a_0|}\otimes a_0).$$ 
The space $\Hom_{\Tw\cA}(X,X_0[0])$ is generated by 
$s^{|a_2|}\otimes a_2$ which has degree zero, so we have  $$H^0(\Hom_{\Tw\cA}(X,X_0))={\rm vect} (s^{|a_2|}\otimes a_2).$$

Now we compute 
\begin{eqnarray*} \mu^2_{\Tw\cA}(s^{|a_2|}\otimes a_2,s^{|a_0|}\otimes a_0) & = & \mu^3_{\mathbb Z \cA} (s^{|a_2|}\otimes a_2,s^{\|a_1\|}\otimes a_1,s^{|a_0|}\otimes a_0)\\ & = & (-1)^\dagger s^0\otimes \mu^3_{\cA}(a_2,a_1,a_0) \\ & =  & (-1)^\dagger s^0\otimes 1_{X_0},
\end{eqnarray*}
where the sign $\dagger$ is given by 
$\dagger =\|a_2\|(\|a_1\|+|a_0|)+\|a_1\| |a_0|.$ Using the fact that $|a_0|+|a_1|+|a_2|=1$ we obtain
$$\mu^2_{\Tw\cA}(s^{|a_2|}\otimes a_2,s^{|a_0|}\otimes a_0)=(-1)^{\|a_1\|\|a_2\|}s^0\otimes 1_{X_0}.$$

We also compute 
\begin{eqnarray*} \mu^2_{\Tw\cA}(s^{|a_0|}\otimes a_0,s^{|a_2|}\otimes a_2) & = & \mu^3_{\mathbb Z \cA} (s^{\|a_1\|}\otimes a_1, s^{|a_0|}\otimes a_0,s^{|a_2|}\otimes a_2)\\ & & \hspace{1cm} + \mu^3_{\mathbb Z \cA} ( s^{|a_0|}\otimes a_0,s^{|a_2|}\otimes a_2,s^{\|a_1\|}\otimes a_1)\\ & = & (-1)^\dagger s^0\otimes \mu^3_{\cA}(a_1,a_0,a_2) +(-1)^{*}s^0\otimes \mu_\cA^3(a_0,a_2,a_1) \\ & =  & (-1)^\dagger s^0\otimes 1_{X_2}+(-1)^*s^0\otimes 1_{X_1}
\end{eqnarray*}
where the signs are given by 
$$\dagger =\|a_1\| (|a_0|+|a_2|)+\|a_0\| |a_2| \quad \textrm {and}\quad *=\|a_0\|(|a_2|+\|a_1\|)+\|a_2\|\|a_1\|.$$
Then using the fact that $|a_0|+|a_1|+|a_2|=1$ we obtain
$\mu^2_{\Tw\cA}(s^{|a_0|}\otimes a_0,s^{|a_2|}\otimes a_2)=(-1)^{\|a_1\|\|a_2\|}s^0\otimes 1_{X}$.

\end{proof}

\subsubsection{Case of bands}

Let $(\gamma,\grad)$ be a graded simple loop, and $\lambda\in K^*$. Assume that $\bS$ is a full system of arcs containing graded simple arcs $X,Y,Z_1$ and $Z_2$ as in the following picture. 

\begin{figure}[!h]
 \[
  \scalebox{0.8}{
  \begin{tikzpicture}[scale=1.2,>=stealth]
  \draw[dark-green,thick,->]  (1,0) arc (0:90:1);
  \draw[dark-green,thick,->] (0,3) arc (-90:0:1);
  \draw[dark-green,thick,->] (4,4) arc (-180:-90:1);
  \draw[dark-green,thick,->] (5,1) arc (90:180:1);
  \node at (0.5,2.8) {$c_2$};
  \node at (1.2,3.2) {$a_1$};
  \node at (4.5,1.2) {$c_1$};
  \node at (3.8,0.7) {$a_2$};
  \node at (4.5,3.5) {$b_1$};
  \node at (0.5,0.5) {$b_2$};
  \draw[blue] (0,1)--node[midway,left]{$X$}(0,3);
  \draw[blue] (0.85,3.5)--node[midway,above]{$Z_1$}(4.15,3.5);
  \draw[blue] (0.7,3.3)--node[midway,right,above]{$Y$}(4.3,0.7);
  \draw[blue] (0.85,0.5)--node[midway,below]{$Z_2$}(4.15,0.5);
  \draw[blue] (5,1)--node[midway,right]{$X$}(5,3);
  \draw[thick,->,purple] (0,1.5)--node[midway,left, inner sep=2pt,fill=white]{$(\gamma,\grad)$}(5,1.5);
  \draw[purple,->] (0.5,1.5) arc (0:90:0.5);
  \node[purple, inner sep=1pt, fill=white] at (0.3,1.7) {$p$};
  \node[purple, inner sep=1pt, fill=white] at (3.1,1.7) {$q$};
  \draw[purple,->] (3.5,1.5) arc (0:120:0.5);
  
  \end{tikzpicture}}\]
  \caption{}
\end{figure}

We denote by $p$ (resp. $q$) the index between $(\gamma,\grad)$ and $X$ (resp. $Y$) and define the following object in $\Tw (\cA_{\bS})$
\begin{equation}\label{defBand}
\xymatrix@C=2cm{B_{(\gamma,\grad,\lambda,\bS)}: X[p]\ar[rr]^-{s^{\|c\|}\otimes (c_1+\lambda (-1)^{\|c\|} c_2)} & & Y[q-1].}
\end{equation}

\begin{lemma}\label{calculBand1}
 The twisted complex $B:=B_{(\gamma,\grad,\lambda,\bS)}$ is isomorphic in $\H^0(\Tw(\cA_{\bS}))$ to the following twisted complexes : 
\begin{itemize}
\item[$B_1:=$] $ \xymatrix@C=2cm{X[p]\ar[rr]^-{s^{\|c\|}\otimes(\lambda^{-1}c_1+ (-1)^{\|c\|} c_2)} & & Y[q-1].}$
\item[$B_2:=$] $ \xymatrix@C=2.5cm{X[p]\ar@/_/[rr]_{-\lambda^{-1}s^{-1}\otimes 1_X}\ar[r]^{s^{\|a_1c_2\|}\otimes a_1c_2} & Z_1[p-|b_1|]\ar[r]^{s^{\|b_1\|}\otimes b_1} & X[p-1]}$
 \item[$B_3:=$] $ \xymatrix@C=2.5cm{Y[q]\ar@/_/[rr]_{-\lambda^{-1}s^{-1}\otimes 1_Y}\ar[r]^{s^{\|a_1\|}\otimes a_1} & Z_1[p-|b_1|]\ar[r]^{s^{\|c_2b_1\|}\otimes c_2b_1} & Y[q-1]}$
  \item[$B_4:=$] $ \xymatrix@C=2.5cm{X[p]\ar@/_/[rr]_{(-\lambda)s^{-1}\otimes 1_X}\ar[r]^{s^{\|a_2c_1\|}\otimes a_2c_1} & Z_2[p-|b_2|]\ar[r]^{s^{\|b_2\|}\otimes b_2} & X[p-1]}$
   \item[$B_5:=$] $ \xymatrix@C=2.5cm{Y[q]\ar@/_/[rr]_{(-\lambda)s^{-1}\otimes 1_Y}\ar[r]^{s^{\|a_2\|}\otimes a_2} & Z_1[p-|b_1|]\ar[r]^{s^{\|c_1b_2\|}\otimes c_1b_2} & Y[q-1]}$
\end{itemize} 
 
\end{lemma}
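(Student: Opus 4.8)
The plan is to prove the five isomorphisms in two stages. First I would handle $B_1$, which is the easiest: it differs from $B$ only by rescaling the differential. Explicitly, the map $B\to B_1$ given by the pair of identity morphisms $1_X$ on $X[p]$ (in degree $0$) and $\lambda^{-1}\cdot 1_Y$ on $Y[q-1]$ is a morphism of twisted complexes — one checks that the square commutes because $\lambda^{-1}(c_1 + \lambda(-1)^{\|c\|}c_2) = \lambda^{-1}c_1 + (-1)^{\|c\|}c_2$ — and it is clearly invertible in cohomology, with inverse given by $1_X$ and $\lambda\cdot 1_Y$. This is really just the observation that rescaling one summand of a two-term twisted complex does not change its isomorphism class.

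Next, for $B_2$ I would apply \Cref{CalculString1} (Lemma on strings). The idea is that $X$, $Z_1$, $Y$ bound a triangle in the arc system (with $\bM$-segments $a_1$, $b_1$, and the segment realizing $c_2$ appropriately oriented around the triangle $X,Z_1,Y$), so by that lemma the object $Y$ (suitably shifted) is isomorphic in $H^0(\Tw\cA_\bS)$ to a two-term complex $X \xrightarrow{} Z_1$ built from $a_1 c_2$ and $b_1$ — more precisely $Y[q-1]$ is quasi-isomorphic to the cone-type complex appearing as the ``tail'' of $B_2$. Substituting this resolution of $Y[q-1]$ into $B$ and then simplifying (using that the composite of the differential of $B$ with the quasi-isomorphism produces the curved arrow $-\lambda^{-1}s^{-1}\otimes 1_X$ via the triangle relation $\mu^3$ together with the normalization from \Cref{CalculString1}) yields exactly $B_2$. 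One must be careful to track the signs: the inverse isomorphisms in \Cref{CalculString1} carry a factor $(-1)^{\|a_1\|\|a_2\|}$, and combined with the sign conventions for $\mu^2$ and $\mu^3$ in $\Tw\cA$ this is where the precise coefficients $-\lambda^{-1}$ appear. The complexes $B_3$, $B_4$, $B_5$ are obtained by the same procedure but resolving the other vertex of the relevant triangle: $B_3$ resolves $X$ using the triangle $Y,Z_1,X$; $B_4$ resolves $X$ using the triangle $X,Z_2,Y$ on the lower side of the picture (hence the coefficient $-\lambda$ rather than $-\lambda^{-1}$, coming from the $c_1$-term of the differential of $B$); and $B_5$ resolves $Y$ using that same lower triangle.

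I expect the main obstacle to be bookkeeping rather than conceptual: assembling the twisted-complex quasi-isomorphisms from \Cref{CalculString1} and pushing them through the cone construction requires carefully iterating the higher multiplications $\mu^n_{\Tw\cA}$ (which themselves involve summing over insertions of the differential), and the various Koszul signs $(-1)^{\|a_i\|\,\|a_j\|}$, $(-1)^{|a|}$ from $\mu^2$, together with the shifts $s^\ell$, must be reconciled so that the coefficients come out to be exactly $-\lambda^{-1}$ and $-\lambda$ as stated. A clean way to organize this is to prove a single ``substitution lemma'': given the data of \Cref{CalculString1} and a twisted complex $W$ with a summand equal to one of $X_0,X_1,X_2$, replacing that summand by the corresponding two-term complex yields an isomorphic twisted complex, with explicit formulas for the new differential entries. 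Then $B_2$ through $B_5$ all follow by applying this substitution lemma to $B$ (or to $B_1$) with the appropriate triangle, and verifying in each case that the induced curved arrow is the scalar multiple of the relevant identity claimed — a computation that, while tedious, is entirely mechanical once the substitution lemma is in place. Since each $B_i$ is independently shown to be isomorphic to $B$, transitivity is not needed, though one gets it for free.
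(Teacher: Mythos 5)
Your proposal takes a genuinely different route from the paper's. The paper proves $B\cong B_2$ by direct verification: it identifies the degree-zero cocycles $f=\lambda\, s^0\otimes 1_X+(-1)^{\|a_1\|\|c\|}s^{|a_1|}\otimes a_1$ and $g=s^0\otimes 1_X+\lambda(-1)^{\|c\|}s^{|c|}\otimes c_1$, checks they are $\mu^1_{\Tw\cA}$-closed, and computes $\mu^2(g,f)=\lambda\, 1_B$ and $\mu^2(f,g)=\lambda\, 1_{B_2}$ using the disc relations, so that $\lambda^{-1}f$ and $g$ are inverse isomorphisms; the other cases are declared completely similar (and $B_1$ is indeed just a rescaling, as you observe). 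You instead invoke Lemma~\ref{CalculString1} to resolve one endpoint object of $B$ and then use the triangulated structure of $H^0(\Tw\cA_{\bS})$ (a cone is unchanged, up to isomorphism, when one vertex is replaced along an isomorphism and the attaching map is transported), followed by a diagonal rescaling. This is sound and more modular -- it explains uniformly where all five models come from, one per choice of resolved vertex and triangle -- but it does not avoid the computational core: to obtain the precise entries $a_1c_2$, $-\lambda^{-1}1_X$, etc., you must evaluate $\mu^2_{\Tw\cA}$ of the Lemma~\ref{CalculString1} isomorphism against $c_1+\lambda(-1)^{\|c\|}c_2$, and this is exactly the $\mu^3$ disc computation the paper performs; the paper's direct check also has the side benefit of exhibiting the explicit inverse morphisms at the chain level.

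Two slips in your bookkeeping, neither fatal to the method: (i) for $B_2$ the relevant triangle, bounded by $Y$, $Z_1$ and $X$, has $\bM$-segments $a_1$, $b_1$ and $c_1$ -- not $c_2$; the segment $c_2$ enters only through the composite $\mu^2(a_1,c_2)=\pm a_1c_2$ when the differential of $B$ is pushed across the isomorphism. (ii) Your assignments for the lower triangle are interchanged: $B_4$ (underlying objects $X$, $Z_2$, $X$) arises by resolving the \emph{target} $Y[q-1]$, and its curved coefficient $-\lambda$ comes from the $\lambda(-1)^{\|c\|}c_2$-term via $\mu^3(b_2,a_2,c_2)=\pm 1_X$, not from the $c_1$-term; $B_5$ (underlying objects $Y$, $Z_2$, $Y$ -- the $Z_1[p-|b_1|]$ in the statement is presumably a typo for $Z_2[p-|b_2|]$, as the arrows $a_2$ and $c_1b_2$ indicate) arises by resolving the \emph{source} $X[p]$.
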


\begin{proof}
First note that we have the following equalities on the degrees:
$$|c_i|+p+(1-q)=1 \quad \textrm{and} \quad |a_i|+|b_i|+|c_i|=1 \quad \textrm{ for }i=1,2.$$

We prove the isomorphism between $B$ and $B_2$. The other cases are completely similar.

\medskip

First note that the degree $0$ subspace of the graded space $\Hom_{\Tw\cA}(B,B_2)$ is generated by $s^0\otimes 1_X:X[p]\to X[p]$ and $s^{|a_1|}\otimes a_1:Y[q-1]\to Z_1[q-1+|a_1|]=Z_1[p-|b_1|]$ shown in the following diagram.

$$\xymatrix@C=3cm{X[p]\ar[d]_{s^0\otimes 1_X}\ar[r]^-{s^{\|c\|}\otimes (c_1+\lambda (-1)^{\|c\|} c_2)} &  Y[q-1]\ar[d]^{s^{|a_1|}\otimes a_1} & \\X[p]\ar@/_/[rr]_{-\lambda^{-1}s^{-1}\otimes 1_X}\ar[r]^{s^{\|a_1c_2\|}\otimes a_1c_2} & Z_1[p-|b_1|]\ar[r]^{s^{\|b_1\|}\otimes b_1} & X[p-1] .}$$

\noindent
Then we compute

\begin{eqnarray*} \mu^1_{\Tw \cA}(s^0\otimes 1_X) & = & \mu^2_{\bZ\cA}(-\lambda^{-1} s^{-1}\otimes 1_X,s^0\otimes 1_X)+\mu^2_{\bZ\cA}( s^{\|a_1c_2\|}\otimes a_1c_2,s^0\otimes 1_X)\\ & = & -\lambda^{-1}s^{-1}\otimes 1_X+ s^{\|a_1c_2\|}\otimes a_1c_2.\end{eqnarray*}

\begin{eqnarray*}
\mu^1_{\Tw\cA}(s^{|a_1|}\otimes a_1) & = & (-1)^{\|c\|}\lambda \mu^2_{\bZ\cA}(s^{|a_1|}\otimes a_1,s^{\|c\|}\otimes c_2)+\mu^3_{\bZ\cA}(s^{\|b_1\|}\otimes b_1,s^{|a_1|}\otimes a_1,s^{\|c\|}\otimes c_1) \\  & = & (-1)^{\|c\|}(-1)^{\|a_1\|.\|c\|}s^{|a_1|+\|c\|}\otimes \mu^2_{\cA}(a_1,c_2) + (-1)^\dagger s^{-1}\otimes \mu^3_\cA(b_1,a_1,c_1) \\ & = & -\lambda (-1)^{\|a_1\|.\|c\|}s^{|a_1|+\|c\|}\otimes a_1c_2 +(-1)^\dagger s^{-1}\otimes 1_X.
\end{eqnarray*}
The sign $\dagger$ is given by 
$$\dagger= \|b_1\| (|a_1|+\| c\|)+\|a_1\|.\|c\|=\|a_1\|.\|c\|.$$
Therefore we obtain that $f:=\lambda s^0\otimes 1_X +(-1)^{\|a_1\|.\|c\|}s^{|a_1|}\otimes a_1$ is in the kernel of $\mu^1_{\Tw \cA}$. Moreover, there is no degree $-1$ morphism from $B$ to $B_2$, therefore we have 
$$H^0 (\Hom_{\Tw\cA_\bS}(B,B_2))={\rm vect} (f).$$

\medskip

Now the degree $0$ subspace of the graded space $\Hom_{\Tw\cA}(B_2,B)$ is generated by $s^0\otimes 1_X:X[p]\to X[p]$, $s^{|b_1|}\otimes b_1:Z_1[p-|b_1|]\to X[p]$, and $s^{|c|}\otimes c_i:X[p-1]\to Y[q-1]$ for $i=1,2$ as shown in the following diagram

$$\xymatrix@C=3cm{X[p]\ar@/^2pc/[rr]^{-\lambda^{-1}s^{-1}\otimes 1_X}\ar[d]_{s^0\otimes 1_X}\ar[r]^{s^{\|a_1c_2\|}\otimes a_1c_2} & Z_1[p-|b_1|]\ar[dl]^{s^{|b_1|}\otimes b_1}\ar[r]^{s^{\|b_1\|}\otimes b_1} & X[p-1]\ar[dl]^{s^{|c|}\otimes c_i} \\ X[p]\ar[r]_-{s^{\|c\|}\otimes (c_1+\lambda (-1)^{\|c\|} c_2)} &  Y[q-1] & .}$$

Then we compute :

\begin{eqnarray*}
\mu^1(s^0\otimes 1_X) & = & \mu^2(s^{\|c\|}\otimes (c_1+(-1)^{\|c\|}\lambda c_2),s^0\otimes 1_X) \\ & = &  s^{\|c\|}\otimes (c_1+(-1)^{\|c\|}\lambda c_2).
\end{eqnarray*}

\begin{eqnarray*}
\mu^1(s^{|c|}\otimes c_1) & = & \mu^3(s^{|c|}\otimes c_1,s^{\|b_1\|}\otimes b_1,s^{\|a_1c_2\|}\otimes a_1c_2) -\lambda^{-1}\mu^2(s^{|c|}\otimes c_1,s^{-1}\otimes 1_X)\\ & = & (-1)^{\|c\|}s^{\|c\|}\otimes \mu_3(c_1,b_1,a_1c_2) -\lambda^{-1}(-1)^{\|c\|}s^{\|c\|}\otimes c_1 \\ & = & -s^{\|c\|}\otimes c_2 -\lambda^{-1} (-1)^{\|c\|}s^{\|c\|}\otimes c_1 
\end{eqnarray*}

Therefore $g:=s^0\otimes 1_X+\lambda (-1)^{\|c\|c}s^{|c|}\otimes c_1$ is in the kernel of $\mu^1_{\Tw\cA}$. Since there are no degree $-1$ morphism from $B_2$ to $B$, we have

$$H^0(\Hom_{\Tw\cA}(B_2,B))={\rm vect}(g).$$

Finally we have

\begin{eqnarray*}
\mu^2_{\Tw\cA}(g,f) & = & \lambda \mu^2_{\bZ\cA}(s^0\otimes 1_X,s^0\otimes 1_X) +(-1)^{\|c\| |a_1|}\lambda \mu^3_{\bZ\cA}(s^{|c|}\otimes c_1,s^{\|b_1\|}\otimes b_1,s^{|a_1|}\otimes a_1) \\
 & = & \lambda (s^0\otimes 1_X +s^0\otimes 1_Y)= \lambda 1_{B},
\end{eqnarray*}

and

\begin{eqnarray*}
\mu^2(f,g) & = & \lambda \mu^2(s^0\otimes 1_X,s^0\otimes 1_X)+ (-1)^{\|a_1\||c|}\lambda \left(\mu^3 (s^{|a_1|}\otimes a_1,s^{|c|}\otimes c_1,s^{\| b_1\|}\otimes b_1)+ \right.\\ & & \left.\qquad \mu^3(s^{|c|}\otimes c_1,s^{\| b_1\|}\otimes b_1,s^{|a_1|}\otimes a_1)\right)\\ &= & \lambda (s^0\otimes 1_X+s^0\otimes 1_{Z_1}+s^0\otimes 1_Y)= \lambda 1_{B_2}.
\end{eqnarray*}

So $\lambda^{-1}f$ and $g$ are inverse isomorphisms from $B$ to $B_2$ in $H^0(\Tw\cA_{\bS})$. 

\end{proof}

This lemma implies that we have $\Psi_{\bS\to \bS'}(B_{(\gamma,\grad,\lambda,\bS)})\simeq B_{(\gamma,\grad,\lambda,\bS')}$ for any full system of graded arcs~$\bS$ and~$\bS'$ containing $Z_1$ and $Z_2$. In particular, we have this isomorphism for $\bS':=(\bS\setminus X)\cup X'$ where $X'$ is the graded arc corresponding to $X[1]$. Indeed in order to pass from $\bS$ to $\bS'$ we use the following two inclusions of full arc systems
$$\xymatrix{ \bS & \bS'':= \bS\setminus X\ar@{_(->}[l]\ar@{^(->}[r] & \bS'}$$
By definition we have 
$$\xymatrix@C=2cm{B_{(\gamma,\grad,\lambda,\bS')}:= X'[p-1]\ar[rr]^-{s^{\|c'\|}\otimes (c'_1+\lambda (-1)^{\|c'\|} c'_2)} & & Y[q-1],}$$
 where $\|c'\|=\|c\|-1$. By the previous lemma it is isomorphic in $\Tw(\cA_{\bS'})$ to the twisted complex  
$$ \xymatrix@C=2.5cm{Y[q]\ar@/_/[rr]_{-\lambda^{-1}s^{-1}\otimes 1_Y}\ar[r]^{\lambda s^{\|a_1\|}\otimes a_1} & Z_1[p-1-|b'_1|]\ar[r]^{s^{\|c'_2b'_1\|}\otimes c'_2b'_1} & Y[q-1]}.$$ 
Since we have the equality  $c'_1b'_2=c_1b_2$ as $\bM$-segments, and since $|b'_1|=|b_1|+1$ we obtain
that $\Psi_{\bS'\to \bS''}(B_{(\gamma,\grad,\lambda,\bS')})$ is isomorphic to
$$ \xymatrix@C=2.5cm{Y[q]\ar@/_/[rr]_{-\lambda^{-1}s^{-1}\otimes 1_Y}\ar[r]^{ s^{\|a_1\|}\otimes a_1} & Z_1[p-|b_1|]\ar[r]^{s^{\|c_2b_1\|}\otimes c_2b_1} & Y[q-1]}$$ 
viewed as an object in $\Tw(\cA_{\bS''})$.  Therefore, we have 
$$\Psi_{\bS\to\bS''}(B_{(\gamma,\grad,\lambda,\bS)})\simeq \Psi_{\bS'\to\bS''}(B_{(\gamma,\grad,\lambda,\bS')}) \quad \textrm{in }\Tw(\cA_{\bS''}).$$

\section{Topological Fukaya category of a surface with $\bZ/2\bZ$-action}\label{Section4}

\subsection{$G$-invariant graded marked surfaces}

In the rest of the paper,  the group $G$ will be assumed to be $G=\bZ/2\bZ$, written multiplicatively with generator~$g$.  Moreover, we assume that~$\surf$ has no completely marked boundary components (since we are mainly interested in finite-dimensional algebra).

\begin{definition}\label{def:Gsurf}
A \emph{$G$-graded marked surface} $(\surf,\bM,\eta,g)$ is a graded marked surface $(\surf,\bM,\eta)$ together with an orientation-preserving automorphism $g$  of $\surf$ of order $2$ such that $g(\bM)=\bM$ and $g^*(\eta)=\eta$. We assume that $g$ has only finitely many fixed points.
\end{definition}

The group $G=\langle g\rangle$ acts on the set of graded arcs and curves. More precisely, if $(\gamma,\grad)$ is a graded arc on $(\surf,\bM,\eta,g)$, then so is $(g_*(\gamma),g^*(\grad))$. A \emph{$G$-invariant graded arc} is an arc $(\gamma,\grad)$ such that $(\gamma^{-1},\grad^{-1})=(g_*(\gamma),g^*(\grad))$. Note that there are no arcs satisfying $g_*(\gamma)=\gamma$, otherwise~$g$ would act as the identity. 

A \emph{full $G$-invariant arc system} $\bS$ is a full arc system $\bS$ which is globally invariant by the action of $G$. It consists of pairs of graded arcs $\{(\gamma,\grad),(g_*(\gamma),g^*(\grad)\}$ and of $G$-invariant graded arcs.
The action of $G$ on $\bS$ induces a strict action of $G$ on the~$A_\infty$-category $\cA:=\cA_\bS$. Our aim is now to understand the indecomposable objects in the category $H^0(\Tw ((\cA* G)_+))$. By Theorem  \ref{theo::comm-square} they are in natural bijection with the indecomposable objects in the category $H^0((\Tw \cA) *G)_+$ which is the split-closure of the category $H^0((\Tw \cA) *G)$.

Therefore, from now on, in order to make the notations less heavy, we will write~$H^0(\Tw \cA *G)_+$ instead of  $H^0((\Tw \cA) *G)_+$ or $H^0(\Tw ((\cA* G)_+))$ when discussing indecomposable objects.

The results of Section \ref{Section2} can be summarized as follows for $X$ an object in $\Tw \cA$. 

\begin{itemize}
\item If $gX$ is not isomorphic to $X$ in $H^0(\Tw \cA)$, then the object $\{X\}:=(\{X\},1_X\otimes 1_G)$ is indecomposable in $H^0(\Tw \cA *G)_+$. 

\item If $gX$ is isomorphic to $X$ in $H^0(\Tw \cA)$, and if there is a morphism $\varphi\in\Hom_{\Tw \cA}(g X, X)$ satisying $\mu^2_{\Tw\cA}(\varphi,g \varphi)=1_X$ then $e_X^{\pm \varphi}:=\frac{1}{2}(1_X\otimes 1_G\pm \varphi\otimes g)$ is an idempotent of the algebra $\End_{H^0(\Tw\cA *G)}(\{X\})$. We denote by $X^{\pm\varphi}$ the object $(\{X\},e_X^{\pm \varphi})$. It is an indecomposable object in $H^0(\Tw \cA *G)_+$, and we have $\{X\}\simeq X^{\varphi}\oplus X^{-\varphi}$. The objects $X^{\pm\varphi}$ will be called the tagged objects.

\end{itemize}

Moreover, by Theorem \ref{thm HKK Morita equivalence} if $\bS'$ is another $G$-invariant full system of graded arcs, then we have a canonical equivalence 
$$(\Psi_{\bS\to \bS'}*G)_+: H^0(\Tw \cA*G)_+\longrightarrow H^0(\Tw\cA'*G)_+.$$ where $\cA':=\cA_{\bS'}$. 

Our aim in the rest of the section is to describe some objects of the form $X^{\pm\varphi}$ and their image through the functors $(\Psi_{\bS\to \bS'}*G)_+$.

\subsection{$G$-invariant strings}

Let $X=(\gamma,\grad)$ be a $G$-invariant graded simple arc, and let $\bS$ be a $G$-invariant full system of graded arcs containing $X$. Such a system always exists. Then we are in the situation where $gX=X$. We denote by $X^\pm$ the object $X^{\pm 1_X}$. The next proposition tells that the assignment of the sign is independent of the choice of $\bS$. 

\begin{proposition} If $\bS$ and $\bS'$ are two $G$-invariant full systems of graded arcs containing $X$, then the equivalence 
$$ (\Psi_{\bS\to \bS'}*G)_+ : H^0(\Tw \cA*G)_+\longrightarrow H^0(\Tw \cA'*G)_+$$ sends the object $X^+$ to the object $X^+$ (resp. $X^-$ to $X^-$).
\end{proposition}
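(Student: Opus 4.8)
The plan is to reduce the statement to a local computation, using the fact that any two $G$-invariant full systems containing $X$ can be connected through a sequence of $G$-invariant full systems each obtained from the previous by adding or deleting a $G$-orbit of arcs (or a single $G$-invariant arc), together with the functoriality $\Psi_{\bS'\to \bS''}\circ \Psi_{\bS\to \bS'} = \Psi_{\bS\to \bS''}$ from Theorem~\ref{thm HKK Morita equivalence}. Since $X$ is fixed throughout, the canonical equivalence $(\Psi_{\bS\to \bS'}*G)_+$ sends $\{X\}$ to $\{X\}$; the content is therefore to check that under this identification the idempotent $e_X^{+1_X} = \tfrac{1}{2}(1_X\otimes g + 1_X\otimes g)$... more precisely $e_X^{\pm} = \tfrac12(1_X\otimes 1_G \pm 1_X\otimes g)$ is preserved, i.e. that the induced automorphism of $\End_{H^0(\Tw\cA*G)}(\{X\})$ does not swap the two idempotents. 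Equivalently, writing the equivalence as conjugation by an isomorphism $\Theta : \{X\}_{\bS} \to \{X\}_{\bS'}$ in $H^0(\Tw(\cA*G)_+)$, I must show $\Theta \circ (1_X\otimes g) = (1_X\otimes g) \circ \Theta$ rather than $\Theta \circ (1_X\otimes g) = -(1_X\otimes g)\circ \Theta$.

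The key step is to track the sign through the elementary moves. For adding/deleting a $G$-orbit $\{Y, gY\}$ of non-$G$-invariant arcs, the isomorphism $\Psi_{\bS\to \bS'}$ fixes $X$ literally (the morphism $1_X$ goes to $1_X$, cf. the Remark after Theorem~\ref{thm HKK Morita equivalence} on how $s^\ell\otimes f$ behaves under inclusion), and since $g$ permutes $Y$ and $gY$, the $G$-equivariant structure is transported without any twist; so these moves are harmless. The delicate case, exactly as flagged in the paragraph preceding the proposition and illustrated in the band computation at the end of Section~\ref{Section3}, is when one replaces a $G$-invariant arc $W$ in $\bS\setminus X$ by $W[1]$ — here the strict isomorphism $\bZ\cA_{\bS'}\to \bZ\cA_{\bS}$ introduces a sign $(-1)^\ell$ on morphisms $s^\ell\otimes ab$ passing through $W$. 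I need to verify that when this sign appears, it appears symmetrically on the pieces of $\{X\}$ and its $g$-twist, so that the identification of $\End(\{X\})$ with $K[G]$ is sign-coherent; concretely, because $W$ is $G$-invariant and $X$ is $G$-invariant, any $\bM$-segment contributing to an isomorphism $X_{\bS}\simeq X_{\bS'}$ as a twisted complex comes in a $g$-symmetric pair, and the extra signs cancel in the comparison $\Theta(1_X\otimes g)$ vs. $(1_X\otimes g)\Theta$.

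The main obstacle I expect is precisely this bookkeeping of Koszul signs in the $W\mapsto W[1]$ move: one must be careful that the equivalence $(\Psi_{\bS\to\bS'}*G)_+$ is genuinely $G$-equivariant and that the sign $(-1)^\ell$ does not accumulate asymmetrically between the two halves of the idempotent decomposition $\{X\}\simeq X^+\oplus X^-$. A clean way to organize this is to fix once and for all a "standard" $G$-invariant system $\bS_0$ containing $X$ in which $X$ sits inside an explicit $G$-invariant local configuration (e.g. as in the string picture of Lemma~\ref{CalculString1}, made $G$-symmetric), compute $X^+$ and $X^-$ there explicitly as twisted complexes with their idempotents, and then show that each elementary move from $\bS_0$ preserves the labelling by checking it on that explicit model; the functoriality of $\Psi$ then gives the result for arbitrary $\bS, \bS'$. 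I would also note that it suffices to treat the case where $\bS \subset \bS'$ (and its reverse), since a general pair is connected by a zigzag through $\bS\cap\bS'$ or through a common refinement, and the sign assignments compose.
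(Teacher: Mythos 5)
Your strategy is essentially the paper's proof: one passes from $\bS$ to $\bS'$ by adding and deleting arcs so that every intermediate system is full, $G$-invariant and contains $X$, and since each such canonical equivalence satisfies $\Psi(1_X)=1_X$ (and fixes $1_X\otimes g$), it sends $e_X^{\pm}$ to $e_X^{\pm}$, hence $X^{\pm}$ to $X^{\pm}$. The extra case you worry about, replacing a $G$-invariant arc $W\neq X$ by $W[1]$, is not needed: the canonical equivalences are generated by inclusions of arc systems alone, so the Koszul-sign bookkeeping you flag never arises and the two-line argument suffices.
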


\begin{proof} One can pass from $\bS$ to $\bS'$ by adding and deleting arcs so that in each step we have a $G$-invariant full system containing $X$.  Then since $\Psi_{\bS\to \bS'}(1_X)=1_X$, we have $(\Psi_{\bS\to \bS'}*G) (e_X^+)=(\Psi_{\bS\to \bS'}*G) \big(\frac{1}{2} (1_X\otimes 1_G + 1_X\otimes g)\big) = \frac{1}{2}(\Psi_{\bS\to \bS'}(1_X)\otimes 1_G + \Psi_{\bS\to \bS'}(1_X)\otimes g) = e_X^+$ and we get the result.
\end{proof}

\begin{proposition}\label{prop:endoX+}
Let $X=(\gamma,\grad)$ be a $G$-invariant graded simple arc, and $\mathbb S$ be a full system of graded $G$-invariant graded arcs. Let $X^+$ and $X^-$ be the indecomposable objects in $H^0(\Tw\cA*G)_+$ as defined above. Then in the category $H^0(\Tw\cA*G)_+$ we have 
$$ \dim_K \Hom(X^\epsilon,X^{\epsilon'}[\ell]) = \left\{\begin{array}{cc} 1 & \textrm{ if } \ell=0 \textrm{ and } \epsilon=\epsilon' \\ 0 & \textrm{ else}.\end{array} \right.$$

\end{proposition}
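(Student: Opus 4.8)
The plan is to compute $\Hom_{H^0(\Tw\cA*G)_+}(X^\epsilon,X^{\epsilon'}[\ell])$ by first identifying it as a subspace of $\Hom_{H^0(\Tw\cA*G)}(\{X\},\{X\}[\ell])$ cut out by the idempotents $e_X^{\epsilon}$ and $e_X^{\epsilon'}$, and then computing that larger space directly. Since $X=(\gamma,\grad)$ is a $G$-invariant \emph{simple arc} lying in the full system $\bS$, the object $X$ in $\Tw\cA$ is just the generator $X_i$ (concentrated in a single degree), so $\Hom_{H^0\Tw\cA}(X,X[\ell])$ is one-dimensional for $\ell=0$ (spanned by $1_X$) and zero otherwise — this is immediate from the definition of $\cA_\bS$ (the only $\bM$-segment from a simple arc to itself that is counted here is the identity, because an honest non-identity $\bM$-segment would change the degree, and $X$ being $G$-invariant forces the relevant degree shift). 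Consequently, by the definition of morphism spaces in $\cA*G$,
\[
\Hom_{H^0(\Tw\cA*G)}(\{X\},\{X\}[\ell]) \;=\; H^\ell\!\Big(\bigoplus_{h\in G}\Hom_{\Tw\cA}(hX,X[\ell])\Big)\;=\;\Hom_{\Tw\cA}(X,X[\ell])\oplus \Hom_{\Tw\cA}(gX,X[\ell])
\]
in cohomology, which is $0$ unless $\ell=0$, and is $2$-dimensional for $\ell=0$, with basis $\{1_X\otimes 1,\ 1_X\otimes g\}$ (using $gX=X$ on the nose in $\bS$, so $g\cdot 1_X = 1_X$).

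Next I would pin down the algebra structure of $E:=\End_{H^0(\Tw\cA*G)}(\{X\})\cong K[1_X\otimes 1]\oplus K[1_X\otimes g]$. From the skew-group multiplication formula $\mu^2_{\cA*G}(a_2\otimes g_2,a_1\otimes g_1)=\mu^2_{\cA}(a_2,g_2\cdot a_1)\otimes g_2g_1$ together with $g\cdot 1_X=1_X$ and the unitality axioms for $\mu^2$, one gets $(1_X\otimes g)\cdot(1_X\otimes g)=1_X\otimes 1$ (up to the sign bookkeeping in the definition of composition in $H^*\cA$, which I should check does not disturb this), so $E\cong K[G]=K\times K$ as a $K$-algebra (here we use $\mathrm{char}\,K\neq 2$), with the two primitive orthogonal idempotents being exactly $e_X^{\pm} = \tfrac12(1_X\otimes 1 \pm 1_X\otimes g)$. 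Therefore $X^+$ and $X^-$ are precisely the two direct summands of $\{X\}$ given by these idempotents, $\End(X^{\epsilon})=e_X^{\epsilon}E e_X^{\epsilon}\cong K$, and $\Hom(X^{+},X^{-})=e_X^{-}E e_X^{+}=0$ — this handles $\ell=0$. For $\ell\neq 0$ the ambient space $\Hom_{H^0(\Tw\cA*G)}(\{X\},\{X\}[\ell])$ already vanishes, so all four $\Hom(X^\epsilon,X^{\epsilon'}[\ell])$ vanish. Finally, passing to the faithful split-closure $H^0(\Tw\cA*G)_+$ changes nothing, since by \cref{lemm::existence-split-closure}(2) the functor to the split-closure is fully faithful on $H^0$, and the objects $X^{\epsilon}=(\{X\},e_X^{\epsilon})$ are direct summands of $\{X\}$ with $\Hom(X^\epsilon,X^{\epsilon'}[\ell])=e_X^{\epsilon'}\Hom_{H^0(\Tw\cA*G)}(\{X\},\{X\}[\ell])e_X^{\epsilon}$.

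The main obstacle I anticipate is not conceptual but bookkeeping: one must be careful that the naive identification $\Hom_{H^0\Tw\cA}(X,X[\ell])=0$ for $\ell\neq 0$ is correct even though $X$ is $G$-invariant and might \emph{a priori} intersect itself or its translate in interior points — but since $X$ is an element of the full arc system $\bS$, it is one of the chosen generators $X_i$, and morphisms in $\cA_\bS$ between generators are spanned only by $\bM$-segments (boundary paths), of which the only degree-$0$ one from $X_i$ to itself is the identity; so this is fine. The second delicate point is the sign in $(1_X\otimes g)^2 = 1_X\otimes 1$: the composition in $H^*\cA$ is $[b][a]=(-1)^{|a|}\mu^2(b,a)$ and the $\mu^2_{\cA*G}$ formula involves $g\cdot 1_X$; since everything is in degree $0$ all signs are trivial, and the identity-like behavior of $1_X$ under $\mu^2$ gives the claim, but I would write this out explicitly to be safe. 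Once these two points are checked, the proposition follows from elementary representation theory of the group algebra $K[\bZ/2\bZ]$.
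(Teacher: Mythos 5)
Your proposal is correct and follows essentially the same route as the paper: both arguments rest on the fact that the $G$-invariant simple arc $X$ has only scalar endomorphisms and no shifted self-morphisms in $H^0\Tw\cA$, so that $\Hom(\{X\},\{X\}[\ell])$ is two-dimensional for $\ell=0$ and zero otherwise, and then split this space by the idempotents $e_X^{\pm}$. The only cosmetic difference is that you identify the degree-zero endomorphism algebra explicitly as $K[\bZ/2\bZ]\cong K\times K$, whereas the paper concludes by a dimension count using that each summand $X^{\pm}$ carries an identity.
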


\begin{proof}
The object $X$ in $H^0\Tw\cA$ is a brick, it has no higher self extension, and a one dimensional Hom-space. Hence in the category $H^0(\Tw\cA*G)_+$ we have 
$$\dim_K \End (X^+\oplus X^-)= 2\ \textrm{ and } \Hom(X^+\oplus X^-,(X^+\oplus X^-)[\ell])= 0 \textrm{ for } \ell\neq 0.$$
Since $1_X\otimes \frac{1_G\pm g}{2}$ is the identity from $X^\pm$ to itself, we get the result.
\end{proof}

For $\bS$ a $G$-invariant full system of graded arcs, we can then easily compute the morphism spaces between the objects in $(\cA*G)_+$.

\begin{proposition}Let $\bS$ be a full system of graded arcs which is $G$-invariant, and $\cA$ the corresponding $A_\infty$-category. Let $X$ and $Y$ be arcs in $\bS$.

\begin{itemize}
\item
if $X$ and $Y$ are not $G$-invariant, then $\dim_K \Hom_{H^0(\Tw\cA*G)_+} (\{X\},\{Y\}[\ell])=\dim_K \Hom_{H^0(\Tw\cA)}(X\oplus gX,Y[\ell]).$
\item if $X$ is $G$-invariant and $Y$ is not, then $\dim_K \Hom_{H^0(\Tw\cA*G)_+} (X^\pm,\{Y\}[\ell])=\dim_K \Hom_{H^0(\Tw\cA)}(X,Y[\ell]).$
\item if $Y$ is $G$-invariant and $X$ is not, then $\dim_K \Hom_{H^0(\Tw\cA*G)_+} (\{X\},Y^\pm[\ell])=\dim_K \Hom_{H^0(\Tw\cA)}(X,Y[\ell]).$
\item if $X$ and $Y$ are both $G$-invariant then $\dim_K \Hom_{H^0(\Tw\cA*G)_+} (X^\pm,Y^\pm[\ell])=\dfrac{1}{2}\dim_K \Hom_{H^0(\Tw\cA)}(X,Y[\ell]).$

\end{itemize}
\end{proposition}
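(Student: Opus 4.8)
The plan is to compute everything directly from the definition of~$\cA*G$. The four objects in question all lie over the single object~$\{X\}$ or~$\{Y\}$ of~$\cA*G$, so the key observation is that the relevant morphism space carries commuting left and right actions of the group algebra~$K[G]$. Since~$X,Y$ are arcs of~$\bS$ we have~$\mu^1_{\cA}=0$, hence~$\mu^1_{\cA*G}=0$, so for every~$\ell$ the space~$M^\ell:=\Hom^\ell_{\cA*G}(\{X\},\{Y\})=\bigoplus_{h\in G}\Hom^\ell_{\cA}(hX,Y)$ equals its own cohomology, and (faithful split closure) it is~$\Hom_{H^0(\Tw\cA*G)_+}(\{X\},\{Y\}[\ell])$. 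The element~$1_X\otimes g\in\End_{H^0(\cA*G)}(\{X\})$ squares to the identity (strict unitality together with~$g^2=1$, the~$A_\infty$ sign being absorbed in the cohomological composition), and similarly for~$1_Y\otimes g$; thus~$M=\bigoplus_\ell M^\ell$ is a~$(K[G],K[G])$-bimodule via post- and precomposition with these elements. A routine computation with~$\mu^2_{\cA*G}$ shows that the right action of~$1_X\otimes g$ sends the summand~$\Hom_{\cA}(hX,Y)$ isomorphically onto~$\Hom_{\cA}(hgX,Y)$ (via the identity, since~$hgX$ is again one of the two objects at hand), that the left action of~$1_Y\otimes g$ sends it onto~$\Hom_{\cA}(ghX,Y)$ via~$g_*$, and that the two actions commute. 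In particular~$M$ is free of rank~$\dim_K\Hom_{\cA}(X,Y)$ as a left~$K[G]$-module and as a right~$K[G]$-module.

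For the first three bullets this already suffices. If neither~$X$ nor~$Y$ is~$G$-invariant, the two objects are~$\{X\}$ and~$\{Y\}$ with no idempotent, and~$M^\ell=\Hom^\ell_{\cA}(X,Y)\oplus\Hom^\ell_{\cA}(gX,Y)$, which, as~$X,gX,Y\in\bS$ and~$\mu^1=0$, is exactly~$\Hom_{H^0(\Tw\cA)}(X\oplus gX,Y[\ell])$. If~$X$ is~$G$-invariant and~$Y$ is not, then~$gX=X$, $X^\pm=(\{X\},e_X^\pm)$ with~$e_X^\pm=\tfrac12(1_X\otimes 1_G\pm 1_X\otimes g)$ the two primitive idempotents of~$K[G]$, and~$\Hom(X^\pm,\{Y\}[\ell])=M^\ell\cdot e_X^\pm$; since~$M^\ell$ is right~$K[G]$-free of rank~$\dim_K\Hom^\ell_{\cA}(X,Y)$ and~$K[G]\cdot e_X^\pm\cong K$, this has dimension~$\dim_K\Hom^\ell_{\cA}(X,Y)=\dim_K\Hom_{H^0(\Tw\cA)}(X,Y[\ell])$. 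The case~$Y$ ~$G$-invariant and~$X$ not is symmetric, using the left action and~$\Hom(\{X\},Y^\pm[\ell])=e_Y^\pm\cdot M^\ell$.

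The fourth bullet is the substantial one. Assume~$X\neq Y$ are both~$G$-invariant, so~$gX=X$, $gY=Y$, and the above identifications give~$M\cong V\otimes_K K[G]$ with~$V:=\Hom_{\cA}(X,Y)$, the right action being the regular one and the left action of the nontrivial element being~$g_*\otimes(\text{left regular})$, where~$g_*$ is a genuine involution of~$V$. Splitting~$V=V^+\oplus V^-$ into the~$(\pm1)$-eigenspaces of~$g_*$ yields~$M\cong(V^+\otimes K[G])\oplus(V^-\otimes K[G])$, where the left~$K[G]$-action is the regular one on the first summand and the~$(-1)$-twist of it on the second; consequently~$e_Y^{\epsilon'}\,M\,e_X^{\epsilon}\cong V^+$ when~$\epsilon=\epsilon'$ and~$\cong V^-$ when~$\epsilon\neq\epsilon'$. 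It therefore remains to show~$\dim_K V^{+,\ell}=\dim_K V^{-,\ell}=\tfrac12\dim_K\Hom^\ell_{\cA}(X,Y)$ for all~$\ell$, i.e. that~$\operatorname{tr}(g_*|_{V^\ell})=0$. A basis of~$V^\ell$ is given by the~$\bM$-segments of degree~$\ell$ from~$X$ to~$Y$ (no identity occurs, as~$X\neq Y$), $g_*$ permutes this basis, and this permutation has no fixed point: a~$g$-invariant~$\bM$-segment would be an embedded subinterval of~$\partial\surf$ fixed setwise, hence pointwise, by the orientation-preserving involution~$g$, contradicting the finiteness of~$\operatorname{Fix}(g)$ from Definition~\ref{def:Gsurf}. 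Thus~$g_*$ acts freely on the basis, its trace is~$0$, and the formula follows. (When~$X=Y$ the statement is Proposition~\ref{prop:endoX+}.)

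I expect the only non-formal ingredient to be this last point — the freeness of the~$G$-action on~$\bM$-segments between two distinct~$G$-invariant arcs — which rests squarely on~$g$ being orientation-preserving with finitely many fixed points; the first three bullets are bookkeeping with~$\mu^2_{\cA*G}$ and the idempotents of~$K[G]$. Some care with the~$A_\infty$ signs is also needed, in order to see that the left~$K[G]$-action on~$M$ is through the honest involution~$g_*$ and not a signed variant, so that the eigenspace decomposition of~$g_*$ makes sense.
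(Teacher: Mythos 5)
Your proof is correct and follows essentially the same route as the paper's: both reduce to computing $e_Y^{\epsilon'}\,\Hom_{\cA*G}(\{X\},\{Y\})\,e_X^{\epsilon}$ directly from the basis of $\bM$-segments, with the halving in the doubly $G$-invariant case coming from the involution $a\mapsto ga$ on that basis. Your $K[G]$-module packaging and the explicit verification that this involution has no fixed points (which the paper leaves implicit, deferring to similar computations in the skew-gentle algebra literature) are refinements of detail rather than a different argument.
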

 
\begin{proof}
By definition we always have \begin{eqnarray*}\dim_K \Hom_{H^0(\Tw\cA*G)_+} (\{X\},\{Y\}[\ell]) & = & \dim_K \Hom_{H^0(\Tw\cA*G)} (\{X\},\{Y\}[\ell])\\ & = & \dim_K \Hom_{H^0(\Tw\cA)}(X\oplus gX,Y[\ell]).\\ \end{eqnarray*}
Now for the second item, if $X$ is $G$-invariant and $Y$ is not, then the space   $$\Hom_{H^0(\Tw\cA*G)_+} (X^\pm,\{Y\}[\ell])$$ is generated by elements of the form $s^0\otimes a\otimes \frac{1\pm g}{2}=\mu^2(s^0\otimes a\otimes 1_G, e_X^{\pm})$ where $a$ is an $\bM$-segment from $X$ to $Y$.
The third item is dual.  

Finally, assume that $X$ and $Y$ are both $G$-invariant. Then note that the dimension of $\Hom_{H^0(\Tw\cA)}(X,Y[\ell])$ is even since for any morphism $s^0\otimes a$ from $X$ to $Y$, $s^0\otimes ga$ is also a morphism from $X$ to $Y$. Then for $\epsilon,\epsilon'\in \{+,-\}$, the space $\Hom (X^\epsilon,Y^{\epsilon '}[\ell])$ is generated by elements of the form 
$$\mu^2 (e_X^{\epsilon},\mu^2(s^0\otimes a\otimes 1_G,e_Y^{\epsilon'})),$$ where $a$ is an $\bM$-segment from $X$ to $Y$. 
We refer to \cite[Prop. 4.3]{AmiotBrustle} for similar computations in the algebra setting.
\end{proof}

\subsection{Morphisms between $G$-invariant strings intersecting in their middle}

Now we would like to understand the morphisms between the objects $X^\pm$ and $Y^\pm$ when the corresponding curves intersect at the orbifold point. Since the curves do intersect in the interior of $\surf$, one has first to describe the objects $X^\pm$ in the category $H^0(\Tw\cA_{\bS}*G)_+$ where $\bS$ does not contain the arc $X$.

Assume that we have a $G$-invariant full system of arcs $\bS$ containing the following graded arcs~$X,X_1$ and~$X_2$, and denote by $\cA:=\cA_{\bS}$ the corresponding $A_\infty$-category. 

\begin{figure}[!h]
 \[
  \scalebox{0.8}{
  \begin{tikzpicture}[scale=1.2,>=stealth]
  \draw[dark-green,thick,->]  (1,0) arc (0:90:1);
  \draw[dark-green,thick,->] (0,3) arc (-90:0:1);
  \draw[dark-green,thick,->] (4,4) arc (-180:-90:1);
  \draw[dark-green,thick,->] (5,1) arc (90:180:1);
  \node at (1,3) {$a_1$};
  \node at (4,1) {$ga_1$};
  \node at (3.75,3.25) {$a_2$};
  \node at (4.25,2.75) {$ga_0$};
  \node at (1.25,0.75) {$ga_2$};
  \node at (0.75,1.25) {$a_0$};
  \draw[blue] (0.5,0.85)--node[midway,left]{$X_1$}(0.5,3.15);
  \draw[blue] (0.85,3.5)--node[midway,above]{$X_2$}(4.15,3.5);
   \draw[blue] (0.7,0.7)--node[midway,right,below]{$X$}(4.3,3.3);
  
  \draw[blue] (0.85,0.5)--node[midway,below]{$gX_2$}(4.15,0.5);
  \draw[blue] (4.5,0.85)--node[midway,right]{$gX_1$}(4.5,3.15);
 
 \node[red] at (2.5,2) {$\times$};
  
  \end{tikzpicture}}\]
  \caption{}
\end{figure}

We have $|a_0|+|a_1|+|a_2|=1$. Moreover since the system is $G$-invariant, for each $i=0,1,2$, we have $|a_i|=|ga_i|$.

We define the following object in $\Tw \cA$
$$ \xymatrix{\tilde{X}: & X_1[|a_0|]\ar[rr]^{s^{\|a_1\|}\otimes a_1} & &X_2[-|a_2|]}.$$ Denote by $\varphi$ the morphism in $\Hom_{\Tw \cA}( g\tilde{X}, \tilde{X}) $ defined as follows

$$ \xymatrix{g\tilde{X}\ar[d]^\varphi: & gX_1[|a_0|]\ar[rr]^{s^{\|a_1\|}\otimes ga_1} & &gX_2[-|a_2|]\ar[dll]^-{(-1)^{|a_0|}s^{-\|a_1\|}\otimes a_0ga_2}\\
\tilde{X}: &  X_1[|a_0|]\ar[rr]_{s^{\|a_1\|}\otimes a_1} & &X_2[-|a_2|]}.$$

Then using the fact that 

$$\mu_\cA^4(a_1,a_0ga_2,ga_1,ga_0a_2)=1_{X_2} \textrm{ and } \mu_\cA^4(a_0ga_2,ga_1,ga_0a_2,a_1)=1_{X_1},$$ we compute 
\begin{eqnarray*}
\mu^2_{\Tw\cA}(\varphi, g\varphi)  & = & \mu^4_{\bZ\cA}(s^{\|a_1\|}\otimes a_1,s^{-\|a_1\|}\otimes a_0 ga_2, s^{\|a_1\|}\otimes ga_1,s^{-\|a_1\|}\otimes ga_0 a_2) \\ & & + \mu^4_{\bZ\cA}(s^{-\|a_1\|}\otimes a_0 ga_2, s^{\|a_1\|}\otimes ga_1,s^{-\|a_1\|}\otimes ga_0 a_2, s^{\|a_1\|}\otimes a_1)\\ & = & s^0\otimes 1_{X_1}+ s^0\otimes 1_{X_2} = 1_{\tilde{X}}.
\end{eqnarray*}
In a similar fashion we have $\mu^2_{\Tw \cA}(g\varphi,\varphi)=1_{g\tilde{X}}.$
Therefore, we have two indecomposable objects in $H^0(\Tw\cA*G)_+$ that we can denote by $\tilde{X}^{\varphi}$ and $\tilde{X}^{-\varphi}$. 

\begin{proposition}\label{prop:X epsilon independant}
With the setup above, we have an isomorphism  between $X^+$ and $\tilde{X}^{\varphi}$ (resp. between $X^-$ and $\tilde{X}^{-\varphi}$) in the category $H^0(\Tw\cA *G)_+$. 
\end{proposition}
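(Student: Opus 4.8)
The plan is to compare the two indecomposable objects $X^+$ and $\tilde X^{\varphi}$ inside a common $A_\infty$-category by enlarging the arc system. Concretely, let $\bS_0 := \bS \cup \{X\}$, which is again a $G$-invariant full arc system (it contains $X$, $X_1$, $X_2$ and their $G$-translates), and let $\cA_0 := \cA_{\bS_0}$. Then $X^+$ is an object of $H^0(\Tw\cA_0 * G)_+$ defined via the idempotent $e_X^{+} = \tfrac12(1_X\otimes 1_G + 1_X\otimes g)$ on $\{X\}$, whereas $\tilde X^{\varphi}$ is defined in $H^0(\Tw\cA*G)_+$ via the idempotent $e^{\varphi}_{\tilde X} = \tfrac12(1_{\tilde X}\otimes 1_G + \varphi\otimes g)$. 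Since passing from $\bS$ to $\bS_0$ is a single arc addition, the canonical equivalence $(\Psi_{\bS\to\bS_0}*G)_+$ identifies $H^0(\Tw\cA*G)_+$ with a subcategory of $H^0(\Tw\cA_0*G)_+$, so it suffices to produce an isomorphism in $H^0(\Tw\cA_0*G)_+$.

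The key input is Lemma~\ref{CalculString1}: applied to the triangle $X, X_1, X_2$ with $\bM$-segments $a_0, a_1, a_2$, it gives an isomorphism $\theta : X[0] \xrightarrow{\ \sim\ } \tilde X$ in $H^0(\Tw\cA_0)$, with explicit inverse isomorphisms $s^{|a_0|}\otimes a_0$ and $(-1)^{\|a_1\|\|a_2\|}s^{|a_2|}\otimes a_2$. Because $g$ acts strictly and $\bS_0$ is $G$-invariant, applying $g$ to this isomorphism gives $g\theta : gX[0]\xrightarrow{\ \sim\ } g\tilde X$ built from $ga_0$ and $ga_2$ in exactly the same way. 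The plan is then to show that $\theta$ intertwines the two $G$-equivariant structures, i.e. that the square
\[
\begin{tikzcd}
 gX \ar{r}{g\theta}\ar{d}[swap]{1_X} & g\tilde X \ar{d}{\varphi} \\
 X \ar{r}{\theta} & \tilde X
\end{tikzcd}
\]
commutes in $H^0(\Tw\cA_0)$, i.e. $\mu^2_{\Tw\cA_0}(\theta, 1_X) = \mu^2_{\Tw\cA_0}(\varphi, g\theta)$ up to the relevant sign. Equivalently, $\theta$ gives an isomorphism of objects in $(\Tw\cA_0)*G$ between $(\{X\}, 1_X\otimes g)$ and $(\{\tilde X\}, \varphi\otimes g)$, and hence carries the idempotent $e_X^+$ to $e^{\varphi}_{\tilde X}$ and $e_X^-$ to $e^{-\varphi}_{\tilde X}$; applying the faithful split-closure then yields $X^{\pm}\simeq \tilde X^{\pm\varphi}$.

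Carrying out the verification of that square is the technical heart of the argument. Using the explicit form of $\theta$ (the morphism $s^{|a_0|}\otimes a_0 : X[0]\to X_1[|a_0|]$, i.e. the ``first component'' map into $\tilde X$), one computes $\mu^2_{\Tw\cA_0}(\varphi, g\theta)$ as a sum of $\mu^3_{\bZ\cA_0}$ and $\mu^4_{\bZ\cA_0}$ terms: the component $s^{\|a_1\|}\otimes ga_1$ composed with $s^{|a_0|}\otimes ga_0$ contributes via the structure constant $\mu^3_{\cA}$ or the defining disc relations, while the diagonal component $(-1)^{|a_0|}s^{-\|a_1\|}\otimes a_0 ga_2$ of $\varphi$ composed with the $X_2$-component picks up the relation $\mu^4_{\cA}(\ldots) = 1_{X_2}$ used just above in the text to check $\mu^2(\varphi, g\varphi) = 1_{\tilde X}$. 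The main obstacle I expect is bookkeeping the Koszul signs so that both sides land on $\pm(s^0\otimes 1_{X_1} + s^0\otimes 1_{X_2})$ with matching sign; the relations $|a_0|+|a_1|+|a_2| = 1$ and $|a_i| = |ga_i|$ should make the signs collapse exactly as in the proofs of Lemma~\ref{CalculString1} and Lemma~\ref{calculBand1}. Once the square commutes, one may also need to check the $H^0$ computation is unambiguous, i.e. that $\Hom_{H^0(\Tw\cA_0)}(gX, \tilde X)$ and $\Hom_{H^0(\Tw\cA_0)}(g\tilde X, \tilde X)$ are one-dimensional in degree $0$ so that the equivariant structures are determined up to scalar; this follows since $X$ is a brick with no negative self-extensions, exactly as in Proposition~\ref{prop:endoX+}, and the scalar is pinned down by evaluating on identities.
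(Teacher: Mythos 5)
Your plan is essentially the paper's proof: both rest on the isomorphism $s^{|a_0|}\otimes a_0$ from Lemma~\ref{CalculString1} and on the same single composition, namely checking that $\mu^2_{\Tw\cA}\big(\varphi,\, g(s^{|a_0|}\otimes a_0)\big)=s^{|a_0|}\otimes a_0$; the paper packages this as the sandwich computation $\mu^2\big(e_{\tilde{X}}^{\varphi},\mu^2(s^{|a_0|}\otimes a_0\otimes 1_G,e_X^{\pm})\big)=(1\pm 1)\,s^{|a_0|}\otimes a_0\otimes \frac{1_G\pm g}{4}$ together with indecomposability, which is equivalent to your statement that $s^{|a_0|}\otimes a_0\otimes 1_G$ intertwines $1_X\otimes g$ with $\varphi\otimes g$ and hence conjugates $e_X^{\pm}$ into $e_{\tilde{X}}^{\pm\varphi}$ (note also that in the setup of the proposition the arc $X$ already belongs to $\bS$, so no enlargement of the system is needed). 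One small correction to your sketch of the deferred sign check: the only nonvanishing term in $\mu^2_{\Tw\cA}(\varphi, g\theta)$ is $\mu^3_{\bZ\cA}\big(s^{-\|a_1\|}\otimes a_0ga_2,\, s^{\|a_1\|}\otimes ga_1,\, s^{|a_0|}\otimes ga_0\big)$, obtained by inserting the differential of $g\tilde{X}$, and it uses the disc relation $\mu^3_{\cA}(a_0ga_2,ga_1,ga_0)=a_0$ with Koszul sign $(-1)^{|a_0|}$ cancelling the $(-1)^{|a_0|}$ in $\varphi$ — the relation $\mu^4_{\cA}(\cdots)=1_{X_2}$ is only needed to show $\mu^2(\varphi,g\varphi)=1_{\tilde{X}}$, not for the commuting square.
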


\begin{proof} By Lemma \ref{CalculString1}, we know that $s^0\otimes a_0\otimes 1_G$ gives an isomorphism in $\cF:=H^0(\Tw\cA*G)_+$ between $X^+\oplus X^-$ and $\tilde{X}^{\varphi}\oplus\tilde{X}^{-\varphi}$. Now we have an isomorphism 
$$\Hom_{\cF} (X^\pm,\tilde{X}^{\varphi})=e_{\tilde{X}}^\varphi \Hom_{H^0(\Tw\cA *G)}(X^+\oplus X^-,\tilde{X}^\varphi\oplus \tilde{X}^{-\varphi}) e_X^{\pm},$$ in the category $\cF=H^0(\Tw\cA *G)_+$ where $e_X^\pm=\frac{1}{2}(1_X\otimes 1_G \pm 1_X\otimes g)$ and $e_{\tilde{X}}^\varphi= \frac{1}{2} (1_{\tilde{X}}\otimes 1_G +\varphi\otimes g)$. Therefore it is enough to compute 
$$\mu^2 (e_{\tilde{X}}^\varphi,\mu^2(s^{|a_0|}\otimes a_0\otimes 1_G,e_X^\pm)),$$
in the $A_\infty$-category $(\Tw \cA)*G$ in order to know whether $\tilde{X}^{\varphi}$ is isomorphic to $X^+$ or to $X^-$. 

First we have 
\begin{eqnarray*} \mu^2 (s^{|a_0|}\otimes a_0\otimes 1_G, e_X^\pm) & = & \mu^2_{\Tw\cA}(s^{|a_0|}\otimes a_0, 1_X)\otimes \frac{1_G\pm g}{2}\\ & = & s^{|a_0|}\otimes a_0\otimes \frac{1_G\pm g}{2}.
\end{eqnarray*}

Then we have 
\begin{eqnarray*}
\mu^2 ( e_{\tilde{X}}^{\varphi},s^{|a_0|}\otimes a_0\otimes \frac{1_G\pm g}{2}) & = & \frac{1}{2} \left( \mu^2 (1_{\tilde{X}}\otimes 1_G + (-1)^{|a_0|}s^{-\|a_1\|}\otimes a_0ga_2 \otimes g, s^{|a_0|}\otimes a_0\otimes \frac{1_G\pm g}{2})\right) \\ & = & 
\frac{1}{2} \mu^2_{\Tw \cA} (1_{\tilde{X}},s^{|a_0|}\otimes a_0)\otimes \frac{1_G\pm g}{2} \\ &  & + \frac{1}{2}(-1)^{|a_0|} \mu^2_{\Tw\cA} (s^{-\|a_1\|}\otimes a_0 g a_2,s^{|a_0|}\otimes ga_0)\otimes \frac{g\pm 1_G}{2}.
\end{eqnarray*}

Finally we have 
\begin{eqnarray*} \mu^2_{\Tw\cA} (s^{-\|a_1\|}\otimes a_0 g a_2,s^{|a_0|}\otimes ga_0) & = & \mu^3_{\bZ\cA} (s^{-\|a_1\|}\otimes a_0ga_2, s^{\|a_1\|}\otimes ga_1,s^{|a_0|}\otimes ga_0) \\ & = & (-1)^{\dagger} s^{|a_0|}\otimes \mu^3_{\cA}(a_0ga_2,ga_1,ga_0) \\ & = & (-1)^{\dagger} s^{|a_0|}\otimes a_0.
\end{eqnarray*}
The sign $(\dagger)$ is given (mod $2$) by 
\begin{eqnarray*}\dagger & = &  \|a_0ga_2\| (\|a_1\|+|a_0|)+\|a_1\| |a_0| \\ & = & (\|a_1\|+1) (\|a_1\|+|a_0|)+\|a_1\| |a_0|  = |a_0|.
\end{eqnarray*}
Therefore we obtain  
\begin{eqnarray*}
\mu^2 ( e_{\tilde{X}}^{\varphi},s^{|a_0|}\otimes a_0\otimes \frac{1_G\pm g}{2}) & = &  (1\pm 1) s^{|a_0|}\otimes a_0\otimes \frac{1_G\pm g}{4}.
\end{eqnarray*}
This implies that $\Hom_{\cF}(X^+,\tilde{X}^{\varphi})\neq 0$, and that $\Hom_{\cF}(X^-,\tilde{X}^{\varphi})=0$ and we get the result.
\end{proof}

This proposition permits to compute the morphism space between the objects $X^\pm$ and $Y^\pm$ with $Y$ being a 
$G$-invariant arc intersecting $X$ in the fixed point. 

 \begin{figure}[!h]
  \scalebox{0.8}{
  \begin{tikzpicture}[scale=1.2,>=stealth]
  \draw[dark-green,thick,->]  (1,0) arc (0:90:1);
  \draw[dark-green,thick,->] (0,3) arc (-90:0:1);
  \draw[dark-green,thick,->] (4,4) arc (-180:-90:1);
  \draw[dark-green,thick,->] (5,1) arc (90:180:1);
  \node at (0.75,2.75) {$a'_1$};
  \node at (1.25,3.25) {$a''_1$};
  \node at (4.25,1.25) {$ga'_1$};
  \node at (3.75,0.75) {$ga''_1$};
  
    \node at (3.75,3.25) {$a_2$};
  \node at (4.25,2.75) {$ga_0$};
  \node at (1.25,0.75) {$ga_2$};
  \node at (0.75,1.25) {$a_0$};

  \draw[blue] (0.5,0.85)--node[midway,left]{$X_1$}(0.5,3.15);
  \draw[blue] (0.85,3.5)--node[midway,above]{$X_2$}(4.15,3.5);
  \draw[blue] (0.7,3.3)--(4.3,0.7);
    \draw[blue] (0.7,0.7)--(4.3,3.3);
  \draw[blue] (0.85,0.5)--node[midway,below]{$gX_2$}(4.15,0.5);
  \draw[blue] (4.5,0.85)--node[midway,right]{$gX_1$}(4.5,3.15);
   
   \node[blue, inner sep=1pt, fill=white] at (1.5,2.5) {$Y$};
   
   \node[blue, inner sep=1pt, fill=white] at (3.5,2.5) {$X$};
   
   \draw[red,->]  (3.2,2.5) arc (35:145:0.8);
   \node[red, inner sep=1pt, fill=white] at (2.5,2.8) {$p$};

 \node[red, inner sep=1pt, fill=white] at (2.5,2) {$\times$};

  \end{tikzpicture}}
\caption{Intersection of $G$-invariant graded arcs}\label{figureXandY}  
  \end{figure}
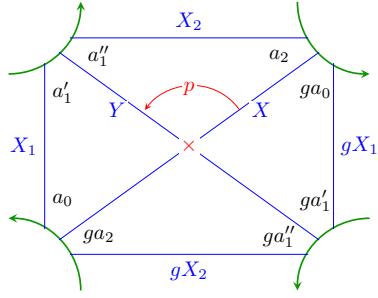

\begin{proposition}\label{prop: morphism X epsilon Y epsilon}
Let $X$ and $Y$ be $G$-invariant graded arcs intersecting in a fixed point as in Figure \ref{figureXandY}. Denote by $p$ the index from $X$ to $Y$. Consider now $\bS$ a $G$-invariant full system of arcs $\bS$ containing $Y$, $X_1$, $X_2$, $gX_1$ and $gX_2$. Let $\tilde{X}^{\varphi}$ be the object in $H^0(\Tw\cA_{\bS}*G)_+$ defined in Proposition \ref{prop:X epsilon independant}. Then in the category $H^0(\Tw\cA_{\bS}*G)_+$ we have the following equalities

\[\dim_K \Hom (\tilde{X}^{\varphi},Y^\pm[\ell]) = \left\{\begin{array}{ll} 1 & \textrm{if }\pm=(-1)^{p} \textrm{ and }\ell=p\\ 0 & \textrm{else.}\end{array}\right.\]
\[\dim_K \Hom (Y^\pm,\tilde{X}^{\varphi}[\ell]) = \left\{\begin{array}{ll} 1 & \textrm{if }\pm=(-1)^{1-p} \textrm{ and }\ell=1-p\\ 0 & \textrm{else.}\end{array}\right.\]
\end{proposition}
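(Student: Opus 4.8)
The strategy is to reduce everything to a computation inside the $A_\infty$-category $(\Tw\cA_{\bS})*G$, exactly as in the proof of Proposition~\ref{prop:X epsilon independant}, and then decorate with the idempotents $e_{\tilde X}^{\varphi}$ and $e_Y^{\pm}$. Concretely, for each degree $\ell$ I would describe a basis of $\Hom_{H^0(\Tw\cA_{\bS}*G)}(\{\tilde X\},\{Y\}[\ell])$ in terms of $\bM$-segments $a$ joining the arcs $X_1,X_2$ (appearing in $\tilde X$) to $Y$, keeping track of the degree shifts $|a_0|$ and $-|a_2|$ built into $\tilde X$; by Proposition~\ref{prop:endoX+}-type reasoning together with Theorem~\ref{thm HKK indec obj}, the total dimension $\dim_K\Hom_{H^0(\Tw\cA)}(\tilde X\oplus g\tilde X,Y[\ell])$ is $2$ when $\ell=p$ (since $X$ and $Y$ are bricks intersecting once, transversally, with index $p$ from $X$ to $Y$, and $\tilde X\simeq X$, $g\tilde X\simeq gX=X$) and $0$ otherwise. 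So the only work is to split this $2$-dimensional space, when $\ell=p$, according to the signs $\varphi$ on $\tilde X$ and $\pm$ on $Y$.

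For the splitting I would proceed as in Proposition~\ref{prop:X epsilon independant}: a generating morphism $\{\tilde X\}\to \{Y\}[p]$ has the form $s^{?}\otimes a\otimes 1_G$ for a suitable $\bM$-segment $a$ (together with its $G$-translate $s^{?}\otimes a\otimes 1_G$ composed with $g$, i.e. involving $ga$, $ga_0ga_2$, etc.), and I compute
\[
\mu^2\big(e_Y^{\pm},\ \mu^2(s^{?}\otimes a\otimes 1_G,\ e_{\tilde X}^{\varphi})\big)
\]
in $(\Tw\cA_{\bS})*G$. Expanding $e_{\tilde X}^{\varphi}=\tfrac12(1_{\tilde X}\otimes 1_G+\varphi\otimes g)$ produces two terms: the ``identity'' term $\tfrac12\,s^{?}\otimes a\otimes\tfrac{1_G\pm g}{2}$, and a ``$\varphi$'' term $\tfrac12(-1)^{|a_0|}\mu^2_{\Tw\cA}(s^{?}\otimes a,\ s^{-\|a_1\|}\otimes a_0ga_2)\otimes \tfrac{g\pm 1_G}{2}$, where the latter $\mu^2$ in $\Tw\cA$ unfolds (as in the displayed computation in Proposition~\ref{prop:X epsilon independant}) into a $\mu^3_{\bZ\cA}$ along the triangle bounded by $a_1$, $b$-type segments, yielding back a scalar multiple of $s^{?}\otimes ga$ (or $s^{?}\otimes a$, depending on the exact segment). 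Comparing the two terms, the composite is either $(1\pm(-1)^{p})\cdot(\text{nonzero})$ times the generator of $\Hom(X^{+},Y^{\pm})$, or it vanishes; the sign $(-1)^{p}$ appears precisely because passing the grading across the triangle shifts the index by the amount recorded in $p$ (the relation $|a_i|+|b_i|+|c_i|=1$ together with $|a_0|+|a_1|+|a_2|=1$ forces the exponent of $(-1)$ to reduce to $p$ mod $2$). This gives the first formula. The second formula, for $\Hom(Y^{\pm},\tilde X^{\varphi}[\ell])$, follows dually: by Proposition~\ref{prop::IndexFormula} the index from $Y$ to $X$ is $1-p$, and the same computation with $e_Y^{\pm}$ on the source and $e_{\tilde X}^{\varphi}$ on the target interchanges the roles, replacing $p$ by $1-p$ throughout.

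The main obstacle I expect is bookkeeping the Koszul signs: one must carefully track the degrees of all the $\bM$-segments ($a_0,a_1,a_2$ and their concatenations $a_0ga_2$, and the analogues $a_1',a_1''$ from Figure~\ref{figureXandY}), the degree shifts $|a_0|$, $-|a_2|$ inside $\tilde X$, and the sign $(-1)^{|f|}$ in the definition of $\mu^2_{\cA}$ on $\bM$-segments, and then verify that the accumulated sign in the $\varphi$-term is exactly $(-1)^{p}$ (resp. $(-1)^{1-p}$), so that the cancellation $1\pm(-1)^{p}$ selects the correct tag. This is the same kind of delicate sign chase already carried out in Lemma~\ref{CalculString1}, Lemma~\ref{calculBand1}, and Proposition~\ref{prop:X epsilon independant}, so I would model the computation on those, reusing the identities $\mu_\cA^4(a_1,a_0ga_2,ga_1,ga_0a_2)=1_{X_2}$ and $\mu_\cA^4(a_0ga_2,ga_1,ga_0a_2,a_1)=1_{X_1}$ established just before Proposition~\ref{prop:X epsilon independant} to evaluate the relevant higher products. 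Once the sign is pinned down, the statement is immediate. I would also remark that, by the preceding propositions on independence of the tag under changes of full arc system, it suffices to do this in the one convenient system $\bS$ of Figure~\ref{figureXandY}, which is what makes the computation tractable.
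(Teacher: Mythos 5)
Your plan follows essentially the same route as the paper's proof: show that the Hom-complex $\Hom_{\Tw\cA_{\bS}}(\tilde X,Y)$ (resp.\ $\Hom_{\Tw\cA_{\bS}}(Y,\tilde X)$) has one-dimensional cohomology concentrated in degree $p$ (resp.\ $1-p$), generated by $s^{-|a_0|}\otimes a'_1$ (resp.\ $s^{-|a_2|}\otimes a''_1$), and then compute $\mu^2\bigl(e_Y^{\pm},\mu^2(\cdot\otimes 1_G,e^{\varphi})\bigr)$, where the $\varphi$-term unfolds through a $\mu^3$ along the triangle into $(-1)^{|a_0|+|a'_1|}=(-1)^p$ times the $g$-translate of the generator, producing the factor $1\pm(-1)^p$ that selects the tag. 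The only minor slips are that the evaluations actually needed are these $\mu^3$'s (the $\mu^4$ identities you cite only served to prove $\mu^2(\varphi,g\varphi)=1_{\tilde X}$) and that the relevant degree relation is $p=|a_0|+|a'_1|$ with $a_1=a''_1a'_1$ rather than the band relations, but neither affects the method.
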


Hence in the setup of Figure \ref{figureXandY} we have the following morphisms linking $X^\pm$ and $Y^\pm$
$$\xymatrix@C=2cm{\ar @{} [dr] |{p\textrm{ even }}  X^{+}\ar[r]^{{\rm deg} p} & Y^+\ar[d]^{{\rm deg} (1-p)}\\ Y^-\ar[u]^{{\rm deg} (1-p)} & X^-\ar[l]_{{\rm deg} p}}
\xymatrix@C=2cm{\ar @{} [dr] |{p\textrm{ odd }}  X^+\ar[r]^{{\rm deg} p} & Y^-\ar[d]^{{\rm deg} (1-p)}\\ Y^+\ar[u]^{{\rm deg} (1-p)} & X^-\ar[l]_{{\rm deg} p}}
$$ 

\begin{proof}
The system $\bS$ looks as follows:

\begin{figure}[!h]
 \[
  \scalebox{0.8}{
  \begin{tikzpicture}[scale=1.2,>=stealth]
  \draw[dark-green,thick,->]  (1,0) arc (0:90:1);
  \draw[dark-green,thick,->] (0,3) arc (-90:0:1);
  \draw[dark-green,thick,->] (4,4) arc (-180:-90:1);
  \draw[dark-green,thick,->] (5,1) arc (90:180:1);
  \node at (0.75,2.75) {$a'_1$};
  \node at (1.25,3.25) {$a''_1$};
  \node at (4.25,1.25) {$ga'_1$};
  \node at (3.75,0.75) {$ga''_1$};
  \node at (4,3) {$ga_0a_2$};
  \node at (1,1) {$a_0ga_2$};
  \draw[blue] (0.5,0.85)--node[midway,left]{$X_1$}(0.5,3.15);
  \draw[blue] (0.85,3.5)--node[midway,above]{$X_2$}(4.15,3.5);
  \draw[blue] (0.7,3.3)--node[midway,right,above]{$Y$}(4.3,0.7);
  \draw[blue] (0.85,0.5)--node[midway,below]{$gX_2$}(4.15,0.5);
  \draw[blue] (4.5,0.85)--node[midway,right]{$gX_1$}(4.5,3.15);
 \node[red] at (2.5,2) {$\times$};

  \end{tikzpicture}}\]
  \caption{}
\end{figure}
where $p=|a_0|+|a'_1|$, and $a_1=a''_1a'_1$. 
Then we have $H^\ell(\Hom_{\Tw\cA}(\tilde{X},Y))=0$ for $\ell\neq p$ and 
$$H^{p}(\Hom_{\Tw\cA}(\tilde{X},Y))={\rm vect} (s^{-|a_0|}\otimes a'_1).$$
Therefore it is enough to compute $\mu^2(e_Y^\pm, \mu^2(s^{-|a_0|}\otimes a'_1\otimes 1_G,e^\varphi)).$

We have 
\begin{eqnarray*} \mu^2(s^{-|a_0|}\otimes a'_1\otimes 1_G,e^\varphi) & = & \frac{1}{2} \left(\mu^2(s^{-|a_0|}\otimes a'_1,1_{X_1})\otimes 1_G +(-1)^{|a_0|}\mu^{2}(s^{-|a_0|}\otimes a'_1,s^{\|a_1\|}a_0ga_2)\otimes g\right)\\
 & = & \frac{1}{2}\left( s^{-|a_0|}\otimes a'_1\otimes 1_G +(-1)^{|a_0|}\mu^3(s^{-|a_0|}\otimes a'_1,s^{\|a_1\|}\otimes a_0ga_2,s^{\|a_1\|}\otimes ga''_1ga'_1)\otimes g\right)\\
  & = & \frac{1}{2}\left(s^{-|a_0|}\otimes a'_1\otimes 1_G +(-1)^{|a_0|}s^{-|a_0|}\otimes \mu^3(a'_1,a_0ga_2,ga''_1ga'_1)\otimes g\right)\\ 
  & = & 
 \frac{1}{2}\left(s^{-|a_0|}\otimes a'_1\otimes 1_G +(-1)^{|a_0|+|a'_1|}s^{-|a_0|}\otimes ga'_1\otimes g\right)\\
  & = & \frac{1}{2}\left(  (s^{-|a_0|}\otimes a'_1\otimes 1_G +(-1)^{p}s^{-|a_0|}ga'_1\otimes g\right).
  \end{eqnarray*}
  Then we have 
  $$\mu^2(e_Y^\pm, \mu^2(s^{-|a_0|}\otimes a'_1\otimes 1_G,e^\varphi))= (-1)^p\frac{1}{4}(1\pm(-1)^p)s^{-|a_0|}\otimes \left(a'_1\otimes 1_G+(-1)^p ga'_1\otimes g\right).$$
Therefore this does not vanish if and only if $\pm=(-1)^p$.

In a similar fashion we have $H^\ell(\Hom_{\Tw\cA}(Y,\tilde{X}))=0$ for $\ell\neq 1-p$ and 
$$H^{1-p}(\Hom_{\Tw\cA}(Y,\tilde{X}))={\rm vect }(s^{-|a_2|}\otimes a''_1).$$
Hence one needs to compute 
$\mu^2(e^\varphi,\mu^2(s{-|a_2|}\otimes a''_1\otimes 1_G,e_Y^\pm))$. First one has
$$\mu^2(s^{-|a_2|}\otimes a''_1\otimes 1_G,e_Y^\pm)=s^{-|a_2|}\otimes a''_1\otimes \frac{1\pm g}{2}.$$
Finally setting $A:=  \mu^2(e^\varphi,s^{-|a_2|}\otimes a''_1\otimes \frac{1\pm g}{2}) $ we have
\begin{eqnarray*}A & = & \left(\mu^2(s^0\otimes 1_{X_2},s^{-|a_2|}\otimes a''_1)\pm (-1)^{|a_0|}\mu^2(s^{-\|a_\|}\otimes a_0ga_2,s^{-|a_2|}\otimes ga''_1)\right) \otimes \frac{1\pm g}{4}\\
 &= & \left( (-1)^{1-p} s^{|a_2|}\otimes a''_1\pm (-1)^{|a_0|}\mu^3(s^{\|a_1\|}\otimes a''_1a'_,s^{-\|a_1\|}\otimes a_0ga_2,s^{-|a_2|}\otimes ga''_1)\right) \otimes \frac{1\pm g}{4}\\ & = & ((-1)^{1-p}\pm 1)s^{-|a_2|}\otimes a''_1\otimes \frac{1\pm g}{4}.\end{eqnarray*}
So this does not vanish if and only if $\pm(-1)^{1-p}=1$ which finishes the proof. 
  
\end{proof}

\begin{remark}\label{rema::shift} Let $(\gamma,\grad)$ is a $G$-invariant simple graded arc, and let $\bS$ be a $G$-invariant full system of arcs containing $X:=(\gamma,\grad)$.  We can consider the arc $X':=(\gamma,\grad[1])$ and the full system $\bS':=(\bS\setminus X)\cup X'$. Then we have a canonical triangle equivalence between the categories $H^0(\Tw\cA_{\bS}*G)_+$ and $H^0(\Tw\cA_{\bS'}*G)_+$. Then Proposition \ref{prop: morphism X epsilon Y epsilon} implies that through this equivalence the object $X'^{\pm}$ of the category $\in H^0(\Tw\cA_{\bS'}*G)_+$ is sent to the object $X^{\mp}[1]$ of the category $ H^0(\Tw\cA_{\bS}*G)_+$. 
\end{remark}

\subsection{$G$-invariant (simple) bands with $[\gamma]=[g\gamma]$}

Let $(\gamma,\grad,V)$ be a primitive graded loop with a $K[x]$-module $V$. The action of $g$ on the corresponding object $B_{(\gamma,\grad,V)}$ satisfies $g(B_{(\gamma,\grad,V)})=B_{(g_*\gamma,g^* \grad,V)}$. Therefore we have that $gB\simeq B$ if one of the following occurs (see \cite{AmiotBrustle}):
\begin{enumerate}
\item either $\gamma$ is homotopic to $g_*\gamma$, or
\item $g_*\gamma$ is homotopic to $\gamma^{-1}$, and the scalar attached to $V$ is $\pm 1$.
\end{enumerate}

The aim of this subsection is to find a suitable $G$-invariant full system of arcs in which we can explicitly describe the band object $B_{(\gamma,\grad,V)}$ together with some explicit morphism $\varphi$ from $gB$ to $B$ with $\mu^2(\varphi,g \varphi)=1_B$. We restrict to the case of a simple graded loop with one dimensional $K[x]$-module $V$.

We treat seperately these two cases which are really different. We start with the case where $[\gamma]=g[\gamma]$.

\begin{proposition}\label{constructionsystem1}
Let $(\gamma, \grad)$ be a simple graded  loop on $\surf$ such that $g_*(\gamma)$ is homotopic to $\gamma$, and such that ${\rm Im}\gamma={\rm Im} g_*(\gamma)$. Then it is possible to find a $G$-invariant system of arcs containing arcs $(X_1,X_2,X_3, X_4,gX_1,gX_2,gX_3,gX_4)$ as in Figure~\ref{figure::8-arcs}.

\begin{figure}[!h]
 \[
  \scalebox{0.8}{
  \begin{tikzpicture}[scale=1.2,>=stealth]
  \draw[dark-green,thick,->]  (1,0) arc (0:65:1);
  \draw[dark-green,thick,->] (0.5,3.15) arc (-65:0:1);
  \draw[dark-green,thick,->] (4,4) arc (-180:0:1);
  \draw[dark-green,thick,->] (6,0) arc (0:180:1);
   \draw[dark-green,thick,->] (9.5,0.85) arc (115:180:1);
    \draw[dark-green,thick,->] (9,4) arc (-180:-115:1);

  \draw[blue] (0.5,0.85)--node[midway,left]{$X_3$}(0.5,3.15);
  \draw[blue] (0.85,3.5)--node[midway,above]{$X_1$}(4.15,3.5);
  \draw[blue] (0.7,3.3)--node[midway,above]{$X_4$}(4.3,0.7);
  \draw[blue] (0.85,0.5)--node[midway,below]{$X_2$}(4.15,0.5);
  \draw[blue] (5,1)--node[midway,right,left]{$gX_3$}(5,3);
  \draw[blue] (5.85,0.5)-- node[midway,below]{$gX_2$}(9.15,0.5);
   \draw[blue] (5.85,3.5)-- node[midway,above]{$gX_1$}(9.15,3.5);
    \draw[blue] (5.7,3.3)--node[midway,above]{$gX_4$}(9.3,0.7);
    \draw[blue] (9.5,0.85)--node[midway,right]{$X_3$}(9.5,3.15);

  \draw[thick,->,purple] (0,1.5)--(11,1.5);
  \node[purple,inner sep=2pt,fill=white] at (10,1.5){$(\gamma,\grad)$};

  \end{tikzpicture}}\]
  \caption{}\label{figure::8-arcs}
\end{figure}

\end{proposition}

\begin{proof} Since the images of $\gamma$ and $g_*\gamma$ coincide, the homotopy from $\gamma$ to $g_*\gamma$ can be assumed to be the translation by $\frac{1}{2}$, and the surface $\surf$ with $G$-action is locally a cylinder as follows on a $G$-invariant neighborhood of $\gamma$.

\begin{figure}[!h]
 \[
  \scalebox{0.8}{
  \begin{tikzpicture}[scale=1.2,>=stealth]

\draw (-1,0).. controls (-1,0.7) and (1,0.7)..(1,0);
\shadedraw[top color= blue!30] (1,0)--(1,-2.5).. controls (1,-3.2) and (-1,-3.2).. (-1,-2.5)--(-1,0).. controls (-1,-0.7) and (1,-0.7) ..(1,0);

\draw[purple, thick] (-1,-1.5).. controls (-1,-2.2) and (1,-2.2).. (1,-1.5);
\draw[blue, very thick] (0,1.5)--(0,-0.5);
\draw[blue, very thick] (0,-3)--(0,-4.5);
\draw[->] (-0.25,1) arc (-180:0:0.25);

  \end{tikzpicture}}\]
 \caption{}
\end{figure}

Let $C$ be a $G$-invariant cylinder containing $\gamma$, and let $\gamma_1$ and $\gamma_2$ be its boundary components. Since the winding number of $\gamma$ is zero, $\gamma_1$ and $\gamma_2$ are not the boundary of a closed subsurface of $\surf$. For $i=1,2$, there exists some simple curves $\alpha_i:[0,1]\to \surf\setminus C$ with $\alpha_i(0)\in \bM$ and $\alpha_i(1)=\gamma_i(0)$. We moreover can assume that $\alpha_i$ and $g\alpha_i$ do not intersect, by resolving  potential crossings. Then we set $$\tilde{\alpha}_i:=\alpha_i.(\gamma_i)_{|_{[0,1/2]}}.(g\alpha_i)^{-1}.$$ Since $g(\gamma_i)_{|_{[0,1/2]}}$ is homotopic to $(\gamma_i)_{|_{[1/2,1]}}$, we obtain that $\alpha_i.g\alpha_i$ is homotopic to $\gamma_i$. For $i=1,2$ we choose a grading on $\alpha_i$ and denote by $X_i$ the corresponding graded arc. It is then possible to complete the system $(X_1,X_2,gX_1,gX_2)$ into a full $G$-invariant system of graded arcs as desired. 

\end{proof}

Now the aim is to define the band object corresponding to the closed curve $(\gamma,\grad)$ as defined in \eqref{defBand} as a $G$-invariant twisted complex in the $G$-invariant system $\bS$. Fix some notation for the $\bM$-segments as follows. 

\begin{figure}[!h]
 \[
  \scalebox{0.8}{
  \begin{tikzpicture}[scale=1.2,>=stealth]
  
   \draw[dark-green,thick,->]  (1,0) arc (0:65:1);
  \draw[dark-green,thick,->] (0.5,3.15) arc (-65:0:1);
 
  \draw[dark-green,thick,->] (4,4) arc (-180:0:1);
  \draw[dark-green,thick,->] (6,0) arc (0:180:1);
  
   \draw[dark-green,thick,->] (9.5,0.85) arc (115:180:1);
    \draw[dark-green,thick,->] (9,4) arc (-180:-115:1);
   
  \node at (1,1) {$c$};
  \node at (0.75,3) {$a$};
  \node at (1.25,3.25) {$b'$};
  \node at (3.75,0.75) {$b$};
  \node at (4.25,1.25) {$a'$};
  \node at (4,3) {$c'$};
   \node at (6,1) {$gc$};
  \node at (5.75,3) {$ga$};
  \node at (6.25,3.25) {$gb'$};
  \node at (8.75,0.75) {$gb$};
  \node at (9.25,1.25) {$ga'$};
  \node at (9,3) {$gc'$};
  
  \draw[blue] (0.5,0.85)--node[midway,left]{$X_3$}(0.5,3.15);
  \draw[blue] (0.85,3.5)--node[midway,above]{$X_1$}(4.15,3.5);
  \draw[blue] (0.7,3.3)--node[midway,above]{$X_4$}(4.3,0.7);
  \draw[blue] (0.85,0.5)--node[midway,below]{$X_2$}(4.15,0.5);
  \draw[blue] (5,1)--node[midway,right,left]{$gX_3$}(5,3);
  \draw[blue] (5.85,0.5)-- node[midway,below]{$gX_2$}(9.15,0.5);
   \draw[blue] (5.85,3.5)-- node[midway,above]{$gX_1$}(9.15,3.5);
    \draw[blue] (5.7,3.3)--node[midway,above]{$gX_4$}(9.3,0.7);
    \draw[blue] (9.5,0.85)--node[midway,right]{$X_3$}(9.5,3.15);

  \draw[thick,->,purple] (0,1.5)--(11,1.5);
  \node[purple,inner sep=2pt,fill=white] at (10,1.5){$(\gamma,\grad)$};

  \end{tikzpicture}}\]
  \caption{}
\end{figure}

Then we have the relations 
$$|a|+|b|+|c|=1, \quad |a'|+|b'|+|c'|=1 \quad \textrm{and }|a|=|a'|,$$
the last relations coming from the fact that the winding number of $\gamma$ is $0$. 
For simplicity, we choose the arc $X_4$ in such a way that $|a|=|a'|=0$.  

From now on and in order to lighten the writing we omit the $s$-shift in the computations when the shift degree is clear. In particular, in all the twisted complexes, the differential are of degree $1$.  

\begin{proposition} Let $(\gamma, \grad)$ and $\bS$ be as above, and $\lambda$ be a scalar in $k^*$. Then the object in $\Tw\cA_{\bS}$ defined as follows 

$$B:=\xymatrix{X_3[p]\ar[r]^{\lambda a}\ar[dr]_{ga'} & X_4[p-1] \\ gX_3[p]\ar[r]^{\lambda ga}\ar[ur]^{a'}  & gX_4[p-1]}$$
is isomorphic in $H^0(\Tw \cA_{\bS})$ to the band object associated with the graded curve $(\gamma,\grad)$  and the parameter $\lambda^2$ as defined in \eqref{defBand}.

\end{proposition}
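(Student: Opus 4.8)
The plan is to realise $B$ as the image, under a canonical equivalence $\Psi$, of the two–term band model \eqref{defBand} computed in a well–chosen $G$–invariant system, and to trace what happens to the scalar: the key point is that the single parameter of \eqref{defBand} gets split symmetrically by the $G$–action, which is why it reappears as $\lambda^2$. First a quick sanity check: all four components of $\delta$ run from $X_3$ or $gX_3$ to $X_4$ or $gX_4$, so no two of them are composable and, using $\mu^1_\cA=0$, every term $\mu^k_{\bZ\cA}(\delta,\dots,\delta)$ with $k\ge 1$ vanishes; hence $B$ is a genuine twisted complex, and since $g$ exchanges $X_3\leftrightarrow gX_3$, $X_4\leftrightarrow gX_4$, $a\leftrightarrow ga$ and $a'\leftrightarrow ga'$, it is $G$–invariant.

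Next I would build a reference system. Using the $G$–invariant cylinder neighbourhood of $\gamma$ of the proof of Proposition~\ref{constructionsystem1}, on which $g$ acts by the translation by $\tfrac12$, choose a graded arc $U$ meeting $\gamma$ exactly once, so that $U, gU$ together with two arcs parallel to $\gamma$ and further $G$–invariant arcs disjoint from $\gamma$ form a full $G$–invariant system $\bS''$ in which the neighbourhood of $\gamma$ is precisely the local configuration appearing in the definition \eqref{defBand}, with $X=U$ and $Y=gU$. By \eqref{defBand} with parameter $\lambda^2$, the band object $B_{(\gamma,\grad,\lambda^2,\bS'')}$ is then the two–term complex $U[p]\xrightarrow{\,c_1+\lambda^2(-1)^{\|c\|}c_2\,}gU[q-1]$, where $c_1$, resp.\ $c_2$, is the $\bM$–segment from $U$ to $gU$ on the side of $\gamma$ which, inside $\bS$, is cut by the $X_4$–crossing, resp.\ the $gX_4$–crossing.

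I would then transport this object to $\bS$ through a finite sequence of arc additions and deletions, computing its image at each step by means of the explicit isomorphisms of Lemmas~\ref{CalculString1} and~\ref{calculBand1}, their images under $g$, and the routine analogue that resolves an $\bM$–segment into a zigzag through a newly inserted arc. Adding $X_4$ replaces the arrow labelled $c_1$ by a zigzag $X_3\to X_4\leftarrow gX_3$ (the two $\bM$–segments being the two pieces of $c_1$), adding $gX_4$ does the same on the $c_2$–side, yielding $X_3\to gX_4\leftarrow gX_3$ and carrying the scalar $\lambda^2$; deleting $U,gU$ and the auxiliary arcs and renaming $U=X_3$, $gU=gX_3$, one arrives at a four–term twisted complex with underlying object $X_3[p]\oplus gX_3[p]\oplus X_4[p-1]\oplus gX_4[p-1]$, whose arrows run along $X_3\to X_4\leftarrow gX_3\to gX_4\leftarrow X_3$ and equal $a,a',ga,ga'$ up to scalars (the shifts matching because $|a|=|a'|=0$), with $\lambda^2$ on one arrow and $1$ on the other three. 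A rescaling of the four summands by scalars $t_i\in K^\ast$ multiplies the entry of $\delta$ from summand $j$ to summand $i$ by $t_i/t_j$, and choosing the $t_i$ suitably one can move the coefficients to $\lambda,1,\lambda,1$ along $X_3\to X_4,\ gX_3\to X_4,\ gX_3\to gX_4,\ X_3\to gX_4$, which is exactly $B$. This gives $B\cong B_{(\gamma,\grad,\lambda^2,\bS)}$ in $H^0(\Tw\cA_\bS)$.

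The hard part will be the bookkeeping in the transport step: carrying the signs and the (suppressed) $s$–shifts produced by the Stasheff relations correctly through each arc move, and setting up $\bS''$ and the order of the moves so that the two sides of $\gamma$ are subdivided symmetrically. Once that is arranged, the elementary rescaling above forces the parameter to be $\lambda^2$ rather than $\lambda$ — the one genuinely new phenomenon, due to the $G$–action distributing $\lambda$ over the two $G$–conjugate arrows $X_3\to X_4$ and $gX_3\to gX_4$.
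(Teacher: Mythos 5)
Your preliminary checks (that $B$ is a genuine $G$-invariant twisted complex) and your closing observation are fine; in particular the remark that a diagonal rescaling of the four summands preserves the cycle invariant $\lambda^2$ of the coefficients is a correct sanity check for why the parameter must be $\lambda^2$. But the core of your argument has a gap that I think is fatal as stated. Your auxiliary system $\bS''$ cannot exist: you ask for a full \emph{$G$-invariant} system in which a neighbourhood of $\gamma$ realizes the local configuration of \eqref{defBand} with $X=U$ and $Y=gU$. In the case $[g_*\gamma]=[\gamma]$, $g$ acts near $\gamma$ as the half-translation of the cylinder, so it exchanges the two segments into which the crossings with $U$ and $gU$ cut $\gamma$. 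The two entries of the band differential are the connecting morphisms attached to these two segments, hence they are exchanged by $g$; since $g$ maps $\Hom(U,gU)$ to $\Hom(gU,U)$, one entry is a morphism $U\to gU$ and the other a morphism $gU\to U$. The shape \eqref{defBand}, however, requires two \emph{parallel} entries $c_1,c_2\in\Hom(X,Y)$, so with $Y=gU$ and a $G$-invariant system this configuration is unattainable (and the two anti-parallel entries cannot even sit in one strictly upper-triangular $\delta$). This obstruction is precisely why the paper's proof flips only $gX_3$, passing to a \emph{non}-$G$-invariant system $\bS'$ in which $\gamma$ meets $X_3$ and the long arc $Z$, which do admit two parallel connecting morphisms $\beta a$ and $\delta ga'$, so that the band with parameter $\lambda^2$ is the two-term complex $B'$.

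The second problem is the transport step, which you declare routine but which is exactly the nontrivial content of the proposition. There is no "routine analogue that resolves an $\bM$-segment into a zigzag through a newly inserted arc": note that the zigzag you want, $X_3\to X_4\leftarrow gX_3$, has both arrows pointing into $X_4$, so it cannot arise from any naive factorization of a single arrow $X_3\to gX_3$ (the direction of the second half is reversed). The paper handles this by introducing an intermediate three-term twisted complex $B''$ (supported on $X_3$, $X_1$, $gX_4$, arcs common to $\bS$ and $\bS'$, and carrying an extra curved arrow $ga'$), and then proving two explicit isomorphisms $B\cong B''$ in $H^0(\Tw\cA_\bS)$ and $B''\cong B'$ in $H^0(\Tw\cA_{\bS'})$ by direct $\mu^1$, $\mu^2$, $\mu^3$ computations with careful sign bookkeeping, in the spirit of Lemmas~\ref{CalculString1} and~\ref{calculBand1} but not a formal consequence of them. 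Your rescaling argument only applies after the (unproven, and in your setup unobtainable) intermediate four-term form with coefficients $(\lambda^2,1,1,1)$ has been established, so as it stands the proof does not go through.
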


\begin{proof}  Let $\bS'$ be the full system of arc obtained from $\bS$ by flipping $gX_3$, and denote by $Z$ the flip of $gX_3$. 

\begin{figure}[!h]
 \[
  \scalebox{0.8}{
  \begin{tikzpicture}[scale=1.2,>=stealth]
  
   \draw[dark-green,thick,->]  (1,0) arc (0:65:1);
  \draw[dark-green,thick,->] (0.5,3.15) arc (-65:0:1);
 
  \draw[dark-green,thick,->] (4,4) arc (-180:0:1);
  \draw[dark-green,thick,->] (6,0) arc (0:180:1);
  
  \draw[dark-green,thick,->] (9.5,0.85) arc (115:180:1);
    \draw[dark-green,thick,->] (9,4) arc (-180:-115:1);

  \node at (0.75,3) {$a$};
  \node at (1.25,3) {$\beta$};
  \node at (2,3.25) {$\beta'$};
  \node at (5,2.75) {$gac'$};
  \node at (8.5,1) {$\delta$};
   \node at (9.25,1.25) {$ga'$};

  \draw[blue] (0.5,0.85)--node[midway,left]{$X_3$}(0.5,3.15);
  \draw[blue] (0.85,3.5)--node[midway,above]{$X_1$}(4.15,3.5);
  \draw[blue] (0.7,3.3)--node[midway,above]{$X_4$}(4.3,0.7);
  \draw[blue] (0.85,0.5)--node[midway,below]{$X_2$}(4.15,0.5);
  \draw[blue] (0.85,3.4)--node[midway,right,above]{$Z$}(9.15,0.6);
  \draw[blue] (5.85,0.5)-- node[midway,below]{$gX_2$}(9.15,0.5);
   \draw[blue] (5.85,3.5)-- node[midway,above]{$gX_1$}(9.15,3.5);
    \draw[blue] (5.7,3.3)--node[midway,above]{$gX_4$}(9.3,0.7);
    \draw[blue] (9.5,0.85)--node[midway,right]{$X_3$}(9.5,3.15);

  \draw[thick,->,purple] (0,1.5)--(11,1.5);
  \node[purple,inner sep=2pt,fill=white] at (10,1.5){$(\gamma,\grad)$};

  \end{tikzpicture}}\]
  \caption{}
\end{figure}

Here we may assume that $|\beta|=0$. 
Since $|a|=0$, the band object corresponding to $(\gamma,\grad,\lambda^2)$ in the category $H^0(\Tw \cA_{\bS'})$ is given by the complex 
$$B':=\xymatrix{X_3[p] \ar[rr]^{-\lambda^2 \beta a + \delta ga'} & & Z[p-1].}$$
 
Let us consider the following twisted complex 
$$B'':=\xymatrix@C=2cm{X_3\ar[r]_{\lambda b'a}\ar@/^1pc/[rr]^{ga'} & X_1 \ar[r]_{-\lambda ga c'} & gX_4[p-1]}. $$
This complex exists in both the categories $\Tw \cA_{\bS}$ and $\Tw \cA_{\bS'}$, so it is enough to construct an isomorphism between $B$ and $B''$ in $H^0(\Tw\cA_{\bS})$ and an isomorphism in $H^0(\Tw\cA_{\bS'})$ between $B''$ and $B'$. 

This is shown in the next two lemmas.

\end{proof}

\begin{lemma}
In the category $H^0(\Tw\cA_{\bS'})$, the (degree zero) morphism $1_{X_3}+ \delta \in \Hom_{\Tw \cA_{\bS'}}(B'',B')$ is an isomorphism whose inverse is given by (the degree zero) morphism $1_{X_3}-(-1)^{|b'|}\lambda^{-1} \beta'.$
\end{lemma}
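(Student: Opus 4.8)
The plan is to argue exactly as in Lemmas~\ref{CalculString1} and~\ref{calculBand1}: reinstate the suppressed $s$-shifts, identify the degree-zero parts of the relevant $\Hom$-complexes, check that the two proposed morphisms are $\mu^1_{\Tw}$-closed, and finally check that their two $\mu^2_{\Tw}$-composites are the identities in cohomology. Throughout I will write $f:=1_{X_3}+\delta\colon B''\to B'$ and $h:=1_{X_3}-(-1)^{|b'|}\lambda^{-1}\beta'\colon B'\to B''$ for the two candidate morphisms.

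\textbf{Setup.} First I would make the shifts explicit, using $|a|=|a'|=0$, the normalization $|\beta|=0$, and $|b|+|c|=|b'|+|c'|=1$. This identifies $\delta$ as an $\bM$-segment from $gX_4$ to $Z$ and $\beta'$ as an $\bM$-segment from $Z$ to $X_1$, with precisely the internal degrees making $f$ a degree-zero morphism $B''\to B'$ and $h$ a degree-zero morphism $B'\to B''$. I would also record that $\Hom^{-1}_{\Tw\cA_{\bS'}}(B'',B')=\Hom^{-1}_{\Tw\cA_{\bS'}}(B',B'')=0$ and that $\End^{0}$ of $B'$ (hence of $B''$, being isomorphic to a band object with one-dimensional $K[x]$-module) is one-dimensional in $H^0(\Tw\cA_{\bS'})$, spanned by the identity; consequently it is enough to exhibit one degree-zero cocycle in each direction and to compute one of the two composites, the precise scalar then forcing the other.

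\textbf{Closedness.} Next I would expand $\mu^1_{\Tw\cA_{\bS'}}(f)$ as a sum of $\mu_{\add\bZ\cA}$-terms over insertions of the two differentials $\delta_{B''}$ (components $\lambda b'a\colon X_3\to X_1$, $-\lambda gac'\colon X_1\to gX_4$, $ga'\colon X_3\to gX_4$) and $\delta_{B'}=-\lambda^2\beta a+\delta\,ga'\colon X_3\to Z$ around the two components of $f$. Post-composing $1_{X_3}$ with $\delta_{B'}$ reproduces all of $\delta_{B'}$, while pre-composing the $\delta$-component with $\delta_{B''}$ produces, through $\mu^2_{\Tw}$ and $\mu^3_{\Tw}$, exactly the opposite terms, once one feeds in the relations $\mu^n_{\cA}(\dots)=\pm(\bM\text{-segment})$ coming from the immersed discs that the $\bM$-segments $a,b',c',\beta,\delta,\dots$ cut out in the last figure, and keeps track of the Koszul signs of $\mu_{\add\bZ\cA}$ and of the dropped shifts. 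Hence $\mu^1_{\Tw}(f)=0$, and since $\Hom^{-1}=0$ the class $[f]$ is well defined. The same computation, with the discs involving $\beta'$ in place of $\delta$, gives $\mu^1_{\Tw}(h)=0$; the scalar $-(-1)^{|b'|}\lambda^{-1}$ is precisely the one making the leftover terms cancel. Then I would compute $\mu^2_{\Tw\cA_{\bS'}}(h,f)$ and $\mu^2_{\Tw\cA_{\bS'}}(f,h)$: in each, $\mu^2(1_{X_3},1_{X_3})$ gives the identity on the $X_3$-summand, and the cross-terms $\mu^2(\beta',\delta)$ together with the $\mu^3_{\Tw}$-terms pairing $\delta$ or $\beta'$ with a component of the differential of the complex account for the identity on the remaining summands ($X_1$ and $gX_4$ for $B''$, and $Z$ for $B'$), again via the disc relations; the normalization makes all scalars equal $1$, so $\mu^2_{\Tw}(h,f)=1_{B''}$ and $\mu^2_{\Tw}(f,h)=1_{B'}$ in $H^0(\Tw\cA_{\bS'})$, and $f$, $h$ are mutually inverse isomorphisms.

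\textbf{Main obstacle.} The only genuine work is the sign and shift bookkeeping: combining the Koszul signs of $\mu_{\add\bZ\cA}$ with the $s$-shifts suppressed from the notation, and reading off the figure exactly which low-order collections of $\bM$-segments bound immersed discs, hence which $\mu^n_{\cA}$ are nonzero. Conceptually the lemma merely says that $B''$ is the twisted complex obtained from $B'$ by resolving the arc $Z$ into the triangle $(Z,X_1,gX_4)$ in the spirit of Lemma~\ref{CalculString1}, so the \emph{existence} of the isomorphism is automatic; what the computation delivers is the explicit matrix entries, i.e.\ the stated formulas together with the sign $(-1)^{|b'|}$ and the factor $\lambda^{-1}$.
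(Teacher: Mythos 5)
Your proposal follows essentially the same route as the paper's proof: check directly that $1_{X_3}+\delta$ and $1_{X_3}-(-1)^{|b'|}\lambda^{-1}\beta'$ are $\mu^1_{\Tw}$-closed and that their $\mu^2_{\Tw}$-composites give the identities of $B''$ and $B'$, using the $\mu^2$- and $\mu^3$-relations coming from the discs bounded by the $\bM$-segments $a,\beta,\beta',c',\delta,ga'$ — exactly the computation the paper carries out, which you set up correctly (correct identification of $\delta$ and $\beta'$, of the differentials, and of the cancellation mechanism) even though the sign bookkeeping is only sketched. The added shortcut via one-dimensionality of $\End^0(B')$ is a harmless minor variant, not a genuinely different argument.
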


\begin{proof}

By the relation $|\beta'|+|c'|+|\delta|=1$ we immediately get that $|\delta|=0$.
We have the following diagram.
$$\xymatrix@C=2cm{X_3\ar[r]_{\lambda \beta'\beta a}\ar@/^1pc/[rr]^{ga'}\ar[d]^{1_{X_3}} & X_1 \ar[r]_{-\lambda ga c'} & gX_4[p-1]\ar[d]^{\delta}\\ X_3[p] \ar[rr]^{-\lambda^2 \beta a + \delta ga'}  & & Z[p-1]} $$
where the horizontal maps have degree $1$, and the vertical maps have degree $0$, that is the vertical map $\delta$ has to be understood as $s^{|\delta|}\otimes \delta$, and the horizontal map and $ga c'$ has to be understood as $s^{\|ga c'\|}\otimes ga c'$. This is of importance in the computation of signs.

Then we compute 
$$\mu^1_{\Tw \cA'} (1_{X_3}) = -\lambda^2 \mu^2(\beta a, 1_{X_3})+\mu^2(\delta ga', 1_{X_3})=-\lambda^2 \beta a+ \delta ga',$$
and 
$$\mu^1_{\Tw\cA'}(\delta)= \mu^2(\delta,ga')-\Lambda^2\mu^3(\delta,gac',\beta'\beta a)= -\delta ga'+\lambda^2 \beta a.$$
Hence the map $1_{X_3}+\delta$ is in the kernel of $\mu^1$.

In the same way let us consider the diagram
$$\xymatrix@C=2cm{X_3[p] \ar[rr]^{-\lambda^2 \beta a + \delta ga'}\ar[d]^{1_{X_3}}  & & Z[p-1]\ar[dl]_{\beta'}\\ X_3\ar[r]^{\lambda \beta'\beta a}\ar@/_1pc/[rr]_{ga'} & X_1 \ar[r]^{-\lambda ga c'} & gX_4[p-1],} $$ 
then we can check the equalities
$$\mu^1(1_{X_3})= ga' +\lambda \beta'\beta a,$$
and 
$$\mu^1(\beta')=-\lambda^2\mu^2(\beta',\beta a)-\lambda \mu^3(ga c',\beta',\delta ga')= (-1)^{|b'|}(\lambda^2 \beta'\beta a+\lambda ga').$$
Hence we obtain that $1_{X_3}-(-1)^{|b'|}\lambda^{-1}\beta'$ is in the kernel of $\mu^1$.

Then one checks that 
$$\mu^2(\beta',\delta)=-\lambda \mu^3 (gac',\beta',\delta)-\lambda \mu^3(\beta',\delta,ga c')=-\lambda (-1)^{|b'|}(1_{gX_4}+1_{X_1}).$$
This implies that 
$$\mu^2(1_{X_3}-(-1)^{|b'|}\lambda^{-1}\beta',1_{X_3}+\delta)={\rm Id}_{B''}.$$

Likewise one has the equalitiy
$$\mu^2(\delta,\beta')=-\lambda\mu^3(\delta,ga c',\beta')=-(-1)^{|b'|}\lambda 1_Z,$$
which gives the relation
$$\mu^2(,1_{X_3}+\delta, 1_{X_3}-(-1)^{|b'|}\lambda^{-1}\beta')={\rm Id}_{B'}.$$

\end{proof}

\begin{lemma}In the category $H^0(\Tw \cA_\bS)$, the morphsim $1_{X_3}-c'+1_{gX_4}$ from $B''$ to $B$ is an isomorphism whose inverse is given by $1_{X_3+(-1)^{|b'|}b'+1_{gX_4}}.$
\end{lemma}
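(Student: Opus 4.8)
The statement claims that a certain explicit degree-zero morphism $\Phi := 1_{X_3} - c' + 1_{gX_4} \in \Hom_{\Tw\cA_\bS}(B'', B)$ is an isomorphism, with inverse $\Psi := 1_{X_3} + (-1)^{|b'|} b' + 1_{gX_4}$. The approach is a direct computation entirely analogous to the preceding lemmas (and to Lemmas~\ref{CalculString1} and~\ref{calculBand1}). First I would write out the two twisted complexes side by side, labelling the three summands of $B''$ as $X_3[p], X_1[\ast], gX_4[p-1]$ (with the curved $ga'$-arrow from $X_3$ to $gX_4$) and the four summands of $B$ as $X_3[p], gX_3[p], X_4[p-1], gX_4[p-1]$ with the arrows $\lambda a$, $ga'$, $\lambda ga$, $a'$ as displayed. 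The morphism $\Phi$ is the identity on the common summands $X_3$ and $gX_4$ and is $-c'$ (suitably $s$-shifted) from the summand $X_1$ of $B''$ to the summand $X_4$ of $B$; here one should note $|c| = |c'| = 1$ since $|a| = |a'| = 0$ and $|a|+|b|+|c| = 1 = |a'|+|b'|+|c'|$, so $\|c'\| = 0$ and the map has the right degree. Similarly $\Psi$ is the identity on $X_3$ and $gX_4$ and is $(-1)^{|b'|} b'$ from the summand $X_4$ of $B$ back to the summand $X_1$ of $B''$.

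\medskip

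The key steps, in order: (1) verify that $\Phi$ is a cocycle, i.e.\ $\mu^1_{\Tw\cA_\bS}(\Phi) = 0$; this unpacks into checking that the contributions of $\mu^2$ and $\mu^3$ involving the components $1_{X_3}$, $-c'$, $1_{gX_4}$ and the two differentials $\delta_{B''}$, $\delta_B$ cancel. The relevant triangle-type identities are $\mu^3_\cA(ga c', c', ?) $ or $\mu^2_\cA(ac', c') = \pm a$ coming from the disc enclosed near the crossing — precisely the kind of relation used to show $\mu^4_\cA(a_1, a_0ga_2, ga_1, ga_0a_2) = 1_{X_2}$ earlier. (2) Verify $\mu^1_{\Tw\cA_\bS}(\Psi) = 0$ by the dual computation. (3) Compute $\mu^2_{\Tw\cA_\bS}(\Psi, \Phi)$ and check it equals $1_{B''}$ (up to a nonzero scalar which can be absorbed), and likewise $\mu^2_{\Tw\cA_\bS}(\Phi, \Psi) = 1_B$; the cross-terms $\mu^2_\cA(b', c') = \pm 1$ and $\mu^3$-terms involving $b'$, $c'$ and a differential will produce the identity on the summands $X_1$ (resp.\ $X_4$) that are not hit directly. (4) Conclude by observing that there are no degree $-1$ morphisms $B'' \to B$ or $B \to B''$ (a dimension count as in the previous lemmas), so being a cocycle with a two-sided inverse-up-to-coboundary suffices, and in fact the inverses are exact.

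\medskip

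The sign bookkeeping is the only real obstacle: one must track the $s$-shifts carefully (the convention stated just before the proposition is that all differentials in a twisted complex have degree $1$, so e.g.\ $ga c'$ really means $s^{\|ga c'\|} \otimes ga c'$), apply the Koszul signs in $\mu^n_{\add\bZ\cA}$, and use the relations $|a| = |a'| = 0$, $|a|+|b|+|c| = 1$, $|a'|+|b'|+|c'| = 1$ to simplify exponents modulo $2$ — exactly as the sign $\dagger$ was handled in the proof of Proposition~\ref{prop:X epsilon independant}. I expect the factors of $(-1)^{|b'|}$ in $\Psi$ to be forced precisely by requiring $\mu^3_\cA$ terms of the form $\mu^3(ga c', b', \cdot)$ and $\mu^3(b', \cdot, ga c')$ to combine into $1_{X_1} + 1_{gX_4}$ with matching sign, just as $-(-1)^{|b'|}\lambda^{-1}\beta'$ appeared in the previous lemma. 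Once the signs are pinned down the verification is mechanical, so I would present it as: state the two diagrams, record the degree constraints, compute $\mu^1(\Phi)$, $\mu^1(\Psi)$, then $\mu^2(\Psi,\Phi)$ and $\mu^2(\Phi,\Psi)$, and invoke the absence of degree $-1$ morphisms to finish.
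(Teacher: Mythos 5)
Your plan is the same as the paper's: one writes the two diagrams, checks that $\mu^1_{\Tw\cA_{\bS}}$ kills both morphisms, and then checks that the two $\mu^2$-composites are exactly $\mathrm{Id}_{B''}$ and $\mathrm{Id}_{B}$ (so your final appeal to the absence of degree $-1$ morphisms is not even needed). However, three concrete slips in your setup would derail the ``mechanical'' verification as you describe it. First, $c'$ is the $\bM$-segment joining the arcs $X_1$ and $gX_3$ (it is a side of the triangle with sides $b'$, $a'$, $c'$ cut out by $X_1$, $X_4$, $gX_3$), so the third component of $1_{X_3}-c'+1_{gX_4}$ is a map from the summand $X_1$ of $B''$ to the summand $gX_3[p]$ of $B$, not to $X_4[p-1]$: there is no $\bM$-segment $c'$ in $\Hom_{\cA}(X_1,X_4)$, that role being played by $b'$, which is indeed the extra component of the inverse, going $X_4[p-1]\to X_1$ as you say. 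Second, the claim $|c|=|c'|=1$ does not follow: with $|a|=|a'|=0$ one only gets $|b|+|c|=1$ and $|b'|+|c'|=1$; no such constraint is needed anyway, since the shift of the middle summand of $B''$ (namely $p+|b'|-1$) absorbs the degrees under the stated convention that a drawn component $f$ stands for the appropriate $s$-shift of $f$. Third, the identity components on the middle summands of the composites do not come from a product $\mu^2_{\cA}(b',c')$, which is never an identity (when defined it is a longer $\bM$-segment); they arise from $\mu^3_{\cA}$ applied to the three sides $b',a',c'$ of that triangle, entering $\mu^2_{\Tw\cA}$ via insertion of the differential components of $B$ and $B''$ -- exactly the mechanism used for $\mu^2(\beta',\delta)$ in the preceding lemma. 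With these corrections your outline reproduces the paper's computation.
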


\begin{proof}
The proof is completely analogous to the previous one. We consider the following diagram
$$\xymatrix@C=2cm{ X_3\ar[r]^{\lambda \beta'\beta a}\ar@/^1pc/[rr]^{ga'}\ar@(dl,ul)[dd]_{1_{X_3}} & X_1 \ar[dl]^{c'}\ar[r]^{-\lambda ga c'} & gX_4[p-1]\ar[dl]^{1_{gX_4}} \\ gX_3[p]\ar[r]^{\lambda ga}\ar[dr]_{a'} & gX_4[p-1] \\ X_3[p]\ar[r]^{\lambda a}\ar[ur]^{ga'}  & X_4[p-1]}$$
where we check that 
$$\mu^1(1_{X_3}-c'+1_{gX_4})=0.$$

Likewise we have the diagram 
$$\xymatrix@C=2cm{  gX_3[p]\ar[r]^{\lambda ga}\ar[dr]_{a'} & gX_4[p-1]\ar[ddr]^{1_{gX_4}} \\ X_3[p]\ar[d]_{1_{X_3}}\ar[r]^{\lambda a}\ar[ur]^{ga'}  & X_4[p-1]\ar[d]^{b'}\\ X_3\ar[r]^{\lambda \beta'\beta a}\ar@/_1pc/[rr]_{ga'} & X_1 \ar[r]^{-\lambda ga c'} & gX_4[p-1] }$$
where we check that 
$$\mu^1(1_{X_3}+(-1)^{|b'|}b'+1_{bX_4})=0.$$

Finally combining the two diagrams, we check the equalities
$$\mu^2(1_{X_3}+(-1)^{|b'|}b'+1_{bX_4},1_{X_3}-c'+1_{gX_4})={\rm Id}_{B''},$$ 
and 
$$\mu^2(1_{X_3}-c'+1_{gX_4}, 1_{X_3}+(-1)^{|b'|}b'+1_{bX_4})={\rm Id}_{B}.$$

\end{proof}

The object $B$ defined above is clearly $G$-invariant. Hence we can define the objects $B^+$ (respectively $B^-$) in the category $H^0(\Tw \cA_{\bS} *G)_+$ as the object  $(\{B\}, e_{B}^{1_B})$ (respectively $(\{B\}, e_B^{-1_B})$).  

\begin{proposition}
Let $\bS$ and $B^{\pm}\in ((\Tw\cA)*G)_+ $ be as above. Then in the category $H^0(\Tw \cA_{\bS}*G)_+$ we have 
$$ \Hom (B^+, B^+[\ell])= \left\{ \begin{array}{cl} K & \textrm{ if }\ell=0,1\\ 0 & \textrm{else}\end{array}\right. \quad \textrm{and}\quad \Hom (B^+, B^-[\ell])= 0 \quad \textrm{for any }\ell.$$

 \end{proposition}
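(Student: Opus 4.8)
The plan is to pull everything back to the graded space $\Hom_{H^0(\Tw\cA*G)}(\{B\},\{B\}[\bullet])$ and to how the orthogonal idempotents $e_B^{\pm}=\tfrac12(1_B\otimes 1_G\pm 1_B\otimes g)$ act on it by left and right composition. Writing $\cF:=H^0(\Tw\cA_{\bS}*G)_+$, one has for every $\ell$ and all $\epsilon,\epsilon'\in\{+,-\}$
\[
\Hom_{\cF}(B^{\epsilon},B^{\epsilon'}[\ell])=e_{B[\ell]}^{\epsilon'}\circ\Hom_{H^0(\Tw\cA*G)}(\{B\},\{B\}[\ell])\circ e_B^{\epsilon},
\]
and, since $B$ is $G$-invariant as a twisted complex (so $gB=B$), the definition of the skew-group category gives
\[
\Hom_{H^0(\Tw\cA*G)}(\{B\},\{B\}[\ell])=\bigoplus_{h\in G}\Hom_{H^0\Tw\cA}(hB,B[\ell])\;\cong\;\Hom_{H^0\Tw\cA}(B,B[\ell])^{\oplus 2},
\]
a basis of the right-hand side being $\{a\otimes 1_G,\ a\otimes g\}$ for $a$ a basis vector of $\Hom_{H^0\Tw\cA}(B,B[\ell])$. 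Because $B$ is isomorphic to the band object $B_{(\gamma,\grad,\lambda^2,\bS)}$ of \eqref{defBand}, attached to the simple primitive loop $\gamma$ with a one-dimensional $K[x]$-module, it is a brick with $\dim_K\Hom_{H^0\Tw\cA}(B,B[\ell])=1$ for $\ell\in\{0,1\}$ and $0$ otherwise; so the target space above is two-dimensional for $\ell\in\{0,1\}$ and vanishes for all other $\ell$, which already settles those $\ell$.

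For $\ell=0$ I would argue exactly as in Proposition~\ref{prop:endoX+}: the generator $1_B$ is $G$-invariant, $e_B^{\pm}$ is the identity of $B^{\pm}$, and $e_B^+e_B^-=0$, so $\Hom_{\cF}(B^+,B^+)=Ke_B^+$ is one-dimensional while $\Hom_{\cF}(B^+,B^-)=0$. Thus the real content lies in degree $\ell=1$, and there the whole question reduces to a single sign: if $\xi$ generates the one-dimensional space $\Hom_{H^0\Tw\cA}(B,B[1])$, then $g\cdot\xi=\pm\xi$ (because $g^2=\mathrm{id}$), and I need the sign to be $+1$. Proving that $\xi$ can be taken $G$-invariant is the main obstacle.

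Granting $g\cdot\xi=\xi$, the conclusion is formal. Then $\xi\otimes 1_G$ commutes with $1_B\otimes g$ (this is precisely $g\cdot\xi=\xi$), hence with both idempotents $e_B^{\pm}$, so $e_{B[1]}^{\epsilon'}\circ(\xi\otimes 1_G)\circ e_B^{\epsilon}=(\xi\otimes 1_G)\circ e_B^{\epsilon'}\circ e_B^{\epsilon}$, which vanishes for $\epsilon\neq\epsilon'$ and equals $(\xi\otimes 1_G)\circ e_B^{\epsilon}=\tfrac12(\xi\otimes 1_G+\epsilon\,\xi\otimes g)$ for $\epsilon=\epsilon'$ — a nonzero vector of the two-dimensional space. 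So $\Hom_{\cF}(B^+,B^+[1])$ and, by the same computation, $\Hom_{\cF}(B^-,B^-[1])$ are both nonzero; since the four subspaces $\Hom_{\cF}(B^{\epsilon},B^{\epsilon'}[1])$ have total dimension $2$, each diagonal one is one-dimensional and each off-diagonal one vanishes. This yields $\dim_K\Hom_{\cF}(B^+,B^+[1])=1$ and $\Hom_{\cF}(B^+,B^-[1])=0$, which together with the cases $\ell=0$ and $\ell\notin\{0,1\}$ is the statement.

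It remains to see that $g\cdot\xi=\xi$, and here I would use the explicit model. By the twisted complex $B$ written above and the computations of Lemma~\ref{calculBand1}, a cocycle representing the degree-one self-extension of $B$ is the endomorphism of $B$ given by the degree-one shift $s^{-1}\otimes 1$ of the identity on each indecomposable summand; one checks it is not a coboundary, so it generates $\Hom_{H^0\Tw\cA}(B,B[1])$, and since the $G$-action on $B$ merely interchanges the summands $X_3\leftrightarrow gX_3$ and $X_4\leftrightarrow gX_4$ along with their identity morphisms, this cocycle is fixed by $g$. Conceptually this is forced: $g$ fixes $\gamma$ up to orientation-preserving homotopy and leaves the $K[x]$-module untouched, so it fixes every band object $B_{(\gamma,\grad,t,\bS)}$ of the one-parameter family, and $\Hom_{H^0\Tw\cA}(B,B[1])$ is the tangent line to that family at the parameter $\lambda^2$, on which an involution fixing every point acts as the identity. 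I expect this verification — that the degree-one self-extension of $B$ is $G$-invariant rather than $G$-anti-invariant — to be the only genuinely new computation in the proof.
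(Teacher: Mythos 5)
Your overall strategy is the same as the paper's: reduce everything to the two-dimensional space $\Hom_{H^0(\Tw\cA*G)}(\{B\},\{B\}[\ell])$, handle $\ell=0$ and $\ell\notin\{0,1\}$ by the brick property of the band object, and observe that the whole statement in degree $1$ comes down to showing that $g$ acts by $+1$ (not $-1$) on the one-dimensional space $\Hom_{H^0\Tw\cA}(B,B[1])$; the subsequent idempotent bookkeeping is the same computation the paper performs with $\mu^2(e_B^{\pm},\mu^2(-\otimes 1_G,e_B^{+}))$. You have correctly isolated the crux.

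However, your verification of that crux has a genuine gap. The proposed representative of the degree-one class --- ``the degree-one shift $s^{-1}\otimes 1$ of the identity on each indecomposable summand'' --- does not exist as a degree-one cocycle. Since $\Hom_{\add\bZ\cA}(X_i[d],X_i[d'])=\Hom(K[d],K[d'])\otimes\Hom_{\cA}(X_i,X_i)$ and $\Hom_{\cA}(X_i,X_i)$ contributes only the identity in degree $0$, any ``identity-type'' component is forced to be the canonical degree-$0$ identification (it represents $1_B$ under $\Hom^{-1}(B,B[1])\cong\Hom^{0}(B,B)$), not a new class in degree $1$. In fact the identity components of $B$ are exactly the sources of the coboundaries: $\mu^1$ applied to $1_{X_3},1_{gX_3},1_{X_4},1_{gX_4}$ produces the relations $\lambda a+ga'=0$, $\lambda ga+a'=0$, $\lambda a+a'=0$, $\lambda ga+ga'=0$, and the generator of $H^1(\Hom_{\Tw\cA}(B,B))$ is the class of one of the $\bM$-segment components $a,a',ga,ga'$ (morally $\partial\delta/\partial\lambda$), as in display \eqref{H1} of the paper. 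The $G$-invariance you need, $[ga]=[a]$, is then exactly what these relations give, and this explicit computation is the paper's proof. Your ``tangent line to the family of band objects'' argument is only a heuristic: an involution fixing each object of the family up to isomorphism could still act by $-1$ on a Hom-space (the case $[g\gamma]=[\gamma^{-1}]$ treated later in the paper shows such signs are genuinely delicate), so it cannot replace the cocycle/coboundary computation.
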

 
 \begin{proof}
We immediately see that $H^*(\Hom_{\Tw \cA}(B,B))$ is concentrated in degrees $0$ and $1$. We have 
$$H^0\Hom_{\Tw \cA}(B,B)={\rm vect} (1_B).$$
and we have \begin{equation}\label{H1} H^1(\Hom_{\Tw \cA}(B,B)) =  {\rm vect} (a, a', ga, ga')/\left\langle \begin{array}{c}\lambda a+ga'= 0\\ \lambda ga+ a'= 0\\ \lambda a+ a'=0 \\ \lambda ga+ga'=0\end{array}\right\rangle. \end{equation} which is one-dimensional.
Likewise we obtain that the space
$\Hom (B^+\oplus B^-,(B^+\oplus B^-)[1])$ is $2$-dimensional. It is then sufficient to compute 
$\mu^2(e_B^{\pm},\mu^2(a\otimes 1_G,e_B^+))$ in $H^0(\Tw \cA *G)$.
Then we have 
$$\mu^2(a\otimes 1_G, 1_B\otimes \frac{1+g}{2})=a\otimes \frac{1+g}{2},$$
 and $$\mu^2(1_B\otimes \frac{1\pm g}{2},a\otimes \frac{1+g}{2})= ((-1) a\pm (-1) ga))\otimes \frac{1+g}{4}.$$
 Finally using the relations \eqref{H1} in $H^1(\Hom_{\Tw\cA}(B,B)$ we obtain that $a=ga$ in the homology and hence $\mu^2(e_B^+,\mu^2(a\otimes 1_G,e_B^+))\neq 0$ while $\mu^2(e_B^{-},\mu^2(a\otimes 1_G,e_B^+))=0$.

 \end{proof}

\subsection{$G$-invariant simple bands with $[g\gamma]=[\gamma^{-1}]$}

In this subsection, we focus on the case where the closed curve $\gamma$ satsifies $[g_*\gamma]=[\gamma^{-1}]$. First note that such a curve can be written $\gamma=\alpha.g(\alpha^{-1})$where $\alpha$ is a curve with orbifold endpoints. Likewise it always has winding number $0$ (see \cite[Prop 5.17 (4)]{AmiotPlamondon}). Such a closed curve gives rise to $G$-invariant objects if and only if the parameter $\lambda$ attached to it satisfies $\lambda^2=1$. Therefore we obtain four non isomorphic objects attached with the curve $\gamma$ after splitting the idempotents. These objects will be called double tagged objects. The first tagging corresponds to the sign of the parameter, while the other tagging corresponds to the splitting of the idempotents. As we will see in Definition \ref{def:Bepsilon}, it will be more natural to attach one sign to each orbifold point lying on the curve $\gamma$, so that their product is equal to the parameter $\lambda=\pm 1$.

The plan of the subsection is as follows. We first exhibit in Proposition \ref{prop iso band G-invariant} an isomorphism $\varphi$ from the object $gB$ to $B$ satisfying  $\mu^2(\varphi, g\varphi)={\rm Id}$. We then define the double tagged objects $B^{\pm_P,\pm_Q}$ in Definition \ref{def:Bepsilon}. We compute the endomorphism algebra of these objects, showing that they are bricks in Proposition \ref{prop: morphism B epsilon B epsilon'}.  Then we compute the morphisms between double tagged objects and tagged objects of the system when they intersect in an orbifold point Proposition \ref{prop: morphism B epsilon X epsilon}.  This computation permits to show that the double tagging behaves well through the canonical functors $(\Psi_{\bS\to \bS'}*G)_+$ (Propostion \ref{prop:B epsilon independant system}). Finally, we compute the morphisms between two double tagged objects intersecting at an orbifold point (Proposition \ref{prop: morphism B epsilon B epsilon'}).

\subsubsection{Definition of double tagged objects}

\begin{proposition}\label{constructionsystem2}
Let $(\gamma,\grad)$ be a simple graded  loop on $\surf$ such that $g_*(\gamma)$ is homotopic to $\gamma^{-1}$ and such that ther image coincide on $\surf$. Then it is possible to find a $G$-invariant system of arcs containing arcs $(X=gX,Y=gY, Z, gZ)$ as in Figure \ref{figure1}.

\end{proposition}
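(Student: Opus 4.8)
The plan is to mimic the construction of Proposition~\ref{constructionsystem1}, but adapted to the case $[g_*\gamma]=[\gamma^{-1}]$. First I would use the hypothesis that $\mathrm{Im}\,\gamma = \mathrm{Im}\,g_*\gamma$ together with $[g_*\gamma]=[\gamma^{-1}]$: since $g$ reverses the orientation of the curve while preserving its image, $g$ acts on the annular neighborhood of $\gamma$ as a reflection, hence must have fixed points on $\gamma$ itself. More precisely, an orientation-preserving involution of the surface that reverses a simple closed curve setwise will fix exactly two points of that curve (this is where the orbifold points on $\gamma$ come from, consistent with the remark preceding the proposition that $\gamma=\alpha\cdot g(\alpha^{-1})$ for an arc $\alpha$ between the two fixed points). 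Call these two fixed points $P$ and $Q$; they split $\gamma$ into two arcs $\alpha$ (from $P$ to $Q$) and $g(\alpha^{-1})$ (from $Q$ back to $P$).

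Next I would build a $G$-invariant annular (or rather, reflection-symmetric) neighborhood $C$ of $\gamma$, whose boundary consists of two circles $\gamma_1,\gamma_2$; because $g$ reverses $\gamma$ and fixes $P,Q$, the map $g$ interchanges no boundary component but reflects each of $\gamma_1$ and $\gamma_2$ across the two points where the arcs through $P$ and $Q$ meet the boundary. As in Proposition~\ref{constructionsystem1}, since $w_\eta(\gamma)=0$ the circles $\gamma_i$ do not bound subsurfaces, so I can choose simple arcs from $\bM$ into the neighborhood reaching $\gamma_i$; after resolving crossings between these arcs and their $g$-images, I obtain the four arcs $X=gX$, $Y=gY$, $Z$, $gZ$ of Figure~\ref{figure1}: the two arcs $X$ and $Y$ are $G$-invariant because they pass through the fixed points $P$ and $Q$ respectively, whereas $Z$ and $gZ$ form a genuine $G$-orbit of size two. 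Finally I would invoke that any finite collection of pairwise non-crossing graded simple arcs on $\surf$ extends to a full arc system, and a $G$-invariant such collection extends to a $G$-invariant full arc system (take any completion, then intersect/refine with its $g$-translate, resolving crossings $G$-equivariantly).

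The main obstacle I anticipate is the \emph{careful equivariant bookkeeping of the fixed points and the reflection structure}: one must verify that the two points of $\gamma$ fixed by $g$ are genuinely in the interior (not on the boundary) and that the local model near each fixed point is the standard orbifold $\bZ/2\bZ$-chart compatible with $g^*\eta=\eta$, so that $X$ and $Y$ can be taken $G$-invariant; and one must check that the arcs $Z,gZ$ can be chosen disjoint from their translates after crossing-resolution \emph{without} creating crossings with $X$ or $Y$. These are topological verifications of the same flavor as in Proposition~\ref{constructionsystem1}, but the reflection (rather than translation) symmetry makes the picture genuinely different, which is why the two cases are treated separately. I would organize the argument so that the existence of $P,Q$ is established first, then the neighborhood $C$, then the arcs, and leave the routine completion-to-a-full-system step to a citation of Theorem~\ref{thm HKK Morita equivalence}'s setup.
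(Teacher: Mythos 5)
Your proposal follows essentially the same route as the paper's proof: a reflection-symmetric annular neighborhood of $\gamma$, the two fixed points $P,Q$ lying on $\gamma$ (the paper obtains the decomposition $\gamma=\alpha.(g\alpha)$ by citing \cite{AmiotPlamondon}, while you argue directly that the restriction of $g$ to the circle must be orientation-reversing), existence of the required arcs from the fact that $\gamma$ has winding number zero and hence does not bound a closed subsurface, and $G$-equivariant completion to a full system. One slip worth correcting: since $g$ is orientation-preserving on $\surf$ and reverses $\gamma$, its restriction to the invariant annulus is the $\pi$-rotation model $(\theta,t)\mapsto(-\theta,-t)$, so it \emph{interchanges} the two boundary circles $\gamma_1,\gamma_2$ rather than preserving each and reflecting it; correspondingly the arcs should be routed from $\bM$ directly to the fixed points $P$ and $Q$ and then doubled by $g$ (giving $X=gX$ and $Y=gY$), not merely to the boundary circles. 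This mis-statement is not load-bearing: as in the paper, the whole construction reduces to producing one arc meeting $\gamma$ in a fixed point, after which the remaining arcs $Z$, $gZ$ and the completion are a local matter in the cylinder.
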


 \begin{figure}[!h]

  \scalebox{0.8}{
  \begin{tikzpicture}[scale=1.2,>=stealth]
  \draw[dark-green,thick,->]  (1,0) arc (0:65:1);
  \draw[dark-green,thick,->] (0.5,3.15) arc (-65:0:1);
  \draw[dark-green,thick,->] (4,4) arc (-180:-115:1);
  \draw[dark-green,thick,->] (4.5,0.85) arc (115:180:1);
  \node at (0.75,2.75) {$c$};
  \node at (1.25,3.25) {$ga$};
  \node at (4.25,1.25) {$gc$};
  \node at (3.75,0.75) {$a$};
  \node at (4,3) {$gb$};
  \node at (1,1) {$b$};
  \draw[blue] (0.5,0.85)--(0.5,3.15);
  \draw[blue] (0.85,3.5)--node[midway,above]{$Z$}(4.15,3.5);
  \draw[blue] (0.7,3.3)--(4.3,0.7);
  \draw[blue] (0.85,0.5)--node[midway,below]{$gZ$}(4.15,0.5);
  \draw[blue] (4.5,0.85)--(4.5,3.15);
 \node[red] at (2.5,2) {$\times$};
 \node[red] at (0.5,2) {$\times$};
 \node[red] at (4.5,2) {$\times$};
 \node[red] at (0.25,1.75) {$P$};
 \node[red] at (2.5,1.75) {$Q$};
 
 \node[blue, inner sep= 1pt, fill=white] at (3.25,1.5) {$Y$};
 \draw[thick,purple,->] (0,2)--(6,2);
 \node[blue] at (0.25,2.5) {$X$};
 \node[blue] at (4.75,2.5) {$X$};
\node[purple, inner sep=1pt, fill=white] at (5,2) {$(\gamma,\grad)$};

\draw[red,->] (1,2) arc (0:90:0.5);
\draw[red,->] (3,2) arc (0:150:0.5);
\node[red,inner sep=1pt, fill=white] at (0.8,2.3) {$p$};
\node[red,inner sep=1pt, fill=white] at (2.8,2.3) {$q$};
 
  \end{tikzpicture}}
  \caption{System for a closed curve $\gamma$ with $[g\gamma]=[\gamma^{-1}]$.}\label{figure1}
  \end{figure}
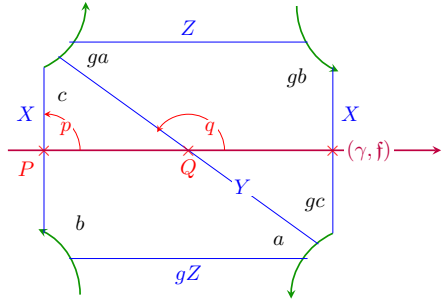

\begin{proof}
We first prove that on a $G$-invariant neighborhood of $\gamma$, the surface is a cylinder as follows. 

\begin{figure}[!h]
\[  \scalebox{0.8}{
  \begin{tikzpicture}[scale=1.2,>=stealth]

\shadedraw[top color=blue!30] (-1.5,1)..controls (-1,1) and (-1,-1).. (-1.5,-1)--(1.5,-1).. controls (2,-1) and (2,1)..(1.5,1)--(-1.5,1);
\draw (-1.5,1).. controls (-2,1) and (-2,-1).. (-1.5,-1);

\draw[thick, blue] (0,2.5)--(0,1);
\draw[thick, blue] (0,-1)--(0,-2);

\draw[->] (-0.25,2) arc (-180:0:0.25);

\node[red] at (0,1) {$\times$};
\node[red] at (0,-1) {$\times$};

\draw[purple,thick] (0,1)..controls (0.5,1) and (0.5,-1).. (0,-1);

\end{tikzpicture}}
\]
\caption{}
\end{figure}

\noindent
The proof is completely analogous to the one in Proposition \ref{constructionsystem1}. 

We can assume that contains exactly 2 fixed points of $g$, that will be denoted by $P$ and $Q$. 
The curve $\gamma$ can be written $\alpha.(g\alpha)$ where $\alpha :[0,1]\to \surf$ is an immersion with $\alpha(0)$ and $\alpha(1)$ being the two fixed points mentionned above. This fact was already proved in \cite[Prop 4.17]{AmiotPlamondon} using fundamental groups of orbifolds.

It is then enough to show the existence of an arc intersecting $\gamma$ in one of the fixed point. The existence of such an arc is clear since $\gamma$ is not the boundary of a closed subsurface of $\surf$ (since its winding number is zero). 
  
\end{proof}   

\begin{proposition}\label{prop iso band G-invariant} Let $(\gamma,\grad)$ and $\mathbb S$ be as in Figure \ref{figure1}, and $\lambda$ be a scalar in $K^*$. Let $B$ be the object $B:=B_{(\gamma,\grad,\lambda)}$ in $\Tw \cA_{\bS}$ defined as follows 
$$\xymatrix{X[p]\ar[rr]^{\lambda (-1)^{\|c\|}c+gc} && Y[q-1]}.$$ Then $B$ is isomorphic to $gB$ in $H^0\Tw \cA_{\bS}$ if and only if $\lambda=\pm 1$. In that case, an isomorphism $gB\to B$ is given by 
$\varphi:=(-1)^p\lambda 1_X+(-1)^{q-1} 1_Y$ and satisfies $\mu^2(g\varphi,\varphi)=1_B$.
\end{proposition}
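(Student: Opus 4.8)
The plan is to obtain the ``if'' direction together with the explicit formula for $\varphi$ in a single computation, and then to deduce the ``only if'' direction from the classification of Theorem~\ref{thm HKK indec obj}. First I would assume $\lambda^2=1$ and check that the prescribed $\varphi=(-1)^p\lambda\,1_X+(-1)^{q-1}\,1_Y$ does the job. Since $X$ and $Y$ are $G$-invariant and $g$ is strictly unital, $g(1_X)=1_X$ and $g(1_Y)=1_Y$, so $\varphi$ and $g\varphi$ are given by the same diagonal matrix, read respectively as a morphism $gB\to B$ and as a morphism $B\to gB$ on the common underlying object $X[p]\oplus Y[q-1]$. The first point is that $\varphi$ is a degree-$0$ cocycle: as $\varphi$ is diagonal and the differentials $\delta_B,\delta_{gB}$ run from the $X$-summand to the $Y$-summand, the only possibly nonzero component of $\mu^1_{\Tw\cA}(\varphi)$ is the $X[p]\to Y[q-1]$ one, and all higher contributions $\mu^{\geq 3}_{\bZ\cA}(\ldots,\varphi,\ldots)$ vanish by strict unitality, so $\mu^1_{\Tw\cA}(\varphi)=\mu^2_{\bZ\cA}(\delta_B,\varphi_{XX})+\mu^2_{\bZ\cA}(\varphi_{YY},\delta_{gB})$. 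Reading off the coefficients of $c$ and of $gc$, and using the degree relation $|c|=q-p$ (equivalently $\|c\|\equiv q-p-1\pmod 2$) coming from the identity $|c|+p+(1-q)=1$ recalled in the proof of Lemma~\ref{calculBand1}, the coefficient of $gc$ cancels automatically while the coefficient of $c$ is a nonzero multiple of $1-\lambda^2$, so it vanishes precisely under the hypothesis --- and this is exactly what dictates the signs $(-1)^p\lambda$ and $(-1)^{q-1}$. The second point is the composition: in $\mu^2_{\Tw\cA}(g\varphi,\varphi)$ every term containing at least one of $\delta_B,\delta_{gB}$ again lands in the off-diagonal part and is killed by strict unitality, so this composition reduces to $\mu^2_\cA$ of the diagonal entries, i.e.\ to $\lambda^2\cdot\mathrm{id}=\mathrm{id}$; hence $\mu^2_{\Tw\cA}(g\varphi,\varphi)=1_B$ (and symmetrically $\mu^2_{\Tw\cA}(\varphi,g\varphi)=1_{gB}$), so $\varphi$ is an isomorphism and $B\simeq gB$.

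For the converse, I would argue through the classification. Applying the strict $A_\infty$-functor $g$ to the differential $s^{\|c\|}\otimes\big(\lambda(-1)^{\|c\|}c+gc\big)$ of $B$ and using $g\cdot c=gc$, $g\cdot gc=c$, $|c|=|gc|$, the differential of $gB$ becomes $\lambda(-1)^{\|c\|}\cdot s^{\|c\|}\otimes\big(gc+\lambda^{-1}(-1)^{\|c\|}c\big)$; since scaling the differential of a two-term twisted complex by a nonzero scalar produces an isomorphic object, $gB$ is isomorphic in $\Tw\cA_\bS$ to the band object $B_{(\gamma,\grad,\lambda^{-1},\bS)}$ of~\eqref{defBand}. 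By Theorem~\ref{thm HKK indec obj} the objects $B_{(\gamma,\grad,\mu,\bS)}$, $\mu\in K^{*}$, are pairwise non-isomorphic in $H^0\Tw\cA_\bS$ --- they correspond to the pairwise non-isomorphic one-dimensional $K[x]$-modules --- so $B\simeq gB$ forces $\lambda=\lambda^{-1}$, that is $\lambda=\pm1$.

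The step I expect to be the real work is the sign bookkeeping in the first part: tracking the $s$-shifts and the Koszul signs of $\mu_{\bZ\cA}$ and $\mu_{\Tw\cA}$ precisely enough to confirm that $\varphi$ is a cocycle with $\mu^2_{\Tw\cA}(g\varphi,\varphi)=1_B$ rather than $-1_B$. If one prefers to avoid the classification in the converse, one can instead compute $H^0\Hom_{\Tw\cA}(gB,B)$ directly: its degree-$0$ part is spanned by $1_X$ and $1_Y$ on the diagonal together with the few off-diagonal $\bM$-segments joining $X$ and $Y$, the cocycle condition on a diagonal element $\alpha\,1_X+\beta\,1_Y$ forces $\alpha\lambda^2=\alpha$ and $\beta\lambda^2=\beta$, and since an isomorphism of twisted complexes is invertible on the associated graded, an isomorphism $gB\to B$ can exist only when $\lambda^2=1$.
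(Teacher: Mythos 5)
Your argument is correct, and its two halves relate to the paper's proof differently. For the ``if'' half you do essentially what the paper does: the paper computes $\mu^1(1_X)=\lambda(-1)^{\|c\|}c+gc$ and $\mu^1(1_Y)=-c-\lambda(-1)^{\|c\|}gc$ in $\Hom_{\Tw\cA}(gB,B)$, and your coefficient analysis (the $gc$-coefficient of $\mu^1(\varphi)$ cancels identically, the $c$-coefficient is $(-1)^{q-1}(\lambda^2-1)$ using $p+|c|=q$) is exactly that computation, and your strict-unitality argument is the content of the paper's ``straightforward to check that $\mu^2(\varphi,g\varphi)=1_B$''. For the ``only if'' half you diverge: the paper extracts it from the same local computation, concluding that $H^0\Hom_{\Tw\cA}(gB,B)$ vanishes unless $\lambda^2=1$, whereas you identify $gB$ with $B_{(\gamma,\grad,\lambda^{-1},\bS)}$ (apply $g$ to the differential and rescale, as in the paper's own Lemma~\ref{calculBand1}, item $B_1$) and invoke the injectivity of the HKK parametrization (Theorem~\ref{thm HKK indec obj}) to force $\lambda=\lambda^{-1}$. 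Your route buys independence from the local $\Hom$-complex (in particular from having to rule out off-diagonal degree-zero cocycles, about which the paper is silent), at the cost of leaning on the classification theorem; the paper's route is self-contained and also produces the generator $\varphi$ of $H^0\Hom(gB,B)$ in one stroke.

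Two small caveats. First, your use of Theorem~\ref{thm HKK indec obj} needs the statement that, for a \emph{fixed oriented} graded loop, distinct scalars give non-isomorphic band objects; this is indeed what the bijection provides (the identification ``up to orientation'' only relates $(\gamma,\lambda)$ to $(\gamma^{-1},\lambda^{\pm1})$ and does not collapse $\lambda$ with $\lambda^{-1}$ for the same parametrization), but it is worth saying explicitly. Second, in your fallback argument the principle ``an isomorphism of twisted complexes is invertible on the associated graded'' is false in general (any closed morphism between contractible twisted complexes is an isomorphism in $H^0\Tw$); here it can be repaired because the differentials and the off-diagonal degree-zero components are built from $\bM$-segments, which are radical morphisms, so no composition of them can produce $1_X$ or $1_Y$ modulo coboundaries --- the usual minimality argument. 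Since this is only your alternative route, it does not affect the correctness of the main proof.
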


\begin{proof}
We consider the following morphisms
\[\xymatrix{X[p]\ar[rr]^{\lambda (-1)^{\|c\|}gc+c}\ar[d]^{1_X} && Y[q-1]\ar[d]^{1_Y}\\ X[p]\ar[rr]^{\lambda (-1)^{\|c\|}c+gc} && Y[q-1] }\]
An immediate calculation gives  $$\mu^1(1_X)=\lambda (-1)^{\|c\|}c+gc \textrm{ and }\mu^1(1_Y)=-c-\lambda (-1)^{\|c\|}gc.$$
Hence $H^0(\Hom_{\Tw\cA}(gB,B))\neq 0$ if and only if $\lambda^2=1$ and in that case it is generated by $\varphi:=(-1)^p\lambda 1_X+(-1)^{q-1}1_Y$ since we have $p+|c|=q$. It is then straightforward to check that $\mu^2(\varphi,g\varphi)=1_B.$

\end{proof}

\begin{definition}\label{def:Bepsilon} Let $(\gamma,\grad)$ and $\bS$ be as in Figure \ref{figure1}. For $\epsilon_P$ and $\epsilon_Q$ in $\{\pm 1\}$ and $\lambda=\epsilon_P\epsilon_Q$ we define the object $B^{\epsilon_P,\epsilon_Q}_{\bS}$ in $H^0(\Tw\cA_{\bS}*G)_+$
$$B^{\epsilon_P,\epsilon_Q}_{\bS}:= \left(\{B_{(\gamma,\grad,\lambda)}\},1_X\otimes \frac{1+\epsilon_P(-1)^pg}{2}+1_Y\otimes \frac{1+\epsilon_Q(-1)^{q-1}g}{2}\right).$$

With the notation of the begining of the section we have $B^{\epsilon_P,\epsilon_Q}=B^\varphi$ with $\varphi:=\epsilon_P(-1)^p 1_X+\epsilon_Q(-1)^{q-1}1_Y$ which is an isomorphism from $gB_{\lambda}$ to $B_{\lambda}$ with $\lambda=\epsilon_P\epsilon_Q$. In particular we have $B^{\epsilon_P,\epsilon_Q}\oplus B^{-\epsilon_P,-\epsilon_Q}=B_{(\gamma,\grad,\lambda)}.$ These objects will be called \emph{double tagged objects}.
\end{definition}

\begin{remark} 
We will see in Proposition \ref{prop:B epsilon independant system} that if $\bS$ and $\bS'$ are two systems of arcs containing arcs $X$, $Y$ and $Z$ (resp. $X'$, $Y'$, $Z'$) as in Figure \ref{figure1} then 
$$\Psi_{\bS \to \bS'}(B^{\epsilon_P,\epsilon_Q}_{\bS})=B^{\epsilon_P,\epsilon_Q}_{\bS'}.$$
In order to do so, we first need to compute morphisms between $B^{\epsilon_P,\epsilon_Q}_{\bS}$ and the arcs $X^\pm$ and $Y^\pm$.
\end{remark}

\subsubsection{Endomorphism of the double tagged object}

\begin{proposition}\label{prop: morphism B epsilon B epsilon'}
Let $(\gamma,\grad)$ and $\bS$ be as above. Then in the category $H^0(\Tw\cA_{\bS}*G)_+$, we have 
\[\dim_k\Hom (B^{\epsilon_P,\epsilon_Q}_{\bS},B^{\epsilon'_P,\epsilon'_Q}_{\bS}[\ell])= \left\{ \begin{array}{ll} 1 & \textrm{if }\ell=0, \quad \epsilon_P=\epsilon'_P \textrm{ and } \epsilon_Q=\epsilon'_Q \\ 1 & \textrm{if }\ell=1, \quad \epsilon_P=-\epsilon'_P \textrm{ and } \epsilon_Q=-\epsilon'_Q \\ 0 & \textrm{else.}\end{array}\right.\]
\end{proposition}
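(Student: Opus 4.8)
The plan is to push everything down to $H^0(\Tw\cA_\bS*G)$ by means of idempotents and then to settle one sign. Since $H^0(\Tw\cA_\bS*G)_+$ is the idempotent completion of $H^0(\Tw\cA_\bS*G)$, and $B^{\epsilon_P,\epsilon_Q}_\bS=(\{B_\lambda\},e^{\epsilon_P,\epsilon_Q})$ with $\lambda=\epsilon_P\epsilon_Q$ and $e^{\epsilon_P,\epsilon_Q}=\tfrac12(1_{B_\lambda}\otimes 1_G+\varphi^{\epsilon_P,\epsilon_Q}\otimes g)$ the idempotent of Definition~\ref{def:Bepsilon}, one has, up to the evident identifications,
\[
\Hom\bigl(B^{\epsilon_P,\epsilon_Q}_\bS,B^{\epsilon'_P,\epsilon'_Q}_\bS[\ell]\bigr)\;\cong\;e^{\epsilon'_P,\epsilon'_Q}\cdot H^\ell\Hom_{(\Tw\cA)*G}\bigl(\{B_\lambda\},\{B_{\lambda'}\}\bigr)\cdot e^{\epsilon_P,\epsilon_Q},
\]
with $\lambda'=\epsilon'_P\epsilon'_Q$ and $\Hom_{(\Tw\cA)*G}(\{B_\lambda\},\{B_{\lambda'}\})=\Hom_{\Tw\cA}(B_\lambda,B_{\lambda'})\oplus\Hom_{\Tw\cA}(gB_\lambda,B_{\lambda'})$. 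I first record what is needed about the band objects $B_\mu$, $\mu\in\{\pm1\}$. Each is indecomposable by Theorem~\ref{thm HKK indec obj}, and, as in the $[\gamma]=[g\gamma]$ band computation of the previous subsection (or by the standard self-extension computation for band objects over a graded gentle algebra), $H^\ell\Hom_{\Tw\cA}(B_\mu,B_\mu)$ is spanned by $1_{B_\mu}$ for $\ell=0$, is one-dimensional for $\ell=1$, and vanishes otherwise. Because $X$, $Y$, $Z$ are $G$-invariant, $gB_\mu$ is literally the twisted complex $X[p]\to Y[q-1]$ with connecting morphism $\mu(-1)^{\|c\|}gc+c$, and Proposition~\ref{prop iso band G-invariant} provides an isomorphism $\varphi_\mu\colon gB_\mu\xrightarrow{\ \sim\ }B_\mu$ with $\mu^2_{\Tw\cA}(\varphi_\mu,g\varphi_\mu)=1_{B_\mu}$ (valid since $\mu^2=1$). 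Finally, for $\mu\neq\mu'$ the objects $B_\mu$, $B_{\mu'}$ are band objects of the \emph{same} graded loop with distinct parameters, so $H^\ell\Hom_{\Tw\cA}(B_\mu,B_{\mu'})=0$ for every $\ell$ (distinct points of a standard tube carry no morphisms or extensions; cf.\ the description of morphisms between band objects over a gentle algebra, e.g.\ \cite{AmiotBrustle}). Together with $gB_\mu\cong B_\mu$ this kills every $\Hom$-space with $\epsilon_P\epsilon_Q\neq\epsilon'_P\epsilon'_Q$, which is part of the ``else'' case.

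It remains to treat $\lambda=\lambda'=:\mu$; the argument is the same for $\mu=\pm1$ (replacing the pair $(B^{++},B^{--})$ by $(B^{+-},B^{-+})$ when $\mu=-1$), so take $\mu=1$. Then $\{B_1\}=B^{++}\oplus B^{--}$ with orthogonal primitive idempotents $e^{\pm}=\tfrac12(1_{B_1}\otimes 1_G\pm\varphi\otimes g)$, $\varphi:=\varphi_1=(-1)^p1_X+(-1)^{q-1}1_Y$. In degree $0$, $H^0\End_{(\Tw\cA)*G}(\{B_1\})$ is two-dimensional, spanned by $1_{B_1}\otimes 1_G$ and $\varphi\otimes g$, hence by $e^+$ and $e^-$; orthogonality gives $\dim_K\Hom(B^\epsilon,B^{\epsilon'})=\delta_{\epsilon\epsilon'}$, the $\ell=0$ line. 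For $\ell\notin\{0,1\}$, $H^\ell\End_{(\Tw\cA)*G}(\{B_1\})=0$ since $H^\ell\Hom_{\Tw\cA}(B_1,B_1)=0$ and $gB_1\cong B_1$, so all four blocks vanish. For $\ell=1$, $H^1\End_{(\Tw\cA)*G}(\{B_1\})$ is two-dimensional with basis $\psi\otimes 1_G$ and $\Psi\otimes g$, where $[\psi]$ spans $H^1\Hom_{\Tw\cA}(B_1,B_1)$ and $\Psi:=\mu^2_{\Tw\cA}(\varphi,g\psi)$ spans $H^1\Hom_{\Tw\cA}(gB_1,B_1)$.

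The heart of the matter is the degree-$1$ case, where I must decide which of the four blocks $e^{\epsilon'}\bigl(H^1\End_{(\Tw\cA)*G}(\{B_1\})\bigr)e^{\epsilon}$ are non-zero. Expanding $\mu^2\bigl(e^{\epsilon'},\mu^2(\psi\otimes 1_G,e^{\epsilon})\bigr)$ in $H^0((\Tw\cA)*G)$, using $\mu^2_{(\Tw\cA)*G}(\varphi\otimes g,\psi\otimes 1_G)=\Psi\otimes g$, $\mu^2_{(\Tw\cA)*G}(\psi\otimes 1_G,\varphi\otimes g)=c\,\Psi\otimes g$ for a scalar $c$, $\mu^2_{(\Tw\cA)*G}(\varphi\otimes g,\varphi\otimes g)=1_{B_1}\otimes 1_G$, and the strict-unitality signs, one finds it equals a nonzero multiple of $(-1+\epsilon\epsilon'c)\,\psi\otimes 1_G+(\epsilon'-\epsilon c)\,\Psi\otimes g$, which vanishes precisely when $\epsilon\epsilon'=c$; since the four blocks partition a two-dimensional space this already forces $c\in\{\pm1\}$. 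So everything reduces to computing the scalar $c$ defined by $[\mu^2_{\Tw\cA}(\psi,\varphi)]=c\,[\Psi]$ in $H^1\Hom_{\Tw\cA}(gB_1,B_1)$. I expect $c=1$ here — the opposite of the value $c=-1$ occurring in the $[\gamma]=[g\gamma]$ case, the sign reversal being exactly the imprint of $[g_*\gamma]=[\gamma^{-1}]$; this has to be extracted by an explicit computation with $\bM$-segment representatives of $\psi$ in the $G$-invariant system of Figure~\ref{figure1}, tracking Koszul signs through the relevant $\mu^2$, $\mu^3$, $\mu^4$ of $\cA_\bS$, in the spirit of the proofs of Proposition~\ref{prop: morphism X epsilon Y epsilon} and of the $[\gamma]=[g\gamma]$ band computation.

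Given $c=1$, the expression above vanishes iff $\epsilon=\epsilon'$, so the two diagonal blocks vanish and, the total being two-dimensional, the two off-diagonal blocks are one-dimensional each. Translating back through the displayed isomorphism, $\dim_K\Hom(B^{\epsilon_P,\epsilon_Q}_\bS,B^{\epsilon'_P,\epsilon'_Q}_\bS[1])$ is $1$ when $\epsilon_P=-\epsilon'_P$ and $\epsilon_Q=-\epsilon'_Q$ and $0$ otherwise; combined with the $\ell=0$ computation, the vanishing for $\ell\neq 0,1$, and the vanishing for $\lambda\neq\lambda'$, this is exactly the asserted formula. The main obstacle is $c=1$: the relevant sign appears only after unfolding the twisted-complex products into higher products $\mu^3$, $\mu^4$ of $\cA_\bS$ (coming from the unique nontrivial component of the differential of $B_1$), and it is precisely this sign that separates the two geometric regimes $[g_*\gamma]=[\gamma]$ and $[g_*\gamma]=[\gamma^{-1}]$.
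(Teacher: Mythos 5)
Your reduction is sound and in fact mirrors the paper's own strategy: decompose $\{B_\lambda\}$ via the idempotents $e^{\epsilon_P,\epsilon_Q}$, use that for the simple loop $\gamma$ the spaces $H^\ell\Hom_{\Tw\cA}(B_\lambda,B_{\lambda'})$ are one-dimensional for $\ell=0,1$ when $\lambda=\lambda'$ and vanish otherwise, dispose of $\lambda\neq\lambda'$ and of $\ell\neq 0,1$, settle $\ell=0$ by orthogonality of the idempotents, and then decide which of the four degree-$1$ blocks survive. Up to that point your argument is correct.

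However, there is a genuine gap exactly where the proposition lives: you reduce everything to the scalar $c$ with $[\mu^2_{\Tw\cA}(\psi,\varphi)]=c\,[\Psi]$ and then write ``I expect $c=1$ here \dots this has to be extracted by an explicit computation,'' without doing that computation. That sign is the entire content of the degree-$1$ statement: if $c$ came out to be $-1$ (as in the $[\gamma]=[g\gamma]$ regime, where the paper finds $\Hom(B^+,B^+[1])\neq 0$ and $\Hom(B^+,B^-[1])=0$), the nonzero blocks would be the diagonal ones and the proposition would read differently. The paper resolves this by working with the concrete generator $s^{\|c\|}\otimes c$ of $H^1\Hom_{\Tw\cA}(B,B)$ (where $c$ is the $\bM$-segment of Figure~\ref{figure1}), using $\mu^1(1_X)=\epsilon_P\epsilon_Q(-1)^{\|c\|}c+gc$ to get the relation $gc\equiv-\epsilon_P\epsilon_Q(-1)^{\|c\|}c$ in cohomology, and then computing
\[
\mu^2\bigl(e_B,\mu^2(s^{\|c\|}\otimes c\otimes 1_G,e_{B'})\bigr)=-\,s^{\|c\|}\otimes\bigl(c+\epsilon_P\epsilon'_Q(-1)^{p+q-1}gc\bigr)\otimes\tfrac{1_G+\epsilon'_P(-1)^p g}{4},
\]
which, using the relation above and $p+|c|=q$, is nonzero in cohomology precisely when $\epsilon_Q=-\epsilon'_Q$. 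Your heuristic (``the sign reversal is the imprint of $[g_*\gamma]=[\gamma^{-1}]$'') is a plausible prediction but not a proof; until you carry out this Koszul-sign computation (or an equivalent determination of $c$), the degree-$1$ case, and hence the proposition, is not established.
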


\begin{proof}
Denote by $\lambda=\epsilon_P\epsilon_Q$ and $\lambda'=\epsilon_P'\epsilon'_Q$ and by $B:=B_{(\gamma,\grad,\lambda)}$ and $B':=B_{(\gamma,\grad,\lambda')}$. Then we have 
$$B^{\epsilon_P,\epsilon_Q}\oplus B^{-\epsilon_P,-\epsilon_Q}=B \textrm{ and }B^{\epsilon'_P,\epsilon'_Q}\oplus B^{-\epsilon'_P,-\epsilon'_Q}=B'.$$
Since the curve $\gamma$ is simple, we already know that 
$$\dim_k \Hom(B,B'[\ell]) = \left\{\begin{array}{ll} 1 & \textrm{if }\ell=0,1 \textrm{ and } \lambda=\lambda'\\ 0 & \textrm{else}\end{array}\right.$$

Now assume that $\lambda=\lambda'=\epsilon_P\epsilon_Q=\epsilon'_P\epsilon'_Q$, that is $B=B'$. Consider the following diagram 

\[\xymatrix@C=2.5cm{X[p]\ar@<0.5ex>[drr]^{c}\ar@<-0.5ex>[drr]_{gc}\ar[rr]^{\lambda (-1)^{\|c\|}c+gc}\ar[d]_{1_X} && Y[q-1]\ar[d]^{1_Y}\\ X[p]\ar[rr]_{\lambda (-1)^{\|c\|}c+gc} && Y[q-1] }\]

By the previous computation we have that $$\mu^1(1_X)=\epsilon_P\epsilon_Q (-1)^{\|c\|}c+gc$$ hence 
$$H^1(\Hom_{\Tw \cA}(B,B))={\rm vect}(s^{\| c\|}\otimes c)$$
modulo the relation $\epsilon_P\epsilon_Q (-1)^{\|c\|}c+gc$.

One needs then to compute $\mu^2(e_B,\mu^2(s^{\|c\|}\otimes c\otimes 1_G,e_{B'}))$ with $e_B:=1_X\otimes \frac{1_G+\epsilon_P(-1)^pg}{2}+1_Y\otimes \frac{1_G+\epsilon_Q(-1)^{q-1}g}{2}$ and $e_{B'}:=1_X\otimes \frac{1_G+\epsilon'_P(-1)^pg}{2}+1_Y\otimes \frac{1_G+\epsilon'_Y(-1)^{q-1}g}{2}$.
A direct computation yields 
$$\mu^2(e_B,\mu^2(s^{\|c\|}\otimes c\otimes 1_G,e_{B'}))=(-1) s^{\|c\|}\otimes \left(c+\epsilon_P\epsilon'_Q(-1)^{p+q-1} gc\right)\otimes \frac{1_G+\epsilon'_P(-1)^pg}{4}.$$
Therefore using the relation in $H^1$ and the fact that $p+|c|=q$ we obtain a non zero morphism if and only if $\epsilon_Q=-\epsilon'_Q.$

\end{proof}

\begin{remark}
In the case where the surface is a cylinder with an involution as in the proof of Proposition~\ref{constructionsystem1}, then the category $H^0(\Tw\cA*G)_+$ is equivalent to the derived category of the path algebra of type $\widetilde{\mathbf{D} _n}$ ({\it cf} Section~\ref{Section5}). One can check that the two objects $B^{\epsilon_P,\epsilon_Q}$ and $B^{-\epsilon_P,-\epsilon_Q}$ corresponds to objects which are on the mouth of tubes of rank $2$ in the Auslander-Reiten quiver (compare also with \cite[Subsection 6.1]{AmiotPlamondon}). They are indeed bricks, but there is a non trivial extension between them.  

\end{remark}

\subsubsection{Morphisms between a double tagged object and a tagged object}

\begin{proposition}\label{prop: morphism B epsilon X epsilon} Let $(\gamma,\grad)$ and $\bS$ as in Figure \ref{figure1}.
In the category $H^0(\Tw\cA_{\bS} *G)_+$ we have the following equalities
\[\dim_k \Hom (B^{\epsilon_P,\epsilon_Q}_{\bS},X^\pm[\ell]) = \left\{\begin{array}{ll} 1 & \textrm{if }\pm\epsilon_P=(-1)^p \textrm{ and }\ell=p\\ 0 & \textrm{else.}\end{array}\right.\]

\[\dim_k \Hom (X^\pm,B^{\epsilon_P,\epsilon_Q}_{\bS}[\ell]) = \left\{\begin{array}{ll} 1 & \textrm{if }\pm\epsilon_P=(-1)^{1-p} \textrm{ and }\ell=1-p\\ 0 & \textrm{else.}\end{array}\right.\]

\[\dim_k \Hom (B^{\epsilon_P,\epsilon_Q}_{\bS},Y^\pm[\ell]) = \left\{\begin{array}{ll} 1 & \textrm{if }\pm\epsilon_Q=(-1)^{q} \textrm{ and }\ell=q\\ 0 & \textrm{else.}\end{array}\right.\]

\[\dim_k \Hom (Y^\pm, B^{\epsilon_P,\epsilon_Q}_{\bS}[\ell]) = \left\{\begin{array}{ll} 1 & \textrm{if }\pm\epsilon_Q=(-1)^{1-q} \textrm{ and }\ell=1-q\\ 0 & \textrm{else.}\end{array}\right.\]

\end{proposition}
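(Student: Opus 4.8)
The plan is to follow exactly the template established in the proofs of Proposition~\ref{prop:endoX+}, Proposition~\ref{prop: morphism X epsilon Y epsilon} and Proposition~\ref{prop: morphism B epsilon B epsilon'}: first compute the relevant cohomology of morphism spaces in $\Tw\cA_{\bS}$ between the underlying non-tagged objects, and then extract the tagged piece by applying the appropriate idempotents and computing one explicit $\mu^2$. Concretely, I would work in the $G$-invariant system $\bS$ of Figure~\ref{figure1}, so that $B=B_{(\gamma,\grad,\lambda)}$ is the two-term twisted complex $X[p]\xrightarrow{\lambda(-1)^{\|c\|}c+gc}Y[q-1]$ with $\lambda=\epsilon_P\epsilon_Q$, and $X$, $Y$ are arcs of the system with their tagged versions $X^\pm=(\{X\},e_X^\pm)$, $Y^\pm=(\{Y\},e_Y^\pm)$. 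Since $X$ and $Y$ are objects of $\bS$ (not twisted complexes), the Hom-spaces $\Hom_{\Tw\cA}(B,X)$ and $\Hom_{\Tw\cA}(B,Y)$ are computed directly from the $\bM$-segments $a$, $b$, $c$, $ga$, $gb$, $gc$ of Figure~\ref{figure1} together with the differential of $B$; and because $\gamma$ is a simple graded loop, I expect $H^*(\Hom_{\Tw\cA}(B,X))$ to be one-dimensional, concentrated in degree $p$ (with generator the class of $b$, or rather $s^{?}\otimes b$ modulo the image of $\mu^1$ coming from the differential $\lambda(-1)^{\|c\|}c+gc$ composed with $a$), and similarly $H^*(\Hom_{\Tw\cA}(B,Y))$ one-dimensional in degree $q$. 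The indices $p$ and $q$ here are precisely the indices of intersection of $(\gamma,\grad)$ with $X$ and with $Y$ as marked in Figure~\ref{figure1}, matching the statement.

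The four assertions are then proved by four nearly identical idempotent computations. For the first, I would pick the degree-$p$ generator $s^{\bullet}\otimes b$ of $H^p(\Hom_{\Tw\cA}(B,X))$ (here $b$ is the $\bM$-segment from $X$, through the differential, to the copy of $X_1$-or-$X$ in $B$; I use the labelling of Figure~\ref{figure1}), lift it to $s^{\bullet}\otimes b\otimes 1_G$ in $(\Tw\cA)*G$, and compute
\[
\mu^2\bigl(e_X^\pm,\ \mu^2(s^{\bullet}\otimes b\otimes 1_G,\ e_B^{\epsilon_P,\epsilon_Q})\bigr),
\]
where $e_B^{\epsilon_P,\epsilon_Q}=1_X\otimes\frac{1_G+\epsilon_P(-1)^pg}{2}+1_Y\otimes\frac{1_G+\epsilon_Q(-1)^{q-1}g}{2}$ and $e_X^\pm=\frac12(1_X\otimes 1_G\pm 1_X\otimes g)$. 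Exactly as in Proposition~\ref{prop: morphism X epsilon Y epsilon}, the inner $\mu^2$ with $e_B$ picks up two terms, one proportional to $s^{\bullet}\otimes b\otimes 1_G$ and one proportional to $\epsilon_P(-1)^p\,s^{\bullet}\otimes(\text{composite through the }gc\text{-arm})\otimes g$; the composite simplifies, via a $\mu^2$ (or $\mu^3$) identity enclosing the relevant disc, to $\pm\, s^{\bullet}\otimes gb$, with a sign I would track mod $2$ as in the earlier proofs. Then composing with $e_X^\pm$ and using the relation $gb=\pm(-1)^{?}\,b$ valid in the one-dimensional cohomology $H^p(\Hom_{\Tw\cA}(B,X))$ (coming from the differential of $B$, the exact analogue of relation~\eqref{H1}) collapses the expression to a scalar multiple of $(1\pm\epsilon_P(-1)^p)$ times a fixed nonzero class. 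Hence it is nonzero precisely when $\pm\epsilon_P=(-1)^p$, which is the first claim; the degree is $p$ by construction. The second claim is the ``dual'' computation using $H^{1-p}(\Hom_{\Tw\cA}(X,B))$ and $\mu^2(e_B^{\epsilon_P,\epsilon_Q},\mu^2(\cdot\otimes 1_G,e_X^\pm))$; here Proposition~\ref{prop::IndexFormula} gives the complementary index $1-p$, and the parity flips accordingly, yielding $\pm\epsilon_P=(-1)^{1-p}$. The third and fourth claims are the same two computations with $Y$, $q$ and $\epsilon_Q$ in place of $X$, $p$ and $\epsilon_P$; here the extra shift in the definition of $e_B$ (the $(-1)^{q-1}$ versus $(-1)^p$) is exactly what produces $(-1)^q$ rather than $(-1)^p$ in the answer.

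\emph{Main obstacle.} The conceptual content is light --- it is the sign bookkeeping that is delicate. The crux is to get right (i) the precise generator of each one-dimensional cohomology group and the relation it satisfies (the analogue of~\eqref{H1}, which depends on $\lambda=\epsilon_P\epsilon_Q$), and (ii) the sign in the $\mu^2$ (or $\mu^3$) identity that turns the $gc$-arm composite $gc\cdot(\text{segment})$ into $\pm gb$; these two signs must combine with the $\epsilon_P(-1)^p$ from $e_B$ so that the vanishing condition comes out as $\pm\epsilon_P=(-1)^p$ and not its negation. I would fix conventions by reusing verbatim the computation in Proposition~\ref{prop: morphism X epsilon Y epsilon} --- the object $\tilde X$ there plays the role of $B$ here, with $\tilde X$ replaced by the band complex --- and only track the new factors of $\lambda$. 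Once the first assertion is pinned down, the other three follow by symmetry (swap $X\leftrightarrow Y$, $P\leftrightarrow Q$) and by the index-complementarity of Proposition~\ref{prop::IndexFormula}, with no further surprises.
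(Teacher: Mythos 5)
Your overall strategy is the paper's: compute the cohomology of the Hom-complexes in $\Tw\cA_\bS$ between the underlying untagged objects, exhibit an explicit generator, and then sandwich it between the idempotents $e_B^{\epsilon_P,\epsilon_Q}$ and $e_X^\pm$ (resp.\ $e_Y^\pm$), reading off the parity condition from a factor of the form $1\pm\epsilon_P(-1)^p$. The degree placements ($p$, $1-p$, $q$, $1-q$) and the symmetry between the four cases are also as in the paper.

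However, the explicit computation you describe for the first case cannot be carried out with the inputs you name, and this is a genuine (if fixable) gap. Since $B$ is the two-term twisted complex built from $X[p]$ and $Y[q-1]$ \emph{themselves} (not from auxiliary arcs), the space $\Hom_{\Tw\cA}(B,X)$ is spanned by the projection component $s^{-p}\otimes 1_X$ together with contributions from $\bM$-segments ending on $X$ or $Y$ and on $X$; the segment $b$ of Figure~\ref{figure1} joins $X$ to $gZ$, so it lies in none of the four Hom-spaces in question, and the relation ``$gb=\pm b$'' you invoke does not exist. In fact $H^{p}(\Hom_{\Tw\cA}(B,X))={\rm vect}(s^{-p}\otimes 1_X)$ with no relation at all, and the inner $\mu^2$ with $e_B$ requires no $\mu^3$/disc argument: one gets directly $(-1)^p\bigl[1\pm\epsilon_P(-1)^p\bigr]\,s^{-p}\otimes 1_X\otimes\tfrac{1_G+\epsilon_P(-1)^pg}{2}$, whence the condition $\pm\epsilon_P=(-1)^p$. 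The relation you have in mind (the analogue of \eqref{H1}) lives instead in $H^{1-p}(\Hom_{\Tw\cA}(X,B))$ and $H^{q}(\Hom_{\Tw\cA}(B,Y))$, which are spanned by the classes of $c$ and $gc$ modulo the single relation coming from $\mu^1$ of the identity component; it is in those two cases, and only there, that a $\mu^3$ (disc) identity and the relation are used to decide when the idempotent-twisted class vanishes in cohomology. Dually, $H^{1-q}(\Hom_{\Tw\cA}(Y,B))$ is spanned by the inclusion $s^{q-1}\otimes 1_Y$, again with no relation. Once the generators are corrected in this way, your idempotent computations go through exactly as you planned and yield the stated parity conditions.
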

 
Hence we have the following morphisms linking $X^\pm$ and $B^{\epsilon_P,\epsilon_Q}_{\bS}$
$$\xymatrix@C=2cm{\ar @{} [dr] |{p\textrm{ even }}  B^{+_P,\pm_Q}\ar[r]^{{\rm deg} p} & X^+\ar[d]^{{\rm deg} (1-p)}\\ X^-\ar[u]^{{\rm deg} (1-p)} & B^{-_P,\pm_Q}\ar[l]_{{\rm deg} p}}
\xymatrix@C=2cm{\ar @{} [dr] |{p\textrm{ odd }}  B^{+_P,\pm_Q}\ar[r]^{{\rm deg} p} & X^-\ar[d]^{{\rm deg} (1-p)}\\ X^+\ar[u]^{{\rm deg} (1-p)} & B^{-_P,\pm_Q}\ar[l]_{{\rm deg} p}}
$$

\begin{proof}
We have $H^\ell(\Hom_{\Tw\cA}(B,X))=0$ for $\ell\neq p$ and 
$H^\ell(\Hom_{\Tw\cA}(B,X)={\rm vect}(s^{-p}\otimes 1_X)$. Therefore it is enough to compute 
$\mu^2(e_X^\pm, \mu^2(s^{-p}\otimes 1_X\otimes 1_G, e_B))$ with $e_B$ as above. A direct computation yields 
$$\mu^2(e_X^\pm, \mu^2(s^{-p}\otimes 1_X\otimes 1_G, e_B))=(-1)^p[1\pm\epsilon_P(-1)^p]s^{-p}\otimes 1_X\otimes \frac{1_G+(-1)^p\epsilon_Pg}{2},$$
which is not zero if and only if $\pm\epsilon_P=(-1)^p$. 

Therefore, there is a morphism from $B^{\epsilon_P,\epsilon_Q}$ to $X^+$ if and only if $\epsilon_P=(-1)^p$ and this morphism has degree $p$. 

\smallskip
In the category $\Tw\cA$ there are morphisms in degree $-p$ and $1-p$ as follows
$$\xymatrix{X[0] \ar[d]_{s^{p}\otimes 1_X}\ar@<0.5ex>[drr]^{s^{q-1}\otimes c}\ar@<-0.5ex>[drr]_{s^{q-1}\otimes gc} &&\\ X[p\ar[rr]_{(-1)^{\|c\|}\epsilon_P\epsilon_Q c+gc} && Y[q-1] }.$$
We compute
$$\mu^1(s^p\otimes 1_X)=(-1)^{\|c\|p}((-1)^{\|c\|}\epsilon_P\epsilon_Q s^{q-1}\otimes c+s^{q-1}\otimes gc).$$
Hence we obtain
$$H^{1-p}(\Hom_{\Tw\cA}(X,B))={\rm vect} (s^{q-1}\otimes c, s^{q-1}\otimes gc)/\langle (-1)^{\|c\|}\epsilon_P\epsilon_Q s^{q-1}\otimes c+s^{q-1}\otimes gc\rangle.$$

It is then enough to compute 
$\mu^2(e_B,\mu^2(s^{q-1}\otimes c\otimes 1_G,e_X^\pm)).$ A direct computation gives
$$\mu^2(e_B,\mu^2(s^{q-1}\otimes c\otimes 1_G,e_X^\pm))=(-1)^{p-1}s^{q-1}\otimes (c\pm(-1)^{q-1}\epsilon_Q gc)\otimes \frac{1\pm g}{2}.$$
Hence using the relation in $H^{1-p}(\Hom_{\Tw\cA}(X,B))$, one obtain that $\mu^2(e_B,\mu^2(s^{q-1}\otimes c\otimes 1_G,e_X^\pm))$ is non zero in cohomology if and only if $(-1)^{1-p}=\pm\epsilon_P.$

\medskip

In the same way we have 
$$H^{1-q}(\Hom_{\Tw\cA}(Y,B))={\rm vect}(s^{q-1}\otimes 1_Y),$$
and 
$$\mu^2(e_B,\mu^2(s^{q-1}\otimes 1_Y\otimes1_G,e_Y^\pm))=(-1)^{q-1}(1\pm\epsilon_Q(-1)^{q-1})s^{q-1}\otimes 1_Y\otimes \frac{1_G\pm g}{4},$$
which is not zero if and only if $\pm\epsilon_Q=(-1)^{1-q}$. 

We also have
$$H^{q}(\Hom_{\Tw\cA}(B,Y))={\rm vect}(s^{-p}\otimes c,s^{-p}\otimes gc)/\langle \epsilon_P\epsilon_Q(-1)^{\|c\|}c+gc\rangle,$$
and $$\mu^2(e_Y^\pm,\mu^2(s^{-p}\otimes c\otimes 1_G,e_B))=(-1)^{q}s^{-p}\otimes (c\pm(-1)^p\epsilon_P gc)\otimes \frac{1_G+\epsilon_P(-1)^p g}{4}.$$
So this does not vanish in cohomology if and only if $\pm\epsilon_Q=(-1)^{q}.$

\end{proof}

\begin{proposition}\label{prop:B epsilon independant system} 
Let $(\gamma,\grad)$ be a graded simple curve such that $g\gamma=\gamma^{-1}$, and let $\bS$ and $\bS'$ be two full system of graded arcs as in Figure \ref{figure1}. Then we have isomorphisms in $H^0(\Tw\cA_{\bS'}*G)_+$
$$\Psi_{\bS\to \bS'}(B_{\bS}^{\epsilon_P,\epsilon_Q})\simeq B_{\bS'}^{\epsilon_P,\epsilon_Q}.$$
\end{proposition}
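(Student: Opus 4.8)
The strategy is to extract the two signs $\epsilon_P,\epsilon_Q$ from morphism spaces that are manifestly preserved by the canonical equivalence $\Psi_{\bS\to\bS'}$ (which here, following the convention of this section, denotes the functor $(\Psi_{\bS\to\bS'}*G)_+$), using the computations of Propositions~\ref{prop: morphism B epsilon B epsilon'} and~\ref{prop: morphism B epsilon X epsilon}. The first observation I would record is that $\Psi_{\bS\to\bS'}$ sends the object $B^{\epsilon_P,\epsilon_Q}_{\bS}\oplus B^{-\epsilon_P,-\epsilon_Q}_{\bS}$ — which by Definition~\ref{def:Bepsilon} is the object $(\{B_{(\gamma,\grad,\lambda)}\},1_B\otimes 1_G)$, with $\lambda=\epsilon_P\epsilon_Q$ — to $B^{\epsilon_P,\epsilon_Q}_{\bS'}\oplus B^{-\epsilon_P,-\epsilon_Q}_{\bS'}$. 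Indeed, each elementary move relating $\bS$ and $\bS'$ can be chosen $G$-equivariant, so $\Psi_{\bS\to\bS'}$ is built from functors $\Tw(\iota)$ for inclusions $\iota\colon\cA_{\bS_i}\hookrightarrow\cA_{\bS_{i+1}}$; these commute with $-*G$, with $\{-\}$, and with the faithful split closure, and on $\Tw\cA$ they preserve the indecomposable object attached to the graded loop $(\gamma,\grad)$ with its one-dimensional $K[x]$-module of eigenvalue $\lambda$ (Theorem~\ref{thm HKK indec obj}). In particular the parameter $\lambda$ is unchanged.

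By Proposition~\ref{prop: morphism B epsilon B epsilon'} all four objects $B^{\pm\epsilon_P,\pm\epsilon_Q}$ are bricks, hence indecomposable, and since $H^0(\Tw\cA_{\bS'}*G)_+$ is split-closed and $\Hom$-finite it is Krull--Schmidt. Comparing the two decompositions above, $\Psi_{\bS\to\bS'}(B^{\epsilon_P,\epsilon_Q}_{\bS})$ is therefore isomorphic to $B^{\epsilon_P,\epsilon_Q}_{\bS'}$ or to $B^{-\epsilon_P,-\epsilon_Q}_{\bS'}$; as the product of the two signs is the fixed parameter $\lambda$, it suffices to rule out the second case, i.e.\ to prove that the sign carried by the single fixed point $P$ is preserved.

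To detect the $P$-sign I would use a $G$-invariant test arc through $P$. Assume first that $\bS$ and $\bS'$ share a $G$-invariant graded arc $W$ meeting $\gamma$ transversally at $P$ (for instance the arc $X$ of Figure~\ref{figure1}), and let $p$ be the index from $W$ to $\gamma$. By Proposition~\ref{prop: morphism B epsilon X epsilon}, for any signs $\delta_P,\delta_Q$ exactly one of $\Hom(B^{\delta_P,\delta_Q},W^{+}[p])$ and $\Hom(B^{\delta_P,\delta_Q},W^{-}[p])$ is non-zero, namely the one indexed by $W^{(-1)^p\delta_P}$ — and this holds both over $\bS$ and over $\bS'$. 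Now $\Psi_{\bS\to\bS'}$ is an equivalence, it sends $W^{+}$ to $W^{+}$ and $W^{-}$ to $W^{-}$ (independence of the sign assignment for $G$-invariant strings, established just before Proposition~\ref{prop:endoX+}), and the index $p$ depends only on the intersection of $\gamma$ with the fixed arc $W$; hence the non-vanishing pattern is transported, and comparison with Proposition~\ref{prop: morphism B epsilon X epsilon} over $\bS'$ forces the $P$-sign of $\Psi_{\bS\to\bS'}(B^{\epsilon_P,\epsilon_Q}_{\bS})$ to be $\epsilon_P$. Together with the preservation of $\lambda$ this gives $\Psi_{\bS\to\bS'}(B^{\epsilon_P,\epsilon_Q}_{\bS})\simeq B^{\epsilon_P,\epsilon_Q}_{\bS'}$ in this case.

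The main obstacle is that $\bS$ and $\bS'$ need not share any $G$-invariant arc through $P$: their local configurations near $\gamma$ may use genuinely different arcs, so ``$\Psi(W^{\pm})=W^{\pm}$'' is not available off the shelf. I would handle this by inserting intermediate $G$-invariant full systems and reducing to the situation of the previous paragraph, using Proposition~\ref{prop:X epsilon independant} to represent the tagged object $W^{\pm}$ by the twisted complex $\tilde W^{\varphi}$ built only from a $G$-invariant triangle around $W$ — data one can keep fixed along the reduction — and checking, via Remark~\ref{rema::shift}, that replacing $W$ by $W[1]$ flips the detected sign exactly when it flips the parity of the index $p$, so that the sign at $P$ read off from Proposition~\ref{prop: morphism B epsilon X epsilon} is genuinely intrinsic to the pair $(\gamma,P)$. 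This bookkeeping of signs against index parity is the delicate part; once it is in place the step-by-step argument applies verbatim, and the relation $\Psi_{\bS'\to\bS''}\circ\Psi_{\bS\to\bS'}=\Psi_{\bS\to\bS''}$ assembles the elementary moves into the general statement.
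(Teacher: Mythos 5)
Your first two steps match the paper: the band object $B_{(\gamma,\grad,\lambda)}$ with its parameter $\lambda=\epsilon_P\epsilon_Q$ is preserved by $\Psi_{\bS\to\bS'}$, so by indecomposability of the four summands the image of $B^{\epsilon_P,\epsilon_Q}_{\bS}$ is either $B^{\epsilon_P,\epsilon_Q}_{\bS'}$ or $B^{-\epsilon_P,-\epsilon_Q}_{\bS'}$, and when $\bS$ and $\bS'$ contain the \emph{same} $G$-invariant arc through $P$ the non-vanishing pattern of Proposition~\ref{prop: morphism B epsilon X epsilon} pins down the sign, since $W^{\pm}$ is sent to $W^{\pm}$. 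The gap is in your third step. The arcs $X\in\bS$ and $X'\in\bS'$ through $P$ are in general distinct, and two distinct $G$-invariant arcs through $P$ always intersect at $P$; since the arcs of a full system are pairwise disjoint, no intermediate full $G$-invariant system can contain both, so a chain of systems in which consecutive terms share the test arc through $P$ cannot bridge from $X$ to $X'$ -- the problematic comparison reappears, unresolved, at the switching step. Moreover the only tool you invoke for the ``intrinsic-ness'' of the detected sign, Remark~\ref{rema::shift}, compares an arc $W$ with its regradings $W[1]$; it says nothing about how the taggings attached to two genuinely different invariant arcs $X$ and $X'$ through $P$ are related, which is exactly the point at issue.

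What fills this gap in the paper is the computation of morphisms between two tagged arcs meeting at an orbifold point (Proposition~\ref{prop: morphism X epsilon Y epsilon}): the nonzero degree-$p$ morphism from the band to $X$ factors through $X'[p']=X'^{+}[p']\oplus X'^{-}[p']$, and one has $\Hom(X'^{\epsilon}[p'],X^{+}[p])\neq 0$ iff $\epsilon=(-1)^{p-p'}$ while $\Hom(B^{\epsilon_P,\epsilon_Q}_{\bS'},X'^{\epsilon}[p'])\neq 0$ iff $\epsilon=\epsilon_P(-1)^{p'}$; matching these forces $\epsilon_P(-1)^{p'}=(-1)^{p-p'}$ and hence determines $\epsilon_P$, with $\epsilon_Q$ then recovered from the parameter. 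Either this factorization argument, or an equivalent direct computation of $\Hom\big(B^{\epsilon_P,\epsilon_Q}_{\bS'},\tilde X^{\varphi}[p]\big)$ inside $\bS'$ using the model of Proposition~\ref{prop:X epsilon independant}, is needed where you write that ``the step-by-step argument applies verbatim''; as it stands, the key claim that the sign read off at $P$ is independent of the chosen invariant test arc is asserted rather than proved.
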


\begin{proof}
We have $$B^{\epsilon_P,\epsilon_Q}_{\bS}\oplus B^{-\epsilon_P,-\epsilon_Q}_{\bS}=B_{(\gamma,\grad,\epsilon_P\epsilon_Q,\bS)},\textrm{ and } B^{\epsilon_P,\epsilon_Q}_{\bS'}\oplus B^{-\epsilon_P,-\epsilon_Q}_{\bS'}=B_{(\gamma,\grad,\epsilon_P\epsilon_Q,\bS')}.$$ Moreover by Theorem \ref{thm HKK indec obj} we know that there is an isomorphism in $H^0(\Tw\cA_{\bS'})$ between $\Psi_{\bS\to \bS'}(B_{(\gamma,\grad,-\epsilon_P\epsilon_Q,\bS)})$ and $B_{(\gamma,\grad,-\epsilon_P\epsilon_Q,\bS')}$. Therefore we have either 
$$\Psi_{\bS\to \bS'}(B_{\bS}^{\epsilon_P,\epsilon_Q})\simeq B_{\bS'}^{\epsilon_P,\epsilon_Q} \textrm{ or }\Psi_{\bS\to \bS'}(B_{\bS}^{\epsilon_P,\epsilon_Q})\simeq B_{\bS'}^{-\epsilon_P,-\epsilon_Q}.$$

The arcs $X$ and $X'$ intersects the curve $\gamma$ in $P$. Assume that  they intersect as follows : 

\begin{figure}[!h]
\[  \scalebox{0.8}{
  \begin{tikzpicture}[scale=1.2,>=stealth]

\draw[purple, thick] (-1.5,0)--(1.5,0);
\draw[blue, thick] (1,1.5)--(-1,-1.5);
\draw[blue,thick] (-1,1.5)--(1,-1.5);
\node[red] at (0,0) {$\times$};
\node[red] at (0,-0.4) {$P$};
\node[purple, inner sep=1pt, fill=white] at (1,0) {$\gamma$};
\node[blue, inner sep=1pt,fill=white] at (0.5,-0.75) {$X$};
\node[blue, inner sep=1pt,fill=white] at (-0.5,-0.75) {$X'$};
\draw[red,->] (0.8,0) arc (0:60:0.8);
\draw[red,->] (0.5,0) arc (0:120:0.5);
\node[red, inner sep=1pt, fill=white] at (0.7,0.5) {$p'$};
\node[red, inner sep=1pt, fill=white] at (0,0.5) {$p$};

\end{tikzpicture}}
\]
\caption{}
\end{figure}

Denote by $p$ the index from $(\gamma,\grad)$ to $X$, and by $p'$ the index from $(\gamma,\grad)$ to $X'$. Then there is non zero morphisms in $\cF(\surf)$
$$\Hom_{\cF}(B,X'[p']) \textrm{ and } \Hom_{\cF}(X'[p'],X[p])$$
whose composition gives rise to a non zero morphism in $\Hom_{\cF}(B,X[p]).$ 
Note that here, because of Theorem \ref{thm HKK indec obj}, we can avoid subscript notation with respect to the system $\bS$ or $\bS'$.

Denote by $\epsilon'_P$ and $\epsilon'_Q$ the signs such that $\Psi_{\bS\to \bS'}(B^{+,\epsilon_Q}_{\bS})\simeq B^{\epsilon'_P,\epsilon'_Q}_{\bS'}.$ Then we have $\epsilon_Q=\epsilon'_P\epsilon'_Q$.

We first assume that $p$ is even to avoid heavy sign notation.  Now by Proposition \ref{prop: morphism B epsilon X epsilon} there is a non zero morphism in $H^0(\Tw\cA_{\bS}*G)_+$ from $B_{\bS}^{+,\epsilon_Q}$ to $X^{+}_{\bS}[p]$. Therefore there is a non zero morphism from $\Psi_{\bS\to \bS'}(B_{\bS}^{+,\epsilon_Q})$ to $\Psi_{\bS\to \bS'}(X_{\bS}^{+}[p])$ in the category $H^0(\Tw\cA_{\bS'}*G)_+$. By Proposition \ref{prop:X epsilon independant} we have $\Psi_{\bS\to \bS'}(X^+_{\bS})=X^+_{\bS'}$, so there is a non zero morphism from $\Psi_{\bS\to \bS'}(B_{\bS}^{+,\epsilon_Q})\simeq B^{\epsilon'_P,\epsilon'_Q}_{\bS'}$ to $X^{+}_{\bS'}[p]$.  This morphism factors through $X'[p']=X'^+[p']\oplus X'^-[p']$. But by Proposition \ref{prop: morphism X epsilon Y epsilon} we have 
$$\Hom (X'^\epsilon[p'],X^+[p])\neq 0$$ if and only if $\epsilon=(-1)^{p-p'}$. And by Proposition \ref{prop: morphism B epsilon X epsilon} we have 
$$\Hom (B^{\epsilon_P,\epsilon_Q}_{\bS'},X^\epsilon[p'])\neq 0$$ if and only if $\epsilon=\epsilon_P(-1)^{p'}.$ Therefore we have 
$$\Hom (B_{\bS'}^{\epsilon_P,\epsilon_Q},X^+[p])\neq 0$$ if and only if $\epsilon_P(-1)^{p'}=(-1)^{p-p'}.$ Since $p$ is even this is equivalent to $\epsilon_P=1$, hence we have 
$$\Psi_{\bS\to \bS'}(B^{+,\epsilon_Q}_{\bS})\simeq B^{+,\epsilon'_Q}_{\bS'}.$$ Since $\epsilon'_Q\epsilon'_P=\epsilon_Q$ we conclude. 
The case where $p$ is odd is completely similar. The case where the curves $X$, $X'$ and $B$ intersect in the order $(B,X,X')$ is also similar.

\end{proof} 

\subsubsection{Morphisms between double tagged objects}

\begin{proposition}\label{prop: morphism B epsilon B' epsilon}
Let $\gamma_1$ and $\gamma_2$ be two graded simple loops on $\surf$ with $[g\gamma_i]=[\gamma^{-1}_i]$ and intersecting in an orbifold point $P$. Denote by $B_1^{\epsilon_P,\epsilon_Q}$ and $B_2^{\epsilon'_P,\epsilon'_R}$ the corresponding double tagged objects in $\mathcal{F}^G_+$ and by $p$ the index between $\gamma_1$ and $\gamma_2$ at $P$. Then we have 
\[\dim_k \Hom (B_1^{\epsilon_P,\epsilon_Q},B_2^{\epsilon'_P,\epsilon'_R} [\ell]) = \left\{\begin{array}{ll} 1 & \textrm{if }\epsilon_P\epsilon'_P=(-1)^p \textrm{ and }\ell=p\\ 0 & \textrm{else.}\end{array}\right.\]

\[\dim_k \Hom (B_2^{\epsilon'_P,\epsilon'_R},B_1^{\epsilon_P,\epsilon_Q}[\ell]) = \left\{\begin{array}{ll} 1 & \textrm{if }\epsilon'_P\epsilon_P=(-1)^{1-p} \textrm{ and }\ell=1-p\\ 0 & \textrm{else.}\end{array}\right.\]

\end{proposition}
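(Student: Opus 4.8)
The strategy mirrors the one used for Proposition~\ref{prop: morphism B epsilon X epsilon} and Proposition~\ref{prop: morphism B epsilon B epsilon'}: we pick a single $G$-invariant full system of arcs adapted to \emph{both} curves simultaneously, compute the relevant cohomology of $\Hom$-complexes in $\Tw\cA$ between the underlying band objects, and then project by the relevant idempotents. First I would set up a $G$-invariant full arc system $\bS$ containing arcs $X=gX$ through $P$ and arcs $X_1=gX_1$ through $Q$ and $X_2=gX_2$ through the other orbifold point $R$ of $\gamma_2$ (using Proposition~\ref{constructionsystem2} applied to each of $\gamma_1$ and $\gamma_2$, after resolving crossings so that the two local pictures near $P$ fit together into the configuration of Figure~\ref{figureXandY} with the roles of $X$ and $Y$ played by the two arcs transverse to $\gamma_1$ and $\gamma_2$ at $P$). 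In this system both band objects $B_i:=B_{(\gamma_i,\grad_i,\lambda_i)}$ with $\lambda_i=\epsilon_P\epsilon_Q$, resp.\ $\lambda_i'=\epsilon_P'\epsilon_R'$, are two-term twisted complexes of the shape given in Proposition~\ref{prop iso band G-invariant}, say $B_1:X[p_1]\to Y_1[q_1-1]$ and $B_2:X'[p_2]\to Y_2[q_2-1]$ where $X,X'$ are the two $G$-invariant arcs through $P$ used to resolve $\gamma_1,\gamma_2$.

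**Key steps.** The plan then proceeds as follows. (1) Compute $H^*(\Hom_{\Tw\cA}(B_1,B_2))$: since $\gamma_1,\gamma_2$ are simple loops intersecting once (at $P$), this space is one-dimensional in each of two adjacent degrees, spanned by classes represented by the $\bM$-segment at $P$ (a copy of $1$-type morphism coming from the single transverse intersection), modulo a relation of the form $\lambda_1\lambda_2(-1)^{\bullet}(\,\cdot\,)+g(\,\cdot\,)$ exactly as in~\eqref{H1} and in the proof of Proposition~\ref{prop: morphism B epsilon X epsilon}; write $\xi\in H^p(\Hom_{\Tw\cA}(B_1,B_2))$ for the relevant generator. (2) Compute $\mu^2_{(\Tw\cA)*G}(e_{B_2},\mu^2(\xi\otimes 1_G,e_{B_1}))$ where $e_{B_i}$ are the idempotents defining the double tagged objects as in Definition~\ref{def:Bepsilon}. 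A direct bilinear computation, using $p_i+|c_i|=q_i$ and the relation from step (1), yields something of the shape $[\,1\pm\epsilon_P\epsilon_P'(-1)^{p}\,]$ times a nonzero element, so that the morphism survives iff $\epsilon_P\epsilon_P'=(-1)^p$ and $\ell=p$; crucially the signs $\epsilon_Q,\epsilon_R'$ attached to the \emph{other} orbifold points $Q,R$ do not enter, since the morphism is supported at $P$ only. (3) The second formula follows by the same computation with roles of $B_1,B_2$ exchanged, using Proposition~\ref{prop::IndexFormula} to pass from index $p$ at $P$ from $\gamma_1$ to $\gamma_2$ to index $1-p$ in the other direction. (4) Finally, invoke Proposition~\ref{prop:B epsilon independant system} to conclude that the answer is independent of the chosen system $\bS$, so the statement makes sense for the double tagged objects in $\mathcal F^G_+$ as stated.

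**Main obstacle.** The routine but delicate part is step~(2): carefully tracking the Koszul signs $(-1)^{\|c\|}$, the shift signs $s^{\bullet}$ (suppressed in the paper's conventions), and the sign $(-1)^p$ coming from the idempotents $1_X\otimes\frac{1+\epsilon_P(-1)^p g}{2}$, so that the sign in the surviving factor comes out as $\epsilon_P\epsilon_P'(-1)^p$ rather than some other combination. The conceptual point that makes this work — and which I would emphasize — is that near $P$ the configuration is \emph{exactly} that of two $G$-invariant strings meeting at an orbifold point, so the sign bookkeeping is formally identical to Proposition~\ref{prop: morphism X epsilon Y epsilon} (and to the mixed string/band case Proposition~\ref{prop: morphism B epsilon X epsilon}), with $\epsilon_P$ playing the role of the tagging sign of $B_1$ at $P$ and $\epsilon_P'$ that of $B_2$; thus the proof can largely be quoted from those, rather than redone from scratch.
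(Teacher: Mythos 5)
There is a genuine gap in your setup. You propose to work in a single $G$-invariant full arc system in which \emph{both} band objects are in the standard two-term form of Proposition~\ref{prop iso band G-invariant}, with ``$X,X'$ the two $G$-invariant arcs through $P$ used to resolve $\gamma_1,\gamma_2$''. But an arc system consists of pairwise \emph{disjoint} simple arcs, so it cannot contain two distinct arcs through the interior point $P$; and even if you use one common arc $X$ through $P$ for both presentations, the remaining auxiliary arcs of the two local configurations of Figure~\ref{figure1} (the arcs through $Q$ and $R$, and especially the two pairs $Z_i,gZ_i$ bounding $G$-invariant annular neighbourhoods of $\gamma_1$ and $\gamma_2$, which cross each other near $P$ because $\gamma_1$ and $\gamma_2$ do) cannot all coexist in one system. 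Hence the premise of your step (1) — that $B_1$ and $B_2$ are simultaneously the short complexes $X[p_1]\to Y_1[q_1-1]$ and $X'[p_2]\to Y_2[q_2-1]$ in a common $\cA_{\bS}$ — fails, and with it the claim that the sign bookkeeping ``can largely be quoted'' from Propositions~\ref{prop: morphism X epsilon Y epsilon} and~\ref{prop: morphism B epsilon X epsilon}: in any common system at least one of the two bands has to be re-expressed as a longer twisted complex, and that computation is not covered by the results you cite. (A smaller inaccuracy: for two non-isotopic simple loops meeting once, $H^*(\Hom_{\Tw\cA}(B_1,B_2))$ is one-dimensional in the single degree $p$, not in two adjacent degrees; the two adjacent degrees occur for endomorphisms of a single band.)

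The way the paper gets around exactly this obstruction is not by a direct $\mu^2$-with-idempotents computation but by a factorization argument: choose one $G$-invariant arc $X$ through $P$ (say with index $0$ from $\gamma_1$), observe that the unique degree-$p$ morphism from $B_1$ to $B_2$ in $H^0(\Tw\cA)$ factors through $X$, so after applying the idempotents every morphism $B_1^{\epsilon_P,\epsilon_Q}\to B_2^{\epsilon'_P,\epsilon'_R}$ factors through $X^+\oplus X^-$; then a dimension count in $H^0(\Tw\cA*G)_+$ together with the already established vanishing/non-vanishing of $\Hom(B_1^{\epsilon_P,\epsilon_Q},X^{\pm})$ and $\Hom(X^{\pm},B_2^{\epsilon'_P,\epsilon'_R})$ from Proposition~\ref{prop: morphism B epsilon X epsilon} pins down which sign combination survives. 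If you want to keep your computational strategy you must either carry out the genuinely new computation of one band expressed in a system adapted to the other, or replace steps (1)--(2) by this factorization-and-counting argument; your appeal to Proposition~\ref{prop:B epsilon independant system} in step (4) is fine either way.
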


\begin{proof}

The proof is similar to the previous one. We denote by $X$ a $G$-invariant arc passing through the point $P$ as in the following picture and such that the index from $\gamma_1$ to $X$ is $0$, and we work in a full $G$-invariant system of graded arcs containing $X$.

\begin{figure}[!h]
\[  \scalebox{0.8}{
  \begin{tikzpicture}[scale=1.2,>=stealth]

\draw[purple, thick] (-1.5,0)--(1.5,0);
\draw[blue, thick] (1,1.5)--(-1,-1.5);
\draw[cyan,thick] (-1,1.5)--(1,-1.5);
\node[red] at (0,0) {$\times$};
\node[red] at (0,-0.4) {$P$};
\node[purple, inner sep=1pt, fill=white] at (1,0) {$\gamma_1$};
\node[cyan, inner sep=1pt,fill=white] at (0.5,-0.75) {$\gamma_2$};
\node[blue, inner sep=1pt,fill=white] at (-0.5,-0.75) {$X$};
\draw[red,->] (0.8,0) arc (0:60:0.8);
\draw[red,->] (0.5,0) arc (0:120:0.5);
\node[red, inner sep=1pt, fill=white] at (0.7,0.5) {$0$};
\node[red, inner sep=1pt, fill=white] at (0,0.5) {$p$};

\end{tikzpicture}}
\]
\caption{}
\end{figure}

Assume that $p$ is even. We prove that there is no morphism from $B_1^{+_P,+_Q}$ to $B_2^{-_P,-_R}$ and that the space of morphism from $B_1^{+_P,+_Q}$ to $B_2^{+_P,+_R}$ is one dimensional. 

We know that the intersection induces a degree $p$ morphism from $(B_1)_1$ to $(B_2)_1$ in $H^0(\Tw\cA)$ factorising through $X$. Hence the space of morphism from $B_1^{+_P,+_Q}\oplus B_1^{-_P,-_Q}$ to $B_2^{+_P,+_R}\oplus B_2^{-_P,-_R}$ in $H^0(\Tw\cA*G)_+$ is $2$-dimensional and of degree $p$, and these morphisms factor through $X^+\oplus X^-$. The space of morphism from $B_1^{+_P,+_Q}$ to $B_2^{+_P,+_R}\oplus B_2^{-_P,-_R}$ is then one dimensional and factors through $X^+\oplus X^-$. By proposition \ref{prop: morphism B epsilon X epsilon}, there is no morphism from $B_1^{+_P,+_Q}$ to $X^-$, and no morphism from $X^+$ to $B_2^{-_P,-_Q}$, hence there is no morphism from $B_1^{+_P,+_Q}$ to $B_2^{-_P,-_R}$. Thus the space of morphism from $B_1^{+_P,+_Q}$ to $B_2^{+_Q,+_R}$ is one dimensional. The other cases can be treated in a similar fashion by considering the morphisms from $(B_1)_1$ and $(B_1)_{-1}$ to $(B_2)_1$ and $(B_2)_{-1}$ in $H^0(\Tw\cA)$. 

\end{proof}

\section{Tilting objects and skew-gentle algebras}\label{Section5}

In this section, $(\surf,\bM,\eta,g)$ is a $G$-graded marked surface as in Definition \ref{def:Gsurf}. Then, we can naturally consider the orbifold surface $\surf/G$, it is an orbifold surface with orbifold points correpponding to  the points fixed by the automorphism $g$. The aim is to describe some tilting objects in the category $H^0\Tw\big( (\cA*G)_+ \big)$ in terms of collections of (tagged) arcs on the orbifold surface $\surf/G$.

For the graded surface $\surf$, we denote by $\mathbf{g}$ its genus, by $\mathbf{b}$ its number of boundary components and by $\mathbf{m}$ the number of boundary segments. For the orbifold surface $\surf/G$ we denote by $\bar{\mathbf{g}}$ its genus, by $\bar{\mathbf{b}}$ its number of boundary components and by $\bar{\mathbf{m}}=\frac{\mathbf{m}}{2}$ its number of marked segments, and by $\mathbf{x}$ the number of orbifold points, that is the number of fixed points of $\surf$ by the action of $g$. 

\subsection{Formal generators and skew-gentle algebras}

\begin{definition} 
A \emph{$G$-graded dissection} $\mathbb D$ on $(\surf,\bM,\eta,g)$ is a system of graded arcs globally invariant by $g$ on $\surf$ cutting out the surface $\surf$ into discs with exactly one non marked boundary component in its boundary, or annuli with one non marked boundary component. 
\end{definition}

A $G$-graded dissection always exists and contains exactly $\mathbf{m}+\mathbf{b}+2\mathbf{g}-2$ arcs by Proposition 1.12 in \cite{AmiotPlamondonSchroll}. 

Our use of~$G$-graded dissections will be giving us formal generators in the category of twisted complexes over the skew-group~$A_\infty$-algebra using the following general facts.

\begin{definition}
 Let~$\cA$ and~$\cB$ be~$A_\infty$-categories with a strict action of a finite group~$G$.  A strictly unital strict~$A_\infty$-functor~$F:\cA\to \cB$ is~\emph{$G$-equivariant} if~$g\cdot FX = F (g\cdot X)$ for all objects~$X$ of~$\cA$ and~$g\cdot Fa = F(g\cdot a)$ for all morphisms~$a$ of~$\cA$.
\end{definition}

\begin{proposition}\label{prop::G-equivariant-functors}
 Let~$F:\cA\to \cB$ be a~$G$-equivariant strictly unital strict~$A_\infty$-functor.
 \begin{enumerate}
  \item There is an induced strictly unital strict~$A_\infty$-functor~$F*G:\cA*G\to \cB*G$ defined by~$F*G(\{X\}) = \{FX\}$ and~$F*G (a\otimes g) = Fa\otimes g$.
  \item It~$F$ is a quasi-equivalence, then so is~$F*G$.
 \end{enumerate}
\end{proposition}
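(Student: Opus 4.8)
The statement to prove is Proposition~\ref{prop::G-equivariant-functors}: that a $G$-equivariant strictly unital strict $A_\infty$-functor $F:\cA\to\cB$ induces $F*G:\cA*G\to\cB*G$, and that $F*G$ is a quasi-equivalence whenever $F$ is. Part~(1) is essentially a bookkeeping check, so I would dispatch it first; part~(2) is where the real content lies, and the main obstacle there will be handling the effect of the averaging idempotents $\tfrac{1}{|G|}\sum_{g} g$ on cohomology and the possibility that $gX\cong X$, which is why $|G|$ must be invertible in $K$.

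For part~(1), the plan is to verify directly that the formulas $F*G(\{X\}) := \{FX\}$ and $F*G(a\otimes g) := Fa\otimes g$ define a strict $A_\infty$-functor. The only non-trivial point is compatibility with the higher multiplications. Using the definition of $\mu^n_{\cA*G}$ from Definition~\ref{defi::skew-group-category}, one computes
\[
\mu^n_{\cB*G}\big(Fa_n\otimes g_n,\ldots,Fa_1\otimes g_1\big)
= \mu^n_{\cB}\big(Fa_n,\, g_n\cdot Fa_{n-1},\,\ldots,\, g_n\cdots g_2\cdot Fa_1\big)\otimes g_n\cdots g_1.
\]
Here I would invoke $G$-equivariance of $F$ to rewrite $g_j\cdots g_n\cdot Fa_i = F(g_j\cdots g_n\cdot a_i)$, and then use that $F$ is a strict $A_\infty$-functor to pull $F$ outside $\mu^n_{\cB}$, obtaining $F\mu^n_{\cA}(a_n, g_n\cdot a_{n-1},\ldots)\otimes g_n\cdots g_1 = (F*G)\,\mu^n_{\cA*G}(a_n\otimes g_n,\ldots,a_1\otimes g_1)$. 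Strict unitality is immediate since $F(1_X) = 1_{FX}$ gives $(F*G)(1_X\otimes 1_G) = 1_{FX}\otimes 1_G = 1_{\{FX\}}$.

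For part~(2), the plan is to show $H^0(F*G):H^0(\cA*G)\to H^0(\cB*G)$ is an equivalence assuming $H^0F:H^0\cA\to H^0\cB$ is. \textbf{Essential surjectivity:} every object of $\cB*G$ is $\{Y\}$ for some $Y\in\Obj\cB$; since $H^0F$ is essentially surjective there is $X\in\Obj\cA$ and an isomorphism $Y\cong FX$ in $H^0\cB$, which induces an isomorphism $\{Y\}\cong\{FX\} = H^0(F*G)(\{X\})$ in $H^0(\cB*G)$ (one checks an isomorphism $\phi:Y\to FX$ in $H^0\cB$ gives the isomorphism $\phi\otimes 1_G$). \textbf{Fully faithful:} I would compute, for objects $X,X'$ of $\cA$,
\[
H^0\Hom_{\cA*G}(\{X\},\{X'\}) = H^0\!\Big(\bigoplus_{g\in G}\Hom_{\cA}(gX,X')\Big) = \bigoplus_{g\in G} H^0\Hom_{\cA}(gX,X'),
\]
since $\mu^1_{\cA*G}$ respects the $G$-grading on morphism spaces (the $g$-component of $\mu^1_{\cA*G}(a\otimes g)$ is $\mu^1_{\cA}(a)\otimes g$). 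The map $H^0(F*G)$ then acts on the $g$-summand as $H^0\Hom_{\cA}(gX,X')\to H^0\Hom_{\cB}(g\cdot FX, FX') = H^0\Hom_{\cB}(F(gX), FX')$, which is the map induced by the fully faithful functor $H^0F$ (using equivariance to identify $g\cdot FX = F(gX)$), hence an isomorphism. Summing over $g\in G$ gives that $H^0(F*G)$ is fully faithful. The point where $|G|$ invertible is implicitly used is that all of these constructions make sense as written only because $\cA*G$, $\cB*G$ are strictly unital $A_\infty$-categories, which was established under that hypothesis earlier in the paper; no further division is needed here since we are not passing to split closures. I expect the main care point to be the sign and equivariance bookkeeping in the fully-faithfulness computation, matching the twists $g_n\cdots g_2\cdot a_1$ in $\mu^n_{\cA*G}$ with $F$'s equivariance, but this is routine given the setup.
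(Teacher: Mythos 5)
Your proof is correct. The paper states Proposition~\ref{prop::G-equivariant-functors} without proof, treating it as a routine verification, and your argument is exactly the expected one: part~(1) by unwinding Definition~\ref{defi::skew-group-category} and using equivariance plus strictness of~$F$, and part~(2) by noting that $\mu^1_{\cA*G}$ preserves the decomposition $\Hom_{\cA*G}(\{X\},\{X'\})=\bigoplus_{g\in G}\Hom_{\cA}(gX,X')$, so that $H^0(F*G)$ is a direct sum of the isomorphisms induced by $H^0F$ on the summands $\Hom_{\cA}(gX,X')\to\Hom_{\cB}(gFX,FX')$, with essential surjectivity transported through the canonical strict functor $\cB\to\cB*G$, $\phi\mapsto\phi\otimes 1_G$. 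The only blemish is a harmless index slip ("$g_j\cdots g_n\cdot Fa_i$" should be $g_n\cdots g_{i+1}\cdot Fa_i$), and your remark on the role of $|G|$ being invertible is accurate: it is not needed beyond the standing hypotheses.
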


\begin{corollary}\label{coro::formal-generator}
 Let~$\cA$ be an~$A_\infty$-category with a strict action of a finite group~$G$, and let~$T$ be a~$G$-invariant formal generator in~$\Tw\cA$.  Assume further that there exists a~$G$-equivariant quasi-equivalence~$H^*\End(T) \to \End(T)$.  Then~$\{T\}$ is a formal generator in~$\big((\Tw\cA)*G\big)_+$.  In particular, the triangulated categories~$H^0\big((\Tw\cA)*G\big)_+$ and~$\per(H^*\End(T))$ are equivalent.
\end{corollary}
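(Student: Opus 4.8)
The plan is to combine Proposition~\ref{prop::G-equivariant-functors} with the commutative square of Theorem~\ref{theo::comm-square}. Since $T$ is a $G$-invariant formal generator of $\Tw\cA$, the full $A_\infty$-subcategory $\cB\subset \Tw\cA$ on the single object $T$ carries a strict $G$-action (the restriction of the action on $\Tw\cA$; $G$-invariance of $T$ makes $\cB$ globally preserved), and $\End(T)$ is precisely the endomorphism $A_\infty$-algebra of $T$ in $\Tw\cA$. The inclusion $\cB\hookrightarrow \Tw\cA$ is a $G$-equivariant strictly unital strict $A_\infty$-functor which is a Morita equivalence (because $T$ is a formal generator, so $\Tw\cB\to \Tw\Tw\cA\simeq \Tw\cA$ is a quasi-equivalence). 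Applying $-*G$ and then $(-)_+$, and using Proposition~\ref{prop::G-equivariant-functors}(2) together with Lemma~\ref{lemm::split-twist-is-twist-split}, we get that $\big(\cB*G\big)_+\to \big((\Tw\cA)*G\big)_+$ induces an equivalence on $H^0\Tw(-)$; in other words $\{T\}$ is a formal generator of $\big((\Tw\cA)*G\big)_+$, which is the first assertion.

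Next I would identify $H^0\Tw\big((\cB*G)_+\big)$ with $\per\big(H^*\End(T)\big)$. Here the hypothesis that there is a $G$-equivariant quasi-equivalence $\Phi: H^*\End(T)\to \End(T)$ enters. First, $G$-equivariance of $\Phi$ lets us apply Proposition~\ref{prop::G-equivariant-functors} to obtain a quasi-equivalence $\Phi*G: \big(H^*\End(T)\big)*G \to \End(T)*G = \cB*G$. By Lemma~\ref{lemm::existence-split-closure} and the functoriality of split closures, this passes to a quasi-equivalence $(\Phi*G)_+$ on faithful split closures, hence induces an equivalence $H^0\Tw\big((H^*\End(T))*G\big)_+ \simeq H^0\Tw\big((\cB*G)_+\big)$. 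Finally, apply Corollary~\ref{coro::split-closure-triangulated} with $\Lambda := H^*\End(T)$ (a finite-dimensional $K$-algebra, on which $G$ acts by automorphisms since the action on $\End(T)$ is by strict $A_\infty$-functors and $\Phi$ is equivariant): it gives $H^0\big((\per\Lambda)*G\big)_+ \simeq \per(\Lambda*G)$, and the left-hand side is exactly $H^0\Tw\big((H^*\End(T))*G\big)_+$ via Theorem~\ref{theo::comm-square} applied to $\cA = \Lambda$. Chaining these equivalences with the first paragraph yields $H^0\big((\Tw\cA)*G\big)_+ \simeq H^0\Tw\big((\cB*G)_+\big) \simeq \per\big(H^*\End(T)\big)$.

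The main obstacle I anticipate is bookkeeping rather than conceptual: one must check carefully that every functor in sight is genuinely $G$-equivariant in the strict sense required by Proposition~\ref{prop::G-equivariant-functors} — in particular that the $G$-action on $\End(T)$ induced from the action on $\Tw\cA$ restricts correctly and that the split-closure construction of Lemma~\ref{lemm::existence-split-closure}(2) is natural enough for equivariant functors to descend to it (this is the content of the Lemma preceding Lemma~\ref{lemm::split-twist-is-twist-split}, realizing $\cA_+$ inside $\Tw^-\cA$). One should also note that the hypothesis on the order of $G$ (not divisible by $\operatorname{char} K$) is already built into all the cited results, so no extra care is needed there; it is automatic in our setting where $G=\bZ/2\bZ$ and $\operatorname{char} K\neq 2$.
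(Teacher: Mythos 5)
Your second paragraph is essentially the paper's own proof: the paper simply applies Proposition~\ref{prop::G-equivariant-functors} to the $G$-equivariant quasi-equivalence $H^*\End(T)\to\End(T)$, noting that $\End(\{T\})$ and $\End(T)*G$ coincide as one-object $A_\infty$-categories; the generation statement and the identification with a perfect derived category are left implicit there, so your extra scaffolding is in the right spirit. Two citation-level problems in that scaffolding should be repaired. First, in your opening paragraph you invoke Proposition~\ref{prop::G-equivariant-functors}(2) for the inclusion $\cB\hookrightarrow\Tw\cA$, but that inclusion is only a Morita equivalence, not a quasi-equivalence ($H^0\cB$ has a single object), so the proposition does not apply as stated; you would have to apply it instead to the $G$-equivariant quasi-equivalence $\Tw\cB\to\Tw\Tw\cA$ and then come back down using Proposition~\ref{prop::comm-square} and Theorem~\ref{theo::comm-square}. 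More economically, you can drop this paragraph altogether: generation of $\big((\Tw\cA)*G\big)_+$ by $\{T\}$ already follows from the argument inside the proof of Theorem~\ref{theo::comm-square}, since every object of $H^0\big((\Tw\cA)*G\big)_+$ is (in cohomology) a direct factor of some $\{X\}$ with $X\in\Tw\cA$, and $X$ lies in the thick subcategory generated by $T$, so $\{X\}$ lies in the one generated by $\{T\}$. Second, the chain you build actually terminates at $\per\big(H^*\End(T)*G\big)$, i.e.\ at $\per$ of the graded skew-group algebra $H^*\End(\{T\})$ (compare the next corollary's $\per((A*G)_{DG})$); your last displayed equivalence ``$\simeq\per(H^*\End(T))$'' does not follow from the equivalences you established and should be read as $\per$ of the endomorphism algebra of $\{T\}$. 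Relatedly, Corollary~\ref{coro::split-closure-triangulated} is stated for a finite-dimensional (ungraded) algebra, so to use it with $\Lambda=H^*\End(T)$ you should remark that its proof applies verbatim to a graded algebra viewed as a dg algebra with zero differential; neither point is fatal, but both need saying.
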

\begin{proof}
 Apply Proposition~\ref{prop::G-equivariant-functors} to the quasi-equivalence~$H^*\End(T) \to \End(T)$, noting that, if considered as~$A_\infty$-categories with one object, we have that~$\End(\{T\})$ and~$\End(T)*G$ are the same.
\end{proof}

Let $\bS$ be a full $G$-invariant arc system on $(\surf,\bM,\eta,g)$, and denote by $\cA:=\cA_\bS$ the corresponding $A_\infty$-category.  Let $\mathbb D$ be a $G$-graded dissection of $(\surf,\bM,\eta,g)$. Then by \cite[Section 3.4]{HaidenKatzarkovKontsevich}, the object corresponding object $\bD$ is a formal generator of the category $\Tw\cA$. This implies that there is an equivalence of triangulated categories 
$$H^0(\Tw\cA) \simeq \per (A_{DG}),$$ where $A=\bigoplus_{\ell\in \bZ} \Hom_{H^0\Tw\cA}(\bD,\bD[\ell])$ is the graded endomorphism algebra of the object $\bD$ seen a differential graded algebra with zero differential.   
Since $\bD$ is $G$-invariant, the algebra $A$ naturally inherits an action of the group $G$. Then we have the following.

\begin{corollary}
In the setup above, the object $\{\bD\}$ in $H^0(\Tw\cA*G)_+$ is a formal generator and we have an equivalence of triangulated categories
$$ H^0(\Tw\cA*G)_+ \simeq \per ((A*G)_{DG}).$$
\end{corollary}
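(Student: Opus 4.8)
The plan is to deduce this corollary directly from \cref{coro::formal-generator} applied to the formal generator $\bD$ in $\Tw\cA$. We already know from \cite[Section 3.4]{HaidenKatzarkovKontsevich} that $\bD$ is a formal generator of $\Tw\cA$, and since $\mathbb D$ is a $G$-graded dissection, the object $\bD$ is $G$-invariant. Thus the only thing that needs checking in order to invoke \cref{coro::formal-generator} is the existence of a $G$-equivariant quasi-equivalence $H^*\End(\bD)\to \End(\bD)$; once this is in hand, \cref{coro::formal-generator} immediately gives that $\{\bD\}$ is a formal generator of $\big((\Tw\cA)*G\big)_+$ and that $H^0\big((\Tw\cA)*G\big)_+ \simeq \per\big(H^*\End(\bD)\big)$. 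Finally, because $H^*\End(\bD) = A$ by definition and $\End(\{\bD\})$ is identified with $A*G$ (viewed with its zero differential as a dg algebra), this last equivalence reads $H^0(\Tw\cA*G)_+\simeq \per((A*G)_{DG})$, which is exactly the statement; one then also uses \cref{theo::comm-square} to identify $H^0\big((\Tw\cA)*G\big)_+$ with $H^0(\Tw\cA*G)_+$ (recall the notational convention adopted at the start of \cref{Section4}).

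The one genuine point to address is therefore the formality of $\End(\bD)$ in a $G$-equivariant way. The endomorphism algebra $\End_{\Tw\cA}(\bD)$ is an $A_\infty$-algebra whose cohomology $A$ is a graded gentle algebra; by \cite{HaidenKatzarkovKontsevich} (or \cite{OpperPlamondonSchroll}) it is formal, i.e. quasi-isomorphic to $A$. What I would do is observe that the standard Kadeishvili-type construction of the minimal model, or the explicit homotopy used in \cite{HaidenKatzarkovKontsevich} to prove formality of gentle algebras, can be carried out $G$-equivariantly: since $|G|=2$ is invertible in $K$, one can average any chosen homotopy retraction $(\End(\bD),\mu^1)\rightleftarrows (A,0)$ over the $G$-action to obtain a $G$-equivariant one, and then the homological perturbation lemma produces a minimal $A_\infty$-structure on $A$ together with an $A_\infty$-quasi-isomorphism that are both $G$-equivariant. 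Since $A$ is formal, this minimal structure is (after a further $G$-equivariant change of coordinates, again available because $2$ is invertible) the trivial one, yielding the desired $G$-equivariant quasi-equivalence $H^*\End(\bD)\to \End(\bD)$.

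The main obstacle is precisely this equivariant formality statement: one must ensure that the change of coordinates trivializing the higher products on $A$ can be chosen to commute with the $G$-action, not merely that some trivialization exists. The averaging trick handles the homotopy data, but one should be slightly careful that the gauge transformation killing the (a priori nonzero) higher products respects $G$-equivariance — this follows because the relevant obstruction/Hochschild cohomology classes live in the $G$-invariant part of the complex when the action is taken into account, and projecting onto $G$-invariants (possible since $2\in K^\times$) preserves the triviality. With that in place the rest is a formal consequence of \cref{coro::formal-generator}, so the proof is short:

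\begin{proof}
 By \cite[Section 3.4]{HaidenKatzarkovKontsevich}, the object $\bD$ associated with the $G$-graded dissection $\mathbb D$ is a formal generator of $\Tw\cA$, and since $\mathbb D$ is globally $G$-invariant, $\bD$ is a $G$-invariant object, so $\End_{\Tw\cA}(\bD)$ inherits a strict $G$-action with $H^*\End_{\Tw\cA}(\bD) = A$. As $|G| = 2$ is invertible in $K$, the homotopy retraction onto cohomology used to establish formality of the graded gentle algebra $A$ can be chosen $G$-equivariantly (average the contracting homotopy over $G$), and homological perturbation together with the resulting $G$-equivariant gauge transformation yields a $G$-equivariant quasi-equivalence $H^*\End_{\Tw\cA}(\bD) \to \End_{\Tw\cA}(\bD)$. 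We may thus apply \cref{coro::formal-generator}: the object $\{\bD\}$ is a formal generator of $\big((\Tw\cA)*G\big)_+$, and the triangulated categories $H^0\big((\Tw\cA)*G\big)_+$ and $\per\big(H^*\End_{\Tw\cA}(\bD)\big)$ are equivalent. By \cref{theo::comm-square}, $H^0\big((\Tw\cA)*G\big)_+ \simeq H^0\Tw\big((\cA*G)_+\big)$, which we denote $H^0(\Tw\cA*G)_+$. Finally, $\End(\{\bD\})$ agrees with $A*G$, so $H^*\End(\{\bD\}) = (A*G)_{DG}$, and the equivalence reads
 \[
  H^0(\Tw\cA*G)_+ \simeq \per\big((A*G)_{DG}\big),
 \]
 as claimed.
\end{proof}
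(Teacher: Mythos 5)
Your overall strategy (reduce to Corollary~\ref{coro::formal-generator}, identify $\End(\{\bD\})$ with $A*G$, and use Theorem~\ref{theo::comm-square} for the identification of the ambient categories) matches the paper, but the step you single out as ``the one genuine point'' is exactly where your argument has a real gap. Averaging over $G$ does produce a $G$-equivariant contraction and hence, via homotopy transfer, a $G$-equivariant minimal model $(A,\{m_n\})$ quasi-isomorphic to $\End_{\Tw\cA}(\bD)$; that part is fine because a contraction is linear data. But the passage from ``$A$ is formal'' to ``the higher products $m_n$ can be killed by a \emph{$G$-equivariant} gauge transformation'' does not follow from averaging: gauge transformations and $A_\infty$-morphisms enter the structure equations non-linearly, so the average of a trivializing morphism is not a morphism. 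Your fallback claim that ``the relevant obstruction classes live in the $G$-invariant part and projecting onto invariants preserves triviality'' is also not a proof: already at the first nontrivial stage, if $m_3=0$, an $A_\infty$-isomorphism $f$ from $(A,m)$ to $(A,0)$ with $f_1=\mathrm{id}$ only yields $m_4=\pm\delta(f_3)\pm f_2\cup f_2$ with $\delta(f_2)=0$, so formality does \emph{not} force $[m_4]=0$ in Hochschild cohomology, and the inductive ``average the bounding cochain'' argument cannot even get started in the form you describe. Equivariant formality of an $A_\infty$-algebra with finite group action is a genuinely delicate statement, and nothing in your sketch establishes it.

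The paper avoids this issue entirely by a geometric reduction: using \cite{HaidenKatzarkovKontsevich}, up to Morita equivalence one may assume that the $G$-invariant dissection $\mathbb D$ is itself part of the $G$-invariant full arc system $\bS$. Then, by the very definition of $\cA_\bS$, the endomorphism $A_\infty$-algebra of $\bD$ has vanishing $\mu^1$ and vanishing $\mu^{\geq 3}$ (the dissection cuts $\surf$ into pieces each containing an unmarked boundary portion, so no fully marked disc sequences occur among its arcs), hence $\End(\bD)$ literally equals its cohomology $A$ and the identity is the required $G$-equivariant quasi-equivalence; Corollary~\ref{coro::formal-generator} then applies with no formality argument needed. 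To repair your proof you should either adopt this reduction or supply an actual proof (or reference) for $G$-equivariant formality of $\End_{\Tw\cA}(\bD)$; as written, that step is asserted rather than proved.
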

\begin{proof}
 Using the work of~\cite{HaidenKatzarkovKontsevich}, up to Morita-equivalence, we can assume that~$\bD$ is part of the $G$-invariant full arc system~$\bS$.  Then by definition,~$\End{\bD}$ has vanishing~$\mu^1$ and~$\mu^{\geq 3}$.  Thus we have a canonical quasi-equivalence~$H^*\End{\bD}\to \End{\bD}$ which is simply the identity map.  Since it is~$G$-equivariant, we can apply Corollary~\ref{coro::formal-generator}.
\end{proof}

Note that the endormophism algebra of the object $\{\bD\}$ in the category $H^0(\Tw\cA*G)_+$ is not basic, indeed each arc of $\bD$ which is not $G$-invariant give rise to two indecomposable summand in $\{\bD\}$ which are isomorphic. There is no canonical choice of a basic representant of the object $\{\bD\}$. 

Since the algebra $A$ is graded gentle in the sense of \cite{AssemSkowronski}, we immediately obtain the following.

\begin{corollary}
The category $H^0(\Tw\cA*G)_+$ is triangle equivalent to the perfect derived category of a graded skew-gentle algebra.
\end{corollary}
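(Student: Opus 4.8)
The plan is to combine the previous corollary with the definition of a graded skew-gentle algebra. From the corollary just proved, we have a triangle equivalence
\[
 H^0(\Tw\cA*G)_+ \simeq \per\big((A*G)_{DG}\big),
\]
where $A = \bigoplus_{\ell\in\bZ}\Hom_{H^0\Tw\cA}(\bD,\bD[\ell])$ is the graded endomorphism algebra of the $G$-invariant formal generator $\bD$, equipped with the induced strict $G$-action, and where $(A*G)_{DG}$ denotes the skew-group algebra viewed as a dg algebra with zero differential. So it suffices to identify $A*G$ up to Morita equivalence with a graded skew-gentle algebra.

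First I would recall (or cite, as the text does to \cite{AssemSkowronski}, \cite{AmiotPlamondonSchroll}) that $A$ is a graded gentle algebra: this is exactly the statement that the endomorphism algebra of the object associated with a (graded) dissection $\bD$ of $\surf$ is graded gentle, which is part of the cited work of Haiden--Katzarkov--Kontsevich together with \cite{OpperPlamondonSchroll}. Moreover, the $G$-action on $A$ arises from the $G$-action on the dissection $\bD$, hence it permutes the primitive idempotents of $A$ (the vertices of its Gabriel quiver) and respects the arrow set and the gentle relations; in other words $G$ acts on $A$ by automorphisms coming from an action on its underlying graded quiver with relations. Since $|G|=2$ is invertible in $K$ (the characteristic is assumed $\neq 2$), the skew-group algebra $A*G$ is well-behaved: its Morita-equivalence class depends only on the graded quiver with relations of $A$ together with the $G$-action, and by definition (up to Morita equivalence, i.e. after replacing $A*G$ by its basic representative $e(A*G)e$ for a suitable idempotent $e$) this is precisely what is called a graded skew-gentle algebra.

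The key steps, in order, are: (1) invoke the previous corollary to get the triangle equivalence $H^0(\Tw\cA*G)_+ \simeq \per\big((A*G)_{DG}\big)$; (2) recall that $A$ is a graded gentle algebra and that the $G=\bZ/2\bZ$-action on $A$ is induced by an action on its Gabriel quiver with relations fixing no arrows (the action on $\bD$ is free on non-invariant arcs, and an invariant arc contributes idempotents/loops fixed by $g$), so that $A$ together with this action is exactly the datum from which a skew-gentle algebra is built; (3) since $\operatorname{char}K\neq 2$, the skew-group algebra $A*G$ is Morita-equivalent to its basic version, which is by definition a graded skew-gentle algebra, and Morita equivalence of (graded) algebras induces an equivalence of perfect derived categories $\per\big((A*G)_{DG}\big)\simeq \per(\Lambda_{DG})$ for $\Lambda$ the corresponding basic graded skew-gentle algebra; (4) compose the equivalences to conclude.

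The main obstacle is really a bookkeeping one: making precise that the induced $G$-action on $A$ is of the specific combinatorial type (action on the graded quiver with relations, with $g^2=1$ and with the fixed-arrow set empty) required by the chosen definition of graded skew-gentle algebra, and that passing to the basic representative of $A*G$ does not disturb the grading. This is where one must be slightly careful, because $A*G$ itself is not basic — as the text emphasizes just before the statement — so the identification with a graded skew-gentle algebra is only up to Morita equivalence, and one should note that derived (perfect) equivalence is insensitive to this. None of this requires new computation; it is an assembly of the previous corollary, the gentleness of $A$, and the standard theory of skew-group algebras in characteristic $\neq 2$.
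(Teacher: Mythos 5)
Your proposal is correct and follows essentially the same route as the paper, which deduces the corollary immediately from the preceding equivalence $H^0(\Tw\cA*G)_+\simeq\per((A*G)_{DG})$ together with the fact that $A$ is graded gentle and that skew-gentle algebras are, up to Morita equivalence, precisely skew-group algebras of gentle algebras with a $\bZ/2\bZ$-action on the quiver (valid since $|G|=2$ is invertible in $K$). Your extra bookkeeping about the $G$-action on the quiver and the passage to a basic representative only spells out what the paper treats as immediate.
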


\subsection{Indecomposable objects and tagged curves on orbifold surfaces} 
\label{subs::tilting}

Let $(\surf,\bM,\eta,g)$ be a $G$-graded marked surface, and denote by $\pi:\surf\to \surf/G$  the natural projection. 
% Since $\eta$ is $G$-invariant, then the orbifold surface $\surf/G$ is naturally equipped with a grading $\pi(\eta)$.  

\begin{definition}\label{defi::tagged-arc}
 A~\emph{tagged arc} on~$\surf/G$ is an arc~$\gamma$ whose endpoints are either in~$\bM$ or in the set of orbifold points, and which does not cut out a disc containing nothing but an orbifold point.  The~\emph{tagging} is a choice of a sign~$\pm$ at each orbifold endpoint of~$\gamma$. 
 A~\emph{graded tagged arc} is a tagged arc~$\gamma$ with a choice of grading on the preimage of~$\gamma$ on~$\surf$ via~$\pi$.
\end{definition}

\begin{remark}
 If two graded tagged arcs~$\gamma$ and~$\delta$ intersect, we can define the index of the intersection as the index of the lift of this intersection point on~$\surf$.  Note that this applies also in the case where the intersection point is an orbifold point: in that case, the index in one direction is some integer~$p$, while the index in the other is~$1-p$. 
\end{remark}

\begin{figure}[!h]
 
  \scalebox{0.8}{
  \begin{tikzpicture}[scale=1,>=stealth]
  
  \draw[thick, cyan] (-2,0)--(2,0);
  \draw[thick, blue] (-1.5,-1.5)--(1.5,1.5);
  
  \draw[red,->] (0.7,0) arc (0:45:0.7);
  \node[red] at (0.9,0.3) {$p$};
  
  \draw[red,<-] (-0.5,0) arc (180:45:0.5);
  \node[red] at (0,0.7) {$1-p$};
  
   \draw[red,->] (-0.7,0) arc (180:225:0.7);
  \node[red] at (-0.9,-0.3) {$p$};
  
   \draw[red,<-] (0.5,0) arc (0:-135:0.5);
  \node[red] at (0,-0.7) {$1-p$};
  
  \node at (0,-1.5) {$\surf$};
  
  \node[red, inner sep=0pt, fill=white] at (0,0) {$\times$};
  
  \begin{scope}[xshift=4cm]
   \draw[thick, cyan] (0,0)--(2,0);
  \draw[thick, blue] (0,0)--(0,1.5);
  \draw[red,->] (0.7,0) arc (0:90:0.7);
  \draw[red,->] (0,0.5) arc (90:360:0.5);
  \node[red] at (0.9,0.4) {$p$};
  \node[red, inner sep=0pt, fill=white] at (-0.6,-0.3) {$1-p$};
  
   \node[red, inner sep=0pt, fill=white] at (0,0) {$\times$};
    \node at (0,-1.5) {$\surf/G$};
  \end{scope}
  
  \end{tikzpicture}}
  \caption{Index between two graded arcs intersecting at an orbifold point on the orbifold surface $\surf/G$}\label{figure::index-orbifold}
  \end{figure}
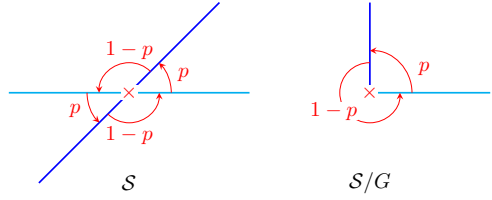

We describe here the indecomposable objects discussed in Section \ref{Section4} in terms of graded tagged arcs on the orbifold $\surf/G$. 

\begin{definition}\label{defi::arcs-to-objects}
 Let~$\gamma$ be a graded simple tagged arc~$\gamma$ on~$\surf/G$.  We exclude the case where both endpoints of~$\gamma$ are the same orbifold point.  We define the object~$\bX(\gamma)$ of~$H^0\Tw(\cA*G)_+$ as follows:
 
 \begin{itemize}
\item if~$\gamma$ is not connected to an orbifold point, then its preimage on~$\surf$ is a union of two (graded) arcs in the same~$G$-orbit.  These are associated to objects~$X$ and~$gX$ in~$\Tw\cA$.  We define~$\bX(\gamma) := \{X\}\oplus \{gX\}$.

\item if~$\gamma$ has one endpoint on the boundary and another on an orbifold point with tag~$\pm$, then its lift to~$\surf$ is a~$G$-invariant curve~$\bar\gamma$ with~$g\tilde{\gamma} = \tilde{\gamma}^{-1}$.  The graded arc~$\gamma$ is associated to a~$G$-invariant object~$X$.  Using the notation of Section~\ref{Section4}, we define~$\bX(\gamma) := X^{\pm}$.

\item if~$\gamma$ has both endpoints on orbifold points~$P$ and~$Q$ with tags~$\pm_P$ and~$\pm_Q$, then its lift is of the form $\alpha.g(\alpha^{-1})$ where $\alpha$ is a simple curve linking two fixed points $P$ and $Q$.  Then we define~$\bX := B^{\pm_P,\pm_Q}$, with the notations of Section~\ref{Section4}.

\end{itemize}
 
\end{definition}

\begin{theorem}\label{theo::local-conditions}
Let $C:=(C_1,\ldots,C_n)$ be a collection of pairwise distinct and non intersecting (except in the orbifold points) tagged simple graded arcs on $\surf/G$.  We assume that none of the~$C_i$ starts and ends at the same orbifold point.  The object~$\bigoplus_{i=1}^n\bX(C_i)$ of~$H^0(\Tw\cA*G)_+$ is rigid if and only if the following three conditions are satisfied    for the graded tagged arcs $C_i=(\gamma_i,\grad_i)$ and $C_j=(\gamma_j,\grad_j)$
\begin{itemize}
\item if for $i\neq j$, $\gamma_i$ is isotopic to $\gamma_j$, (then there exists $p\in\bZ$ with $\grad_i=\grad_j[p]$), then   

\begin{itemize}
\item  either $\gamma_i$ and $\gamma_j$ are arcs linking the boundary to an orbifold point, and the sign attached to $C_i$ is $(-1)^{p-1}$ times the sign attached to $C_j$ ;

\item or they are double tagged arcs linking orbifold points, and on one orbifold point, the sign attached to $C_i$ is $(-1)^p$ times the sign attached to $C_j$, and on the other orbifold point, the sign attached to $C_i$ is $(-1)^{p-1}$ times the sign attached to $C_j$;
\end{itemize}

\item if $\gamma_i$ and $\gamma_j$ are not isotopic and have an endpoint on the same boundary segment, then the corresponding $\bM$-segment has degree $0$ (here $i$ and $j$ may coincide);
\item if $C_i$ and $C_j$ are not isotopic and intersect at an orbifold point, then the signs at this point are the same, and the index at this orbifold point is $0$ and $1$ (cf Figure \ref{figure::index-orbifold});

\end{itemize}
If moreover we have $n= \bar{\mathbf{m}}+\bar{\mathbf{b}}+2\bar{\mathbf{g}}-2+2\mathbf{x}$, then $C$ correspond to a tilting object in the triangulated category $H^0(\Tw\cA*G)_+$. 
\end{theorem}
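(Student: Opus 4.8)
The plan is to reduce the statement to a purely local analysis at each boundary segment and each orbifold point, using the computations of Section~\ref{Section4} to translate the geometry into Hom-space dimensions. First I would observe that, since the canonical functors $(\Psi_{\bS\to\bS'}*G)_+$ are triangle equivalences and commute with $[1]$, together with Propositions~\ref{prop:X epsilon independant}, \ref{prop:B epsilon independant system}, the object $\bigoplus_i\bX(C_i)$ is well-defined independently of the ambient $G$-invariant full arc system, so we may freely enlarge the collection $C$ to a $G$-graded dissection when convenient. Then I would record that each $\bX(C_i)$ is a brick with no self-extensions in positive or negative degrees: for $C_i$ a genuine (non-orbifold) arc this is immediate from the fact that the underlying graded arc on $\surf$ is a brick with vanishing higher self-extensions in $\cF(\surf)$; for $C_i$ connected to one orbifold point this is Proposition~\ref{prop:endoX+}; for $C_i$ a double-tagged arc it is Proposition~\ref{prop: morphism B epsilon B epsilon'}. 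Hence rigidity of $\bigoplus_i\bX(C_i)$ is equivalent to the vanishing of $\Hom(\bX(C_i),\bX(C_j)[\ell])$ for all $\ell\in\bZ$ and all $i\neq j$.

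Next I would carry out the case analysis for a fixed pair $i\neq j$, according to how the arcs $\gamma_i$ and $\gamma_j$ meet. If they are disjoint and non-isotopic, there are no $\bM$-segments and no interior intersections between the underlying curves on $\surf$, so all Hom-spaces vanish automatically and there is no condition. If $\gamma_i$ and $\gamma_j$ share an endpoint on a boundary segment (but are not isotopic), then on $\surf$ the two preimages share an endpoint on a boundary segment of $\surf$, and the relevant Hom-spaces are spanned by the $\bM$-segments there; by the formulas of the subsection ``$G$-invariant strings'' (and its boundary cases), the space $\Hom(\bX(C_i),\bX(C_j)[\ell])$ is nonzero precisely for $\ell$ equal to the degree of that $\bM$-segment, so rigidity forces this degree to be $0$ — and conversely, degree $0$ gives a morphism that is not an extension. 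If $\gamma_i$ and $\gamma_j$ are non-isotopic and cross at an orbifold point, I would invoke Propositions~\ref{prop: morphism X epsilon Y epsilon}, \ref{prop: morphism B epsilon X epsilon}, \ref{prop: morphism B epsilon B' epsilon}: in each of these the Hom-space in degree $p$ (resp. $1-p$) is one-dimensional exactly when the product of the relevant signs equals $(-1)^{p}$ (resp. $(-1)^{1-p}$), and it vanishes otherwise. Rigidity therefore forces both the index to be $\{0,1\}$ and the signs at that point to agree — if $p=0$ then $(-1)^p$-matching means equal signs on the degree-$0$ side, and the degree-$1$ morphism on the other side is killed by requiring the opposite sign-product, which, since the signs were forced equal, is automatically the extension that must be absent; working this through shows the stated condition is exactly the rigidity condition. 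If $\gamma_i$ and $\gamma_j$ are isotopic as unoriented curves, they differ by a shift $\grad_i=\grad_j[p]$; here I would use Remark~\ref{rema::shift} together with the self-Hom computations to see that $\Hom(\bX(C_i),\bX(C_j)[\ell])$ is governed by $\End(\bX(C_j))$ shifted, and the explicit sign bookkeeping in Propositions~\ref{prop: morphism X epsilon Y epsilon} and~\ref{prop: morphism B epsilon B epsilon'} yields precisely the two alternatives in the statement (one sign flips by $(-1)^{p-1}$ in the one-orbifold case; the two signs flip by $(-1)^p$ and $(-1)^{p-1}$ respectively in the double-tagged case). Finally, the case where $\gamma_i$ and $\gamma_j$ meet at an orbifold point \emph{and} one of them is a non-orbifold arc is subsumed in the crossing-at-orbifold-point analysis via Proposition~\ref{prop: morphism B epsilon X epsilon} and Proposition~\ref{prop: morphism X epsilon Y epsilon}.

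For the last sentence, I would argue that when $n=\bar{\mathbf m}+\bar{\mathbf b}+2\bar{\mathbf g}-2+2\mathbf x$ the rigid object $\bigoplus_i\bX(C_i)$ is in fact a tilting object. The idea is to lift $C$ to $\surf$: the non-invariant arcs lift to $G$-orbit pairs, the one-orbifold arcs and double-tagged arcs lift to $G$-invariant curves, and a count shows the lifted configuration, suitably completed, is (Morita-equivalent to) a $G$-graded dissection of $\surf$, which has $\mathbf m+\mathbf b+2\mathbf g-2$ arcs; the Riemann–Hurwitz-type relations $\mathbf m=2\bar{\mathbf m}$, $\mathbf g$ and $\mathbf b$ in terms of $\bar{\mathbf g},\bar{\mathbf b},\mathbf x$ match the two sides so that $\{\bD\}$ for this dissection is exactly $\bigoplus_i\bX(C_i)$ up to multiplicity. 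By the Corollary following Corollary~\ref{coro::formal-generator}, $\{\bD\}$ is a formal generator of $H^0(\Tw\cA*G)_+$ with endomorphism algebra a graded skew-gentle algebra; a rigid object whose indecomposable summands number the rank of the Grothendieck group of $\per$ of a (graded) finite-dimensional algebra and which is a generator is a tilting object. The main obstacle I anticipate is the bookkeeping in the isotopic case and in reconciling all the $(-1)^p$ versus $(-1)^{1-p}$ conventions across the four propositions of Section~\ref{Section4} — making sure that ``the index is $0$ and $1$'' together with ``signs agree'' is genuinely equivalent, in every sign parity, to the simultaneous vanishing of \emph{both} the degree-$p$ and the degree-$(1-p)$ Hom-spaces, rather than just one of them; this requires carefully pairing up the morphism going one way with the morphism going the other way and checking that the rigid configuration kills the second while keeping no stray extension.
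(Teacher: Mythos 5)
Your treatment of the rigidity equivalence is essentially the paper's own argument: reduce to the vanishing of the pairwise graded Hom-spaces, quote Propositions~\ref{prop:endoX+} and~\ref{prop: morphism B epsilon B epsilon'} for the fact that each $\bX(C_i)$ is rigid on its own, use Propositions~\ref{prop: morphism X epsilon Y epsilon}, \ref{prop: morphism B epsilon X epsilon} and~\ref{prop: morphism B epsilon B' epsilon} for arcs meeting at an orbifold point or along a marked segment, and handle the isotopic case through Remark~\ref{rema::shift} (i.e.\ $C^{-}[p]=(\gamma,\grad[p])^{(-1)^{p-1}}$ and its double-tagged analogue). That part is fine and is the same route the paper takes.

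The last step, however, contains a genuine gap. To get the tilting statement you propose to lift $C$ to $\surf$, \emph{complete} the lift to a $G$-graded dissection $\bD$, and then claim that $\{\bD\}$ is ``exactly $\bigoplus_i\bX(C_i)$ up to multiplicity''. This cannot be right as stated: if any arcs are added in the completion, $\{\bD\}$ acquires indecomposable summands not present in $\bigoplus_i\bX(C_i)$, so the two objects are not identified; and if you instead mean that no completion is needed because the count $n=\bar{\mathbf{m}}+\bar{\mathbf{b}}+2\bar{\mathbf{g}}-2+2\mathbf{x}$ is already maximal, then the assertion that a maximal rigid collection of tagged arcs automatically cuts $\surf$ into pieces of the type required for a dissection (so that Corollary~\ref{coro::formal-generator} applies to it) is precisely what would need proof -- it is not a formal consequence of the count, and the gradings of the $C_i$ are in general unrelated to those of a dissection. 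The paper does not argue this way at all: it only matches numbers, namely that a $G$-invariant dissection of $\surf$ has $\mathbf{m}+\mathbf{b}+2\mathbf{g}-2$ arcs and that the rank of $A*G$ is $\frac{\operatorname{rk}A+3\mathbf{x}}{2}$, which equals $n$, so that the rigid object has as many pairwise non-isomorphic indecomposable summands as the rank of the Grothendieck group of the graded skew-gentle algebra. If you want to go beyond that and actually establish generation, you need a separate argument (and your dissection-completion step would have to be replaced by one that does not change the object); as written, this step would fail.
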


\begin{proof}
By Propositions \ref{prop:endoX+}, \ref{prop: morphism B epsilon B epsilon'}, each of these objects corresponds to a rigid object. The graded morphisms between $C_i$ and $C_j$ are computed in Propositions \ref{prop: morphism X epsilon Y epsilon}, \ref{prop: morphism B epsilon X epsilon} and \ref{prop: morphism B epsilon B' epsilon}.

Moreover, if $C:=(\gamma,\grad)$ is a $G$-invariant graded arc on $\surf$ corresponding to the objects $C^+$ and $C^-$, then $C^+\oplus C^-[p]$ is rigid for any $p\in \bZ$. The first subitem comes from the fact that $C^-[p]=(\gamma,\grad[p])^{(-1)^{p-1}}$ (cf Remark \ref{rema::shift}). 
The second subitem is analogous since $B^{+_P,+_Q}\oplus B^{-_P,+_Q}[p]$ is rigid for any  $p\in \bZ$.

For the second statement, we can combine the fact that the cardinality of a $G$-invariant dissection is $\mathbf{m}+\mathbf{b}+2\mathbf{g}-2$, and the fact that if $A$ is an algebra with a $G$-action then the rank of the algebra $A*G$ is $\frac{{\rm rk} A+3\mathbf{x}}{2}$ where $\mathbf{x}$ is the number of $G$-invariant projective modules. 
\end{proof}

\begin{remark}
If $X=(\gamma,\grad)$ is a~$G$-invariant arc on $\surf$, then for $p\neq 0$, the object $X^+\oplus X^-[p]$ is rigid, but cannot be completed into a collection of $n= \bar{\mathbf{m}}+\bar{\mathbf{b}}+2\bar{\mathbf{g}}-2+2\mathbf{x}$ arcs as in Theorem~\ref{theo::local-conditions}, and hence into a tilting object.
\end{remark}

\subsection{Tagged dissections on orbifold surfaces}

Using our description of tilting objects from Section~\ref{subs::tilting}, we can use the combinatorics of collections of tagged arcs on an orbifold surface to construct algebras derived-equivalent to graded skew-gentle algebras.

\begin{definition}\label{dissection-orbifold}
 Let~$\surf$ be a surface with an action of~$G$, and let~$\pi:\surf\to \surf/G$ be the projection.  A \emph{tagged dissection} of~$\surf/G$ is a collection~$\Delta$ of tagged arcs on~$\surf/G$ satisfying the following conditions.
 \begin{itemize}
  \item The arcs of~$\Delta$ are simple and are pairwise non-intersecting in the interior of~$\surf/G$.
  \item If~$C_i,C_j\in\Delta$ are distinct but isotopic, then they are either arcs linking the boundary to an orbifold point, and their signs at this point are distinct  or they are double tagged arcs with common signs at one orbifold point and opposite signs at the other.
  \item If~$C_i,C_j\in\Delta$ are not isotopic and meet at an orbifold point, then their signs at this point are equal.
  \item The arcs of~$\Delta$ cut~$\surf/G$ into pieces of three kinds:
   \begin{enumerate}
    \item\label{type1} discs whose sides are all arcs in~$\Delta$ except for one, which is a boundary segment with exactly one unmarked segment;
    \item\label{type2} discs containing exactly one unmarked boundary component and whose sides are all in~$\Delta$;
    \item\label{discsOrbifold} discs with three sides which are all arcs in~$\Delta$, at least one of which is connected to an orbifold point for which all the tagged arcs incident to it have the same sign.
   \end{enumerate}
  \item each orbifold point is the endpoint of at least two tagged arcs.
  \item there is a bijection between the set of orbifold points for which all the tagged arcs incident to it have the same sign and the set of pieces of type (\ref{discsOrbifold}) sending an orbifold point to a piece of which it is part of the boundary.  This bijection is part of the data of a tagged dissection.
 \end{itemize}
\end{definition}

\begin{figure}[!h]
 
  \scalebox{0.6}{
  \begin{tikzpicture}[scale=0.8,>=stealth]
  
  \draw (0,0) circle (6);
  
\node (A1) at (0,6) {$.$};
\node (A2) at (0,-6) {$.$};

\draw[blue, thick] (0,6)--(0,0);
\draw[blue, thick] (0,0)--(2,3);
\draw[blue, thick] (2,3)--(4,0);
\draw[blue, thick] (4,0)--(0,0);
\draw[blue, thick] (0.2,6)--(2,3);
\draw[blue, thick] (4,0)..controls (4.1,-1) and (4.1,-2)..(4,-3);
  \draw[blue, thick] (4,0)..controls (3.9,-1) and (3.9,-2)..(4,-3);

 \draw[blue,thick] (-0.2,-6).. controls (-3,-5) and (-3,5).. (-0.2,6);
 
 \draw[blue, thick] (-0.4,-6)..controls (-2,-6) and  (-4,-1)..(-4,0);
 
 \draw[blue,thick] (-0.1,6)..controls (-2,0) and  (0,-4)..(4,0);
 
 \draw[blue, thick] (-0.4,-6)..controls (-3,-6) and  (-4,-2)..(-4,0);

\draw[red, thick] (0.5,0) arc (0:60:0.5);
\draw[red,thick] (3.5,0) arc (180:225:0.5);
\draw[red, thick] (1.8,3.3) arc (115:220:0.5);

    \draw[dark-green, fill=dark-green] (-0.5,5.95) rectangle(0.5,6.05);
    \draw[dark-green, fill=dark-green] (-0.5,-5.95) rectangle(0.5,-6.05);
    
    \node at (0,-0.5) {$-$};
    \node at (2.5,3) {$+$};
    \node at (4.5,0) {$+$};
    \node at (-4.5,0) {$\pm$};
    \node at (3.5,-3) {$\pm$};
    
    \node[red, fill=white, inner sep=0pt] (A3) at (-4,0) {$\times$};
\node[red, fill=white, inner sep=0pt] (A4) at (0,0) {$\times$};
\node[red, fill=white, inner sep=0pt] (A5) at (4,0) {$\times$};
\node[red, fill=white, inner sep=0pt] (A6) at (2,3) {$\times$};
\node[red, fill=white, inner sep=0pt] (A7) at (4,-3) {$\times$}; 
    
    \end{tikzpicture}}
    \caption{Example of a dissection. The circle arcs around orbifold points illustrate the bijection with pieces of type (\ref{discsOrbifold}).}
    \end{figure}
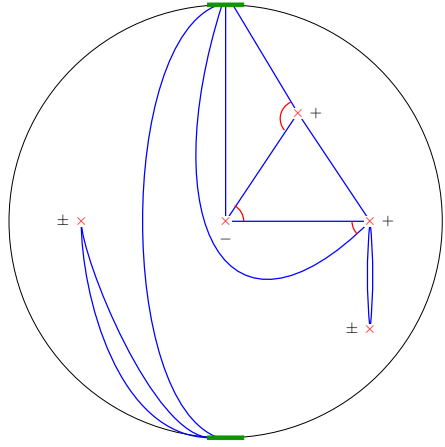

\begin{proposition}
 The number of tagged arcs in a tagged dissection is~$n=\bar{\mathbf{m}} + \bar{\mathbf{b}} + 2\bar{ \mathbf{g}} - 2 + 2\mathbf{x}$.
\end{proposition}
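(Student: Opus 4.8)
The plan is to count the tagged arcs by passing to the $G$-invariant dissection upstairs on $\surf$ and using the Riemann--Hurwitz-type relations between the invariants of $\surf$ and $\surf/G$. Recall that a $G$-graded dissection $\mathbb D$ on $(\surf,\bM,\eta,g)$ contains exactly $\mathbf{m}+\mathbf{b}+2\mathbf{g}-2$ arcs (Proposition~1.12 of \cite{AmiotPlamondonSchroll}, quoted above). First I would set up a bijection between tagged dissections $\Delta$ of $\surf/G$ and $G$-invariant dissections $\mathbb D$ of $\surf$. An arc of $\Delta$ with no orbifold endpoint lifts to a pair of arcs in the same $G$-orbit; an arc with exactly one orbifold endpoint lifts to a single $G$-invariant arc (a curve $\bar\gamma$ with $g\bar\gamma=\bar\gamma^{-1}$); an arc with two orbifold endpoints lifts to a single $G$-invariant arc as well. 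So if $\Delta$ has $n_0$ arcs of the first kind and $n_1$ arcs with at least one orbifold endpoint, then $|\mathbb D| = 2n_0 + n_1$, while $n = n_0 + n_1$.

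The core step is to relate the pieces of $\surf/G$ cut out by $\Delta$ to the pieces of $\surf$ cut out by $\mathbb D$, and in particular to identify what happens over the orbifold points. A piece of type~(\ref{type1}) or~(\ref{type2}) of $\surf/G$, not meeting an orbifold point, lifts to two copies of itself; a piece meeting an orbifold point on its boundary but not of type~(\ref{discsOrbifold}) lifts to a single piece of $\surf$ (the involution acts on it); and a triangular piece of type~(\ref{discsOrbifold}) attached to a ``monochromatic'' orbifold point (one whose incident tagged arcs all have the same sign) lifts, together with its mirror image, to a single disc of $\surf$ that contains the corresponding fixed point of $g$ in its interior. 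Using the bijection between monochromatic orbifold points and type~(\ref{discsOrbifold}) pieces that is built into the definition, the pieces of $\mathbb D$ are exactly: two copies of each non-orbifold piece of $\Delta$, one piece for each ``dichromatic'' orbifold point configuration, and one disc for each monochromatic orbifold point (absorbing its type~(\ref{discsOrbifold}) triangle). The upshot is that $\mathbb D$ is an honest dissection of $\surf$, so $|\mathbb D| = \mathbf{m}+\mathbf{b}+2\mathbf{g}-2$.

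It remains to substitute the Riemann--Hurwitz relations $\mathbf{m}=2\bar{\mathbf{m}}$, $\mathbf{g} = 2\bar{\mathbf{g}} - 1 + \frac{\mathbf{x}}{2} - \dots$ — more precisely, the relation between the Euler characteristics $\chi(\surf) = 2\chi(\surf/G) - \mathbf{x}$ (since $g$ has $\mathbf{x}$ fixed points), together with the behaviour of the boundary: each boundary component of $\surf/G$ either lifts to two boundary components of $\surf$, or to a single one (if $g$ acts on it). Writing $\mathbf{b}=2\bar{\mathbf{b}}-\mathbf{f}$ where $\mathbf{f}$ is the number of $g$-invariant boundary components, and combining with $2-2\mathbf{g}-\mathbf{b} = \chi(\surf) = 2\chi(\surf/G)-\mathbf{x} = 2(2-2\bar{\mathbf{g}}-\bar{\mathbf{b}})-\mathbf{x}$, one solves for $\mathbf{m}+\mathbf{b}+2\mathbf{g}-2$ in terms of the barred quantities and $\mathbf{x}$. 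Then $n = n_0+n_1 = \frac{(2n_0+n_1)+n_1}{2}+\dots$; matching $n_1$ with $\mathbf{x}$ (each orbifold point is the endpoint of at least two arcs, but counted with the correct multiplicity the total contribution is controlled by $\mathbf{x}$) gives $n = \frac{|\mathbb D|+3\mathbf{x}}{2} = \frac{\mathbf{m}+\mathbf{b}+2\mathbf{g}-2+3\mathbf{x}}{2}$, which one checks equals $\bar{\mathbf{m}}+\bar{\mathbf{b}}+2\bar{\mathbf{g}}-2+2\mathbf{x}$ using the substitutions above.

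The main obstacle, and the step requiring genuine care, is the bookkeeping of $n_1$ versus $\mathbf{x}$: an arc can have two orbifold endpoints and hence ``use up'' two orbifold points, so $n_1 \neq \mathbf{x}$ in general, and one must instead count endpoint-incidences. The clean way around this is probably to argue at the level of the skew-gentle algebra: by Theorem~\ref{theo::local-conditions} (or rather the rank computation in its proof), a tagged dissection yields a tilting object whose endomorphism algebra has rank $n = \frac{\operatorname{rk}A + 3\mathbf{x}}{2}$ with $\operatorname{rk}A = |\mathbb D| = \mathbf{m}+\mathbf{b}+2\mathbf{g}-2$, and then it is purely a matter of the topological identity $\frac{\mathbf{m}+\mathbf{b}+2\mathbf{g}-2+3\mathbf{x}}{2} = \bar{\mathbf{m}}+\bar{\mathbf{b}}+2\bar{\mathbf{g}}-2+2\mathbf{x}$, i.e. $\mathbf{m}+\mathbf{b}+2\mathbf{g}-2-\mathbf{x} = 2(\bar{\mathbf{m}}+\bar{\mathbf{b}}+2\bar{\mathbf{g}}-2)$, which follows directly from $\mathbf{m}=2\bar{\mathbf{m}}$ and $\chi(\surf)=2\chi(\surf/G)-\mathbf{x}$ once one treats the boundary terms consistently.
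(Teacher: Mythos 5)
Your main route has a genuine gap at its first step: the lift of a tagged dissection of~$\surf/G$ is \emph{not} a $G$-invariant dissection of~$\surf$, so the count $\mathbf{m}+\mathbf{b}+2\mathbf{g}-2$ from Proposition~1.12 of \cite{AmiotPlamondonSchroll} cannot be applied to it. Three things go wrong: a tagged arc joining two orbifold points lifts to a \emph{closed curve} through two fixed points, not an arc; two tagged arcs ending at the same orbifold point lift to arcs that intersect in the interior of~$\surf$ (at the fixed point), whereas a dissection consists of pairwise disjoint arcs; and isotopic tagged arcs (which Definition~\ref{dissection-orbifold} allows and which are needed, e.g.\ a parallel pair with opposite signs at an orbifold point) lift to isotopic arcs, again forbidden in an arc system. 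Already in the $\mathbf{D}_5$ example of Section~\ref{Section6} the lift of the five-arc dissection consists of $8$ curves meeting at the fixed point, while a $G$-invariant dissection of the disc with $8$ marked segments has $7$ arcs, so your identity $2n_0+n_1=\mathbf{m}+\mathbf{b}+2\mathbf{g}-2$ fails. The discrepancy is exactly the ``$n_1$ versus $\mathbf{x}$ bookkeeping'' you flag as the main obstacle, and it is left undone. Your proposed fallback does not repair this: deducing $n$ from ``a tagged dissection yields a tilting object of rank $\frac{\operatorname{rk}A+3\mathbf{x}}{2}$'' is circular in the paper's logical order, since the tilting property of the object attached to a tagged dissection is established (in Proposition~\ref{prop::line-field}) precisely by invoking the cardinality hypothesis $n=\bar{\mathbf{m}}+\bar{\mathbf{b}}+2\bar{\mathbf{g}}-2+2\mathbf{x}$ of Theorem~\ref{theo::local-conditions}; you give no independent argument that the object generates.

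Your Riemann--Hurwitz bookkeeping itself ($\mathbf{m}=2\bar{\mathbf{m}}$, $\chi(\surf)=2\chi(\surf/G)-\mathbf{x}$, and the resulting identity $\mathbf{m}+\mathbf{b}+2\mathbf{g}-2-\mathbf{x}=2(\bar{\mathbf{m}}+\bar{\mathbf{b}}+2\bar{\mathbf{g}}-2)$) is correct, but it only converts one unproved count into another. The paper avoids lifting altogether: it works directly on~$\surf/G$, turning orbifold points into marked points, shrinking marked segments to points and capping boundary components with discs, and then computes the Euler characteristic $2-2\bar{\mathbf{g}}$ of the resulting closed surface from the cell decomposition induced by~$\Delta$, with $\bar{\mathbf{m}}+\mathbf{x}$ vertices, $n+\bar{\mathbf{m}}$ edges and $\bar{\mathbf{b}}+\bar{\mathbf{m}}+\mathbf{x}$ faces (using the digons formed by isotopic arcs and the type~(\ref{discsOrbifold}) triangles to account for the faces at orbifold points). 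If you want to salvage your approach, you would need an analogous face/edge/vertex count for the lifted configuration of arcs and closed curves on~$\surf$, at which point the downstairs Euler characteristic computation is both shorter and cleaner.
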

\begin{proof}

 Let~$\Delta$ be a tagged dissection.  If it contains isotopic arcs, we draw them parallel to each other so that they form a digon.  We note that if two isotopic arcs join two orbifold points, then one of these orbifold points has unequal signs, while the other has equal signs; if both had unequal signs, then no arcs could be added to either orbifold points, and the surface cannot be cut into discs.
 
 We turn~$\surf/G$ into a compact surface with marked points by turning all orbifold points into marked points, shrinking each marked segment on the boundary into a marked point, and filling each boundary component with a disc, keeping the boundary edges between marked points (unmarked boundary components thus disappear).  If~$\chi$ is the Euler characteristic of the surface, then~$\chi = 2-2\bar{\mathbf{g}}$.  Moreover, if the surface is cut into discs, then~$\chi$ is the number of faces, minus the number of edges, plus the number of vertices.  Now,~$\Delta$ provides a cutting of the surface into discs, with~$\bar{\mathbf{m}} + \mathbf{x}$ vertices, $n + \bar{\mathbf{m}}$ edges (one per arc in~$\Delta$ and one per boundary edge, which are in bijection with marked points on the boundary) and~$\bar{\mathbf{b}} + \bar{\mathbf{m}} + \mathbf{x}$ faces (one face per ``filled'' marked boundary component, one face containing each of the ``filled'' unmarked boundary components (pieces of type~(\ref{type2})), one face per unmarked boundary segment (pieces of type (\ref{type1}), one triangle per orbifold point with equal signs (pieces of type(\ref{discsOrbifold}), and one digon per orbifold point with unequal signs).  This proves the formula. 
\end{proof}

\begin{proposition}\label{prop::line-field}
 Let~$\Delta$ be a tagged dissection of~$\surf/G$.  There exists a line field~$\eta$ on~$\surf$ and a grading on each of the~$C_i$ such that the resulting graded tagged arcs~$C_1, \ldots, C_n$ of~$\Delta$ satisfy the conditions of Theorem~\ref{theo::local-conditions}.
\end{proposition}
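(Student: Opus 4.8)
The plan is to realize~$\Delta$ as the image of a~$G$-invariant dissection of a suitably chosen graded surface, following the dictionary between (graded) dissections and (graded) gentle algebras of~\cite[Section~3.4]{HaidenKatzarkovKontsevich} and~\cite{AmiotPlamondonSchroll}, in the~$G$-equivariant/orbifold setting. Observe first that the sign conditions imposed on isotopic arcs and on arcs meeting at an orbifold point in the definition of a tagged dissection are exactly the sign conditions appearing in the first and third bullets of Theorem~\ref{theo::local-conditions}; what remains to be produced is a~$G$-invariant line field~$\eta$ on~$\surf$ and gradings on the~$C_i$ realizing the degree and index requirements.

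First I would lift: pulling~$\Delta$ back along~$\pi$ gives a~$G$-invariant family~$\widetilde\Delta$ on~$\surf$, where an arc of~$\Delta$ disjoint from the orbifold points lifts to a~$G$-orbit~$\{X,gX\}$ of simple arcs, an arc from the boundary to an orbifold point lifts to a~$G$-invariant simple arc through the corresponding fixed point, and an arc joining two orbifold points lifts to a simple closed curve~$\alpha\cdot g(\alpha^{-1})$ through the two fixed points (near a fixed point, where~$g$ acts as~$-\mathrm{id}$ on the tangent space, such a lift looks like a diameter, so~$\widetilde\Delta$ is locally a union of diameters around each fixed point). Using the structure of a tagged dissection together with the chosen bijection between orbifold points with all incident signs equal and pieces of type~(\ref{discsOrbifold}), one checks that~$\widetilde\Delta$ cuts~$\surf$ into discs with exactly one unmarked boundary segment, discs with exactly one unmarked boundary component, and — around each fixed point — either two triangles of type~(\ref{discsOrbifold}) glued at the orbifold point (equal-sign point) or a bigon formed by two isotopic arcs (unequal-sign point). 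Since a~$G$-invariant~$\eta$ and~$G$-invariant gradings on~$\widetilde\Delta$ descend to gradings on the~$C_i$, and the index at an orbifold point is by definition read off at the fixed point upstairs, it suffices to produce such~$\eta$ and gradings so that every~$\bM$-segment between two non-isotopic arcs of~$\widetilde\Delta$ on a common boundary segment has degree~$0$, the index between two distinct arcs of~$\widetilde\Delta$ at a fixed point is as prescribed in Theorem~\ref{theo::local-conditions}, and the relative shift of two isotopic lifts is even.

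Next I would build~$\eta$ piece by piece on the complement of~$\widetilde\Delta$, exactly as in~\cite[Section~3.4]{HaidenKatzarkovKontsevich}: on a disc with one unmarked segment, the standard foliation making consecutive arc-sides meet at index~$0$, with all the turning concentrated near the unmarked segment; on a disc with one unmarked boundary component, the standard foliation with a prescribed winding number around that component; near a fixed point, the rotationally invariant line field (lines in a fixed direction), which is automatically~$G$-invariant since~$-\mathrm{id}$ acts trivially on~$\bP(T\surf)$ (this is the local picture already used in Propositions~\ref{constructionsystem1} and~\ref{constructionsystem2}). Adjacent pieces share whole arcs of~$\widetilde\Delta$, so these local choices glue to a global line field once the induced winding numbers around a generating system of loops agree; the discs impose no constraint, and the windings around the once-holed pieces and around loops through fixed points are free parameters that I would fix so as to make the system consistent, all choices being made~$G$-equivariantly (the~$G$-action permutes the pieces and preserves the fixed-point pieces, where the standard model is already~$G$-invariant). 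With~$\eta$ in hand, grading each arc so that the~$\bM$-segments on each disc piece are of degree~$0$ gives the second bullet of Theorem~\ref{theo::local-conditions}; in the rotationally invariant model the arcs through a fixed point meet at the indices required by the third bullet, the remaining freedom being pinned down by the bijection with type-(\ref{discsOrbifold}) pieces; and for two isotopic arcs, which cobound a bigon among the pieces, the relative shift is a free integer whose parity I would set to be even, giving the first bullet.

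The hard part will be the simultaneous bookkeeping in the last two paragraphs: verifying that the local standard models glue to a single~$G$-invariant line field while the degree, index and parity conditions all hold at once. This is the~$\mathbb Z/2$-equivariant, orbifold version of the classical correspondence between graded dissections and graded gentle algebras, and the genuinely new point is the control of signs and indices at the orbifold points, for which the rotationally symmetric local model around each fixed point together with the chosen bijection with pieces of type~(\ref{discsOrbifold}) is what makes the argument go through; once it does, the conditions of Theorem~\ref{theo::local-conditions} are met, and combined with the count of the preceding proposition the~$C_i$ correspond to a tilting object.
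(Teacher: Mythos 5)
Your overall strategy is the same as the paper's: lift~$\Delta$ to~$\surf$, cut the surface along the lifted arcs, define~$\eta$ piecewise by standard foliations transverse to the arcs (with a symmetric local model at the fixed points, tied to the bijection with pieces of type~(\ref{discsOrbifold})), glue, arrange~$G$-invariance up to homotopy, and then grade the arcs so as to invoke Theorem~\ref{theo::local-conditions}. However, as written the proof has a genuine gap: the entire content of the proposition is the verification that, for the line field and gradings so constructed, the three bullets of Theorem~\ref{theo::local-conditions} hold \emph{simultaneously} (degree~$0$ of the $\bM$-segments between non-isotopic neighbours, index~$0$/$1$ and equal signs at each orbifold point, and the parity condition for isotopic arcs), and you explicitly defer this as ``the hard part'' instead of carrying it out. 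The paper's proof does this by exhibiting the explicit foliations on the two types of pieces (Figures for Cases~1 and~2), from which the degree and index statements are read off directly; without that check, or an equivalent one for your local models, nothing has been proved.

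Two of the auxiliary claims you lean on to make the bookkeeping look manageable are also off. First, there is no winding-number consistency obstruction to gluing: you are \emph{constructing}~$\eta$, not matching it to a prescribed homotopy class, so piecewise-defined line fields that are (say) orthogonal to the cut arcs glue automatically into some line field on~$\surf$; introducing ``free winding parameters to be fixed so as to make the system consistent'' is a red herring and leaves the reader unsure what is actually being adjusted. Second, the relative shift~$p$ between two isotopic arcs is in general \emph{not} a free integer: once you impose the degree-$0$ conditions between each of them and their non-isotopic neighbours on the boundary (second bullet of Theorem~\ref{theo::local-conditions}), their gradings are pinned down by~$\eta$, so the parity of~$p$ is forced by the construction rather than chosen. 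This is precisely why the paper removes one arc of each isotopic pair before cutting, builds the foliation on honest polygons, and only then grades the remaining isotopic partner -- with the explicit transverse foliation the parity comes out right, but that has to be checked, not decreed.
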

\begin{proof}
 We construct~$\eta$ as follows.  First, let~$\overline{\Delta}$ be the collection of all lifts of arcs in~$\Delta$ via~$\pi:\surf\to\surf/G$ (forgetting the taggings).  If two arcs in~$\Delta$ are isotopic, we only keep the lifts of one of them in~$\overline{\Delta}$.   Then~$\Delta$ is a collection of arcs  and closed curves on~$S$ which may intersect at~$G$-fixed points.  It cuts~$S$ into polygons whose vertices are either on the boundary or are~$G$-fixed points, and whose arcs are portions of the curves in~$\overline{\Delta}$.

 We now build~$\eta$ by defining a line field on each of the polygons, in such a way that the foliation of the line field is transverse to the edges of the polygon.  Thus~$\eta$ is obtained by gluing these foliations together.  
 
 There are two types of polygons.
 
 \noindent\emph{Case 1: polygons containing exaclty one unmarked boundary segment.}.  This corresponds to pieces of types (\ref{type1}) and (\ref{type2}) in Definition~\ref{dissection-orbifold}.  In this case, the foliation is as follows:
 
 \begin{figure}
  \includegraphics[scale=.6]{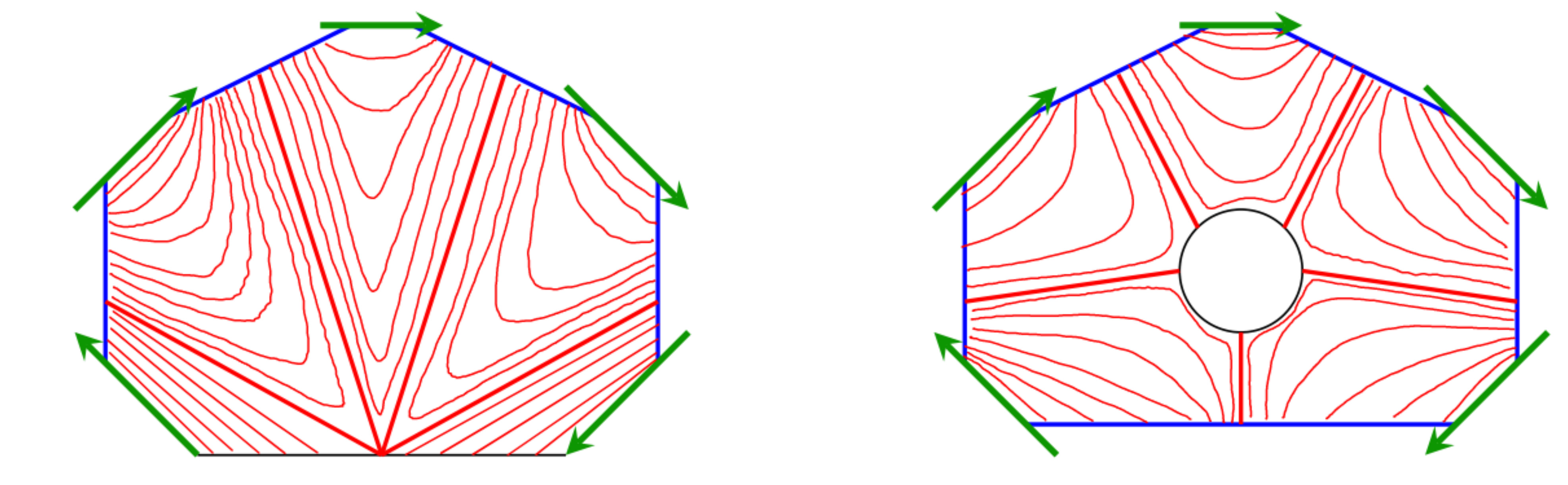}
  \caption{Foliations on polygons of types (\ref{type1}) and (\ref{type2})}
 \end{figure}

 We see that in each case, the~$\mathbb{M}$-path between two adjacent arcs is of degree 0.
 
 \smallskip
 \noindent\emph{Case 2: triangles with at least one orbifold point in their boundary.}  Those triangles are lifts of pieces of type (\ref{discsOrbifold}) in Definition~\ref{dissection-orbifold}.  Such a triangle is associated to one of the orbifold point in its boundary by the bijection in the definition of a dissection.  If we denote by~$\times$ this orbifold point, then the foliation in this triangle is given as follows:
 
 \begin{figure}[!h]
 
  \scalebox{0.8}{
  \begin{tikzpicture}[scale=0.8,>=stealth]
  
  \draw[blue, thick] (-1.5,0.5)--(0,2);
  \draw[blue, thick] (1.5,0.5)--(0,2);
  \draw[blue, thick] (-1.5,0)--(1.5,0);
  
  \draw[red,very thick] (0,0)--(0,2);
  
  \draw[red] (-0.25,1.75)--(-0.25,0);
  \draw[red] (0.25,1.75)--(0.25,0);
  \draw[red] (-0.5,1.5)--(-0.5,0);
  \draw[red] (0.5,1.5)--(0.5,0);
  \draw[red] (-0.75,1.25)--(-0.75,0);
  \draw[red] (0.75,1.25)--(0.75,0);
  \draw[red] (-1,1)--(-1,0);
  \draw[red] (1,1)--(1,0);
  \draw[red] (-1.25,0.75)--(-1.25,0);
  \draw[red] (1.25,0.75)--(1.25,0);
  
  \node[red, inner sep=0pt, fill=white] at (0,2) {$\times$};

  \end{tikzpicture}}
  \caption{Foliation on polygon of type (\ref{discsOrbifold})}
  \end{figure}
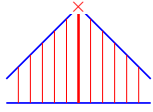

 All these foliations glue together and thus define a line field~$\eta$ on~$\surf$.  Moreover, by contruction and up to homotopy, we can assume that~$\eta$ is~$G$-invariant.  Then the map~$\bX$ of Definition~\ref{defi::arcs-to-objects} sends~$\Delta$ to an object which can be seen to be tilting using Theorem~\ref{theo::local-conditions}.

\end{proof}

We now define an algebra from a tagged dissection.

\begin{definition}\label{defi::algebra-of-dissection}
 Let~$\surf/G$ be an orbifold surface with a tagged dissection~$\Delta$.  We define an algebra~$A(\Delta)$ is several steps:
 \begin{enumerate}
  \item Let~$\eta$ be the line field of Proposition~\ref{prop::line-field}.
  \item Choose a full~$G$-invariant arc system~$\cA$ on~$\surf$.  
  \item By Proposition~\ref{prop::line-field} and Theorem~\ref{theo::local-conditions}, the arcs of~$\Delta$ correspond to the indecomposable summands of a tilting object~$T$ in $H^0\Tw\big((\cA*G)_+\big)$.
  \item Let~$A(\Delta)$ be the endomorphism algebra of~$T$ in~$H^0\Tw\big((\cA*G)_+\big)$.
 \end{enumerate}
\end{definition}

\begin{remark}
 The algebra~$A(\Delta)$ is generally not basic: indeed, any arc joining two boundary segments will correspond to two isomorphic objects~$\{X\}$ and~$\{gX\}$.
\end{remark}

\begin{corollary}\label{coro::derived-equivalence}
 Let~$\surf/G$ be an orbifold surface with a tagged dissection~$\Delta$.  Then~$A(\Delta)$ is derived equivalent to a graded skew-gentle algebra.
\end{corollary}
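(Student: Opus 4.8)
The plan is to deduce this from two ingredients already established: the identification of the ambient triangulated category $\cF := H^0\Tw\big((\cA*G)_+\big)$ with the perfect derived category of a graded skew-gentle algebra (the Corollary just above), together with classical derived Morita theory applied to the tilting object underlying $A(\Delta)$.

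First I would unwind Definition~\ref{defi::algebra-of-dissection}: by Proposition~\ref{prop::line-field} and Theorem~\ref{theo::local-conditions}, there is a choice of line field $\eta$ on $\surf$ and of $G$-invariant full arc system $\cA$ for which the arcs of $\Delta$ correspond, through the map $\bX$ of Definition~\ref{defi::arcs-to-objects}, to the indecomposable direct summands of a \emph{tilting} object $T$ in $\cF$, and by construction $A(\Delta) = \End_{\cF}(T)$. The tilting property provides exactly the two hypotheses needed for the next step: $T$ generates $\cF$ as a thick subcategory, and $\Hom_{\cF}(T,T[\ell]) = 0$ for all $\ell \neq 0$.

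Next I would pass to the $A_\infty$-enhancement. Since the graded endomorphism space of $T$ computed inside $\Tw\big((\cA*G)_+\big)$ has cohomology concentrated in degree $0$, and the higher multiplications $\mu^n$ are homogeneous of degree $2-n$, all $\mu^n$ with $n \geq 3$ must vanish on this cohomology; hence the endomorphism $A_\infty$-algebra of $T$ is formal and quasi-isomorphic to the ordinary algebra $A(\Delta)$. Invoking the $A_\infty$-version of derived Morita theory (in the spirit of \cite[(4c)]{Seidel}) for a compact generator with formal endomorphism algebra, I would conclude that there is a triangle equivalence $\per\big(A(\Delta)\big) \simeq \cF$. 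Composing with the equivalence between $\cF$ and the perfect derived category of a graded skew-gentle algebra $\Gamma$ supplied by the preceding corollary yields $\per\big(A(\Delta)\big) \simeq \per(\Gamma)$, which is the assertion.

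I do not expect a serious obstacle here, since the substantial work was done in producing the tilting object $T$ (Theorem~\ref{theo::local-conditions}) and in identifying $\cF$ with the perfect derived category of a graded skew-gentle algebra. The one point that must be handled with some care is the formality, i.e. the degree-zero concentration, of the endomorphism $A_\infty$-algebra of $T$: it is what allows one to replace the $\RHom$ computed in the enhancement by the ordinary algebra $A(\Delta)$ and thereby apply derived Morita theory. This formality, however, is an immediate consequence of the vanishing of the self-extension groups of $T$ in nonzero degrees.
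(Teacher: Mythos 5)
Your proof is correct and follows essentially the same route as the paper: the paper's own argument is simply that $A(\Delta)$ is by construction the endomorphism algebra of a tilting object in $H^0\Tw\big((\cA*G)_+\big)$, which is the perfect derived category of a graded skew-gentle algebra, and the derived equivalence then follows from standard tilting/Morita theory. You merely make explicit the step the paper leaves implicit, namely the formality of the endomorphism $A_\infty$-algebra (cohomology concentrated in degree $0$) and the resulting equivalence $\per\big(A(\Delta)\big)\simeq H^0\Tw\big((\cA*G)_+\big)$, which is a correct and standard justification.
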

\begin{proof}
 By construction,~$A(\Delta)$ is the endomorphism algebra of a tilting object in~$H^0\Tw\big((\cA*G)_+\big)$, which is the perfect derived category of a skew-gentle algebra.
\end{proof}

\begin{remark}
 Corollay~\ref{coro::derived-equivalence} implies that the algebras~$A(\Delta)$ are in the derived-equivalence class of the class of graded skew-gentle algebra.  
\end{remark}

\begin{corollary}\label{coro::derived-equivalence2}
 Let~$\Delta$ and~$\Delta'$ be tagged dissections on~$\surf/G$.  If the corresponding line fields~$\eta$ and~$\eta'$ (see Proposition~\ref{prop::line-field}) are homotopic, then~$A(\Delta)$ and~$A(\Delta')$ are derived-equivalent.
\end{corollary}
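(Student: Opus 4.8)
The plan is to reduce the statement to the fact, going back to~\cite{HaidenKatzarkovKontsevich}, that the partially wrapped Fukaya category of a graded surface depends only on the homotopy class of the line field, and then to conclude by tilting theory. First I would recall that, by Definition~\ref{defi::algebra-of-dissection} and Theorem~\ref{theo::local-conditions}, $A(\Delta)$ is the endomorphism algebra of a tilting object $T$ in the triangulated category $\cT := H^0\Tw\big((\cA*G)_+\big)$, where $\cA = \cA_{\bS}$ for a full $G$-invariant arc system $\bS$ on $(\surf,\bM,\eta)$; since $\cT$ is, as recalled earlier in this section, the perfect derived category of a graded skew-gentle algebra, it is an algebraic triangulated category carrying a classical tilting object, hence $\cT \simeq \per A(\Delta)$. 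Likewise $\cT' := H^0\Tw\big((\cA'*G)_+\big) \simeq \per A(\Delta')$, with $\cA' = \cA_{\bS'}$ built from $(\surf,\bM,\eta')$. Therefore it suffices to produce a triangle equivalence $\cT \simeq \cT'$.

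Next I would build this equivalence. By Theorem~\ref{thm HKK Morita equivalence}, $\cT$ does not depend on the chosen $G$-invariant full arc system, so we may take for both $\eta$ and $\eta'$ the \emph{same} underlying topological full arc system $\bS$, equipped with the respective gradings; write $\cA_\eta$ and $\cA_{\eta'}$ for the two resulting $A_\infty$-categories. Since $\eta$ and $\eta'$ are homotopic, a homotopy between them induces, as in~\cite{HaidenKatzarkovKontsevich}, an equivalence of $A_\infty$-categories $\cA_\eta \to \cA_{\eta'}$, matching each arc with itself, the degree shifts on the $\bM$-segments being dictated by the winding-number differences read off from the homotopy. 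As $g$ preserves both $\eta$ and $\eta'$, this equivalence can be taken to be $G$-equivariant in the sense of the definition preceding Proposition~\ref{prop::G-equivariant-functors}. Then Proposition~\ref{prop::G-equivariant-functors} yields a quasi-equivalence $\cA_\eta * G \to \cA_{\eta'} * G$; passing to faithful split-closures and then to twisted complexes (both of which preserve quasi-equivalences) and taking $H^0$ gives the desired triangle equivalence $\cT \to \cT'$. Composing the three equivalences obtained so far produces $\per A(\Delta) \simeq \cT \simeq \cT' \simeq \per A(\Delta')$.

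The main obstacle is the claim that the identification $\cA_\eta \simeq \cA_{\eta'}$ can be made $G$-equivariant, i.e. that two homotopic $G$-invariant line fields on $\surf$ are homotopic \emph{through} $G$-invariant line fields. This is the delicate point; I would handle it by comparing winding numbers. Since winding number is a homotopy invariant, $\eta \simeq \eta'$ forces $w_\eta(\gamma) = w_{\eta'}(\gamma)$ for every closed curve $\gamma$, in particular for $g$-invariant curves; and because an order-two rotation fixes the projectivized tangent space at a fixed point pointwise, the homotopy class of a $G$-invariant line field is pinned down by its winding numbers around a basis of $g$-invariant curves. Hence $\eta$ and $\eta'$ descend to homotopic line fields on the orbifold $\surf/G$ and lift back to a $G$-invariant homotopy on $\surf$. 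Once this equivariant homotopy is available, the remaining steps are formal, relying only on the functoriality of $(-)*G$, $(-)_+$ and $\Tw$ established in Section~\ref{Section2}, together with the tilting argument of the first paragraph.
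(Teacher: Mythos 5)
Your overall strategy is the same as the paper's: both $A(\Delta)$ and $A(\Delta')$ are endomorphism algebras of tilting objects, and one concludes by tilting theory (Rickard/Keller). The paper's proof is in fact shorter than your route: it fixes one full $G$-invariant arc system $\cA$ and observes that \emph{both} tilting objects already live in the single category $H^0\Tw\big((\cA*G)_+\big)$ -- gradings with respect to $\eta'$ are transported to gradings with respect to $\eta$ along the homotopy, and the conditions of Theorem~\ref{theo::local-conditions} only involve homotopy-invariant indices -- so no equivalence between two different categories $\cT$ and $\cT'$ has to be built, and in particular no statement about $G$-equivariant homotopies of line fields is invoked.

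The weak point of your write-up is exactly the step you flag as delicate, and as written it is a gap: you need that two homotopic $G$-invariant line fields are homotopic \emph{through} $G$-invariant line fields (or at least that the transport of gradings is $G$-compatible), but your justification does not establish it. The classification of line fields by winding numbers is a non-equivariant statement involving curves generating $H_1(\surf)$, not only $g$-invariant ones, and the observation that $g$ acts trivially on $\bP(T_p\surf)$ at a fixed point only says the invariance condition is vacuous there -- it does not show that equivariant homotopy classes are detected by winding numbers along $g$-invariant curves. Moreover, $\eta$ does not literally descend to a line field on $\surf/G$ at the orbifold points, since $\pi$ is a branched cover there, so ``descend and lift back'' needs care. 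The claim is true, but requires an argument: for instance, with a $G$-equivariant trivialisation of $\bP(T\surf)$ the difference of $\eta$ and $\eta'$ is a $g$-invariant map $u\colon\surf\to\bR/\bZ$, and $[u]=0$ in $H^1(\surf;\bZ)$ implies $[\bar u]=0$ in $H^1(\surf/G;\bZ)$ because $\pi^*$ is injective on $H^1$ (the transfer gives $\mathrm{tr}\circ\pi^*=2$ and $H^1(\surf/G;\bZ)$ is torsion-free since $\partial(\surf/G)\neq\emptyset$). Alternatively, and more cheaply, for an arbitrary homotopy $H$ from $\eta$ to $\eta'$ the loop $H*(g^*H)^{-1}$ of line fields is classified by an integer $n$ with $g^*$ acting trivially and sending the loop to its inverse, so $n=-n=0$; hence transport along $H$ is automatically compatible with the $G$-action, which is all that is needed either to make your functor $\cA_\eta\to\cA_{\eta'}$ $G$-equivariant or to run the paper's one-category argument.
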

\begin{proof}
 If~$\cA$ is a full~$G$-invariant arc system on~$\surf$, then~$A(\Delta)$ and~$A(\Delta')$ are the endormorphism algebras of tilging objects in~$H^0\Tw\big((\cA*G)_+\big)$.  Therefore they are derived-equivalent.
\end{proof}

\section{The $\mathbf{D}_n$-case: a complete classification}
\label{Section6}

In this section, we apply the results of Section \ref{Section5} to the case where the $G$-graded marked surface $(\surf, \bM, \eta, g)$ is a disc with $2(n-1)$ marked segment, and the automorphism $g$ is the central symmetry. Denote by $\bS$ a $G$-invariant full graded system on $(\surf, \bM, \eta, g)$, and by $\cA_{\bS}$ the corresponding Fukaya category. Then we have the following.

\begin{proposition}\label{prop:tilting-Dn}
Let $(\surf, \bM, \eta, g)$ and $\bS$ as above. Then we have a  one to one correspondence between graded tagged dissections on $\surf/G$ satisfying conditions of Theorem~\ref{theo::local-conditions} and basic tilting objects of $H^0(\Tw\cA*G)_+$. 
\end{proposition}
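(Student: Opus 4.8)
The plan is to construct two mutually inverse maps. I would first record the numerology: the central symmetry~$g$ of the disc~$\surf$ has a unique fixed point, its center, so~$\mathbf{x}=1$, and~$\surf/G$ is a disc with~$\bar{\mathbf{g}}=0$,~$\bar{\mathbf{b}}=1$,~$\bar{\mathbf{m}}=n-1$ and one orbifold point; hence~$\bar{\mathbf{m}}+\bar{\mathbf{b}}+2\bar{\mathbf{g}}-2+2\mathbf{x}=n$, which is simultaneously the number of arcs in any tagged dissection of~$\surf/G$ (Definition~\ref{dissection-orbifold}) and the number of indecomposable summands of a basic tilting object. I would also use that, for a disc with one orbifold point, the skew-gentle algebra of Corollary~\ref{coro::derived-equivalence} is piecewise hereditary of Dynkin type~$\mathbf{D}_n$, so~$H^0(\Tw\cA*G)_+$ is~$\Hom$-finite, Krull--Schmidt, of finite representation type, has all its indecomposables exceptional, and has the property that a basic rigid object has at most~$n$ pairwise non-isomorphic indecomposable summands, with equality iff it is tilting. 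Finally, since~$\pi_1(\surf)$ is trivial there are no primitive closed loops on~$\surf$, hence no band objects and no double tagged objects; combining Lemma~\ref{lemm:split2} with the dichotomy opening Section~\ref{Section4}, every indecomposable of~$H^0(\Tw\cA*G)_+$ is either~$\{X\}$ for a non-$G$-invariant graded arc~$X$, or~$X^{\pm}$ for a~$G$-invariant one, i.e. it is~$\bX(C)$ (Definition~\ref{defi::arcs-to-objects}) for a unique graded tagged arc~$C$ on~$\surf/G$ joining two boundary segments, respectively a boundary segment and the orbifold point; uniqueness follows from Theorem~\ref{thm HKK indec obj} together with Remark~\ref{rema::shift}, by which~$\bX((\gamma,\grad[p])^{\epsilon})\cong X^{(-1)^{p}\epsilon}[p]$, which determines~$p$ and~$\epsilon$.

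For the map from dissections to tilting objects, I would send a graded tagged dissection~$\Delta$ (a tagged dissection in the sense of Definition~\ref{dissection-orbifold} equipped with a grading on each of its~$n$ arcs) whose arcs satisfy the conditions of Theorem~\ref{theo::local-conditions} to~$T_\Delta:=\bigoplus_{C\in\Delta}\bX(C)$. By Theorem~\ref{theo::local-conditions} this is a tilting object (the arc count being exactly~$n$), and it is basic: two isotopic arcs of~$\Delta$ are, by the first condition of that theorem, a~$G$-invariant arc carrying opposite taggings up to a shift, hence are sent to~$X^{\epsilon}$ and~$X^{-\epsilon}[p]$, which are non-isomorphic, while non-isotopic arcs have different underlying curves on~$\surf$ and therefore non-isomorphic images.

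Conversely, given a basic tilting object~$T=\bigoplus_{i=1}^{n}T_i$, each~$T_i$ is a direct summand of the image of some indecomposable of~$\Tw\cA$ under the functor~$F$ of Lemma~\ref{lemm:split2}, so by the first paragraph~$T_i=\bX(C_i)$ for a graded tagged arc~$C_i$ on~$\surf/G$, none of which joins an orbifold point to itself. As~$T$ is rigid, the ``only if'' part of Theorem~\ref{theo::local-conditions} forces the~$C_i$ to be pairwise distinct and to satisfy the three local conditions there; these are exactly the incidence and sign conditions of Definition~\ref{dissection-orbifold}. It then remains to check that~$C=(C_1,\dots,C_n)$ cuts~$\surf/G$ into pieces of the three allowed types, meets the orbifold point at least twice, and carries the required bijection with pieces of type~(\ref{discsOrbifold}). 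I would obtain this from maximality: a proper pairwise compatible collection of graded tagged arcs on the once-orbifolded disc can be enlarged combinatorially (subdividing an inadmissible piece and choosing the new grading so as to keep the~$\bM$-segment and index conditions), but enlarging would yield, by Theorem~\ref{theo::local-conditions} again, a rigid object with~$n+1$ pairwise non-isomorphic summands, which is impossible in type~$\mathbf{D}_n$; and with a single orbifold point the bijection with type-(\ref{discsOrbifold}) pieces is forced. Hence~$C$ is a graded tagged dissection with~$T=T_C$. The two maps are then mutually inverse, since~$\bX$ is injective on graded tagged arcs and the indecomposable summands of~$T_\Delta$ are exactly the~$\bX(C)$ for~$C\in\Delta$.

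I expect the main obstacle to be the last combinatorial step -- verifying that a maximal pairwise compatible collection of graded tagged arcs on the disc with one orbifold point is genuinely a tagged dissection in the sense of Definition~\ref{dissection-orbifold} -- together with, if it has not been established earlier in the section, the identification of~$H^0(\Tw\cA*G)_+$ with the derived category of type~$\mathbf{D}_n$ (through the skew-gentle algebra attached to the disc with one orbifold point being piecewise hereditary of type~$\mathbf{D}_n$), which is what makes the ``at most~$n$ summands'' bound available.
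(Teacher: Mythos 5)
Your proposal is correct in substance and its core is the same as the paper's: since the disc is simply connected there are no band (hence no double tagged) objects, so every indecomposable of~$H^0(\Tw\cA*G)_+$ is of the form~$\{X\}$ or~$X^{\pm}$ and corresponds to a graded tagged arc on~$\surf/G$, and one then concludes with Theorem~\ref{theo::local-conditions} in both directions. The difference is one of reading and of length: the paper interprets ``graded tagged dissection satisfying the conditions of Theorem~\ref{theo::local-conditions}'' simply as a collection of pairwise compatible graded tagged arcs satisfying those conditions together with the cardinality count, so its proof stops right after invoking the classification of indecomposables and Theorem~\ref{theo::local-conditions}; it never verifies that the resulting arc collection cuts~$\surf/G$ into pieces of the types of Definition~\ref{dissection-orbifold}, which is precisely the step you flag as the main obstacle and only sketch via a maximality/enlargement argument, and it also does not need your type-$\mathbf{D}_n$ bound on the number of summands of a rigid object (that identification is only established afterwards, in the corollary, using the specific $G$-invariant dissection, so you are right to be careful about circularity there). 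One small caveat shared by both arguments: Theorem~\ref{theo::local-conditions} takes non-crossing of the arcs as a hypothesis, so deducing from rigidity that the arcs attached to a tilting object do not cross in the interior (intersections produce extensions) is used implicitly by the paper and asserted rather than proved by you; apart from that, your extra steps are sound but go beyond what the paper's own proof contains.
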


\begin{proof}
Any graded tagged dissection satisfying conditions of Theorem~\ref{theo::local-conditions} gives rise to a tilting object. Now let $T$ be a tilting object in $H^0(\Tw\cA*G)_+$. Since the surface $\surf$ is a disc, then the indecomposable objects of $H^0(\Tw\cA)$ are in bijection with the graded simple arcs on $(\surf,\bM,\eta)$. Therefore the indecomposable objects of $H^0(\Tw\cA)$ are in bijection with graded tagged arcs on $\surf/G$. Therefore the tilting object $T$ corresponds to a collection $(C_1,\ldots, C_\ell)$ of pairwise non interesecting (except in the orbifold point) simple graded tagged arcs. We conclude by Theorem \ref{theo::local-conditions}.

\end{proof}

\begin{corollary} Let $(\surf,\bM,g)$ be a disc with $2(n-1)$ marked points and $g$ be the central symmetry. Then for any tagged dissection $\Delta$ on $\surf/G$ the algebra $A(\Delta)$ is an algebra derived equivalent to the path algebra of a quiver $\bar{Q}$ of type $\mathbf{D}_n$. Moreover each algebra derived equivalent to $k\bar{Q}$ is Morita equivalent to an algebra of the form $A(\Delta)$. 
\end{corollary}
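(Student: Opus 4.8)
The claim has two parts: (i) every algebra $A(\Delta)$ arising from a tagged dissection $\Delta$ of $\surf/G$ (where $\surf$ is the disc with $2(n-1)$ marked segments and $g$ the central symmetry) is derived equivalent to $k\bar Q$ for a quiver $\bar Q$ of type $\mathbf D_n$; and (ii) every algebra derived equivalent to $k\bar Q$ is Morita equivalent to some $A(\Delta)$. I would organize the proof around the structure already established in Sections~\ref{Section5} and~\ref{Section6}, in particular Proposition~\ref{prop:tilting-Dn}.

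\emph{Part (i).} First I would identify the Fukaya category of the disc with $2(n-1)$ marked segments. By Theorem~\ref{thm HKK indec obj}, the indecomposable objects of $H^0(\Tw\cA)$ (for $\cA=\cA_\bS$) are in bijection with graded simple arcs on the disc, and there are no closed loops (a disc has trivial fundamental group). A concrete choice of full arc system $\bS$ realizes $H^0(\Tw\cA)$ as the bounded derived category of a graded linear $A_{n'}$-type algebra; more to the point, the underlying (ungraded) gentle algebra~$A$ with $G$-action has skew-group algebra $A*G$, and by Corollary~\ref{coro::formal-generator} together with the corollaries following it, $H^0(\Tw\cA*G)_+\simeq \per((A*G)_{DG})$. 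It is classical (this is the $\mathbf D_n$ analogue of the $A_n$-case, and is where the reference to \cite{Keller91} is relevant) that this perfect derived category is triangle equivalent to $\per(k\bar Q)$ for $\bar Q$ an orientation of the $\mathbf D_n$ Dynkin graph — one verifies this on a single explicit dissection $\Delta_0$ (e.g. a ``fan'' triangulation of the orbifold disc with one orbifold point, using two arcs with opposite tags at that point and the remaining arcs forming a path), computes $A(\Delta_0)$ by Definition~\ref{defi::algebra-of-dissection}, and checks its Gabriel quiver is $\mathbf D_n$. Then for an arbitrary tagged dissection $\Delta$, Definition~\ref{defi::algebra-of-dissection} and Theorem~\ref{theo::local-conditions} say that $\Delta$ gives a tilting object $T$ in $H^0(\Tw\cA*G)_+$ with $A(\Delta)=\End(T)$; since this category is $\per(k\bar Q)$, tilting theory gives a derived equivalence $\per(A(\Delta))\simeq\per(k\bar Q)$, which is what (i) asserts.

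\emph{Part (ii).} Conversely, let $B$ be any algebra with $\per(B)\simeq\per(k\bar Q)$. Up to Morita equivalence I may assume $B$ basic. A derived equivalence $\per(B)\xrightarrow{\sim}\per(k\bar Q)\simeq H^0(\Tw\cA*G)_+$ sends the regular module $B$ to a basic tilting object $T$ of $H^0(\Tw\cA*G)_+$ with $\End(T)\cong B$. By Proposition~\ref{prop:tilting-Dn}, $T$ corresponds to a graded tagged dissection $\Delta$ of $\surf/G$ satisfying the conditions of Theorem~\ref{theo::local-conditions}; tracking through Definition~\ref{defi::algebra-of-dissection} (and Proposition~\ref{prop::line-field}, which produces a compatible line field, so that the dissection is a genuine tagged dissection in the sense of Definition~\ref{dissection-orbifold}), one gets $A(\Delta)\cong\End(T)\cong B$ as algebras, hence $B$ is (literally, not just Morita-) of the form $A(\Delta)$. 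Here one should be slightly careful that the basicness of $B$ matches: an arbitrary $A(\Delta)$ need not be basic (Remark after Definition~\ref{defi::algebra-of-dissection}), but Proposition~\ref{prop:tilting-Dn} is about \emph{basic} tilting objects, and a basic $B$ produces a basic $T$, so no mismatch arises; for non-basic $B$ one first replaces it by its basic version.

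\emph{Main obstacle.} The genuinely substantive step is establishing the base case $H^0(\Tw\cA*G)_+\simeq \per(k\bar Q)$ with $\bar Q$ of type $\mathbf D_n$ — i.e. that the graded skew-gentle algebra attached to the orbifold disc with one orbifold point is piecewise hereditary of Dynkin type $\mathbf D_n$, and that conversely \emph{every} $\mathbf D_n$-orientation is realized. Part of this is recognizing the skew-group algebra $A*G$ of the linear gentle algebra as (derived equivalent to) a $\mathbf D_n$ path algebra, and part is a counting/combinatorial check that the number of tilting objects, equivalently the number of tagged dissections with $n = \bar{\mathbf m}+\bar{\mathbf b}+2\bar{\mathbf g}-2+2\mathbf x$ arcs, matches and that all Dynkin orientations occur. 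Everything after that is a formal consequence of tilting theory plus the dictionary in Proposition~\ref{prop:tilting-Dn} and Theorem~\ref{theo::local-conditions} already proved in the paper.
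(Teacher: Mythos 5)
Your proposal is correct and follows essentially the same route as the paper: $A(\Delta)$ is the endomorphism algebra of a tilting object in $H^0(\Tw\cA*G)_+$, that category is identified with $\per k\bar{Q}$ of type $\mathbf{D}_n$ by computing the endomorphism algebra of one explicit fan-type ($G$-invariant) dissection, and the converse is Rickard's theorem combined with Proposition~\ref{prop:tilting-Dn}, with the basic/non-basic issue handled just as you describe. The only superfluous step is your proposed check that all $\mathbf{D}_n$ orientations occur and that tilting objects are counted: since all orientations of a tree quiver are derived equivalent, the single base-case computation already suffices, exactly as in the paper.
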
 

\begin{proof}
By Definition 5.11, the algebra $A(\Delta)$ is the endomorphism algebra of a tilting object $T$ in $H^0(\cA*G)_+$ where $\cA$ is a $G$-invariant full system of graded arc defined on the graded surface $(\surf,\bM,\eta,g)$ where $\eta$ is a well-chosen line field on $\surf$. 

 Hence we have a triangle equivalence $H^0(\Tw\cA *G)_+\simeq \per A(\Delta)$. 

Now let $\mathbf{D}$ be the following $G$-invariant dissection on $\surf$. 

\begin{figure}[!h]
 \[
  \scalebox{1}{
  \begin{tikzpicture}[scale=2,>=stealth]
  
  \draw (0,0) circle (1);
  
  \draw[very thick, dark-green] (1,-0.1)--(1,0.1);
  
  \draw[very thick, dark-green] (0.8,0.6)--(0.6,0.8);
  
    \draw[very thick, dark-green] (-0.8,0.6)--(-0.6,0.8);
      \draw[very thick, dark-green] (0.8,-0.6)--(0.6,-0.8);
        \draw[very thick, dark-green] (-0.8,-0.6)--(-0.6,-0.8);
  
  \draw[very thick, dark-green] (-0.1,1)--(0.1,1);
  
  \draw[very thick, dark-green] (-1,-0.1)--(-1,0.1);
  
   \draw[very thick, dark-green] (-0.1,-1)--(0.1,-1);
  
  \draw[blue] (-1,0)--(1,0);
  \node[blue, inner sep=0pt, fill=white] at (0.5,0) {$X_1$};
  \draw[blue] (-1,0.03)--node[inner sep=0pt, fill=white]{$X_2$}(0.7,0.7);
  \draw[blue] (-1,0.06)--node[inner sep=0pt, fill=white]{$X_3$}(0,1);
  \draw[blue] (-1,0.09)--node[inner sep=0pt, fill=white]{$X_4$}(-0.7,0.7);
   
  \draw[blue] (1,-0.03)--node[inner sep=0pt, fill=white]{$gX_2$}(-0.7,-0.7);
  \draw[blue] (1,-0.06)--node[inner sep=0pt, fill=white]{$gX_3$}(0,-1);
  \draw[blue] (1,-0.09)--node[inner sep=0pt, fill=white]{$gX_4$}(0.7,-0.7);
  
  \node[red, fill=white, inner sep=0pt] at (0,0) {$\times$};

  \end{tikzpicture}}\]
  \caption{}
\end{figure}

Since the surface is a disc, there is only one line field up to homotopy, so it is possible to choose the grading on each arc in such a way that the graded algebra corresponding to the dissection $\mathbb D$ is concentrated in degree $0$. The endomorphism algebra of the object $\mathbb D$ in $H^0\Tw\cA$ is isomorphic to the path algebra of the quiver $Q$ of type $\mathbb A_{2n+1}$ with symmetric orientation.  

 \[
  \scalebox{1}{
  \begin{tikzpicture}[scale=1,>=stealth]
\node (1) at (0,0) {$X_1$};
\node (2) at (1,1) {$X_2$};
\node (3) at (2.5,1) {$X_3$};
\node (4) at (4,1) {$X_4$};
\node (g2) at (1,-1) {$gX_2$};
\node (g3) at (2.5,-1) {$gX_3$};
\node (g4) at (4,-1) {$gX_4$};

\draw[->] (1)--(2);
\draw[->] (2)--(3);
\draw[->] (3)--(4);
\draw[->] (1)--(g2);
\draw[->] (g2)--(g3);
\draw[->] (g3)--(g4);

  \end{tikzpicture}}\]

The corresponding formal generator $\{\mathbb D\}$ has its endomorphism algebra concentrated in degree $0$ and is Morita equivalent to the path algebra of a quiver $\bar{Q}$ of type $\mathbb D_n$.  

 \[
  \scalebox{1}{
  \begin{tikzpicture}[scale=1,>=stealth]
  
  \node (1+) at (0,1) {$X_1^+$};
   \node (1-) at (0,-1) {$X_1^-$};
   \node (2) at (1,0) {$\{X_2\}$};
   \node (3) at (2.5,0) {$\{X_3\}$};
   \node (4) at (4,0) {$\{X_4\}$};
   
   \draw[->] (1+)--(2);
    \draw[->] (1-)--(2);
     \draw[->] (2)--(3);
      \draw[->] (3)--(4);
      \end{tikzpicture}}\]

Hence we have a triangle equivalence $H^0(\Tw\cA*G)_+\simeq \per k\bar{Q}$. 

Conversely, if $A$ is an algebra derived equivalent to $k\bar{Q}$, then by \cite{Rickard} it is the endomorphism algebra of a tilting object $T$ in $H^0(\Tw\cA*G)_+$. We conclude by Proposition \ref{prop:tilting-Dn}. 

\end{proof}

\begin{example}
Here we give some examples of algebras which are derived equivalent to the path algebra of a quiver of type $\mathbf{D}_5$. In this case, $\surf$ is a disc with $8$ marked points, and $S/G$ is a disc with $4$ marked points and one orbifold point.

Let $\Delta$ be a tagged dissection of $\surf/G$. If $\Delta$ contains two isotopic tagged arcs with opposite signs, then it does not contain any other tagged arc with endpoint in the orbifold point. In that case, the endomorphism algebra $A(\Delta)$ is skew-gentle. 

We focus here on the case where $\Delta$ does not contain two isotopic arcs with opposite signs, since the skew-gentle algebras have been already heavily studied. In that case, since there is only one orbifold point, there is exactly one piece of type (3) in the dissection, and no choice for the bijection between pieces of type (3) and orbifold points. 

For example, consider the following tagged dissection of $\surf/G$. 

\begin{figure}[!h]
 \[
  \scalebox{1}{
  \begin{tikzpicture}[scale=1.5,>=stealth]
  
  \draw (0,0) circle (1);
  
  \draw[very thick, dark-green] (1,-0.1)--(1,0.1);
  
  \draw[very thick, dark-green] (-0.1,1)--(0.1,1);
  
  \draw[very thick, dark-green] (-1,-0.1)--(-1,0.1);
  
   \draw[very thick, dark-green] (-0.1,-1)--(0.1,-1);
   
   \draw[blue] (-1,0)--node[inner sep=0pt, fill=white]{$1$}(0,0);
   \draw[blue] (0,0)--node[inner sep=0pt, fill=white]{$2$}(0,1);
   \draw[blue] (-1,0.05)--node[inner sep=0pt, fill=white]{$3$}(-0.05,1);
   \draw[blue] (0.05,1).. controls (0.3,0.8) and (0.3,-0.8)..node[inner sep=0pt, fill=white]{$4$} (0.05,-1);
   \draw[blue] (0.09,-1)--node[inner sep=0pt, fill=white]{$5$}(1,0);
   
   \node[red, inner sep=0pt, fill=white] at (0,0) {$\times$};
   
    \node[inner sep=0pt, fill=white] at (-0.2,-0.1) {$+$};
     \node[inner sep=0pt, fill=white] at (0.1,0.2) {$+$};

  \end{tikzpicture}}\]
  \caption{}
\end{figure}
  
  It corresponds to the following collection of graded arcs in the covering of the orbifold (the indices between the objects $X_1$ and $X_2$ are indicated in red):
  
\begin{figure}[!h]
   \[
  \scalebox{0.8}{
  \begin{tikzpicture}[scale=2.2,>=stealth]
  
  \draw (0,0) circle (1);
  
  \draw[very thick, dark-green] (1,-0.1)--(1,0.1);
  
  \draw[very thick, dark-green] (0.8,0.6)--(0.6,0.8);
  
    \draw[very thick, dark-green] (-0.8,0.6)--(-0.6,0.8);
      \draw[very thick, dark-green] (0.8,-0.6)--(0.6,-0.8);
        \draw[very thick, dark-green] (-0.8,-0.6)--(-0.6,-0.8);
  
  \draw[very thick, dark-green] (-0.1,1)--(0.1,1);
  
  \draw[very thick, dark-green] (-1,-0.1)--(-1,0.1);
  
   \draw[very thick, dark-green] (-0.1,-1)--(0.1,-1);
  
  \draw[blue] (-1,0)--(1,0);
  \node[blue, inner sep=0pt, fill=white] at (0.5,0) {$X_1$};
  \draw[blue] (-0.7,0.7)--(0.7,-0.7);
  \node[blue, inner sep=0pt, fill=white] at (-0.5,0.5) {$X_2$};
  
  \draw[blue] (-1,0.05)-- node[inner sep=0pt, fill=white]{$X_3$} (-0.75,0.65);
  
  \draw[blue] (-0.65,0.75)--node[inner sep=0pt, fill=white]{$X_4$}(0.7,0.7);

  \draw[blue] (0,1)--node[inner sep=0pt, fill=white]{$X_5$}(0.65,0.75);
  
    \draw[blue] (1,-0.05)-- node[inner sep=0pt, fill=white]{$gX_3$} (0.75,-0.65);
  
  \draw[blue] (0.65,-0.75)--node[inner sep=0pt, fill=white]{$gX_4$}(-0.7,-0.7);

  \draw[blue] (0,-1)--node[inner sep=0pt, fill=white]{$gX_5$}(-0.65,-0.75);

  \node[red, fill=white, inner sep=0pt] at (0,0) {$\times$};
  
  \draw[red,->] (0.2,0) arc (0:135:0.2);
  \node[red, inner sep=0pt, fill=white] at (0,0.3) {$0$};
  \draw[red,<-] (-0.2,0) arc (180:135:0.2);
  \node[red, inner sep=0pt, fill=white] at (-0.3,0.1) {$1$};

  \end{tikzpicture}}\]
  \caption{}
\end{figure}
  
  One can compute the endomorphism algebra of the object $X_1^+\oplus X_2^+\oplus \{X_3\}\oplus \{X_4\}\oplus \{X_5\}$ in $H^0(\Tw\cA*G)_+$ and one obtains  the following quiver with relation (the dotted line indicates that the composition from $X_1^+$ to $X_4$ vanishes).

  \[
  \scalebox{0.8}{
  \begin{tikzpicture}[scale=1.5,>=stealth]  
  \node (1) at (0,0) {$X_1^+$};
  \node (2) at (1.5,0) {$X_3$};
  \node (3) at (3,0) {$X_2^+$};
  \node (4) at (4.5,0) {$X_4$};
  \node (5) at (6,0) {$X_5$};
  
  \draw[->] (1)--(2);
  \draw[->] (2)--(3);
  \draw[->] (3)--(4);
  \draw[->] (5)--(4);
  
  \draw[thick, loosely dotted] (1,0).. controls (1.5,0.5) and (3,0.5)..(3.5,0);
  \end{tikzpicture}}
  \]
Note that by Proposition \ref{prop: morphism X epsilon Y epsilon}, there is a non zero morphism from $X_1^+$ to $X_2^+$. On can easily check that this morphism factors through the object $\{X_3\}$.

In a similar way, the algebra $A(\Delta)$ corresponding to the following tagged dissection 

\begin{figure}[!h]
 \[
  \scalebox{1}{
  \begin{tikzpicture}[scale=1.5,>=stealth]
  
  \draw (0,0) circle (1);
  
  \draw[very thick, dark-green] (1,-0.1)--(1,0.1);
  
  \draw[very thick, dark-green] (-0.1,1)--(0.1,1);
  
  \draw[very thick, dark-green] (-1,-0.1)--(-1,0.1);
  
   \draw[very thick, dark-green] (-0.1,-1)--(0.1,-1);
   
   \draw[blue] (-1,0)--node[inner sep=0pt, fill=white]{$1$}(0,0);
   \draw[blue] (0,0)--node[inner sep=0pt, fill=white]{$2$}(0,1);
   \draw[blue] (-1,0.05)--node[inner sep=0pt, fill=white]{$3$}(-0.05,1);
   \draw[blue] (0,0)--node[inner sep=0pt, fill=white]{$4$} (1,0);
   \draw[blue] (0,0)--node[inner sep=0pt, fill=white]{$5$}(0,-1);
   
   \node[red, inner sep=0pt, fill=white] at (0,0) {$\times$};
   
    \node[inner sep=0pt, fill=white] at (-0.2,-0.1) {$+$};
    \node[inner sep=0pt, fill=white] at (0.1,-0.2) {$+$};
    \node[inner sep=0pt, fill=white] at (0.2,0.1) {$+$};
    \node[inner sep=0pt, fill=white] at (-0.1,0.2) {$+$};

  \end{tikzpicture}}\]
  \caption{}
\end{figure}
  
 gives rise to the following algebra (where the dotted line indicates a commutativity relation).
 
   \[
  \scalebox{0.8}{
  \begin{tikzpicture}[scale=1.5,>=stealth]  
  \node (1) at (0,0) {$X_1^+$};
  \node (2) at (3,0) {$X_2^+$};
  \node (3) at (1.5,-1) {$\{X_3\}$};
  \node (5) at (1,1) {$X_5^+$};
  \node (4) at (2,1) {$X_4^+$};
  
  \draw[->] (1)--(3);
  \draw[->] (3)--(2);
  \draw[->] (1)--(5);
  \draw[->] (5)--(4);
  \draw[->] (4)--(2);
  
  \draw[thick, loosely dotted] (1)--(2);
  \end{tikzpicture}}
  \]

This is (up to cyclic permutation, and change of tagging) the only tagged dissection containig four tagged arcs. In a similar way we can list all the algebras that are derived equivalent to the path algebra of a quiver of type $\mathbf{D}_5$ and which are not skew-gentle. There are 25.

There are $6$ corresponding to a dissection containing exactly $3$ tagged arcs.

\begin{figure}[!h]
 \[
  \scalebox{1}{
  \begin{tikzpicture}[scale=1.5,>=stealth]
  
  \draw (0,0) circle (1);
  
  \draw[very thick, dark-green] (1,-0.1)--(1,0.1);
  
  \draw[very thick, dark-green] (-0.1,1)--(0.1,1);
  
  \draw[very thick, dark-green] (-1,-0.1)--(-1,0.1);
  
   \draw[very thick, dark-green] (-0.1,-1)--(0.1,-1);
   
   \draw[blue] (-1,0)--(0,0);
   \draw[blue] (0,0)--(0,1);
  
   \draw[blue] (0,0)-- (1,0);

   \node[red, inner sep=0pt, fill=white] at (0,0) {$\times$};
   
    \node[inner sep=0pt, fill=white] at (-0.2,-0.1) {$+$};
    \node[inner sep=0pt, fill=white] at (-0.1,0.2) {$+$};
    \node[inner sep=0pt, fill=white] at (0.2,0.1) {$+$};
  
  \end{tikzpicture}}\]

 \[
  \scalebox{0.8}{
  \begin{tikzpicture}[scale=0.8,>=stealth]
\begin{scope}
\node (1) at (0,0) {$+$};
\node (2) at (1,1) {$.$};
\node (3) at (2,0) {$+$};
\node (4) at (1,-1) {$+$};
\node (5) at (2.5,1) {$.$};

\draw[->] (1)--(2);
\draw[->] (2)--(3);
\draw[->] (1)--(4);
\draw[->] (4)--(3);
\draw[thick, loosely dotted] (1)--(3);
\draw[->] (2)--(5);
  \end{scope}
  
  \begin{scope}[xshift=5cm]
\node (1) at (0,0) {$+$};
\node (2) at (1,1) {$.$};
\node (3) at (2,0) {$+$};
\node (4) at (1,-1) {$+$};
\node (5) at (-0.5,1) {$.$};

\draw[->] (1)--(2);
\draw[->] (2)--(3);
\draw[->] (1)--(4);
\draw[->] (4)--(3);
\draw[thick, loosely dotted] (1)--(3);
\draw[->] (5)--(2);
  \end{scope}
  
    \begin{scope}[xshift=10cm]
\node (1) at (0,0) {$+$};
\node (2) at (1,1) {$.$};
\node (3) at (2,0) {$+$};
\node (4) at (1,-1) {$+$};
\node (5) at (-0.5,-1) {$.$};

\draw[->] (1)--(2);
\draw[->] (2)--(3);
\draw[->] (1)--(4);
\draw[->] (4)--(3);
\draw[thick, loosely dotted] (1)--(3);
\draw[->] (5)--(4);

\draw[loosely dotted, thick] (0,-1)..controls (0.5,-1.5) and (1.5,-1.5)..(1.5,-0.5);
  \end{scope}
  
      \begin{scope}[yshift=-3cm]
\node (1) at (0,0) {$+$};
\node (2) at (1,1) {$.$};
\node (3) at (2,0) {$+$};
\node (4) at (1,-1) {$+$};
\node (5) at (2.5,-1) {$.$};

\draw[->] (1)--(2);
\draw[->] (2)--(3);
\draw[->] (1)--(4);
\draw[->] (4)--(3);
\draw[thick, loosely dotted] (1)--(3);
\draw[->] (4)--(5);

\draw[thick, loosely dotted] (0.5,-0.5).. controls (0.5,-1.5) and (1.5,-1.5).. (2,-1);
  \end{scope}

       \begin{scope}[yshift=-3cm, xshift=5cm]
\node (1) at (0,0) {$+$};
\node (2) at (1,1) {$.$};
\node (3) at (2,0) {$+$};
\node (4) at (1,-1) {$+$};
\node (5) at (-1.5,0) {$.$};

\draw[->] (1)--(2);
\draw[->] (2)--(3);
\draw[->] (1)--(4);
\draw[->] (4)--(3);
\draw[thick, loosely dotted] (1)--(3);
\draw[->] (5)--(1);

\draw[thick, loosely dotted] (-1,0).. controls (-0.5,-0.5) and (0,-0.5)..(0.5,-0.5);
  \end{scope}

      \begin{scope}[yshift=-3cm, xshift=10cm]
\node (1) at (0,0) {$+$};
\node (2) at (1,1) {$.$};
\node (3) at (2,0) {$+$};
\node (4) at (1,-1) {$+$};
\node (5) at (3.5,0) {$.$};

\draw[->] (1)--(2);
\draw[->] (2)--(3);
\draw[->] (1)--(4);
\draw[->] (4)--(3);
\draw[thick, loosely dotted] (1)--(3);
\draw[->] (3)--(5);

\draw[thick, loosely dotted] (3,0).. controls (2.5,-0.5) and (2,-0.5)..(1.5,-0.5);

  \end{scope}

  \end{tikzpicture}}\]
\caption{}
\end{figure}

There are exactly $4$ tagged dissections the following three arcs, and no other tagged arc.

\begin{figure}[!h]
\[
  \scalebox{1}{
  \begin{tikzpicture}[scale=1.5,>=stealth]
  
  \draw (0,0) circle (1);
  
  \draw[very thick, dark-green] (1,-0.1)--(1,0.1);
  
  \draw[very thick, dark-green] (-0.1,1)--(0.1,1);
  
  \draw[very thick, dark-green] (-1,-0.1)--(-1,0.1);
  
   \draw[very thick, dark-green] (-0.1,-1)--(0.1,-1);
   
   \draw[blue] (-1,0)--(0,0);
   \draw[blue] (0,0)--(1,0);
  
 \draw[blue] (-1,0).. controls (-0.5,0.5) and (0.5,0.5)..node[inner sep=0pt, fill=white]{$\circ$}(1,0);
   
   \node[red, inner sep=0pt, fill=white] at (0,0) {$\times$};
   
    \node[inner sep=0pt, fill=white] at (-0.2,-0.1) {$+$};

    \node[inner sep=0pt, fill=white] at (0.2,0.1) {$+$};
  
  \end{tikzpicture}}\]
  \caption{}
\end{figure}

  The corresponding $4$ algebras are the following
  
 \[
  \scalebox{0.8}{
  \begin{tikzpicture}[scale=0.8,>=stealth]
\begin{scope}
\node (1) at (0,0) {$.$};
\node (2) at (1.5,0) {$+$};
\node (3) at (3,0) {$\circ$};
\node (4) at (4.5,0) {$+$};
\node (5) at (3,-1) {$.$};

\draw[->] (1)--(2);
\draw[->] (2)--(3);
\draw[->] (3)--(4);
\draw[->] (3)--(5);

\draw[thick, loosely dotted] (0.5,0)..controls (1,0.5) and (3.5,0.5)..(4,0);
  
  \end{scope}
  
  \begin{scope}[xshift=6cm]
\node (1) at (0,0) {$.$};
\node (2) at (1.5,0) {$+$};
\node (3) at (3,0) {$\circ$};
\node (4) at (4.5,0) {$+$};
\node (5) at (3,-1) {$.$};

\draw[->] (1)--(2);
\draw[->] (2)--(3);
\draw[->] (3)--(4);
\draw[->] (5)--(3);

\draw[thick, loosely dotted] (0.5,0)..controls (1,0.5) and (3.5,0.5)..(4,0);
  
  \end{scope}
  
  \begin{scope}[yshift=-3cm]
\node (1) at (0,0) {$+$};
\node (2) at (1.5,0) {$\circ$};
\node (3) at (3,0) {$+$};
\node (4) at (4.5,0) {$.$};
\node (5) at (1.5,-1) {$.$};

\draw[->] (1)--(2);
\draw[->] (2)--(3);
\draw[->] (3)--(4);
\draw[->] (2)--(5);

\draw[thick, loosely dotted] (0.5,0)..controls (1,0.5) and (3.5,0.5)..(4,0);
  
  \end{scope}
  
  \begin{scope}[xshift=6cm,yshift=-3cm]
\node (1) at (0,0) {$+$};
\node (2) at (1.5,0) {$\circ$};
\node (3) at (3,0) {$+$};
\node (4) at (4.5,0) {$.$};
\node (5) at (1.5,-1) {$.$};

\draw[->] (1)--(2);
\draw[->] (2)--(3);
\draw[->] (3)--(4);
\draw[->] (5)--(2);

\draw[thick, loosely dotted] (0.5,0)..controls (1,0.5) and (3.5,0.5)..(4,0);
  
  \end{scope}

 \end{tikzpicture}}\] 
  
There are $7$ dissections containing the following three arcs and no other tagged arcs.

\begin{figure}[!h]
\[
  \scalebox{1}{
  \begin{tikzpicture}[scale=1.5,>=stealth]
  
  \draw (0,0) circle (1);
  
  \draw[very thick, dark-green] (1,-0.1)--(1,0.1);
  
  \draw[very thick, dark-green] (-0.1,1)--(0.1,1);
  
  \draw[very thick, dark-green] (-1,-0.1)--(-1,0.1);
  
   \draw[very thick, dark-green] (-0.1,-1)--(0.1,-1);
   
   \draw[blue] (0,1)--(0,0);
   \draw[blue] (0,0)--(1,0);
  
 \draw[blue] (1,0).. controls (0,-0.75) and (-0.75,0)..node[inner sep=0pt, fill=white]{$\circ$}(0,1);
   
   \node[red, inner sep=0pt, fill=white] at (0,0) {$\times$};

  \end{tikzpicture}}\]
  \caption{}
\end{figure}
  
 The $7$ corresponding algebras are as follows :
 
  \[
  \scalebox{0.8}{
  \begin{tikzpicture}[scale=0.8,>=stealth]
\begin{scope}
\node (1) at (0,0) {$+$};
\node (2) at (1.5,0) {$\circ$};
\node (3) at (3,0) {$.$};
\node (4) at (4.5,0) {$.$};
\node (5) at (1.5,-1) {$+$};

\draw[->] (1)--(2);
\draw[->] (2)--(3);
\draw[->] (3)--(4);
\draw[->] (2)--(5);

\draw[thick, loosely dotted] (2,0)..controls (2.5,0.5) and (3.5,0.5)..(4,0);
  
  \end{scope}

\begin{scope}[yshift=-3cm]
\node (1) at (0,0) {$+$};
\node (2) at (1.5,0) {$\circ$};
\node (3) at (3,0) {$.$};
\node (4) at (4.5,0) {$.$};
\node (5) at (1.5,-1) {$+$};

\draw[->] (1)--(2);
\draw[->] (2)--(3);
\draw[->] (3)--(4);
\draw[->] (2)--(5);
  
  \end{scope}  
  
  \begin{scope}[xshift=6cm]
\node (1) at (0,0) {$+$};
\node (2) at (1.5,0) {$\circ$};
\node (3) at (3,0) {$.$};
\node (4) at (4.5,0) {$.$};
\node (5) at (1.5,-1) {$+$};

\draw[->] (1)--(2);
\draw[->] (3)--(2);
\draw[->] (4)--(3);
\draw[->] (2)--(5);

\draw[thick, loosely dotted] (2,0)..controls (2.5,0.5) and (3.5,0.5)..(4,0);
  
  \end{scope}
  
    \begin{scope}[xshift=6cm,yshift=-3cm]
\node (1) at (0,0) {$+$};
\node (2) at (1.5,0) {$\circ$};
\node (3) at (3,0) {$.$};
\node (4) at (4.5,0) {$.$};
\node (5) at (1.5,-1) {$+$};

\draw[->] (1)--(2);
\draw[->] (3)--(2);
\draw[->] (4)--(3);
\draw[->] (2)--(5);
  
  \end{scope}
  
      \begin{scope}[xshift=12cm]
\node (1) at (0,0) {$+$};
\node (2) at (1.5,0) {$\circ$};
\node (3) at (3,0) {$.$};
\node (4) at (4.5,0) {$.$};
\node (5) at (1.5,-1) {$+$};

\draw[->] (1)--(2);
\draw[->] (2)--(3);
\draw[->] (4)--(3);
\draw[->] (2)--(5);
  
  \end{scope}
      \begin{scope}[xshift=12cm,yshift=-3cm]
\node (1) at (0,0) {$+$};
\node (2) at (1.5,0) {$\circ$};
\node (3) at (3,0) {$.$};
\node (4) at (4.5,0) {$.$};
\node (5) at (1.5,-1) {$+$};

\draw[->] (1)--(2);
\draw[->] (3)--(2);
\draw[->] (3)--(4);
\draw[->] (2)--(5);
  
  \end{scope}
  
      \begin{scope}[xshift=18cm,yshift=-1.5cm]
\node (1) at (0,0) {$+$};
\node (2) at (1.5,0) {$\circ$};
\node (3) at (1.5,1.5) {$.$};
\node (4) at (3,0) {$.$};
\node (5) at (1.5,-1.5) {$+$};

\draw[->] (1)--(2);
\draw[->] (2)--(3);
\draw[->] (4)--(2);
\draw[->] (2)--(5);

\draw[thick, loosely dotted] (1.5,1)..controls (2,1) and (1,1)..(2.5,0);
  
  \end{scope}

  \end{tikzpicture}}\]

  Finally there are 7 dissections containing the following three arcs and no other tagged arcs.
  
\begin{figure}[!h]
  \[
  \scalebox{1}{
  \begin{tikzpicture}[scale=1.5,>=stealth]
  
  \draw (0,0) circle (1);
  
  \draw[very thick, dark-green] (1,-0.1)--(1,0.1);
  
  \draw[very thick, dark-green] (-0.1,1)--(0.1,1);
  
  \draw[very thick, dark-green] (-1,-0.1)--(-1,0.1);
  
   \draw[very thick, dark-green] (-0.1,-1)--(0.1,-1);
   
   \draw[blue] (0,1)--(0,0);
   \draw[blue] (0,0)--(1,0);
  
 \draw[blue] (1,0)--node[inner sep=0pt, fill=white]{$\circ$}(0,1);
   
   \node[red, inner sep=0pt, fill=white] at (0,0) {$\times$};

  \end{tikzpicture}}\]
  \caption{}
\end{figure}
  
  The corresponding algebras are as follows 
  
  \[
  \scalebox{0.8}{
  \begin{tikzpicture}[scale=0.8,>=stealth]
\begin{scope}
\node (1) at (0,0) {$+$};
\node (2) at (1.5,0) {$\circ$};
\node (3) at (3,0) {$+$};
\node (4) at (4.5,0) {$.$};
\node (5) at (6,0) {$.$};

\draw[->] (1)--(2);
\draw[->] (2)--(3);
\draw[->] (3)--(4);
\draw[->] (4)--(5);

\draw[thick, loosely dotted] (0.5,0)..controls (1,0.5) and (3,0.5)..(4,0);

\draw[thick, loosely dotted] (4,0)..controls (4.5,0.5) and (5,0.5)..(5.5,0);

  \end{scope}
  
  \begin{scope}[xshift=8cm]
\node (1) at (0,0) {$+$};
\node (2) at (1.5,0) {$\circ$};
\node (3) at (3,0) {$+$};
\node (4) at (4.5,0) {$.$};
\node (5) at (6,0) {$.$};

\draw[->] (1)--(2);
\draw[->] (2)--(3);
\draw[->] (3)--(4);
\draw[->] (4)--(5);

\draw[thick, loosely dotted] (0.5,0)..controls (1,0.5) and (3,0.5)..(4,0);

  \end{scope}

  \begin{scope}[yshift=-3cm]
\node (1) at (0,0) {$+$};
\node (2) at (1.5,0) {$\circ$};
\node (3) at (3,0) {$+$};
\node (4) at (4.5,0) {$.$};
\node (5) at (6,0) {$.$};

\draw[<-] (1)--(2);
\draw[<-] (2)--(3);
\draw[<-] (3)--(4);
\draw[<-] (4)--(5);

\draw[thick, loosely dotted] (0.5,0)..controls (1,0.5) and (3,0.5)..(4,0);

\draw[thick, loosely dotted] (4,0)..controls (4.5,0.5) and (5,0.5)..(5.5,0);

  \end{scope}
  \begin{scope}[xshift=8cm,yshift=-3cm]
\node (1) at (0,0) {$+$};
\node (2) at (1.5,0) {$\circ$};
\node (3) at (3,0) {$+$};
\node (4) at (4.5,0) {$.$};
\node (5) at (6,0) {$.$};

\draw[<-] (1)--(2);
\draw[<-] (2)--(3);
\draw[<-] (3)--(4);
\draw[<-] (4)--(5);

\draw[thick, loosely dotted] (0.5,0)..controls (1,0.5) and (3,0.5)..(4,0);
 
  \end{scope}

  \begin{scope}[yshift=-6cm]
\node (1) at (0,0) {$+$};
\node (2) at (1.5,0) {$\circ$};
\node (3) at (3,0) {$+$};
\node (4) at (4.5,0) {$.$};
\node (5) at (6,0) {$.$};

\draw[->] (1)--(2);
\draw[->] (2)--(3);
\draw[->] (3)--(4);
\draw[->] (5)--(4);

\draw[thick, loosely dotted] (0.5,0)..controls (1,0.5) and (3,0.5)..(4,0);

  \end{scope}

    \begin{scope}[xshift=8cm,yshift=-6cm]
\node (1) at (0,0) {$+$};
\node (2) at (1.5,0) {$\circ$};
\node (3) at (3,0) {$+$};
\node (4) at (4.5,0) {$.$};
\node (5) at (6,0) {$.$};

\draw[<-] (1)--(2);
\draw[<-] (2)--(3);
\draw[<-] (3)--(4);
\draw[<-] (5)--(4);

\draw[thick, loosely dotted] (0.5,0)..controls (1,0.5) and (3,0.5)..(4,0);

  \end{scope}
  
    \begin{scope}[yshift=-9cm,xshift=4cm]
\node (1) at (0,0) {$.$};
\node (2) at (1.5,0) {$+$};
\node (3) at (3,0) {$\circ$};
\node (4) at (4.5,0) {$+$};
\node (5) at (6,0) {$.$};

\draw[->] (1)--(2);
\draw[->] (2)--(3);
\draw[->] (3)--(4);
\draw[->] (4)--(5);

\draw[thick, loosely dotted] (0.5,0)..controls (1,0.5) and (3,0.5)..(4,0);
\draw[thick, loosely dotted] (2,0)..controls (3.5,-0.5) and (4.5,-0.5)..(5.5,0);

  \end{scope}
  
  \end{tikzpicture}}\]  

\end{example}

%%%
\section*{Acknowledgements}
Much of this work was completed while we were visiting the Centre for Advanced Study in Oslo during the project \emph{Representation Theory: Combinatorial Aspects and Applications}.  We are grateful to the organizers of the project, Aslak Buan and Steffen Oppermann, for inviting us.  We acknowledge the support of the French ANR grant CHARMS (ANR-19-CE40-0017-02) and of the Institut Universitaire de France (IUF).  Some of the diagrams where produced using~\url{q.uiver.app}.  We thank an anonymous referee for their very helpful comments.
%%%

\bibliographystyle{alpha}
\bibliography{orbifold.bib}

\end{document}